\documentclass[10pt]{amsart} 

\usepackage[psamsfonts]{amssymb} 
\usepackage{amsfonts,amsmath} 
\usepackage{graphicx, color}
\usepackage{enumerate}
\usepackage{url}
\usepackage{tikz-cd}
\usepackage[hidelinks]{hyperref}
\usepackage{mathtools}
\usepackage{stmaryrd}

\usepackage{amsthm, amssymb} 

\usepackage{gradient-text}

\usepackage{stix}

\usepackage[all]{xy}

\newtheorem{thmA}{Theorem}

\newtheorem{corA}[thmA]{Corollary}

\newtheorem{theorem}{Theorem}[section]
\newtheorem{prop}[theorem]{Proposition}
\newtheorem{proposition}[theorem]{Proposition}
\newtheorem{lemma}[theorem]{Lemma}
\newtheorem{corollary}[theorem] {Corollary}

\newtheorem*{claim*}{Claim}

\newtheorem{warning}[theorem]{Warning}

\theoremstyle{remark}
\newtheorem{remark}[theorem]{Remark}

\theoremstyle{definition}
\newtheorem{definition}[theorem]{Definition}
\newtheorem{notation}[theorem]{Notation}

\newcommand{\tn}[1]{\textnormal{#1}}
\newcommand{\cd}{\textnormal{cd}}

\newcommand{\ring}{\Lambda}
\newcommand{\rg}{\ring(G)}

\newcommand{\Hom}{\tn{Hom}}

\newcommand{\solid}{\smblksquare}

\DeclareMathOperator{\Solid}{Solid}
\newcommand{\LSolid}{\Solid_\ring}

\newcommand{\SolidLG}{\Solid_{\rg}}

\newcommand{\Z}{\mathbb{Z}}

\newcommand{\rar}{\rightarrow}

\newcommand{\rh}{\tn{R}\underline{\tn{Hom}}_\ring}
\newcommand{\rhg}{\tn{R}\underline{\tn{Hom}}_{\rg}}

\newcommand{\cG}{\underline{G}}
\newcommand{\cR}{{\underline{\ring}}}

\newcommand{\st}{\otimes_{\ring}^{\solid}}
\newcommand{\stheart}{\otimes_{\ring, \heartsuit}^{\solid}}
\newcommand{\stg}{\otimes_{\rg}^{\solid}}

\newcommand{\Ind}{\operatorname{Ind}}
\newcommand{\colim}{\operatorname{colim}}

\DeclareMathOperator{\Pro}{Pro}

\newcommand{\finitemodules}{\mathcal{F}_{\ring\llbracket G \rrbracket}}

\newcommand{\cgr}[1]{\ring\llbracket #1 \rrbracket}

\newcommand{\id}{\tn{id}}

\title{Solid Duality for Profinite Groups}
\author{Ged Corob Cook, Max Gheorghiu, Sof\'ia Marlasca Aparicio, Thomas A. Wasserman}   
\date{\today}
\begin{document}
	
\begin{abstract} 
We classify profinite groups that have a Poincar\'e-like duality between their homology and cohomology. Our proofs work over every profinite coefficient ring, and for profinite as well as discrete coefficients. We do not only unify and generalise existing results, but also construct two novel examples of duality groups.

We prove our results via the framework of condensed mathematics. This recently developed setting is a natural home for profinite objects, and our work is among the first to leverage this for the study of the (co)homology of profinite groups. Establishing our duality results involves an investigation of homological finiteness properties in condensed mathematics.
\end{abstract}
	
\address{}
\email{}

\maketitle

\setcounter{tocdepth}{1}
\tableofcontents

\section{Introduction}
Our main results concern a Poincaré-like duality between homology and cohomology of a profinite group $G$. We assume $G$ to have finite cohomological dimension $n$ over a profinite coefficient ring $\ring$ such that its cohomology $H_{\ring}^{\bullet}(G, M)$ is finite for every finite module $M$ over the completed group ring $\ring\llbracket G\rrbracket$ (condition (FF), standing for `finite-to-finite'). Our main tool is \emph{condensed mathematics}, a powerful unified approach for studying topological groups, rings and modules that has been introduced by Clausen and Scholze in~\cite{Scholze2019}. Within this framework, associated with $G$ and $\ring$ are a condensed group $\cG$, as well as the \emph{analytic rings} $\cR$ and $\rg$ with corresponding \emph{solid modules}. We say that $G$ is a \emph{solid duality group} over $\cR$ if there is a \emph{solid dualising module} $D_{\cR}$ and an isomorphism
 $$
\mathbf{H}^{\cR}_k(G, D_{\cR}\stg N) \cong \mathbf{H}_{\cR}^{n-k}(G, N),
$$
for the \emph{solid (co)homology} of $G$ for every degree $k$ and all solid $\rg$-modules $N$.

We prove that $G$ is a solid duality group if and only if $G$ is \emph{Cohen--Macaulay} over $\ring$. Here, the Cohen--Macaulay condition requires that the continuous cohomology of $G$ satisfies
$$
H^k(G, \ring\llbracket G\rrbracket)=\begin{cases}
    D_\ring&\text{for } k=n, \\ 0& \text{otherwise},
\end{cases}
$$
where $D_\ring$ is a $\ring$-flat $\ring\llbracket G\rrbracket$-module. Moving from the condensed to the profinite setting, we use this to conclude that a profinite group with condition (FF) has a duality isomorphism for profinite modules if and only if it is Cohen--Macaulay. Under stronger conditions, this duality isomorphism in the profinite setting can extended to all discrete modules.

Along the way we investigate homological finiteness conditions. We establish the first example of a profinite duality group that is not of type $\mathrm{FP}_\infty$. We show that finitely generated free pronilpotent groups have duality over $\widehat{\Z}$. We also prove that profinite duality groups close under taking open subgroups, extensions and certain amalgamated products.

Our results are among the very first in group theory that have only been achieved through condensed mathematics.

\subsection{Background}\label{ssec:background}
Serre, Verdier and Tate \cite{serre1964cohomologie,Verdier1965,tat94} introduced versions of Poincar\'e-like duality for a profinite group $G$ which were formulated over the profinite coefficient rings $\Z/p{\Z}$, $\Z_p$ and $\widehat{\Z}$. To state this duality, assume that $G$ has finite cohomological dimension $\cd_{\Z_p} (G) = n$ and satisfies condition (FF). If $(-)^{\ast}$ denotes the Pontryagin duality operator, then this duality over $\Z_p$ consists of a profinite \emph{dualising module} $D_{\Z_p}$ and of isomorphisms
\[ H_{\Z_p}^k(G, \mathrm{Hom}(M, D_{\Z_p}^{\ast})) \rightarrow H_{\Z_p}^{n-k}(G, M)^{\ast} \]
for every finite $\Z_p \llbracket G \rrbracket$-module $M$ in every degree $k$. Verdier~\cite{Verdier1965} proved that a profinite group has duality over $\Z_p$ in this sense if and only if it is Cohen--Macaulay.

A discrete group $\Gamma$ is said to have \emph{Bieri--Eckmann duality \cite{bie73}} over a ring $R$ if $\tn{cd}_R(\Gamma)=n$, there exists a dualising module $D_R$, and for every $R[\Gamma]$-module $M$ there are isomorphisms in every degree $k$ induced by a cap product
\[ H_k^R(\Gamma, M \otimes_{R} D_{R}) \rightarrow H_R^{n-k}(\Gamma, M). \]
Pletch showed that Serre--Verdier--Tate duality is equivalent to the profinite analogue of Bieri--Eckmann duality for finite $\Z_p \llbracket G \rrbracket$-modules \cite{Pletch1980}, and that Bieri--Eckmann duality passes to the profinite completion of cohomologically good groups \cite{Pletch1980a}. Inspired by work of Lazard \cite{lazard1965groupes}, Symonds and Weigel \cite{sym00} introduced the Pontryagin category containing both discrete and profinite $\Z_p \llbracket G \rrbracket$-modules. For $G$ satisfying a homological finiteness condition called type $\mathrm{FP}_{\infty}$ over $\Z_p$, they formulated homology and cohomology of $G$ taking coefficients in the Pontryagin category. Further assuming $\cd_{\Z_p} (G) = n$, they extended Bieri--Eckmann duality over $\Z_p$ from finite to profinite $\Z_p \llbracket G \rrbracket$-modules. In the particular case where the dualising module is also trivial, they showed duality extends to the entire Pontryagin category. This perspective underlies the current state of the art and has been refined by Wilkes~\cite{Wilkes2019} who continued to consider duality only over $\Z_p$. 

This classical framework bears some inconveniences. One of them is that duality has only been formulated over particular examples of profinite coefficient rings. Even if one continues to only consider duality over $\Z_p$, one has to take due care of discrete and profinite modules. If one places them in distinct categories, then discrete modules lack enough projectives while profinite modules lack enough injectives, resulting in an asymmetric footing. One can embed discrete and profinite coefficients in the Pontryagin category, but this category is not abelian and lacks both enough injectives and projectives.

Condensed mathematics treats both types of modules on the same footing: as objects in an abelian category of \emph{solid modules} that has enough injectives and projectives. The homology and cohomology of a profinite group with coefficients in solid modules were first introduced in unpublished work by Ansch\"utz and Le Bras~\cite{Anschuetz2020}, shortly after condensed mathematics and the theory of solid modules had been introduced in~\cite{Scholze2019}. The theory of (co)homology of profinite groups with solid coefficients was given further substance by Tang \cite{tang24} and Brink \cite{bri25}. Tang shows that whenever they are comparable, continuous (co)homology and solid (co)homology agree with each other. We extract from this the corresponding statement for the derived functors (Theorem~\ref{thm:profintosolid}). Ansch\"utz and Le Bras studied duality for profinite groups using solid modules, but only over particular examples of profinite coefficient rings and only treated the case where the dualising module is trivial. The duality we establish (Theorem~\ref{thmA} and Theorem~\ref{thmB}) has neither of these limitations. We generalise Bieri--Eckmann duality with its cap product and Serre--Verdier--Tate duality to all profinite coefficient rings (Proposition~\ref{prop:capduality} and Proposition~\ref{prop:cupduality}).

To underline the generality of our results, we construct two novel examples of duality groups that exceed previous limitations. Thus far, every existing example of a profinite duality group $G$ over $\ring$ satisfies a homological finiteness condition called type $\mathrm{FP}_{\infty}$. We construct the first example of a profinite duality group over $\mathbb{Z}_p$ that is not of type $\mathrm{FP}_{\infty}$ (Theorem~\ref{thm:novelexample}). As in~\cite{Pletch1980}, \cite{ser94} and \cite{Verdier1965}, our duality only needs the weaker condition (FF), which has been also termed Serre's condition (F): that $H_{\ring}^k(G, M)$ is finite for every finite $\ring \llbracket G \rrbracket$-module $M$ in any degree $k$. Type $\mathrm{FP}_{\infty}$ implies condition (FF) by~\cite[Proposition~4.2.2]{sym00}. Furthermore, we demonstrate that finitely generated free pronilpotent groups are duality groups over $\widehat{\mathbb{Z}}$ (Proposition~\ref{prop:freegroups}).

Pletch also investigated various group-theoretic properties closure properties of duality groups in~\cite{Pletch1980} and~\cite{Pletch1980a}. This includes taking open subgroups of a profinite group and proving that the subgroup satisfies duality if and only if the overgroup does so. The other properties are studied in a similar vein. As stated, Pletch proved these results only for duality groups over $\Z_p$. We generalise three of their results to arbitrary duality groups over any profinite coefficient ring: they are closed under taking open subgroups (Proposition~\ref{prop:subgroupclosed}), taking extensions (Proposition~\ref{lem:extensionclosed}) and forming profinite amalgamated products (Propostion~\ref{lem:amalgamclosed}). In particular, the last two properties allow one to construct new examples of duality groups by taking subnormal series or iterated amalgamated products.

We provide one of the first accounts of homological finiteness properties in condensed mathematics. To our knowledge, the only other treatment of finiteness properties is found in Ansch\"utz and Le Bras \cite[Section 3]{Anschuetz2020}, whose main results we generalise. More specifically, Ansch\"utz and Le Bras only work over particular examples of profinite coefficient rings, while our results pertain to every profinite coefficient ring $\ring$. Vastly generalising \cite[Proposition~3.9]{Anschuetz2020}, we prove that the cohomological dimension of a profinite group $G$ for profinite modules $\cd_{\ring} (G)$ equals that for solid modules $\cd_{\underline{\ring}}(G)$ (Theorem~\ref{thm:cd}). Recall that an object $X$ in a category is called compact if $\mathrm{Hom}(X, -)$ commutes with direct limits (i.e. filtered colimits). Generalising \cite[Lemma~3.1]{Anschuetz2020}, we prove that any profinite group $G$ is of type $\mathrm{CP}_{\infty}$ over $\underline{\ring}$ (Theorem~\ref{thm:cpoo}), meaning that there is a projective resolution $P_{\bullet} \rightarrow \underline{\ring}$ of solid $\rg$-modules such that every term $P_i$ is compact.

The core technical result (Lemma~\ref{lem:corelemma}) underlying our duality is a statement about the \emph{derived solid tensor product} $\stg$ and the \emph{derived solid internal Hom} $\rhg(-,-)$. For every profinite group $G$ of finite cohomological dimension satisfying condition (FF), it says that the tensor product with the \emph{dualising complex} $D^\bullet_{\cR}:= \rhg(\cR, \rg)$ is isomorphic to mapping out of $\cR$:
\begin{equation}\label{eq:key iso}
     D_{{\cR}}^{\bullet} \stg(-)  \xrightarrow{\cong} \rhg(\cR, -) \, .
\end{equation}
Together with an identity of tensor products,  this implies (Theorem~\ref{thm:tate}) an isomorphism
\begin{equation}\label{eq:tate iso}
     \cR\stg\left(D_{{\cR}}^{\bullet} \st(-) \right) \xrightarrow{\cong} \rhg(\cR, -) \, .
\end{equation}
The homology of $\cR\stg (-)$ is the \emph{solid group homology} $\mathbf{H}_{\bullet}^{\cR}(G, -)$ and that of $\rhg(\cR,-)$  is the \emph{solid internal group cohomology} $\mathbf{H}_{\cR}^{\bullet}(G, -)$.  Associated with the isomorphism \eqref{eq:tate iso} is a spectral sequence relating solid group homology and cohomology analogous to the classical Tate spectral sequence \cite[Theorem 2.5.3]{neu13} for discrete groups. As in the classical case, a profinite group is a duality group if and only if the solid version of the Tate spectral sequence collapses on the $E_2$ page.

\subsection{Framework}
\subsubsection{Condensed mathematics}
We provide a brief sketch of our main tool, Condensed mathematics. Its building blocks are condensed sets, which are sheaves of sets on profinite spaces (up to a set-theoretic caveat). There is an operation called \emph{condensation} that turns any $T1$-topological space $X$ (meaning that all its points are closed) into a condensed set $\underline{X}$ mapping any profinite space $S$ to $\Hom_{\tn{cts}}(S,X)$. If $X$ is additionally a topological group, ring, module, etc., the pointwise operations turn $\underline{X}$ into a condensed group, ring, module, etc. Condensation is fully faithful on the subcategory of profinite spaces by the Yoneda lemma. This naturally embeds profinite algebraic objects into the condensed world. 

Contrary to topological modules over topological rings, condensed modules possess excellent category-theoretic properties~\cite[Theorem~1.10]{Scholze2019}. Namely, they form an abelian category in which all limits and colimits exist. Arbitrary products, direct sums and direct limits are exact. These categories are generated by compact projective objects, meaning that every condensed module admits a surjective homomorphism from a coproduct of these compact projectives. In particular, condensed modules have enough projectives.

Condensates of profinite rings as well as condensates of discrete and profinite modules over them are particularly nice: they are canonically \emph{analytic} rings and  \emph{solid} modules over them, respectively. An analytic ring can be thought of as a condensed ring with a choice of ``complete'' free modules; solid modules satisfy a form of completeness with respect to this choice. Condensation of profinite and discrete modules is fully faithful and exact~\cite[Theorem~3.2]{tang24}, and furthermore condensates of projective profinite modules are projective solid modules~\cite[Theorem~3.14]{tang24}. Therefore, homological algebra with continuous modules can be equivalently performed using solid modules. This is the content of Theorem~\ref{thm:profintosolid}, which lifts Tang's comparison of continuous (co)homology and solid (co)homology \cite{tang24} to the derived funtors:
\begin{align}\begin{split}\label{eq:derived functors comparison}
     \underline{M \widehat{\otimes}_{\ring} N}& \cong \underline{M} \st \underline{N}\\
     \underline{M \widehat{\otimes}_{\ring \llbracket G \rrbracket} N} &\cong \underline{M} \stg \underline{N}\\
     \mathrm{RHom}_{\ring \llbracket G \rrbracket}(M, N)& \cong \mathrm{RHom}_{\rg}(\underline{M}, \underline{N})\\
    \underline{\mathrm{RHom}_{\ring \llbracket G \rrbracket}(M, N)}& \cong \rhg(\underline{M}, \underline{N}).\end{split}
\end{align}
This explains how we can uniformly treat profinite and discrete modules.

\subsubsection{Derived categories and sifted colimits}
The natural context for our work is that of derived categories, in particular the derived category $\SolidLG$ of solid modules over the analytic group ring $\rg$. One of the key advantages of condensed mathematics is that every object in $\SolidLG$ can be written as a sifted colimit of complexes of compact projectives \cite[Proposition 12.4]{scholze2026}. Sifted colimits can be thought of as a combination of direct limits and reflexive coequalisers~\cite[p.~251]{ada10}; direct limits are sifted. For $\SolidLG$ the compact projective generators can be taken to be objects of the form  $\prod_I \rg$ where $I$ ranges over all sets~\cite[Proposition~3.10]{tang24}.

Generation by compact projectives under sifted colimits suggests a strategy of proof for showing that a natural transformation between two functors is a natural isomorphism: first prove that both functors commute with sifted colimits, and then show the natural transformation is an isomorphism on compact projectives. In the case of $\SolidLG$, there is also an obvious strategy for the latter: show both functors commute with arbitrary products and then check the natural transformation is an isomorphism on the free rank-one module $\rg$. This final step is usually straightforward.

We want to apply this strategy to establish the core isomorphism \eqref{eq:key iso}. Some of this is formal: the tensor product functor $D^\bullet_{\cR} \stg -$ commutes with all small colimits as it is a left adjoint; the functor $\rhg(\cR,-)$ commutes with all small limits as it is a right adjoint, so in particular commutes with arbitrary products. Ensuring that $D^\bullet_{\cR} \stg -$ commutes with arbitrary products and $\rhg(\cR,-)$ with sifted colimits requires imposing homological finiteness properties.

\subsubsection{Homological finiteness in the solid setting}
We are naturally led to one of the first investigations of homological finiteness properties in condensed mathematics. The first is cohomological dimension, the minimal length of a projective resolution of the trivial module. We denote by $\mathrm{cd}_{{\cR}}(G)$ the cohomological dimension of $G$ in the category of solid $\rg$-modules, and by $\mathrm{cd}_{\ring}(G)$ the cohomological dimension of $G$ in the category of profinite $\ring\llbracket G \rrbracket$-modules. We prove that cohomological dimension carries across condensation:
\begin{theorem}[= Theorem~\ref{thm:cd}] If $G$ is a profinite group and $\ring$ a commutative profinite ring, then $\mathrm{cd}_{\ring} G =\mathrm{cd}_{\cR} G$.
\end{theorem}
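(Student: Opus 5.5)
We prove the two inequalities separately, comparing both dimensions with the vanishing of $\mathrm{Ext}$ groups out of the trivial module and moving between the two settings via the comparison isomorphisms \eqref{eq:derived functors comparison} (Theorem~\ref{thm:profintosolid}).

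\textbf{Profinite bound from solid bound.} Classically, $\cd_\ring G \le n$ holds if and only if
\[
\mathrm{Ext}^{n+1}_{\ring\llbracket G\rrbracket}(\ring, M)=0
\]
for all profinite $\ring\llbracket G\rrbracket$-modules $M$. By the same criterion it suffices to test finite $M$: every profinite module is an inverse limit of finite quotients, and $\ring$ admits a resolution by projective profinite modules. Dually, $\cd_\ring G\le n$ if and only if $H^{n+1}(G,M)=0$ for all discrete torsion modules $M$, and discrete modules are direct limits of finite ones.

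First, the inequality $\cd_\ring G \le \cd_\cR G$. Suppose $\cd_\cR G = n$. Then
\[
\mathrm{Ext}^{n+1}_{\rg}(\cR,\underline{M})=0
\]
for every solid $\underline M$. In particular this holds for condensates of finite modules. By Theorem~\ref{thm:profintosolid}, these groups agree with $\mathrm{Ext}^{n+1}_{\ring\llbracket G\rrbracket}(\ring,M)$. Hence $\cd_\ring G\le n$.

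\textbf{Solid bound from profinite bound.} Now suppose $\cd_\ring G=n$. Choose a projective resolution $P_\bullet\to\ring$ of length $n$ in profinite $\ring\llbracket G\rrbracket$-modules. Such a resolution exists because the category has enough projectives and its $(n-1)$-st syzygy is projective. By \cite[Theorem~3.14]{tang24}, condensation sends projective profinite modules to projective solid modules. By exactness of condensation \cite[Theorem~3.2]{tang24}, $\underline{P_\bullet}\to\cR$ is a projective resolution of length $n$ in solid $\rg$-modules. This gives $\cd_\cR G\le n$.

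\textbf{Expected obstacle.} The delicate point is the first inequality. There one must ensure that the profinite cohomological dimension is detected by $\mathrm{Ext}$ into profinite (or finite) modules, and that the comparison of Theorem~\ref{thm:profintosolid} applies in exactly the degree needed. The cleanest route I would take is the syzygy argument. Let $K$ be the $n$-th syzygy of a profinite projective resolution. Vanishing of $\mathrm{Ext}^{n+1}(\ring,-)$ on all profinite modules shows $\mathrm{Ext}^1(K,-)=0$ on profinite modules, so $K$ is projective. This uses the standard profinite criterion that a module with $\mathrm{Ext}^1$ vanishing into all profinite modules is projective.

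For infinite $\cd$ the statement reads $\infty=\infty$. It follows from the two inequalities, since each shows that finiteness of one dimension forces finiteness of the other.
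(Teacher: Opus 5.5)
Your proof is correct. For the inequality $\cd_{\cR} G \le \cd_\ring G$ it takes a genuinely different and more elementary route than the paper.

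For $\cd_\ring G \le \cd_\cR G$ you argue as the paper does: compare $\mathrm{Ext}$ groups via Theorem~\ref{thm:profintosolid}, then apply Lemma~\ref{lem:cd}. You can apply the comparison directly to all profinite $M$, since Theorem~\ref{thm:profintosolid} allows this. The reduction to finite modules is therefore unnecessary, and its justification by commuting $\mathrm{Ext}$ with inverse limits would need more care without (FF). This is the easy direction, not the delicate one.

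For the reverse inequality the paper proceeds as follows. It takes a solid $N$ with $H^n_{\cR}(G,N)\neq 0$. It writes $N=\varinjlim \underline{N_i}$ with $N_i$ profinite, using the Ind--Pro description of Lemma~\ref{lem:indprofinites}. It then uses type $\mathrm{CP}_\infty$ (Theorem~\ref{thm:cpoo}) to commute $H^n_{\cR}(G,-)$ with the direct limit, which forces some $H^n_\ring(G,N_i)\neq 0$.

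You instead condense a length-$n$ profinite projective resolution of $\ring$. This uses only that condensation is exact on profinite modules and preserves projectives (Lemma~\ref{lem:profinitembeddings}). It is the same mechanism the paper uses on the bar resolution in the proof of Theorem~\ref{thm:cpoo}.

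What each route buys:
\begin{itemize}
\item Your route bypasses Lemma~\ref{lem:indprofinites} and the $\mathrm{CP}_\infty$ property altogether. It also produces an explicit solid projective resolution of $\cR$ of length $n$ made of condensates of profinite projectives.
\item The paper's route is heavier, but it illustrates a principle reused in the second proof of Theorem~\ref{thmB}: solid cohomology is detected on condensates of profinite modules through direct limits.
\end{itemize}

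A minor indexing point: with $P_{n-1}\to\cdots\to P_0\to\ring$, the kernel that Lemma~\ref{lem:cd} makes projective is the $n$-th syzygy, as you say later, not the $(n-1)$-st.
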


We also need a homological finiteness property we term type $\mathrm{CP}_{\infty}$. Type $\mathrm{CP}_{\infty}$ is in analogy with and acts as a replacement for the $\mathrm{FP}_{\infty}$-property. Type $\mathrm{FP}_{\infty}$ is inadequate in condensed mathematics because it hinges on the existence of a resolution with finitely generated projectives. Condensed and solid modules are sheaves, and notions such as ``finite'' or ``finitely generated'' are ill-behaved even if one can define them --- they interact poorly with operations such as taking kernels or images. The appropriate notion in the condensed setting is that of a compact object: an object $X$ such that $\mathrm{Hom}(X,-)$ commutes with direct limits. 

\begin{definition}[Type $\mathrm{CP}_n$, $\mathrm{CP}_{\infty}$ and $\mathrm{CP}$]
A group is of type $\mathrm{CP}_{n}$ over $\ring$ if there is a projective resolution $P_{\bullet} \rightarrow \ring$ of modules over the group ring such that $P_i$ is compact for $i\leq n$. A group that is of type $\mathrm{CP}_n$ for all $n$ is said to be of type $\mathrm{CP}_\infty$. If the resolution can additionally be taken to be bounded, then the group is of type $\mathrm{CP}$.\footnote{In the language of derived geometry, type $\mathrm{CP}_{\infty}$ over $\ring$ is the condition that $\ring$ is an almost perfect object in the derived category of modules over the group ring, $\mathrm{CP}$ means that $\ring$ is perfect.}
\end{definition}

For discrete groups, type $\mathrm{FP}_{n}$ is equivalent to type $\mathrm{CP}_{n}$. Namely, a module over a discrete ring is a compact object if and only if it is finitely presented~\cite[pp.~140--141]{kas06}. This is not true for modules over profinite rings. The completed group ring $\widehat{\mathbb{Z}}\llbracket G \rrbracket$ of any infinitely generated profinite group $G$ is finitely presented as a module over itself, but not a compact object (Proposition~\ref{prop:solidandcompact}). Even so, discrete and profinite groups that are not of type $\mathrm{FP}_{\infty}$ are abundant (cf.~\cite[Examples~6.3]{bes97} and~\cite[Proposition~4.6]{coo16a}). This explains the significance of the following theorem:

\begin{theorem}[= Theorem~\ref{thm:cpoo}] If $G$ is a profinite group and $\ring$ a commutative profinite ring, then $\cG$ is of type $\mathrm{CP}_{\infty}$ over $\cR$ for solid modules.
\end{theorem}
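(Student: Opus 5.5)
We will show that every profinite group $G$ is of type $\mathrm{CP}_\infty$ over $\cR$ by building a resolution of $\cR$ whose terms are the compact projective generators $\prod_I \rg$, or finite direct sums of them.

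\textbf{Step 1: the first term.} Consider the standard surjection onto $\cR$. It is cleanest to work profinitely first. Write $G=\varprojlim G/U$ over open normal subgroups $U$. The augmentation ideal $I_G$ of $\cgr{G}$ is a profinite module. It is generated topologically by the elements $g-1$ with $g$ ranging over a profinite generating space, for instance $G$ itself.

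This gives an exact sequence of profinite $\cgr{G}$-modules
\[
\cgr{G\times G}\cong \cgr{G}\widehat{\otimes}_\ring \cgr{G}\longrightarrow \cgr{G}\longrightarrow \ring\to 0 .
\]
More generally, we use the profinite bar resolution. Its terms are $\cgr{G^{n+1}}\cong \cgr{G}\widehat{\otimes}_\ring \cgr{G^{n}}$, and it is exact by the standard contracting homotopy argument applied at each finite level and then passed to the limit. Inverse limits of profinite modules are exact, so the limit step is harmless.

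\textbf{Step 2: condensation.} By Theorem~\ref{thm:profintosolid} and the exactness of condensation (\cite[Theorem~3.2]{tang24}), the condensed bar complex is a resolution of $\cR$ in solid $\rg$-modules. Its terms are $\underline{\cgr{G}\widehat{\otimes}_\ring \ring\llbracket S\rrbracket}$ for the profinite set $S=G^n$.

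\textbf{Step 3: the terms are compact projective (the main obstacle).} We must show each such term is compact projective. For a profinite set $S$, the free solid $\cR$-module on $S$ is $\underline{\ring\llbracket S\rrbracket}$. By analogy with solid abelian groups, where $\Z[S]^\solid\cong\prod_I\Z$, I expect $\ring\llbracket S\rrbracket\cong\prod_I\ring$ as profinite $\ring$-modules. Then
\[
\underline{\cgr{G}\widehat{\otimes}_\ring\ring\llbracket S\rrbracket}\cong\prod_I\rg ,
\]
using the first line of \eqref{eq:derived functors comparison} together with the fact that $\rg\st(-)$ commutes with products of this type. By \cite[Proposition~3.10]{tang24}, $\prod_I\rg$ is compact projective.

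Proving $\ring\llbracket S\rrbracket\cong\prod_I\ring$ for a general profinite commutative $\ring$ is the hard part, since $\ring$ need not be pro-$p$ or local. I would try the following:
\begin{enumerate}[(i)]
\item Write $\ring=\varprojlim \ring/J$ over open ideals $J$.
\item Note that $\ring\llbracket S\rrbracket$ is the solid free module, i.e.\ the value of the analytic ring structure of $\cR$ on $S$.
\item Show it is projective and compact directly. Compactness follows because $\Hom(\underline{\ring\llbracket S\rrbracket},M)=M(S)$, and evaluation at an extremally disconnected or profinite $S$ commutes with filtered colimits of condensed modules. Projectivity would be transported along the adjunction $\rg\stg$-induction, whose right adjoint (restriction to $\cR$) preserves filtered colimits and is exact.
\end{enumerate}
Route (iii) avoids the product decomposition entirely. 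We need $\underline{\ring\llbracket S\rrbracket}$ to be projective in solid $\cR$-modules. For extremally disconnected $S$ this is standard. For general profinite $S$, I would instead replace $G^n$ by extremally disconnected covers and use a simplicial resolution. This step is where the real work lies.

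Alternatively, using the compact projectives guaranteed to generate $\SolidLG$, choose inductively surjections from finite sums of $\prod_I\rg$ onto successive kernels. Compactness of each kernel's cover is the crux, which again reduces to Step 3.
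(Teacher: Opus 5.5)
Your outline is the paper's proof: the inhomogeneous bar resolution by the free profinite modules $\cgr{G}\llbracket G^n\rrbracket$, condensed using exactness of condensation on profinite modules, with terms $\underline{\cgr{G}\llbracket G^n\rrbracket}=\rg[G^n]^{\solid}$. The gap is that you leave open the one step that is not formal. In Step~3 you never show that $\rg[S]^{\solid}$ is compact projective when $S=G^n$ is merely profinite, and without this the bar resolution proves nothing.

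The missing idea is Nöbeling's theorem: $C(S,\Z)$ is a free abelian group for every profinite $S$. This is how $\Z[S]^{\solid}\cong\prod_I\Z$ is proved (cf.\ the proof of \cite[Corollary~5.5]{Scholze2019}), and it is what the paper imports as Lemma~\ref{lem:coeffringanalytic}(3). It also proves your ``expectation'' for arbitrary profinite $\ring$, with no pro-$p$ or locality hypothesis. For finite $A$ one has $A[S_i]\cong\Hom_\Z(C(S_i,\Z),A)$, naturally in the finite set $S_i$. Hence
\[
\ring\llbracket S\rrbracket=\varprojlim_{i,j}\ring_j[S_i]\cong\Hom_\Z\Big(\varinjlim_i C(S_i,\Z),\ring\Big)=\Hom_\Z\big(C(S,\Z),\ring\big)\cong\prod_I\ring
\]
topologically, where $I$ is a basis of $C(S,\Z)$. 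Replacing $\ring_j$ by the finite quotients $\ring_j[G/U]$ gives $\cgr{G}\llbracket G^n\rrbracket\cong\prod_I\cgr{G}$ as profinite $\cgr{G}$-modules. So $\underline{\cgr{G}\llbracket G^n\rrbracket}\cong\prod_I\rg$, which is compact projective by Lemma~\ref{lem:coeffringanalytic}(2).

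Your route (iii) does not close the gap. The identity $\Hom(\rg[S]^{\solid},M)=M(S)$ does give compactness. But projectivity is then equivalent to exactness of $M\mapsto M(S)$ on solid modules, and that is exactly the non-formal content; on condensed modules it can fail, cf.\ Remark~\ref{rem:projectives}. Passing to extremally disconnected covers of $G^n$ and a simplicial resolution is a plan, not an argument.

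Your final alternative, by contrast, can be completed without Step~3:
\begin{enumerate}
\item Work profinitely, starting from $K_0=\ring$.
\item Choose an extremally disconnected $T_n\twoheadrightarrow K_n$, e.g.\ $T_n=\beta(K_n^{\delta})$, and take the induced surjection $\cgr{G}\llbracket T_n\rrbracket\twoheadrightarrow K_n$.
\item Let $K_{n+1}$ be its kernel, which is closed and hence profinite.
\item Condensing, which is exact on profinite modules, yields a resolution of $\cR$ by the modules $\underline{\cgr{G}\llbracket T_n\rrbracket}=\rg[T_n]^{\solid}$.
\end{enumerate}
These are compact projective by Lemma~\ref{lem:analyticrings} and Lemma~\ref{lem:profinitembeddings}(2). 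So the crux there is not compactness of a cover but that the kernels stay profinite, so that a single $\rg[T]^{\solid}$ surjects onto each.
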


This theorem is one of the key advantages of condensed mathematics over existing classical methods: it implies that solid group cohomology $H^i_{\cR}(G,-)$ of any profinite group $G$ commutes with direct limits of coefficient modules. When $G$ has finite cohomological dimension, this lifts to the derived functor $\rhg(\cR, -)$. We shall use the latter in establishing the core isomorphism from Equation~\eqref{eq:key iso}.

Another finiteness property that plays an important role in our paper is condition (FF): cohomology with finite coefficient modules is finite. This property implies (see the proof of Lemma~\ref{lem:corelemma}) that the dualising complex $D^\bullet_{\cR}$ is an inverse limit of finite modules. The solid tensor product with finite modules commutes with arbitrary products, and limits commute with limits, so this means that $D^\bullet_\cR\stg-$ commutes with arbitrary products. This will be used for Equation~\eqref{eq:key iso}, 

\subsubsection{The Tate spectral sequences}

We can now sketch a proof of our core technical result, Lemma~\ref{lem:corelemma}. This says that if $G$ is a profinite group of finite cohomological dimension satisfying condition (FF), the natural transformation from Equation~\eqref{eq:key iso} between the functors $D^\bullet_\cR \stg (-) $ and $ \rhg(\cR,-)$ on $\SolidLG$ is an isomorphism. As explained in the preceding sections, these hypotheses guarantee that both these functors commute with sifted colimits and arbitrary products. The functors agree on the free rank-one module $\rg$. The fact that the compact projectives $\prod_I \rg$ generate $\SolidLG$ under sifted colimits then gives the desired isomorphism. 

Combining this with the identity $\cR \stg \left(D^\bullet_\cR \st (-)\right) \cong D^\bullet_\cR\stg (-)$ from Lemma~\ref{lem:tensorinteract} gives Equation~\eqref{eq:tate iso}. When restricted to condensates of modules over the completed group ring $\ring\llbracket G \rrbracket$, we find, using Equation~\eqref{eq:derived functors comparison}, a natural isomorphism
$$
    \ring \widehat{\otimes}_{\ring \llbracket G \rrbracket} \left((-) \widehat{\otimes}_{\ring} D_{\ring}^{\bullet} \right) \xrightarrow{\cong} \mathrm{RHom}_{\ring \llbracket G \rrbracket}(\ring, -).
$$
Associated with these isomorphisms are spectral sequences 
\begin{align}\begin{split}\label{eq:spectral sequences}
    E_2^{pq}= &\mathbf{H}^{\cR}_{-p}(G; H^q(M \st D^\bullet_{\cR})) \Rightarrow \mathbf{H}_{\cR}^{p+q}(G;M) \\
   E_2^{pq}=&H^\ring_{-p}(G; H^q(M \widehat{\otimes}_\ring D^\bullet_\ring)) \Rightarrow H_\ring^{p+q}(G;M).    
\end{split}
\end{align}
that we refer to as Tate spectral sequences in analogy with the classical theory. From these, we can deduce our main results. 

We note that our proof relies on the facts that $\SolidLG$ is generated by compact projectives under sifted colimits and that every profinite group is of type $\mathrm{CP}_{\infty}$ over $\cR$. This explains why this spectral sequence has not been established for profinite modules previously. 

\subsection{Main results}

We prove two versions of duality for profinite groups, one for solid modules and one for continuous modules. Both state that a profinite group $G$ satisfying condition (FF) is a duality group if and only if it is Cohen--Macaulay. This condition goes back to Verdier \cite[Définition~4.1]{Verdier1965}. 

\begin{definition}\label{def:cmclassical}\label{def:cm}
Let $G$ be a profinite group and $\ring$ a commutative profinite ring such that $G$ satisfies condition (FF). 
\begin{itemize}
    \item We call $G$ a \emph{profinite Cohen--Macaulay group over $\ring$ of dimension $n$} if $\cd_{\ring}(G) = n$, $H_{\ring}^{\bullet}(G, \ring \llbracket G \rrbracket)$ is concentrated in degree $n$ and is flat as a profinite $\ring$-module.
    \item Analogously, $G$ is \emph{solid Cohen--Macaulay group over $\cR$ of dimension $n$} if $\cd_{\cR}(G) = n$, $\mathbf{H}_{\cR}^{\bullet}(G, \rg)$ is concentrated in degree $n$ and is flat as a solid $\cR$-module.
\end{itemize}
\end{definition}

The first main theorem concerns duality for solid modules. We hereby recall that condensates of profinite and discrete modules are solid. Thus, this theorem also establishes duality for profinite and discrete modules uniformly within condensed mathematics whereas they previously had to be treated differently. 

\begin{thmA}[Solid duality]\label{thmA}
Let $G$ be a profinite group and $\ring$ a commutative profinite ring such that $\cd_{\cR}(G) = n$ and $G$ satisfies condition (FF). Then $G$ is a solid duality group over $\cR$ of dimension $n$, meaning that there is a solid dualising module $D_{\cR}$ and that for every solid $\rg$-module $M$ and every integer $k$ there exists an isomorphism
\[ \mathbf{H}_k^{\cR}(G, M \st D_{\cR}) \xrightarrow{\cong} \mathbf{H}_{\cR}^{n-k}(G, M)  \, , \]
if and only if $G$ is a solid Cohen--Macaulay group over $\cR$.
\end{thmA}

From similar considerations we get an analogous result purely in the profinite setting. This theorem is classical for specific, well-behaved profinite rings (cf.\! \textsection\ref{ssec:background}), but is new for \emph{arbitrary} profinite coefficient rings. In the classical setting, one uses various finiteness and topological properties of the group rings to gain control over the derived functors on the category of profinite modules. In condensed mathematics, the category of profinite modules is fully faithfully embedded in that of solid modules, which has a very well-behaved derived category. This result is one of the first applications of condensed mathematics that has not been achieved in this uniformity or generality through any other means:

\begin{thmA}[Continuous duality]\label{thmB}
Let $G$ be a profinite group and $\ring$ a commutative profinite ring such that $\cd_{\ring}(G) = n$ and $G$ satisfies condition (FF). Then $G$ is a profinite duality group over $\ring$ of dimension $n$, meaning that there is a profinite dualising module $D_{\ring}$ and that for every profinite $\ring \llbracket G \rrbracket$-module $M$ and every integer $k$ there exists an isomorphism
\begin{equation}\label{eq:profinduality}
H_k^{\ring}(G, M \widehat{\otimes}_{\ring} D_{\ring}) \rightarrow H_{\ring}^{n-k}(G, M) \, , \tag{$\ast$}
\end{equation}
if only if $G$ is a profinite Cohen--Macaulay group over $\ring$. 

If $G$ is a profinite duality group of dimension $n$ and of type $\mathrm{FP}_{\infty}$ over $\ring$ with finitely generated $D_{\ring}$, then duality of $G$ also holds for discrete $\ring \llbracket G \rrbracket$-modules, meaning that Equation~(\ref{eq:profinduality}) holds for any discrete $\ring \llbracket G \rrbracket$-module $M$.
\end{thmA}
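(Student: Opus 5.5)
We deduce Theorem~\ref{thmB} from Theorem~\ref{thmA} via the comparison isomorphisms \eqref{eq:derived functors comparison}, plus the profinite Tate spectral sequence in \eqref{eq:spectral sequences}.

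\emph{Transfer of hypotheses.} By Theorem~\ref{thm:cd}, $\cd_\ring(G)=\cd_{\cR}(G)=n$. Condition (FF) is a statement about continuous cohomology with finite coefficients. Since $\mathrm{RHom}_{\ring\llbracket G\rrbracket}(\ring,M)\cong \mathrm{RHom}_{\rg}(\cR,\underline M)$, it is equivalent to its solid counterpart. Next, $\rg$ is the condensate of the profinite module $\ring\llbracket G\rrbracket$, so the fourth line of \eqref{eq:derived functors comparison} gives $\mathbf{H}^\bullet_{\cR}(G,\rg)\cong \underline{H^\bullet_\ring(G,\ring\llbracket G\rrbracket)}$. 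Condensation is exact and fully faithful, so one side is concentrated in degree $n$ exactly when the other is. For flatness I would show that a profinite $\ring$-module $D$ is flat if and only if $\underline D$ is solid-flat. Let $D=\lim D_i$ with $D_i$ finite. Then $-\st\underline D$ is computed on condensates by $\widehat\otimes_\ring D$ (first line of \eqref{eq:derived functors comparison}). Flatness on both sides can then be tested against finite (or profinite) modules, using that $\SolidLG$ is generated under sifted colimits by compact projectives of the form $\prod_I$ and that $\underline D\st-$ commutes with products because $\underline D$ is a limit of finite modules. Hence $G$ is profinite Cohen--Macaulay over $\ring$ if and only if it is solid Cohen--Macaulay over $\cR$, with $\underline{D_\ring}=D_{\cR}$.

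\emph{The equivalence.} Suppose $G$ is Cohen--Macaulay. Restrict the solid duality isomorphism of Theorem~\ref{thmA} to $\underline M$ for $M$ profinite. Then $\underline M\st\underline{D_\ring}\cong\underline{M\widehat\otimes_\ring D_\ring}$, and the homology and cohomology comparisons in \eqref{eq:derived functors comparison} identify both sides with condensates of $H_k^\ring(G,M\widehat\otimes D_\ring)$ and $H^{n-k}_\ring(G,M)$. Full faithfulness of condensation then yields \eqref{eq:profinduality}. Conversely, assume \eqref{eq:profinduality} holds. Take $M=\ring\llbracket G\rrbracket$. Then $M\widehat\otimes_\ring D_\ring$ is an induced module, hence homologically acyclic in positive degrees, and $H_0$ of it is $D_\ring$. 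So $H^{q}_\ring(G,\ring\llbracket G\rrbracket)$ vanishes for $q\ne n$ and equals $D_\ring$ for $q=n$. For flatness, use \eqref{eq:profinduality} with $k=n+1$ (which vanishes since $\cd=n$) on modules with trivial $G$-action or induced from finite $\ring$-modules. Alternatively, feed this into the profinite Tate spectral sequence: with $H^\bullet(D^\bullet_\ring)$ concentrated in degree $n$, degeneration forces $\mathrm{Tor}^\ring_{>0}(M,D_\ring)$ to vanish, as in the classical Verdier argument.

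\emph{Discrete coefficients.} Assume in addition type $\mathrm{FP}_\infty$ and $D_\ring$ finitely generated. Write a discrete $M$ as the direct limit of its finite submodules $M_j$, for which \eqref{eq:profinduality} already holds. Type $\mathrm{FP}_\infty$ implies that continuous cohomology commutes with direct limits of discrete modules. On the homology side, $M\otimes_\ring D_\ring=\colim M_j\widehat\otimes D_\ring$ because $D_\ring$ is finitely generated, so the completed and uncompleted tensor products agree on finite $M_j$. Homology commutes with direct limits, using the $\mathrm{FP}_\infty$ resolution by finitely generated projectives. Naturality of the isomorphism then passes it to the colimit.

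The main obstacle is the flatness comparison between profinite and solid $\ring$-modules, and extracting flatness in the converse direction. I would first attempt it via the product-commutation and generation by $\prod_I\rg$, reducing to finite test modules.
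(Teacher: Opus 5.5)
Your skeleton is the paper's. For Cohen--Macaulay $\Rightarrow$ duality you transfer the hypotheses to the solid side and restrict Theorem~\ref{thmA} to condensates (the paper's second proof). For the converse you evaluate $(\ast)$ at $\ring\llbracket G\rrbracket$ and apply Shapiro's lemma (the paper's first proof). Discrete coefficients are handled by the same direct-limit argument.

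\textbf{The gap is in the flatness steps, and the first one fails as proposed.} Your route through Theorem~\ref{thmA} needs ``$D$ flat profinite $\Rightarrow$ $\underline{D}$ flat solid''. The mechanism you suggest cannot give this, because generation under \emph{sifted} colimits by the compact projectives $\prod_I\cR$ says nothing about flatness:
\begin{itemize}
\item Sifted colimits include geometric realisations, and these are not t-exact.
\item $\underline{D}\st P$ lies in the heart for every projective $P$ and every $D$ in the heart, since projectives are flat by Lemma~\ref{lem:solidprojflat}. So the argument would prove that every module is flat.
\item Product-commutation of $\underline{D}\st-$ plays no role.
\end{itemize}
What does reduce the test to condensates of profinite (and then finite) modules is Lemma~\ref{lem:indprofinites}. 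Every short exact sequence of solid modules is a \emph{filtered} colimit of short exact sequences of condensates of profinite modules, and filtered colimits are exact and commute with $\st$. This is how Lemma~\ref{lem:flatness} is proved. Alternatively, skip the transfer: under Cohen--Macaulay, $D^\bullet_\ring\simeq D_\ring[-n]$ with $D_\ring$ flat, so taking homology of the profinite core isomorphism \eqref{e:tatess2} gives $(\ast)$ directly.

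\textbf{The converse flatness step also needs repair.} Using $(\ast)$ with $k=n+1$ only says $H^\ring_{n+1}(G,M\widehat{\otimes}_\ring D_\ring)\cong H_\ring^{-1}(G,M)=0$, which is vacuous. The paper instead uses $k=0$ and $k=1$ at induced modules. For $0\to A\to B\to C\to 0$ one gets $A\widehat{\otimes}_\ring D_\ring\cong H^n_\ring(G,\ring\llbracket G\rrbracket\widehat{\otimes}_\ring A)$, and similarly for $B$ and $C$. The long exact sequence is then flanked by $H^{n+1}_\ring=0$ (cohomological dimension) and by $H^{n-1}_\ring(G,\ring\llbracket G\rrbracket\widehat{\otimes}_\ring C)\cong H^\ring_1(G,\text{induced})=0$. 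That argument tacitly uses naturality of $(\ast)$ in $M$.

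Your Tate-sequence alternative does work, and it avoids naturality. However, it is not ``degeneration'' that kills Tor. The correct argument runs as follows.
\begin{itemize}
\item Once $D^\bullet_\ring\simeq H^n_\ring(G,\ring\llbracket G\rrbracket)[-n]$, put $M=\ring\llbracket G\rrbracket\widehat{\otimes}_\ring A$ into \eqref{e:tatess2}.
\item By Lemmas~\ref{lem:tensorinducedmod} and~\ref{lem:tensorinteract}, the left side collapses to $A\widehat{\otimes}^{\mathrm{L}}_\ring D^\bullet_\ring$.
\item This gives $\mathrm{Tor}^\ring_k\big(A,H^n_\ring(G,\ring\llbracket G\rrbracket)\big)\cong H^{n-k}_\ring(G,\ring\llbracket G\rrbracket\widehat{\otimes}_\ring A)$.
\item The right side vanishes for $k\ge1$ by $(\ast)$ applied to this single induced module, together with Shapiro's lemma.
\end{itemize}
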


The requirement that $G$ is of type $\mathrm{FP}_{\infty}$ over $\ring$ for discrete coefficients ensures that continuous group homology is well defined (cf.~\cite[p.~378]{sym00}).

The Cohen--Macaulay condition in the solid setting is equivalent to that in the profinite setting: continuous group (co)homology is isomorphic to solid group (co)homology via the condensation functor (Theorem~\ref{thm:profintosolid}), we have $\mathrm{cd}_{\ring}(G) = \mathrm{cd}_{\cR}(G)$ (Theorem~\ref{thm:cd}), and $\mathbf{H}_{\cR}^n(G, \rg)$ is a flat solid $\cR$-module if and only if $H_{\ring}^n(G, \ring \llbracket G \rrbracket)$ is a flat profinite $\ring$-module (Lemma~\ref{lem:flatness}). Theorem~\ref{thmA} and Theorem~\ref{thmB} thus imply:

\begin{corA}\label{corC}
Let $G$ be a profinite group and $\ring$ a commutative profinite ring such that $\cd_{\ring}(G) = n$ and $G$ satisfies condition (FF). Then $G$ is a solid duality group over $\cR$ if and only if it is a profinite duality group over $\ring$. 
\end{corA}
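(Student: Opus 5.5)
The corollary follows by chaining Theorem~\ref{thmA} and Theorem~\ref{thmB} through the Cohen--Macaulay condition. Under the standing hypotheses ($\cd_{\ring}(G)=n$ and condition (FF)), we show that $G$ is solid Cohen--Macaulay over $\cR$ if and only if it is profinite Cohen--Macaulay over $\ring$. Once this equivalence is in place, Theorem~\ref{thmA} identifies solid duality with solid Cohen--Macaulayness, and Theorem~\ref{thmB} identifies profinite duality with profinite Cohen--Macaulayness.

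First, by Theorem~\ref{thm:cd} we have $\cd_{\cR}(G)=\cd_{\ring}(G)=n$. So the hypothesis of Theorem~\ref{thmA}, which is stated in terms of $\cd_{\cR}$, holds, and the dimension clauses in the two halves of Definition~\ref{def:cm} coincide.

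Second, we compare the cohomology groups. Applying Theorem~\ref{thm:profintosolid} with $M=\ring$ and $N=\ring\llbracket G\rrbracket$ gives
\[ \underline{\mathrm{RHom}_{\ring\llbracket G\rrbracket}(\ring,\ring\llbracket G\rrbracket)}\cong\rhg(\cR,\rg). \]
Here we use that $\underline{\ring\llbracket G\rrbracket}=\rg$. Condensation of profinite modules is exact and fully faithful, so taking cohomology yields
\[ \mathbf{H}^k_{\cR}(G,\rg)\cong \underline{H^k_{\ring}(G,\ring\llbracket G\rrbracket)} \quad\text{for every } k. \]
An exact fully faithful functor detects vanishing. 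Hence one side is concentrated in degree $n$ exactly when the other is.

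Third, we compare flatness. Lemma~\ref{lem:flatness} states that $\mathbf{H}^n_{\cR}(G,\rg)$ is a flat solid $\cR$-module if and only if $H^n_{\ring}(G,\ring\llbracket G\rrbracket)$ is a flat profinite $\ring$-module.

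Combining the three steps gives the equivalence of the two Cohen--Macaulay conditions, and the corollary follows. The only delicate point is the identification in the second step: we must check that Theorem~\ref{thm:profintosolid} applies to the profinite module $\ring\llbracket G\rrbracket$ as the second argument, and that solid internal cohomology agrees with the condensate of continuous cohomology degreewise. With exactness of condensation, that check is routine, and the substantive input is carried entirely by the two main theorems and Lemma~\ref{lem:flatness}.
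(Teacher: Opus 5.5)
Your proposal is correct and follows the paper's argument. The paper likewise shows that solid and profinite Cohen--Macaulayness coincide using Theorem~\ref{thm:profintosolid}, Theorem~\ref{thm:cd} and Lemma~\ref{lem:flatness}, and then chains Theorem~\ref{thmA} with Theorem~\ref{thmB}. One point to tighten: the degreewise identification $\mathbf{H}^k_{\cR}(G,\rg)\cong\underline{H^k_{\ring}(G,\ring\llbracket G\rrbracket)}$ is exactly the (FF) clause of Theorem~\ref{thm:profintosolid}(3), and you should cite that clause rather than exactness of condensation on profinite modules, since the cochain modules computing $\mathrm{RHom}_{\ring\llbracket G\rrbracket}(\ring,\ring\llbracket G\rrbracket)$ need not be profinite.
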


\subsubsection{Proof sketch of the main theorems}
In Theorem~\ref{thmA} and Theorem~\ref{thmB} the Cohen--Macaulay condition on $G$ implies that spectral sequences from Equation~\eqref{eq:spectral sequences} collapse and that $G$ is a duality group. For the other implication where $G$ is a duality group, one uses that $\mathbf{H}_{\bullet}^{\cR}(G, \rg \st M)$ (respectively, $H_{\bullet}^{\ring}(G, \ring \llbracket G \rrbracket \widehat{\otimes}_{\ring} M)$) is concentrated in a single degree (Lemma~\ref{lem:Shapiro} and~\cite[Lemma~3.3.4]{sym00}). This implies first that $D_{\cR} = \mathbf{H}_{\cR}^n(G, \rg)$ and $D_{\ring} = H_{\ring}^n(G, \ring \llbracket G \rrbracket)$ are flat and then that $G$ is Cohen--Macaulay.

\subsection{Section summary}
In Section~\ref{sec:condsetting} we introduce all the relevant background on condensed mathematics. Importantly, we prove that all homological algebra for profinite groups with continuos modules can be equivalently performed with solid modules. The investigation of homological finiteness conditions in condensed mathematics is the subject of Section~\ref{sec:finitenessconds}. Section~\ref{sec:varia} proves preliminary results necessary to demonstrate the main theorems. Then Section~\ref{sec:mains} proves the Tate spectral sequence as well as the main theorems. It also generalises Serre--Verdier--Tate duality as well as Bieri--Eckmann duality. Lastly, Section~\ref{sec:exls} constructs novel examples and establishes group-theoretic properties of duality groups.

\subsection*{Acknowledgements}

We would like to express our gratitude to Thomas Weigel for sharing his knowledge with us. We express our thanks to Emma Brink for the insight she has provided into her article on condensed group cohomology. We are indebted to Peter Kropholler who drew our attention to homological finiteness properties in condensed mathematics.

MG wishes to thank Ilya Kazachkov and Montse Casals Ruiz for having hosted him as a research visitor at the University of the Basque Country and gratefully acknowledges support by DFG grant KL 2162/4-1, within a joint AEI-DFG-funded project. TW gratefully acknowledges support from the Royal Society through Richard Wade's University Research Fellowship, the European Commission through a Marie Sk{\l}odowska-Curie Individual Fellowship with reference number 101203556 – DisQ FA, and from the University of Oxford and Aalto University.

\section{The condensed setting}\label{sec:condsetting}

This section provides the necessary background on condensed mathematics and demonstrates that homological algebra for profinite groups with continuous modules can be equivalently performed with solid modules. The goal of the section is to establish Theorem~\ref{thm:profintosolid} stating that the derived tensor product on the profinite and the solid side are isomorphic via the condensation operation where the same pertains to the derived Hom-functor. We do not claim much originality for this result, part of it can be extracted from \cite{bri25}, and most of the statement is implicit in \cite{tang24}.

\subsection{Basics of condensed modules}

\subsubsection{Condensed sets}
The building blocks of condensed mathematics are \emph{condensed sets}. We define these following Clausen and Scholze's account~\cite[Lecture~I and~II]{sch22}. Further below we provide a plain and straightforward definition of $\kappa$-condensed sets after introducing the relevant Grothendieck sites of our sheaves. Naïvely, condensed sets are sheaves on the category of profinite spaces $\mathbf{Pro}$, equipped with the Grothendieck topology whose coverings are finite collections of (continuous) maps $\lbrace f_i\colon S_i \rightarrow S \rbrace_{i = 1}^n$ such that the induced map $\bigsqcup_{i = 1}^n S_i \rightarrow S$ is surjective. This naïve definition leads to size issues: $\mathbf{Pro}$  is not essentially small. One gets around these size issues by taking for each uncountable strong limit cardinal $\kappa$ the category $\mathrm{Cond}_{\kappa}(\mathbf{Set})$ of sheaves on the site of profinite spaces with at most $\kappa$ clopen subsets. In more down-to-earth terms, a (contravariant) functor
\[ X: \mathbf{Pro}_{\kappa}^{\mathrm{op}} \rightarrow \mathbf{Set} \]
is a $\kappa$-condensed space if the following two conditions are satisfied.
\begin{enumerate}
    \item For every finite collection $\lbrace S_i \rbrace_{i = 1}^n$ of objects in the category $\mathbf{Pro}_{\kappa}$ there is a bijection $X(\bigsqcup_{i = 1}^n S_i) \rightarrow \prod_{i = 1}^n X(S_i)$.
    \item Let $f: T \rightarrow S$ be a (continuous) surjection in $\mathbf{Pro}_{\kappa}$ and denote the projections from the fiber product by $p_1, p_2: T \times_S T \rightarrow T$. Then $X(f): X(S) \rightarrow X(T)$ is injective with image
\[ \mathrm{Im}(X(f)) = \lbrace x \in X(T) \mid X(p_1)(x) = X(p_2)(x) \in X(T \times_S T) \rbrace \, . \]
\end{enumerate}
The category of condensed sets is defined by taking the colimit
\[ \mathrm{Cond}(\mathbf{Set}) := \varinjlim_{\kappa \in K} \mathrm{Cond}_{\kappa}(\mathbf{Set}) \]
over all such cardinals. These size issues are mostly left to the background for our purposes. For a more thorough definition we refer the reader to~\cite[Chapter~2]{bri25}.

One can think of a condensed set $X\in \mathrm{Cond}(\mathbf{Set})$ as encoding a topological space through mapping spaces $X(S)$ of ``continuous maps $S \rightarrow X$'' from a profinite space $S$. This is literally true for those condensed sets that are in the image of the \emph{condensation functor} $\underline{(-)}\colon T1\tn{-}\mathbf{Top}\to \mathrm{Cond}(\mathbf{Set})$ that takes a $T1$ topological space $Y$ to the sheaf $S\mapsto \mathrm{Hom}_{\tn{cont}}(S,Y)$. By the Yoneda lemma, condensation is fully faithful when restricted to the category of profinite spaces; it is faithful on the entirety of $ T1\tn{-}\mathbf{Top}$.

\subsubsection{Condensed algebraic structures}

All concrete (defined as structure on a set) mathematical objects have condensed counterparts: a condensed group, ring, module, etc. is a sheaf of groups, rings, modules, etc. on the site $\mathbf{Pro}$, up to the size issues alluded to above. Condensates of $T1$ topological groups (resp.\! rings, modules etc) are condensed groups (resp.\! rings, modules etc)~\cite[p.~8]{Scholze2019}.

Condensed modules have  excellent category-theoretic properties \cite[Theorem~1.10]{Scholze2019}. Denote by $\mathrm{Cond}(\mathbf{Ab})^{\heartsuit}$ the abelian category of condensed abelian groups and by $\mathrm{Cond}(\mathbf{Mod}_{\mathcal{R}})^{\heartsuit}$ the abelian category of condensed modules over a condensed ring $\mathcal{R}$. In these categories, all limits and colimits exist. Arbitrary products, arbitrary direct sums and filtered colimits are exact. They are generated by compact projective objects, meaning that every condensed module admits a surjective homomorphism from a coproduct of these compact projective objects. In particular, the category $\mathrm{Cond}(\mathbf{Mod}_{\mathcal{R}})^{\heartsuit}$ has enough projectives. Below, we give a set of compact projective generators.

\begin{notation}
We will denote by $\mathrm{Cond}(\mathbf{Ab})$ the derived category of $\mathrm{Cond}(\mathbf{Ab})^{\heartsuit}$ and by $\mathrm{Cond}(\mathbf{Mod}_{\mathcal{R}})$ the derived category of $\mathrm{Cond}(\mathbf{Mod}_{\mathcal{R}})^{\heartsuit}$ for a condensed ring $\mathcal{R}$.
\end{notation}

\begin{warning}[{\cite[Warning~2.18]{Scholze2019}}]
Although the derived category $D(\mathrm{Cond}(\mathbf{Ab})^{\heartsuit})$ of condensed abelian groups might not be equivalent to $\mathrm{Cond}(D(\mathbf{Ab}^{\heartsuit}))$, the $\infty$-category $\mathcal{D}(\mathrm{Cond}(\mathbf{Ab})^{\heartsuit})$ is nevertheless equivalent to $\mathrm{Cond}(\mathcal{D}(\mathbf{Ab}^{\heartsuit}))$.
\end{warning}

\subsubsection{Free condensed modules}
Free condensed modules play an important role in our investigation, especially because we can find a class of compact projective generators among them. According to the proof of Theorem~A.15 in~\cite{tang24}, there is a free condensed $\mathcal{R}$-module $\mathcal{R}[X]$ over any condensed set $X$. Taking free condensed $\mathcal{R}$-modules is left adjoint to the forgetful functor $\mathrm{Cond}(\mathbf{Mod}_{\mathcal{R}})^{\heartsuit} \rar \mathrm{Cond}(\mathbf{Set})$. Regarding compact projective generators, a topological space $S$ is called extremally disconnected if the closure of any open subspace is again open~\cite[Definition~1.1]{gle58}. These are exactly the projective profinite spaces~\cite[Theorem~1.2]{gle58} and the condensate of any extremally disconnected space $\underline{S}$ is a projective condensed set~\cite[p.~16]{sch22}. The free condensed $\mathcal{R}$-modules $\mathcal{R}[\underline{S}]$ where $S$ ranges through extremally disconnected spaces are compact projective generators for $\mathrm{Cond}(\mathbf{Mod}_{\mathcal{R}})^{\heartsuit}$~\cite[p.12--13]{Scholze2019}. This means that for every condensed $\mathcal{R}$-module $M$ there is a collection of extremally disconnected spaces $S_i$ such that there is an epimorphism $\bigoplus_{i \in I} \mathcal{R}[\underline{S_i}] \rightarrow M$.

\begin{remark}\label{rem:projectives}
Not every free condensed module is projective. For instance, if $I$, $J$ are infinite discrete spaces and ${\beta}I$, ${\beta}J$ denote their Stone--Čech compactifications, then the free condensed $\underline{\Z}$-module $\underline{\Z}[\underline{{\beta}I \times {\beta}J}]$ is not projective by~\cite[Proposition~3.7]{sch22}. In general, it is only true that free condensed modules $\mathcal{R}[\underline{S}]$ are projective for extremally disconnected spaces $S$. The reason for this is that left adjoints, such as taking free condensed modules, preserve projectives if the right adjoint is exact.
\end{remark}

\subsubsection{Tensors and Homs}\label{sssec:tensorhoms}

Condensed modules carry analogues of the usual tensor products of modules over rings. We denote the (derived) tensor product over a condensed ring $\mathcal{R}$ by $\otimes_\mathcal{R}$. We denote the derived $\mathcal{R}$-linear Hom-functor by $\mathrm{RHom}_{\mathcal{R}}(-, -)$. When $\mathcal{R}$ is commutative, there is an internal Hom 
$\mathrm{R}\underline{\mathrm{Hom}}_{\mathcal{R}}(A, B) \in \mathrm{Cond}(\mathbf{Mod}_\mathcal{R})$ between $A,B\in \mathrm{Cond}(\mathbf{Mod}_\mathcal{R})$, satisfying the following tensor-hom adjunction for all $A,B,C\in \mathrm{Cond}(\mathbf{Mod}_\mathcal{R})$:

\begin{equation*}
\mathrm{RHom}_{\mathcal{R}}(A\otimes_{\mathcal{R}}B,C) \cong \mathrm{RHom}_{\mathcal{R}}(A,\mathrm{R}\underline{\mathrm{Hom}}_{\mathcal{R}}(B, C)).
\end{equation*}
The counit of this adjunction is the evaluation map
$$
\tn{ev}_A\colon A\otimes_{\mathcal{R}}\mathrm{R}\underline{\mathrm{Hom}}_{\mathcal{R}}(A, B) \to B.
$$
The composition of internal homs is defined to be the $\mathcal{R}$-linear map
\begin{equation}\label{eq:internalcomposition}
\underline{\circ}\colon\mathrm{R}\underline{\mathrm{Hom}}_{\mathcal{R}}(A, B)\otimes_\mathcal{ R}\mathrm{R}\underline{\mathrm{Hom}}_{\mathcal{R}}(B, C)\to \mathrm{R}\underline{\mathrm{Hom}}_{\mathcal{R}}(A, C)    
\end{equation}
adjoint to the composite of evaluation maps
$$
A\otimes_{\mathcal{R}}\mathrm{R}\underline{\mathrm{Hom}}_{\mathcal{R}}(A, B)\otimes_\mathcal{ R}\mathrm{R}\underline{\mathrm{Hom}}_{\mathcal{R}}(B, C)\xrightarrow{\tn{ev}_A} B\otimes_{\mathcal{R}}\mathrm{R}\underline{\mathrm{Hom}}_{\mathcal{R}}(B, C)\xrightarrow{\tn{ev}_B} C. 
$$

As group cohomology is of concern, we consider condensed group rings, their tensor products and Hom-functors. If $\mathcal{G}$ denotes a condensed group, then there exists a condensed group ring $\mathcal{R}[\mathcal{G}]$ by~\cite[p.~14]{tang24}. By the same source, the category $\mathrm{Cond}(Mod_{\mathcal{R}[\mathcal{G}]})$ is equivalent to the category of derived $\mathcal{R}$-modules with a commuting $\mathcal{G}$-action. Importantly for group cohomology, the coefficient ring $\mathcal{R}$ is a condensed $\mathcal{R}[\mathcal{G}]$-module.

Analogous to the classical setting, we have both $\otimes_\mathcal{R}$ and $\otimes_{\mathcal{R}[\mathcal{G}]}$ at our disposal, as well as $\mathrm{R}\underline{\mathrm{Hom}}_{\mathcal{R}}(-, -)$ and $\mathrm{R}\underline{\mathrm{Hom}}_{\mathcal{R}[\mathcal{G}]}(-, -)$, where we emphasise that for the last, the underline signals that $\mathrm{R}\underline{\mathrm{Hom}}_{\mathcal{R}[\mathcal{G}]}(M, N)$ is a condensed $\mathcal{R}$-module for any pair of $\mathcal{R}[\mathcal{G}]$-modules $M$ and $N$. The $\mathcal{R}[\mathcal{G}]$-tensor product and hom satisfy the usual tensor-hom adjunction for the tensor product of right and left modules.

The inverse map gives an anti-isomorphism $\mathcal{R}[G]\cong (\mathcal{R}[\mathcal{G}])^\tn{op}$, and hence an equivalence $\mathcal{I}$ between the categories of left and right $\mathcal{R}[\mathcal{G}]$-modules. Note that
\begin{align*}
    \mathrm{R}\underline{\mathrm{Hom}}_{\mathcal{R}[\mathcal{G}]}(\mathcal{I}(M),A)&\cong \mathcal{I}(\mathrm{R}\underline{\mathrm{Hom}}_{\mathcal{R}[\mathcal{G}]}(M,A))\\
    \mathcal{I}(M)\otimes_{\mathcal{R}[\mathcal{G}]} N& \cong \mathcal{I}(N)\otimes_{\mathcal{R}[\mathcal{G}]} M
\end{align*}
where $M$ can be taken to be either a left or right $\mathcal{R}[\mathcal{G}]$-module in the first equation and $M$ and $N$ are both left $\mathcal{R}[\mathcal{G}]$-modules in the second. Following the usual conventions in the literature \cite[Chapter III.0]{bro82}, we will suppress $\mathcal{I}$ from the notation for the rest of this paper, and take all our modules to be left modules, tacitly using $\mathcal{I}$ when needed. With this convention, we have hom-tensor adjunctions
\begin{align*}
    \mathrm{R}\underline{\mathrm{Hom}}_{\mathcal{R}}(M\otimes_{\mathcal{R}[\mathcal{G}]} N, A)&\cong \mathrm{R}\underline{\mathrm{Hom}}_{\mathcal{R}[\mathcal{G}]}(M, \mathrm{R}\underline{\mathrm{Hom}}_{\mathcal{R}}( N, A)) \quad \text{and} \\
    \mathrm{R}\underline{\mathrm{Hom}}_{\mathcal{R}}(M\otimes_{\mathcal{R}[\mathcal{G}]} N, A)&\cong \mathrm{R}\underline{\mathrm{Hom}}_{\mathcal{R}[\mathcal{G}]}(N, \mathrm{R}\underline{\mathrm{Hom}}_{\mathcal{R}}( M, A)),
\end{align*}
where $M$ and $N$ are both left $\mathcal{R}[\mathcal{G}]$-modules, and $A$ is an $\mathcal{R}$-module. 

The tensor product $\otimes_\mathcal{R}$ is again a $\mathcal{R}[\mathcal{G}]$-module (equipped with the diagonal action), and $\mathrm{R}\underline{\mathrm{Hom}}_{\mathcal{R}}(-, -)$ acts the internal hom with respect to this symmetric monoidal structure:
$$
\mathrm{R}\underline{\mathrm{Hom}}_{\mathcal{R}[\mathcal{G}]}(M\otimes_{\mathcal{R}} N, L)\cong \mathrm{R}\underline{\mathrm{Hom}}_{\mathcal{R}[\mathcal{G}]}(M, \mathrm{R}\underline{\mathrm{Hom}}_{\mathcal{R}}( N, L)),
$$
for $L,M,N$ left $\mathcal{R}[\mathcal{G}]$-modules.
\subsection{Basics of solid modules}

Solid modules are essential to Theorem~\ref{thm:profintosolid} which ensures that homological algebra for profinite groups via continuous modules can be performed equivalently in condensed mathematics. In this subsection, we present the basics of these condensed modules satisfying a form of completion.

\subsubsection{Definition of solid modules}
The definition of solid modules comes in two steps where we first define a pre-analytic ring.

\begin{definition}
[{\cite[Definition~7.1]{Scholze2019}}] A pre-analytic ring $(\mathcal{A}, S \mapsto \mathcal{A}[S]^{\square})$ is a condensed ring $\mathcal{A}$ together with a functor
\begin{align*}
c_{\mathcal{A}}\colon \lbrace \text{extremally disconnected spaces} \rbrace &\rar \mathrm{Cond}(Mod_{\mathcal{A}}^{\heartsuit}), \\
S &\mapsto \mathcal{A}[S]^{\square}
\end{align*}
taking finite disjoint unions to products and with a natural transformation
\[ C_{\mathcal{A}}\colon \mathcal{A}[-] \rar c_{\mathcal{A}}, \quad C_{\mathcal{A}}(S)\colon \mathcal{A}[\underline{S}] \rar \mathcal{A}[S]^{\square} \]
where $\mathcal{A}[\underline{S}]$ denotes the free condensed $\mathcal{A}$-module on $\underline{S}$.
\end{definition}

Thus, a pre-analytic structure on a condensed ring $\mathcal{A}$ is a systematic assignment of condensed $\mathcal{A}$-modules $\mathcal{A}[S]^{\square}$ that we can think of as being free. However, this assignment is not defined for any profinite space $S$ a priori. In case of an analytic ring one can extend this choice of ``free'' condensed modules to a full subcategory of $\mathcal{A}$-modules that we call solid modules.

\begin{definition}
[{\cite[Definition~7.4]{Scholze2019})}] An analytic ring $(\mathcal{A}, S \mapsto \mathcal{A}[S]^{\square})$ is a pre-analytic ring such that for any complex of condensed $\mathcal{A}$-modules
\[ C: {} \dots {} \rar C_2 \rar C_1 \rar C_0 \rar 0 \]
where all $C_i$ are direct sums of objects $\mathcal{A}[T]^{\square}$ for varying extremally disconnected $T$, the homomorphism of condensed abelian groups
\[ \mathrm{R}\underline{\mathrm{Hom}}_{\mathcal{A}}(C_{\mathcal{A}}, C)\colon \mathrm{R}\underline{\mathrm{Hom}}_{\mathcal{A}}(\mathcal{A}[S]^{\square}, C) \rar \mathrm{R}\underline{\mathrm{Hom}}_{\mathcal{A}}(\mathcal{A}[\underline{S}], C) \]
is an isomorphism for every extremally disconnected space $S$.
\end{definition}

The promised well-behaved category of solid modules is given in the following lemma.

\begin{lemma}\label{lem:analyticrings}
[{\cite[Proposition~7.5]{Scholze2019}}] Let $(\mathcal{A}, S \rar \mathcal{A}[S]^{\square})$ be an analytic ring. Then the full subcategory
\[ \Solid_{\mathcal{A}}^{\heartsuit} \subseteq \mathrm{Cond}(Mod_{\mathcal{A}})^{\heartsuit} \]
of all $\mathcal{A}$-modules $M$, for which for every extremally disconnected space $S$ the homomorphism
\[ \mathrm{Hom}_{\mathcal{A}}(C_{\mathcal{A}}(S), M) \colon \mathrm{Hom}_{\mathcal{A}}(\mathcal{A}[S]^{\square}, M) \rar \mathrm{Hom}_{\mathcal{A}}(\mathcal{A}[\underline{S}], M) = M(S) \]
is an isomorphism, is an abelian category stable under all limits and colimits. The derived internal Hom-functors $\mathrm{R}\underline{\mathrm{Hom}}_{\mathcal{A}}$ agree. There is a left adjoint to the above forgetful functor
\[ (-)^{{\square}\mathcal{A}}\colon \mathrm{Cond}(Mod_{\mathcal{A}}^{\heartsuit}) \rar \Solid_{\mathcal{A}}^{\heartsuit}, \: M \mapsto M^{\square} \]
that is the unique colimit-preserving extension of $C_{\mathcal{A}}(S)\colon \mathcal{A}[\underline{S}] \rar \mathcal{A}[S]^{\square}$. The objects $\mathcal{A}[S]^{\square}$ for $S$ extremally disconnected are compact projective generators of $\Solid_{\mathcal{A}}^{\heartsuit}$.
\end{lemma}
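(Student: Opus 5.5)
The plan follows Scholze's argument for \cite[Proposition~7.5]{Scholze2019}: characterise $\Solid_{\mathcal{A}}^{\heartsuit}$ as a right orthogonal, construct the left adjoint by resolving, and read off the remaining claims. The first step is to reformulate solidity. Write $P_S := \mathcal{A}[\underline{S}]$ and $Q_S := \mathcal{A}[S]^{\square}$. A module $M$ is solid if and only if $\mathrm{Hom}(C_{\mathcal{A}}(S), M)$ is bijective for every extremally disconnected $S$. Since $P_S$ is projective, it is natural to upgrade this to the derived statement
\[ \mathrm{R}\underline{\mathrm{Hom}}_{\mathcal{A}}(Q_S, M) \xrightarrow{\cong} \mathrm{R}\underline{\mathrm{Hom}}_{\mathcal{A}}(P_S, M). \]
To do so, I would resolve $M$ (or rather its solidification, once it is constructed) by a complex $C$ whose terms are direct sums of $Q_T$. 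The analytic ring axiom then gives exactly the derived isomorphism for such $C$. Once this derived statement is available, closure under all limits is immediate, because $\mathrm{Hom}(X,-)$ commutes with limits.

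Next I would construct the left adjoint. Define $M \mapsto M^{\square}$ as the unique colimit-preserving functor extending $P_S \mapsto Q_S$. Concretely, choose a presentation $\bigoplus_j P_{S_j} \to \bigoplus_i P_{S_i} \to M \to 0$ and set $M^{\square}$ to be the cokernel of the corresponding map $\bigoplus_j Q_{S_j} \to \bigoplus_i Q_{S_i}$. Well-definedness reduces to the functoriality of $c_{\mathcal{A}}$ on maps between frees. The key input is that $\mathrm{Hom}(P_S, Q_T) \cong \mathrm{Hom}(Q_S, Q_T)$, which is the analytic axiom applied to the complex $C = Q_T$. This isomorphism allows every map between free objects to be transported. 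The identity $\mathrm{Hom}(M^{\square}, N) = \mathrm{Hom}(M, N)$ for solid $N$ then follows by applying $\mathrm{Hom}(-,N)$ to the presentations and using the five lemma.

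The main obstacle is colimits and kernels, that is, showing the subcategory is abelian and closed under cokernels and colimits. Cokernels of maps between the modules $Q$ and their direct sums are handled through the derived form of the axiom: for a two-term complex $C$ built from sums of $Q$'s, the axiom identifies $\mathrm{RHom}(Q_S, C)$ with $\mathrm{RHom}(P_S, C)$. Taking $H^0$ and using the projectivity of $P_S$ then shows $H_0(C)$ is solid. A general colimit of solid modules is a cokernel of a map between direct sums of solid modules. Each solid module is in turn the cokernel of a map between sums of $Q$'s, because $M^{\square} \cong M$ for $M$ solid by adjunction. Hence a general colimit is again such an $H_0$. Kernels are limits, so they are already covered. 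The agreement of the internal Homs comes from the derived isomorphism of the first step together with the fact that the internal Homs are computed from the same generators.

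Finally, the modules $Q_S$ are projective in $\Solid_{\mathcal{A}}^{\heartsuit}$ because $\mathrm{Hom}(Q_S, M) = M(S)$, and evaluation at an extremally disconnected $S$ is exact. They are compact because evaluation at $S$ commutes with filtered colimits, and colimits of solid modules are computed in condensed modules. They generate because every solid module receives a surjection $\bigoplus P_{S_i} \to M$, which factors through $\bigoplus Q_{S_i}$.
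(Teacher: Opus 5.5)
The paper gives no argument of its own here; it simply quotes \cite[Proposition~7.5]{Scholze2019}. Your outline reconstructs that result reasonably well, but the cokernel step does not go through as written.

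For $C=[C_1\xrightarrow{d}C_0]$ with each $C_i$ a sum of $Q_T$'s, the axiom gives $\mathrm{RHom}(Q_S,C)\cong\mathrm{RHom}(P_S,C)$. Projectivity of $P_S$ identifies $H^0$ of the right-hand side with $\mathrm{Hom}(P_S,H_0(C))$. However, $Q_S$ is not projective in condensed $\mathcal{A}$-modules, so $H^0$ of the left-hand side need not be $\mathrm{Hom}(Q_S,H_0(C))$. The triangle $H_1(C)[1]\to C\to H_0(C)$ only gives an exact sequence
\[ \mathrm{Ext}^1(Q_S,H_1(C))\to \mathrm{Hom}_{D}(Q_S,C)\to \mathrm{Hom}(Q_S,H_0(C))\to \mathrm{Ext}^2(Q_S,H_1(C)), \]
and nothing in your Step 3 controls the outer terms.

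The missing input is that $H_1(C)=\ker d$ is itself solid. Sums of $Q_T$'s are solid, by the axiom applied to a complex concentrated in degree $0$. Kernels of maps between solid modules are solid, by left exactness of $\mathrm{Hom}(Q_S,-)$ and $\mathrm{Hom}(P_S,-)$. Hence your Step 1 applies to $H_1(C)$. The class of complexes satisfying the derived condition is closed under cones, so $H_0(C)$ satisfies it, and taking $H^0$ gives solidity. Equivalently, you can splice a resolution of $\ker d$ by sums of $Q_T$'s onto $C$ and apply the axiom to the resulting resolution of $H_0(C)$.

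The same cone argument handles cokernels of arbitrary maps between solid modules directly. Direct sums follow by summing resolutions. This is cleaner than your reduction of general colimits to presentations.

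Two related points concern the order of the steps.
\begin{itemize}
\item Step 1 must be run for an arbitrary solid $M$, not ``for its solidification''. The resolution by sums of $Q_T$'s exists by exactly the two facts above. Solidity of $M^{\square}$ is what the cokernel step is supposed to produce, so invoking it in Step 1 is circular. Moreover, solidifying a free resolution of $M$ termwise does not in general resolve $M^{\square}$, because derived solidification can have higher homology.
\item The comparison of derived Homs should not rest on ``the same generators''. It should rest on the following: for solid $N$, Step 1 gives $\mathrm{RHom}(Q_T,N)\simeq N(T)$ in degree $0$. So sums of $Q_T$'s are $\mathrm{Hom}(-,N)$-acyclic among condensed modules while being projective among solid ones, and a resolution of $M$ by them computes both derived Homs.
\end{itemize}

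With these repairs, the rest of your outline is fine. This includes the left adjoint built from presentations, with maps transported via $\mathrm{Hom}(P_S,Q_T)\cong\mathrm{Hom}(Q_S,Q_T)$. It also includes the compact projective generator statements.
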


\begin{definition}
In the notation of Lemma~\ref{lem:analyticrings}, the objects in $\Solid_{\mathcal{A}}$ are called \emph{solid} $\mathcal{A}$-modules and the functor $(-)^{{\square}\mathcal{A}}\colon \mathrm{Cond}(Mod_{\mathcal{A}}) \rar \Solid_{\mathcal{A}}$ is called \emph{solidification}.
\end{definition}

As $\Solid_{\mathcal{A}}^{\heartsuit}$ is generated by compact projective objects, it has enough projectives. We will make frequent use of the fact that the derived category of any abelian category with enough projectives is generated under {sifted colimits} by projective objects:

\begin{proposition}[{\cite[Proposition 12.4]{scholze2026}}]\label{lem:siftedcolimits}
Let $\mathcal{A}$ be an analytic ring. Then every object $M$ in the derived category of solid $\mathcal{A}$-modules can be written as a sifted colimit of chain complexes of compact projective solid $\mathcal{A}$-modules.
\end{proposition}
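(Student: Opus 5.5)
The plan is to take a connective model, take a simplicial projective resolution of it, and pass through the Dold--Kan correspondence. The categories $\Solid_{\mathcal{A}}$ are stable, so I first reduce to bounded-below objects. Any $M$ in the derived category is the direct limit of its truncations $\tau_{\geq -n} M$, and each truncation is a shift of a connective object. Direct limits are sifted, and shifts preserve complexes of compact projectives. A sifted colimit of sifted colimits, indexed over a sifted diagram, is again sifted: one can take the Grothendieck construction, or simply note that the resulting presentation is a colimit over a product-like sifted indexing category. So it suffices to treat connective $M$, i.e. $M$ concentrated in nonnegative homological degrees.

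For connective $M$, Lemma~\ref{lem:analyticrings} gives that $\Solid_{\mathcal{A}}^{\heartsuit}$ has enough projectives, and every projective is a retract of a direct sum of compact projective generators $\mathcal{A}[S]^{\square}$. I therefore resolve $M$ by a simplicial object $P_\bullet$ in which each $P_k$ is a direct sum of such generators; the standard free-resolution construction in an abelian category with a generating set of projectives does this. By Dold--Kan and the usual Lurie statement (HA 1.3.3.x / the $\mathcal{P}_\Sigma$ description of connective derived categories), $M \simeq |P_\bullet|$. This is a geometric realization, i.e. a colimit over $\Delta^{\mathrm{op}}$, which is sifted.

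Each $P_k$ is a direct sum $\bigoplus_{i\in I}\mathcal{A}[S_i]^{\square}$. That is the filtered colimit of its finite sub-sums, and each finite sub-sum is compact projective. The remaining task is to combine these into a single sifted colimit of compact projective complexes. For this I use that the connective derived category is $\mathcal{P}_\Sigma(\mathcal{C})$, where $\mathcal{C}$ is the full subcategory of compact projectives. Every object of $\mathcal{P}_\Sigma(\mathcal{C})$ is by definition a sifted colimit of objects of $\mathcal{C}$: it is the colimit over its category of elements $\mathcal{C}_{/M}$, which is sifted because $\mathcal{C}$ has finite coproducts. Each object of $\mathcal{C}$ is a complex concentrated in one degree with compact projective term, which gives the claim.

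The main obstacle is identifying $D_{\geq 0}(\Solid_{\mathcal{A}}^{\heartsuit})$ with $\mathcal{P}_\Sigma(\mathcal{C})$. This uses that the compact projectives generate and that the abelian category is Grothendieck-like (it has exact filtered colimits, Lemma~\ref{lem:analyticrings}), together with Lurie's recognition criterion. The key input there is that $\mathrm{Hom}$ out of compact projectives commutes with sifted colimits in the derived category.
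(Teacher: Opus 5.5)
The paper does not prove this statement; it imports it from \cite[Proposition 12.4]{scholze2026}, so your argument has to stand on its own. Your connective case is the standard argument and is sound. $\Solid_{\mathcal{A}}^{\heartsuit}$ has exact filtered colimits and a family $\mathcal{C}$ of compact projective generators closed under finite sums. Hence $D_{\geq 0}\simeq\mathcal{P}_\Sigma(\mathcal{C})$, and every connective $M$ is the colimit over the sifted slice $\mathcal{C}_{/M}$; this makes your simplicial-resolution paragraph superfluous. Two precisions there. First, the input that $\mathrm{Map}(P,-)$ commutes with sifted colimits for $P\in\mathcal{C}$ holds on $D_{\geq 0}$ only, not on the whole stable category: the bar construction $[k]\mapsto P[-1]^{\oplus k}$ has contractible mapping spaces from $P$, yet it realizes to $\Sigma(P[-1])=P$. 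Second, the objects $\mathcal{A}[S]^{\square}$ do not form an essentially small category, so the recognition criterion must be applied below a fixed cutoff cardinal $\kappa$.

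The step that does not go through as written is the reduction to connective objects. Knowing $M\simeq\colim_n\tau_{\geq -n}M$ and that each $\tau_{\geq -n}M$ is \emph{some} sifted colimit does not exhibit $M$ as a single sifted colimit. A Grothendieck-construction argument needs the presentations to be functorial in $n$, i.e.\ a diagram $n\mapsto K_n$ of indexing categories over $D$ whose colimits recover the maps $\tau_{\geq -n}M\to\tau_{\geq -n-1}M$. Your presentations are indexed by $\mathcal{C}[-n]_{/\tau_{\geq -n}M}$. Since $\mathcal{C}[-n]\not\subseteq\mathcal{C}[-n-1]$, there are no transition functors between them, and there is no product-like indexing category either.

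One repair within your framework: let $\mathcal{E}_n\subseteq D_{\geq -n}$ consist of finite sums $\bigoplus_i P_i[k_i]$ with $P_i\in\mathcal{C}$ and $k_i\geq -n$. These subcategories are nested and closed under finite sums. Each $\mathcal{E}_n$ is still dense in $D_{\geq -n}$, because it contains the dense subcategory $\mathcal{C}[-n]$ and density passes to larger full subcategories by a right Kan extension argument. Then $n\mapsto(\mathcal{E}_n)_{/\tau_{\geq -n}M}$ is a functor from $\mathbb{N}$ to sifted categories. Its Grothendieck construction is sifted, since colimits over it are iterated colimits over $\mathbb{N}$ and over the fibres, and both commute with finite products. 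The colimit over it is $M$.

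A shorter route bypasses $\mathcal{P}_\Sigma$ entirely. $D$ is compactly generated by the $\mathcal{A}[S]^{\square}[k]$, so $D\simeq\Ind(D^{\omega})$, where $D^{\omega}$ is the idempotent completion of the bounded complexes of finite sums of generators. As $\Ind$ does not see idempotent completion, every $M$ is a filtered, hence sifted, colimit of bounded complexes of compact projectives.
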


\subsubsection{Examples and their properties}
Recall for a profinite group $G = \varprojlim_{i \in I} G_i$ and a profinite ring $R = \varprojlim_{j \in J} R_j$ that the completed group ring $R\llbracket G \rrbracket = \varprojlim_{(i, j) \in I \times J} R_j[G_i]$ is the profinite completion of the ``ordinary'' group ring $R[G]$.

For an extremally disconnected space $S = \varprojlim_{i \in I} S_i$ define the condensed $\underline{\Z}$-module $\Z[S]^{\solid} := \varprojlim_{i \in I} \underline{\Z}[\underline{S_i}]$. Then $(\underline{\Z}, S \rar \Z[S]^{\solid})$ is an analytic ring \cite[Theorem~5.8]{Scholze2019}.

\begin{definition}
Solid $\underline{\Z}$-modules are called \emph{solid abelian groups}. The category of solid abelian groups is denoted by $\Solid_{\mathbf{Ab}}$ and solidification by $(-)^{\solid}\colon \mathrm{Cond}(\mathbf{Ab}) \rar \Solid_{\mathbf{Ab}}$.
\end{definition}

This analytic ring structure on the integers $\underline{\Z}$ can be extended to any condensed ring.

\begin{lemma}\label{lem:analyticstructure}
(\cite[Proposition~3.6 and Lemma~3.13]{tang24}) Let $\mathcal{R}$ be a condensed ring. Then the following assertions hold.
\begin{enumerate}
\item $(\mathcal{R}, S \mapsto \mathcal{R}[\underline{S}]^{\solid})$ is an analytic ring.
\item If $\mathcal{R}$ is solid as a condensed abelian group, then a condensed $\mathcal{R}$-module is solid if and only if its underlying condensed abelian group is solid.
\item There is an equivalence of categories $\Solid_{\mathcal{R}} = \Solid_{\mathcal{R}^{\solid}}$.
\item For any $M, N \in  \Solid_{\mathcal{R}}$ the internal Hom-functor $\mathrm{R\underline{Hom}}_{\mathcal{R}}(M, N)$ lies in $\Solid_{\mathcal{R}}$.
\end{enumerate}
\end{lemma}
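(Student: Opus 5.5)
The statement is imported from Tang, so the plan is to reduce every part to the analytic ring $(\underline{\Z}, S\mapsto \Z[S]^{\solid})$ of \cite[Theorem~5.8]{Scholze2019}. The device is \emph{base change of analytic structures} along the map of condensed rings $\underline{\Z}\to\mathcal{R}$. Concretely, I would identify $\mathcal{R}[\underline{S}]^{\solid}$ with the solidification (over $\underline{\Z}$) of $\mathcal{R}\otimes_{\underline{\Z}}\underline{\Z}[\underline{S}]$. The one input I would use repeatedly is the following adjunction identity: for every solid abelian group (or complex) $C$ carrying an $\mathcal{R}$-module structure,
\[
\mathrm{R}\underline{\mathrm{Hom}}_{\mathcal{R}}(\mathcal{R}[\underline{S}]^{\solid}, C)\cong \mathrm{R}\underline{\mathrm{Hom}}_{\underline{\Z}}(\Z[S]^{\solid}, C).
\]
This follows from free--forgetful adjunction together with the universal property of solidification, since $C$ is solid.

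For (1), take a complex $C$ whose terms are direct sums of modules $\mathcal{R}[\underline{T}]^{\solid}$. Each term is a solid abelian group, and solid abelian groups are closed under colimits (Lemma~\ref{lem:analyticrings} applied to $\underline{\Z}$), so $C$ is a complex of solid abelian groups. The displayed identity, combined with the analytic ring axiom for $\Z_{\solid}$ (namely $\mathrm{R}\underline{\mathrm{Hom}}_{\underline{\Z}}(\Z[S]^{\solid},C)\cong\mathrm{R}\underline{\mathrm{Hom}}_{\underline{\Z}}(\underline{\Z}[\underline{S}],C)$), then gives
\[
\mathrm{R}\underline{\mathrm{Hom}}_{\mathcal{R}}(\mathcal{R}[\underline{S}]^{\solid},C)\cong \mathrm{R}\underline{\mathrm{Hom}}_{\underline{\Z}}(\Z[S]^{\solid},C)\cong \mathrm{R}\underline{\mathrm{Hom}}_{\underline{\Z}}(\underline{\Z}[\underline{S}],C)\cong \mathrm{R}\underline{\mathrm{Hom}}_{\mathcal{R}}(\mathcal{R}[\underline{S}],C).
\]
This is exactly the defining condition for an analytic ring. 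Pre-analyticity is immediate: finite disjoint unions go to finite products, and the natural transformation is the unit of solidification.

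For (2), apply the same identity in degree $0$: the map $\mathrm{Hom}_{\mathcal{R}}(\mathcal{R}[\underline{S}]^{\solid},M)\to M(S)$ becomes $\mathrm{Hom}_{\underline{\Z}}(\Z[S]^{\solid},M)\to M(S)$. The latter is an isomorphism for all extremally disconnected $S$ precisely when $M$ is a solid abelian group. The hypothesis that $\mathcal{R}$ is solid is what makes $\mathcal{R}[\underline{S}]^{\solid}$ a genuine $\mathcal{R}$-module, with no extra completion of the ring needed.

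For (3), a condensed $\mathcal{R}$-module that is solid over $\underline{\Z}$ extends uniquely to an $\mathcal{R}^{\solid}$-module. The action map $\mathcal{R}\otimes M\to M$ factors through $(\mathcal{R}\otimes M)^{\solid}$, which agrees with $(\mathcal{R}^{\solid}\otimes M)^{\solid}$, so (2) identifies both categories with the same full subcategory of solid abelian groups. For (4), I would test solidity of $\mathrm{R}\underline{\mathrm{Hom}}_{\mathcal{R}}(M,N)$ against $\Z[S]^{\solid}$ via tensor--hom adjunction: it reduces to $\mathrm{R}\underline{\mathrm{Hom}}_{\mathcal{R}}((\Z[S]^{\solid}\otimes M)^{\solid},N)$, which is computed using solidity of $N$.

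I expect the main obstacle to be derived-versus-underived bookkeeping in (1). One must check that the derived solidification of $\mathcal{R}\otimes^{L}\underline{\Z}[\underline{S}]$ sits in degree $0$ and agrees with the abelian-level module $\mathcal{R}[\underline{S}]^{\solid}$, which requires flatness of the compact projectives $\Z[S]^{\solid}$ in the solid tensor product. If that fails in general, my fallback is to take the derived object $\mathcal{R}\otimes^{L\solid}\Z[S]^{\solid}$ as the definition and appeal to Scholze's general induced-analytic-ring construction, which is how I expect Tang proceeds.
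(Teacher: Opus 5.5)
\textbf{The gap is in the identity you use repeatedly.} You claim
\[
\mathrm{R}\underline{\mathrm{Hom}}_{\mathcal{R}}(\mathcal{R}[\underline{S}]^{\solid},C)\cong\mathrm{R}\underline{\mathrm{Hom}}_{\underline{\Z}}(\Z[S]^{\solid},C).
\]
Free--forgetful adjunction and the universal property of solidification only give
\[
\mathrm{R}\underline{\mathrm{Hom}}_{\underline{\Z}}(\Z[S]^{\solid},C)\cong\mathrm{R}\underline{\mathrm{Hom}}_{\underline{\Z}}(\underline{\Z}[\underline{S}],C)\cong\mathrm{R}\underline{\mathrm{Hom}}_{\mathcal{R}}(\mathcal{R}[\underline{S}],C).
\]
The universal property of $(-)^{\solid}$ concerns $\underline{\Z}$-linear maps, and $\mathcal{R}[\underline{S}]^{\solid}$ is not a free $\mathcal{R}$-module. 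Replacing $\mathcal{R}[\underline{S}]$ by $\mathcal{R}[\underline{S}]^{\solid}$ inside an $\mathcal{R}$-linear derived Hom is exactly the analytic-ring axiom in (1). So your chain for (1) is circular, and your reduction of (4) rests on the same unproved step.

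A non-circular route is monadic. $\mathrm{R}\underline{\mathrm{Hom}}_{\mathcal{R}}(Y,C)$ is the totalisation of $\mathrm{R}\underline{\mathrm{Hom}}_{\underline{\Z}}(\mathcal{R}^{\otimes^L n}\otimes^L Y,C)$. For solid $C$, each term depends only on $(\mathcal{R}^{\otimes^L n}\otimes^L Y)^{L\solid}$. Hence $\mathcal{R}[\underline{S}]\to\mathcal{R}[\underline{S}]^{\solid}$ induces an isomorphism as soon as $\mathcal{R}[\underline{S}]^{L\solid}$ is concentrated in degree $0$.

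\textbf{Your diagnosis of the obstacle is therefore off.} We have $\mathcal{R}[\underline{S}]^{L\solid}\simeq\mathcal{R}^{L\solid}\otimes^{L\solid}\Z[S]^{\solid}$. Even granting flatness of $\Z[S]^{\solid}\cong\prod_I\Z$, this only yields $H_i(\mathcal{R}[\underline{S}]^{L\solid})\cong H_i(\mathcal{R}^{L\solid})\otimes^{\solid}\Z[S]^{\solid}$. What you actually need is that $\mathcal{R}^{L\solid}$ itself is discrete, and that is a hypothesis, not a lemma.

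For a general condensed ring it fails, and then (1) fails at the abelian level as well. Take $\mathcal{R}=\underline{\Z}\oplus\epsilon\,\underline{\mathbb{R}/\Z}$ with $\epsilon^2=0$.
\begin{itemize}
\item Since $\underline{\mathbb{R}}^{L\solid}=0$, we get $\mathcal{R}^{L\solid}\simeq\underline{\Z}\oplus\underline{\Z}[1]$, and $\mathcal{R}[\ast]^{\solid}=\underline{\Z}$ with $\epsilon$ acting by $0$.
\item Now $\mathrm{Tor}_1^{\mathcal{R}}(\underline{\Z},\underline{\Z})\cong\underline{\mathbb{R}/\Z}$. Hence $\mathrm{Ext}^2_{\mathcal{R}}(\underline{\Z},\underline{\Z})$ contains $\mathrm{Ext}^1(\underline{\mathbb{R}/\Z},\underline{\Z})\cong\Z$.
\item On the other hand $\mathrm{Ext}^2_{\mathcal{R}}(\mathcal{R},\underline{\Z})=0$.
\end{itemize}
So the axiom fails for $S=\ast$ and $C=\underline{\Z}$. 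Your fallback of taking the derived object as the definition proves a different statement: an animated, non-normalised analytic ring, not the abelian-level one claimed.

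The argument can be completed only under the extra hypothesis that $\mathcal{R}^{L\solid}$ is discrete. This holds, for example, when $\mathcal{R}$ is solid, or for $\mathcal{R}=\cR[\underline{G}]$, whose derived solidification is $\rg$; these are the only cases the paper uses. In those cases your base change from $\Z_{\solid}$, combined with the monadic comparison above, goes through. The paper itself gives no argument here, only the citation to Tang.
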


Important to us, profinite rings possess particularly well-behaved solid modules. We remind the reader that not all free condensed modules are projective (cf. Remark~\ref{rem:projectives}).

\begin{lemma}\label{lem:coeffringanalytic}
Let $R$ be a profinite ring or the integers $\Z$. If one takes the analytic rings structure $(\underline{R}, S \mapsto \underline{R}[\underline{S}]^{\solid})$ from Lemma~\ref{lem:analyticstructure}, then the following assertions hold.
\begin{enumerate}
\item $\underline{R}^{\solid} = \underline{R}$.
\item The compact projective generators of $\Solid_{\underline{R}}^{\heartsuit}$ are of the form $\prod_I \underline{R}$ where $I$ is any set.
\item If $S$ is a profinite space, then $\underline{R}[\underline{S}]^{\solid}$ is a compact projective solid $\underline{R}$-module.
\end{enumerate}
\end{lemma}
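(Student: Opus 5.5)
Part (1) says that $\underline{R}$ is already solid, so solidification does nothing to it. Since $R$ is profinite, write $R=\varprojlim_j R_j$ with the $R_j$ finite. For a finite discrete ring, the condensate $\underline{R_j}$ is the constant sheaf on a finite set, and finite discrete abelian groups are solid; this follows from \cite{Scholze2019}, because $\underline{\Z}$ is solid and solid abelian groups are closed under finite colimits. Condensation commutes with limits of compact Hausdorff spaces, since it is a right adjoint on that subcategory, so $\underline{R}=\varprojlim_j \underline{R_j}$. By Lemma~\ref{lem:analyticrings}, solid abelian groups are stable under limits, so $\underline{R}$ is solid as a condensed abelian group. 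Lemma~\ref{lem:analyticstructure}(2) then shows that $\underline{R}$ is a solid $\underline{R}$-module, so $\underline{R}^{\solid}=\underline{R}$. The case $R=\Z$ is \cite[Theorem~5.8]{Scholze2019}.

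For (3), let $S=\varprojlim_i S_i$ be profinite with finite quotients $S_i$. First identify $\underline{R}[\underline{S}]^{\solid}$ with $\varprojlim_i \underline{R}[S_i]$, which is $\underline{R\llbracket S\rrbracket}=\underline{\prod_{J}R}$ for a suitable set $J$. The first identification comes from $\underline{R}[\underline{S}]^{\solid}=\underline{R}\otimes^{\solid}\Z[S]^{\solid}$ together with Scholze's computation $\Z[S]^{\solid}=\varprojlim\Z[S_i]\cong\prod_J\Z$ (\cite{Scholze2019}, Theorem~5.4 and Corollary~5.5). One then uses that $\underline{R}\otimes^{\solid}-$ commutes with products of copies of $\Z$: write $\underline{R}$ as a limit of finite rings, reduce to finitely presented abelian groups, and use that for $M$ finitely presented, $M\otimes\prod$ is exact in $\prod$. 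The second identification uses the fact that the profinite module $R\llbracket S\rrbracket$ is free profinite on a basis. I would cite \cite[Proposition~3.10, Theorem~3.14]{tang24} here rather than redo this, since it is where the nontrivial input lies.

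Compactness and projectivity of $\prod_J\underline{R}$ then come from $\prod_J\underline{R}=\underline{R}\otimes^{\solid}\prod_J\Z$. The module $\prod_J\Z$ is compact projective in $\Solid_{\mathbf{Ab}}$ by \cite[Corollary~5.5]{Scholze2019}, and base change $\underline{R}\otimes^{\solid}-$ is left adjoint to the forgetful functor. That forgetful functor is exact and commutes with filtered colimits, so base change preserves compact projectives. This gives (3) and one half of (2).

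For the generation half of (2), by Lemma~\ref{lem:analyticrings} the objects $\underline{R}[\underline{S}]^{\solid}$ with $S$ extremally disconnected are compact projective generators. By (3) each of these is of the form $\prod_J\underline{R}$. Conversely, every $\prod_I\underline{R}$ arises as $\underline{R}[\underline{\beta I}]^{\solid}$, with $\beta I$ extremally disconnected, possibly after passing to a retract. So the set $\{\prod_I\underline{R}\}$ generates. Any compact projective is a retract of a finite direct sum of generators, and a finite sum of such products is again a product. Retracts should be read as "generators of this form", which is the intended meaning of the statement.

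I expect the main obstacle to be the identification $\underline{R}\otimes^{\solid}\prod_J\Z\cong\prod_J\underline{R}$ when $R$ is an arbitrary profinite ring rather than a finite one.
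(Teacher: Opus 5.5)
Your proposal is correct, and apart from (1) it follows the paper's route. The paper's proof consists entirely of citations.

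For (1), the paper invokes \cite[Lemma~3.9]{tang24} together with $\underline{R}=\underline{R}[\ast]^{\solid}$. You instead give a self-contained argument: finite discrete groups are solid (as cokernels of maps of finite free groups), $\underline{R}=\varprojlim_j\underline{R_j}$ because condensation preserves limits, and solid objects are closed under limits. This avoids Tang altogether and works for any profinite abelian group; the paper's version is shorter only because it cites.

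For (3), the paper combines the proof of \cite[Corollary~5.5]{Scholze2019} with \cite[Equation~1, p.~10]{tang24}. Your argument is the natural way to unpack this: $\underline{R}[\underline{S}]^{\solid}=\underline{R}\otimes^{\solid}\Z[S]^{\solid}$, where $\Z[S]^{\solid}\cong\prod_J\Z$ is compact projective. Base change is left adjoint to an exact, filtered-colimit-preserving forgetful functor, so it preserves compact projectives.

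For (2), both you and the paper rest on \cite{tang24}. Your reading of (2) as ``the $\prod_I\underline{R}$ form a family of compact projective generators'' is the right one. It is how the lemma is used in the proof of Lemma~\ref{lem:corelemma}, and a general compact projective is only a retract of such a product (e.g.\ $\underline{\Z/2}$ over $R=\Z/6$).

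One caveat concerns your sketch of $\underline{R}\otimes^{\solid}\prod_J\Z\cong\prod_J\underline{R}$ via the finite quotients $R_j$. As written it does not work. It silently commutes $-\otimes^{\solid}\prod_J\Z$ with the cofiltered limit $\varprojlim_j\underline{R_j}$, but the solid tensor product does not commute with limits in general, and this interchange is the real content of the step. It is supplied by \cite[Theorem~3.14]{tang24}, which you already plan to cite; this is the paper's Lemma~\ref{lem:profinitembeddings}(2), $\underline{R\llbracket S\rrbracket}=R[S]^{\solid}$. Combine it with $R\llbracket S\rrbracket\cong\mathrm{Hom}(C(S,\Z),R)\cong\prod_J R$, which holds by N\"obeling's theorem. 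Note also that (3) never needs this identification; only the ``products'' half of (2) does.
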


\begin{proof}
The first assertion is due to~\cite[Lemma~3.9]{tang24} and the fact that $\underline{R} = \underline{R}[\ast]^{\solid}$ is a projective solid $\underline{R}$-module. The second assertion holds by~\cite[Proposition~3.10]{tang24}. The third assertion follows from the proof of~\cite[Corollary~5.5]{Scholze2019} and \cite[Equation~1, p.~10]{tang24}.
\end{proof}

\begin{notation}
For a profinite ring $R$ and a profinite space $S$ we write $R[S]^{\solid}$ for $\underline{R}[\underline{S}]^{\solid}$.
\end{notation}

\subsection{Solid modules and (co)homology of profinite groups}

The purpose of this subsection is to prove the section's main theorem, namely Theorem~\ref{thm:profintosolid}. The following lemma due to Tang yields the necessary link from continuous modules over a profinite ring to solid modules. This link allows one to perform all continuous homological algebra for profinite groups equivalently in condensed mathematics.

\begin{lemma}\label{lem:profinitembeddings}
Let $R$ be a profinite ring.
\begin{enumerate}
\item The fully faithful condensation functor $\underline{-}\colon T1\text{-}Mod_R^{\heartsuit} \rar \mathrm{Cond}(\mathbf{Mod}_{\underline{R}})^{\heartsuit}$ takes discrete and profinite $R$-modules to solid $\underline{R}$-modules. Restricted to profinite $R$-modules, $\underline{-}$ is exact where the same holds true if one restricts to discrete modules.
\item If $S$ is a profinite space, then $\underline{R\llbracket S \rrbracket} = R[S]^{\solid}$. In particular, the condensation functor $\underline{-}$ restricted to profinite $R$-modules preserves projectives.
\end{enumerate}
\end{lemma}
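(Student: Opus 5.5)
We prove the two parts separately, in both cases reducing to Tang's comparison results and to explicit descriptions of the condensates involved.

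\emph{Part (1).} Write a profinite $R$-module $M=\varprojlim_j M_j$ as an inverse limit of finite discrete $R$-modules. A discrete module is a filtered colimit of its finite submodules.

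First, condensation of a finite discrete module gives a finite discrete condensed module. Such a module is solid: by Lemma~\ref{lem:analyticstructure}(2) it suffices that the underlying condensed abelian group is solid, and finite discrete abelian groups are solid by \cite{Scholze2019}.

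Next, condensation commutes with inverse limits of topological spaces, since $\Hom_{\tn{cts}}(S,\varprojlim M_j)=\varprojlim \Hom_{\tn{cts}}(S,M_j)$. For a discrete module $M$ and a profinite $S$, a continuous map $S\to M$ has finite image. Hence $\underline{M}=\colim \underline{M_k}$ over the finite submodules $M_k$, and this filtered colimit in condensed modules is computed pointwise on profinite sets. Since $\Solid_{\underline R}^\heartsuit$ is stable under all limits and colimits (Lemma~\ref{lem:analyticrings}), both profinite and discrete modules have solid condensates.

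Full faithfulness is \cite[Theorem~3.2]{tang24}. Alternatively, one can argue that $T1$ condensation is faithful, and that continuity of a map is detected on profinite probes because compactly generated spaces are determined by maps from profinite sets.

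For exactness on profinite modules, take a short exact sequence $0\to A\to B\to C\to 0$ of profinite modules. Left exactness is automatic, since condensation is a right adjoint on compactly generated spaces; concretely, kernels are computed pointwise. For surjectivity as condensed sets, we need that every continuous $S\to C$ lifts locally, i.e.\ after a surjection $S'\to S$. Take $S'=S\times_C B$: this is profinite because $B\to C$ is a continuous surjection of compact Hausdorff spaces, so $S'\to S$ is a surjection of profinite sets. For discrete modules the same argument works with pointwise surjectivity on locally constant maps. This local-lifting step is the only genuinely topological point.

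\emph{Part (2).} Write $S=\varprojlim S_i$ with $S_i$ finite and $R=\varprojlim R_j$ with $R_j$ finite. Then
\[\underline{R\llbracket S\rrbracket}=\varprojlim_{i,j}\underline{R_j[S_i]}=\varprojlim_{i,j}\underline{R_j}[\underline{S_i}],\]
where the first equality uses that condensation commutes with limits. Comparing this with the definition $R[S]^{\solid}=\underline R[\underline S]^{\solid}$, we identify it with $\varprojlim_i \underline R[\underline{S_i}]$ via \cite[Equation~1, p.~10]{tang24}, noting that $\underline R[\underline{S_i}]=\underline R^{S_i}=\varprojlim_j \underline{R_j}^{S_i}$.

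For the consequence about projectives: every projective profinite $R$-module is a direct summand of a free profinite module $R\llbracket S\rrbracket$ on a profinite space $S$. Its condensate is therefore a summand of $R[S]^{\solid}$, which is projective by Lemma~\ref{lem:coeffringanalytic}(3).

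The main obstacle is the identification of $R[S]^{\solid}$ with the inverse limit in Part (2). I would try first to deduce it from Tang's description of solidification over a profinite ring, reducing to the case $R=\widehat{\Z}$ and then base changing.
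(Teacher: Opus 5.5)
Your proposal is correct, but it takes a more hands-on route than the paper. The paper's proof is purely by citation: part~(1) is attributed to \cite[Theorem~3.2]{tang24} and its proof, and part~(2) to the statement and proof of \cite[Theorem~3.14]{tang24}.

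You instead derive (1) from general facts:
\begin{itemize}
\item finite discrete modules are solid (here you implicitly need that $\underline{R}$ is itself solid, Lemma~\ref{lem:coeffringanalytic}(1), to apply Lemma~\ref{lem:analyticstructure}(2));
\item profinite and discrete modules are, respectively, limits and filtered colimits of finite ones (the latter because cyclic submodules of a discrete module over the compact ring $R$ are finite);
\item exactness holds pointwise for kernels, and for cokernels follows from the local lift along the profinite surjection $S\times_C B\to S$.
\end{itemize}
In (2) you compute $\underline{R\llbracket S\rrbracket}=\varprojlim_{i,j}\underline{R_j}^{S_i}$ directly. You then obtain preservation of projectives from a retract argument combined with Lemma~\ref{lem:coeffringanalytic}(3), rather than reading it off from Tang.

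What your version buys is transparency. It shows that everything except one step is point-set topology or formal, and it isolates the single genuinely solid-theoretic input: $\underline{R}[\underline{S}]^{\solid}\cong\varprojlim_i\underline{R}^{S_i}$ for profinite $R$. Check that the reference you invoke for this step really gives the inverse-limit description. If it only expresses $\underline{R}[\underline{S}]^{\solid}$ as a solid base change of $\Z[S]^{\solid}$, you still have to commute the solid tensor product past the limit $\underline{R}=\varprojlim_j\underline{R_j}$. That is exactly the content the paper imports via \cite[Theorem~3.14]{tang24}. The paper's route is shorter but leaves all of this opaque.
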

\begin{proof}
The first assertion is due to \cite[Theorem~3.2]{tang24} and its proof. The second assertion follows from the statement and the proof of~\cite[Theorem~3.14]{tang24}.
\end{proof}

We also need to construct a solid group ring and solid modules over them in order to define solid group homology and cohomology. By the previous lemma, the condensate of the completed group ring agrees with the solidification of condensed group ring. We thus introduce the notation
$$
\rg := \ring[G]^{\solid} =\underline{\ring\llbracket G \rrbracket},
$$
and refer to $\rg$ as the \emph{solid group ring}.

\begin{lemma}\label{lem:analyticgrouprings}
Let $\ring$ be a commutative profinite ring and $G$ a profinite group. Equip $\rg$ with the analytic structure from Lemma~\ref{lem:analyticstructure}. Then:
\begin{enumerate}
\item The category $\Solid_{\rg}$ is equivalent to the category of solid ${\cR}$-modules with a commuting $\underline{G}$-action.
\item The category $\SolidLG$ embeds fully faithfully into $\mathrm{Cond}(\mathbf{Mod}_{\cR[\underline{G}]})$. Solidification takes the form $(-)^{\solid}\colon \mathrm{Cond}(\mathbf{Mod}_{\cR[\underline{G}]}) \rar \SolidLG$.
\end{enumerate}
\end{lemma}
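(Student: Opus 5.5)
Both parts reduce to statements already available once we know that $\rg=\ring[G]^{\solid}$ is the solidification of the condensed group ring $\cR[\cG]$. Part (2) is the direct one, so we treat it first, then (1).

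For (2), apply Lemma~\ref{lem:analyticstructure}(1) to the condensed ring $\mathcal{R}=\cR[\cG]$. This gives an analytic ring structure whose solid modules form a full subcategory of $\mathrm{Cond}(\mathbf{Mod}_{\cR[\cG]})$, closed under limits and colimits. By Lemma~\ref{lem:analyticrings}, this inclusion has a left adjoint, the solidification functor. By Lemma~\ref{lem:analyticstructure}(3), $\Solid_{\cR[\cG]}=\Solid_{\cR[\cG]^{\solid}}$. Lemma~\ref{lem:profinitembeddings}(2), applied with $S=G$, gives $\cR[\cG]^{\solid}=\underline{\ring\llbracket G\rrbracket}=\rg$. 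So $\SolidLG$ is this full subcategory, and solidification is the left adjoint $(-)^{\solid}$. The derived statement follows because the heart embedding is fully faithful and exact, and $\mathrm{RHom}$ computed in either category agrees; this is the agreement of internal Homs in Lemma~\ref{lem:analyticrings}. One subtle point is to check that the analytic structure put on $\rg$ by Lemma~\ref{lem:analyticstructure} coincides with the one induced from $\cR[\cG]$. Both have "free" modules $(-)[S]^{\solid}$, so they agree.

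For (1), start from the fact recalled in \S\ref{sssec:tensorhoms}: condensed $\cR[\cG]$-modules are equivalent to condensed $\cR$-modules with a commuting $\cG$-action. It then suffices to show that, under this equivalence, a condensed $\cR[\cG]$-module is solid exactly when its underlying $\cR$-module is solid. By Lemma~\ref{lem:analyticstructure}(2), applied twice, both conditions are equivalent to the underlying condensed abelian group being solid. The first application uses $\mathcal{R}=\cR[\cG]^{\solid}=\rg$, which is solid by Lemma~\ref{lem:profinitembeddings}(1) as the condensate of a profinite module. The second uses $\mathcal{R}=\cR$, which is solid by Lemma~\ref{lem:coeffringanalytic}(1). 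Combined with (2), this yields the equivalence $\Solid_{\rg}\simeq$ solid $\cR$-modules with commuting $\cG$-action.

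The main obstacle is bookkeeping: applying Lemma~\ref{lem:analyticstructure}(2) with $\mathcal{R}=\rg$ in place of $\cR[\cG]$ requires the identification (3) together with the compatibility of analytic structures noted in the proof of (2). I would verify this by comparing the defining Hom-isomorphism conditions on extremally disconnected $S$, using $\rg[S]^{\solid}=(\cR[\cG][S])^{\solid}$.
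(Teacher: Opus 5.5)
Your argument is correct, but it runs in the opposite direction from the paper's. The paper cites \cite[pp.~23--24]{tang24} for (1). It then deduces (2) from (1) together with Lemma~\ref{lem:analyticstructure}(2) applied to the solid ring $\cR$. By (1), a solid $\rg$-module is a condensed $\cR$-module with commuting $\cG$-action whose underlying $\cR$-module is solid. It is therefore a condensed $\cR[\cG]$-module whose underlying condensed abelian group is solid, which exhibits $\SolidLG$ as a full subcategory of $\mathrm{Cond}(\mathbf{Mod}_{\cR[\cG]})$.

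You instead obtain (2) first, from general facts about induced analytic structures: Lemma~\ref{lem:analyticstructure}(1),(3), Lemma~\ref{lem:analyticrings}, and the identification $\cR[\cG]^{\solid}=\underline{\ring\llbracket G\rrbracket}$, which the paper also uses in the paragraph before the lemma. You then recover (1) from (2), the condensed-level equivalence recalled in \S\ref{sssec:tensorhoms}, and two applications of Lemma~\ref{lem:analyticstructure}(2).

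Your route buys self-containedness. It replaces the external citation for (1) with lemmas already stated in the paper, and it justifies the \emph{derived} full faithfulness explicitly via Lemma~\ref{lem:analyticrings}, which the paper's two-line proof leaves implicit. The price is the compatibility bookkeeping you correctly flag. Lemma~\ref{lem:analyticstructure}(2) cannot be applied to the non-solid ring $\cR[\cG]$ itself. You therefore need (3) to know that a condensed $\cR[\cG]$-module with solid underlying abelian group carries a unique compatible $\rg$-module structure. You also need the two analytic structures to agree, e.g.\ via $(\cR[\cG]\otimes\Z[S])^{\solid}\cong(\rg\otimes\Z[S])^{\solid}$ by monoidality of solidification. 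Finally, Lemma~\ref{lem:profinitembeddings}(2) only identifies $\cR[\cG]^{\solid}$ with $\underline{\ring\llbracket G\rrbracket}$ as modules, so you should note that the ring structures also match. The paper glosses over this last point as well.
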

\begin{proof}
The first assertion follows from~\cite[pp.~23--24]{tang24}. The second assertion follows from the first and from the second assertion of Lemma~\ref{lem:analyticstructure}.
\end{proof}

We take $\st$ on $\LSolid$ to be the solidification of the tensor product $\otimes_{\cR}$ on $\mathrm{Cond}(\mathbf{Mod}_{\cR})$. By the above lemma, $\st$ is also defined within $\SolidLG$. This is a symmetric monoidal structure that by construction makes solidification into a symmetric monoidal functor. We similarly have $\stg$ defined by $M \stg N= (M \otimes_{\cR[\underline{G}]} N)^{\solid}$ for $M, N \in \SolidLG$. The internal ${\cR}$-linear Hom on $\mathrm{Cond}(\mathbf{Mod}_{\cR[\underline{G}]})$ restricts to an internal Hom $\rh(-,-)$ for $\SolidLG$, while the $\rg$-linear Hom $\rhg$ takes values in solid ${\cR}$-modules. As a consequence of the adjunctions from \textsection\ref{sssec:tensorhoms}, these functors satisfy for $L,M,N\in \SolidLG$ and $A\in \Solid$
\begin{align}
    \rh(M \stg N, A)& \cong \rhg(N, \rh(M,A))\label{eq:adjunction1}\\
    \rh(M \stg N, A)& \cong \rhg(M, \rh(N,A))\label{eq:adjunction1a}
\end{align}
and
\begin{equation}\label{eq:adjunction3}
\rhg(L \st M, N)\cong \rhg(L, \rh(M,N)).
\end{equation}	
If $M$ is a solid $\rg$-bimodule and $N$ is a solid left $\rg$-module, note that $\rhg(M,N)$ is naturally a solid left $\rg$-module. We have
\begin{equation}\label{eq:adjunction2}
\rhg(M \stg N, L) \cong \rhg(N, \rhg(M,L)),
\end{equation}
the left module structures of $M$ and $L$ are just along for the ride. For similar reasons, for $M,N\in\SolidLG$ and $A\in \LSolid$, the hom-tensor adjunction of ${\cR}$-modules
\begin{equation}\label{eq:adjunction4}
    \rh(M,\rh(N,A))\cong\rh(M\st N, A)
\end{equation}
is an isomorphism of $\rg$-modules.

With the derived functors in place, we can define (co)homology. Besides the solid enriched hom $\rhg(-,-)$, there is also the external hom $\mathrm{RHom}(-,-)$. This means there are two versions of cohomology.

\begin{definition}
Let $G$ be a profinite group and $\ring$ a profinite commutative ring. We define the solid group homology of $G$ with coefficient ring ${\cR}$ as
\[ \mathbf{H}_{\bullet}^{\cR}(G, -) := H_{\bullet}(\cR \otimes_{\rg}^{\solid} -) \, . \]
We the solid internal group cohomology of $G$ with coefficient ring ${\cR}$ as
$$
\mathbf{H}_{\cR}^{\bullet}(G, -) := H^{\bullet}\big(\rhg(\cR, -)\big),
$$
and the solid external group cohomology of $G$ as
$$
H_{\cR}^{\bullet}(G, -) := H^{\bullet}\big(\mathrm{RHom}_{\rg}(\cR, -)\big).
$$
\end{definition}

We consider when cohomology groups of profinite groups are profinite, which is necessary in the third assertion of Theorem~\ref{thm:profintosolid} and ultimately in Theorem~\ref{thmA} and Theorem~\ref{thmB}.

\begin{lemma}\label{lem:profincohomgroups}
Let $G$ be a profinite group and $\ring$ be a commutative profinite ring. Then the following are equivalent.
\begin{enumerate}
    \item $H_{\ring}^k(G, M)$ is finite for every finite $M$ and any $k$.
    \item $H_{\ring}^k(G, M)$ is profinite for every profinite $M$ and any $k$.
\end{enumerate}
\end{lemma}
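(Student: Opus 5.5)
The plan is to prove the two implications separately. For (2) $\Rightarrow$ (1), I would observe that a finite $\ring\llbracket G\rrbracket$-module is profinite, so each $H_{\ring}^k(G,M)$ is profinite by (2). It is also a subquotient of a cochain group: compute it with the continuous cochains $C^k(G,M)$, which form a direct limit over finite quotients of $G$ of finite groups. Hence it is a torsion module over the finite ring $\ring/\mathrm{Ann}(M)$, where $\mathrm{Ann}(M)$ is an open ideal, and it is in fact discrete, being a quotient of a direct limit of finite modules. A profinite group that is discrete is compact and discrete, hence finite. The only care needed is to make sure that "profinite" in (2) refers to the topology that restricts to the discrete one here. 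I would phrase (2) as saying that the natural topology on cohomology is Hausdorff and compact, which for finite $M$ forces finiteness.

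For (1) $\Rightarrow$ (2), write a profinite $M$ as an inverse limit $M=\varprojlim_j M_j$ of finite quotient modules. The key step is the continuity statement
\[
H_{\ring}^k(G,\varprojlim_j M_j)\cong \varprojlim_j H_{\ring}^k(G,M_j).
\]
Since each $H^k_\ring(G,M_j)$ is finite by (1), the right side is an inverse limit of finite modules and hence profinite. To justify the isomorphism, I would use the derived-functor description of cohomology for profinite modules, namely $\mathrm{RHom}_{\ring\llbracket G\rrbracket}(P_\bullet,-)$ with $P_\bullet\to\ring$ a projective resolution by profinite modules. Then $\mathrm{Hom}(P_i,\varprojlim M_j)=\varprojlim \mathrm{Hom}(P_i,M_j)$. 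Passing the limit through cohomology gives a Milnor-type short exact sequence
\[
0\to \varprojlim{}^1_j H^{k-1}(G,M_j)\to H^k(G,M)\to \varprojlim_j H^k(G,M_j)\to 0.
\]
The $\varprojlim^1$ term vanishes because the system consists of finite modules, which satisfy the Mittag-Leffler condition. This requires reducing to a countable or well-ordered cofinal system, or else using exactness of inverse limits of profinite (compact) modules in general. I prefer the latter: inverse limits are exact on the category of profinite $\ring\llbracket G\rrbracket$-modules, so the limit commutes with cohomology of the cochain complexes once each $\mathrm{Hom}(P_i,M_j)$ is given a compact topology.

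The main obstacle is exactly that final point. $\mathrm{Hom}(P_i,M_j)$ is not finite in general, so the cochain complexes are not complexes of compact modules and exactness of $\varprojlim$ does not apply directly. To get around this, I would run the argument at the level of cohomology rather than cochains, by induction on the degree $k$ through dimension shifting. Embed each finite $M_j$ into a coinduced module $\mathrm{Coind}(M_j)$, which is cohomologically trivial and profinite, compatibly in $j$. The long exact sequences then express $H^k(G,M_j)$ as a cokernel or kernel of maps between finite modules (using (1)) for $k\ge1$. Exactness of $\varprojlim$ on inverse systems of finite modules, combined with the five lemma, would then transfer the identification $H^k(G,M)\cong\varprojlim_j H^k(G,M_j)$ from degree $k-1$ to degree $k$. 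The base case is $H^0(G,M)=M^G=\varprojlim_j M_j^G$.
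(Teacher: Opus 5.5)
The overall structure is the paper's. Your (2)$\Rightarrow$(1) is the paper's argument: cohomology with finite coefficients is discrete, and discrete plus profinite forces finite. For (1)$\Rightarrow$(2) you reduce, as the paper does, to $H^k_\Lambda(G,\varprojlim_j M_j)\cong\varprojlim_j H^k_\Lambda(G,M_j)$. The paper takes this from Pletch's Theorem~1.6 together with exactness of inverse limits of profinite groups.

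You are right that exactness of $\varprojlim$ on profinite modules does not apply directly to the cochains, since $\mathrm{Hom}(P_i,M_j)$ is discrete and usually infinite. However, the dimension-shifting workaround you settle on fails, for the following reasons.

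\begin{itemize}
\item For infinite $G$ the coinduced module of a finite $M_j$ is $\mathrm{Map}_{\mathrm{cts}}(G,M_j)$, which is discrete and infinite, not profinite. The profinite candidate $\Lambda\llbracket G\rrbracket\widehat{\otimes}_\Lambda M_j$ is not cohomologically trivial: for example $H^1_{\mathbb{Z}_p}(\mathbb{Z}_p,\mathbb{F}_p\llbracket\mathbb{Z}_p\rrbracket)\cong\mathbb{F}_p$. This is just the lack of enough injectives among profinite modules.
\item Consequently the cokernels $\mathrm{Map}_{\mathrm{cts}}(G,M_j)/M_j$ are not finite. Neither (1) nor your inductive hypothesis, which concerns limits of \emph{finite} modules, applies to them.
\item On the limit side, $\varprojlim_j\mathrm{Map}_{\mathrm{cts}}(G,M_j)=\mathrm{Map}_{\mathrm{cts}}(G,M)$ is not a profinite module. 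So $H^\bullet_\Lambda(G,-)$, defined as derived functors on profinite modules, is not even defined on the short exact sequence you want to use.
\item Even with continuous cochain cohomology the induction does not close. For $k\ge 2$ you need the continuity statement in degree $k-1$ for the system of infinite discrete modules $\mathrm{Map}_{\mathrm{cts}}(G,M_j)/M_j$. The five lemma then requires vanishing of $\varprojlim^1_j\mathrm{Map}_{\mathrm{cts}}(G,M_j)$, which is exactly the cochain-level statement you were trying to avoid.
\end{itemize}

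The missing ingredient is the derived statement $\mathrm{RHom}_{\Lambda\llbracket G\rrbracket}(\Lambda,M)\simeq\mathrm{R}\varprojlim_j\mathrm{RHom}_{\Lambda\llbracket G\rrbracket}(\Lambda,M_j)$. It gives a spectral sequence $E_2^{pq}=\varprojlim^p_j H^q_\Lambda(G,M_j)\Rightarrow H^{p+q}_\Lambda(G,M)$. Under (1) the columns $p>0$ vanish, because higher derived limits of inverse systems of finite modules vanish over any directed set (classically, or from Proposition~\ref{prop:invlimexact} by evaluating at a point). Hence $H^q_\Lambda(G,M)\cong\varprojlim_j H^q_\Lambda(G,M_j)$.

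In the paper's setting the derived statement is formal:
\begin{itemize}
\item $\underline{M}\simeq\mathrm{R}\varprojlim_j\underline{M_j}$ by Proposition~\ref{prop:invlimexact};
\item $\mathrm{RHom}_{\Lambda(G)}(\underline{\Lambda},-)$ commutes with limits;
\item part (2) of Theorem~\ref{thm:profintosolid}, which does not use this lemma, identifies $\mathrm{RHom}_{\Lambda(G)}(\underline{\Lambda},\underline{N})$ with $\mathrm{RHom}_{\Lambda\llbracket G\rrbracket}(\Lambda,N)$ for $N=M$ and $N=M_j$.
\end{itemize}
Classically, you can instead resolve $\Lambda$ by free profinite modules $\Lambda\llbracket G\rrbracket\llbracket\beta D_i\rrbracket$ on Stone--\v{C}ech compactifications of discrete sets. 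Then $\mathrm{Hom}(P_i,M_j)\cong\prod_{D_i}M_j$ is $\varprojlim$-acyclic in $j$, so the limit of the cochain complexes computes $\mathrm{R}\varprojlim$. Your Milnor/Mittag-Leffler argument is valid only when $M$ has a countable cofinal system of finite quotients.
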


\begin{proof}
This essentially follows from the statement and proof of Theorem~1.6 in~\cite{Pletch1980}. To prove that the first assertion implies the second, we use that the second variable of Hom-functors commutes with inverse limits. By~\cite[Proposition~2.2.4]{rib10}, inverse limits of profinite groups are exact. Thus, the result follows because group cohomology commutes with inverse limits. For the other implication one uses that cohomology groups with discrete coefficients are discrete and that a continuous module is finite if and only if it is both discrete and profinite.
\end{proof}

We are now ready to state and prove the main result of this section:

\begin{theorem}\label{thm:profintosolid}
Let $G$ be a profinite group and $\ring$ a profinite ring.
\begin{enumerate}
\item If $\widehat{\otimes}_{\ring}$ denotes the tensor product of profinite $\ring\llbracket G \rrbracket$-modules and $\otimes_{\ring}^{\solid}$ the tensor product of solid $\rg$-modules, then
\[ \underline{M \widehat{\otimes}_{\ring} N} \cong \underline{M} \st \underline{N} \quad \text{and} \quad \underline{M \widehat{\otimes}_{\ring \llbracket G \rrbracket} N} \cong \underline{M} \stg \underline{N} \]
for any chain complexes of profinite $\ring\llbracket G \rrbracket$-modules $M$ and $N$. In particular, there is an isomorphism $H_{\bullet}^{\ring}(G, -) \cong \mathbf{H}_{\bullet}^{\cR}(G, -)$.
\item There is an isomorphism
\[ \mathrm{RHom}_{\ring \llbracket G \rrbracket}(M, N) \cong \mathrm{RHom}_{\rg}(\underline{M}, \underline{N}) \]
for any chain complex of profinite $\ring\llbracket G \rrbracket$-modules $M$ and any chain complex of discrete or profinite $\ring \llbracket G \rrbracket$-modules $N$. In particular, there is an isomorphism $H_{\ring}^{\bullet}(G, -) \cong H_{\cR}^{\bullet}(G, -)$.
\item If one endows the Hom-functors of continuous modules with the compact-open topology, then there is an isomorphism of chain complexes of condensed $\cR$-modules
\[ \underline{\mathrm{RHom}_{\ring \llbracket G \rrbracket}(M, N)} \cong \rhg(\underline{M}, \underline{N}) \]
for any chain complex of profinite $\ring\llbracket G \rrbracket$-modules $M$ and any chain complex of discrete or profinite $\ring \llbracket G \rrbracket$-modules $N$. If $G$ satisfies condition (FF), then this descends to an isomorphism of solid $\cR$-modules
\[ \underline{H_{\ring}^{\bullet}(G, N)} \cong \mathbf{H}_{\cR}^{\bullet}(G, \underline{N}) \]
for any profinite or discrete $\ring \llbracket G \rrbracket$-module $N$.
\end{enumerate}
\end{theorem}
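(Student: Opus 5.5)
The overall strategy is to reduce every derived statement to an underived one on projective resolutions, using that condensation is exact on profinite modules and sends projectives to projectives (Lemma~\ref{lem:profinitembeddings}). Throughout we work with bounded-below chain complexes, the unbounded case following by the usual colimit and limit arguments over truncations.

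For assertion (1), replace $M$ (and, where needed, $N$) by a projective resolution $P_\bullet \to M$ in profinite $\ring\llbracket G\rrbracket$-modules, with terms direct summands of $\ring\llbracket S\rrbracket$ for profinite spaces $S$. By Lemma~\ref{lem:profinitembeddings}, $\underline{P_\bullet}\to\underline{M}$ is a projective resolution in $\SolidLG$, with terms summands of $\ring[S]^{\solid}$, so $\underline{M}\stg\underline{N}$ is computed termwise by $\underline{P_i}\stg\underline{N}$. The key underived input is the identity
\[
\ring[S]^{\solid}\st \underline{N}\cong \underline{\ring\llbracket S\rrbracket\widehat{\otimes}_\ring N}
\]
for profinite $N$. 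To prove it, write $N=\varprojlim N_j$ with finite $N_j$. For finite $N_j$, the solid tensor product with $\ring[S]^{\solid}=\varprojlim \ring[S_i]$ commutes with the limit, since finite modules are finitely presented and $\prod$-type objects tensor well in the solid world (cf.\ \cite{tang24}). We then pass to $\varprojlim_j$ using that $\ring[S]^{\solid}$ is compact projective and, after writing it as a summand of $\prod_I\cR$, that $\prod_I\cR\st -$ commutes with these cofiltered limits of finite modules. The $\rg$-version follows by taking coinvariants: $\stg$ is the coequaliser of the two actions on $\st$, the completed tensor product $\widehat{\otimes}_{\ring\llbracket G\rrbracket}$ is the analogous coequaliser of profinite modules, and condensation is exact on profinite modules, so it preserves this cokernel. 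Taking $M=\ring$ then gives $H^\ring_\bullet\cong\mathbf{H}^{\cR}_\bullet$.

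For assertion (2), use the same resolution $P_\bullet$. Full faithfulness of condensation gives
\[
\Hom_{\ring\llbracket G\rrbracket}(P_i,N)=\Hom_{\rg}(\underline{P_i},\underline{N})
\]
for $N$ discrete or profinite. Since the $\underline{P_i}$ are projective in $\SolidLG$, the complex $\Hom_{\rg}(\underline{P_\bullet},\underline{N})$ computes $\mathrm{RHom}_{\rg}(\underline M,\underline N)$. For complexes $N$ one applies this termwise and totalises. Taking $M=\ring$ yields the comparison $H_{\ring}^{\bullet}(G,-)\cong H_{\cR}^{\bullet}(G,-)$.

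For assertion (3), the same argument is needed internally. For each extremally disconnected $T$ we must identify the $T$-points:
\[
\underline{\Hom}_{\rg}(\underline{P_i},\underline N)(T)=\Hom_{\rg}(\underline{P_i}\st\ring[T]^{\solid},\underline N)\cong \Hom_{\mathrm{cts}}(T,\Hom(P_i,N)_{\mathrm{c.o.}}).
\]
To get this, first apply (1) to write $\underline{P_i}\st\ring[T]^{\solid}=\underline{P_i\widehat\otimes\ring\llbracket T\rrbracket}$, then use full faithfulness and the exponential law for the compact-open topology, which holds since $T$ and $P_i$ are compact Hausdorff. Since $\underline{P_i}$ is projective, $\rhg(\underline{P_i},-)$ is underived, so the complex computes $\rhg$. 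The final claim needs cohomology to commute with condensation. Condensation is not exact on arbitrary topological modules, but under (FF), Lemma~\ref{lem:profincohomgroups} shows the cohomology groups are profinite for profinite $N$ (and discrete for discrete $N$). We then argue that the cocycle and coboundary modules are closed subobjects in a profinite (resp.\ discrete) setting, where condensation is exact by Lemma~\ref{lem:profinitembeddings}, so $H^\bullet$ commutes with $\underline{(-)}$.

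The main obstacle is the limit interchange in (1), namely showing that $\ring[S]^{\solid}\st-$ commutes with the cofiltered limit $\varprojlim N_j$. I would handle this first via $\prod_I\cR\st\varprojlim N_j$ using \cite[Proposition~3.10]{tang24} and the fact that for profinite $\ring$ the product $\prod_I\cR$ is flat and commutes with limits of finite modules. A secondary obstacle is justifying that the Hom complexes in (3) have closed coboundaries, and this is exactly where (FF) enters.
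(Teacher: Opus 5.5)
\textbf{Where you agree with the paper.} For (1), (2) and the complex-level part of (3) you follow the paper's route. The paper's proof is only citations (Lemma~\ref{lem:profinitembeddings}, Lemma~\ref{lem:analyticgrouprings}, \cite[Proposition~3.20]{tang24}, \cite[Proposition~4.2]{Scholze2019}, \cite[Proposition~3.12]{tang24}). You unpack them via profinite projective resolutions, exactness and full faithfulness of condensation, the underived tensor comparison, and the exponential law on $T$-points.

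Two of your justifications need repair, though neither is fatal. First, finite $\ring$-modules need not be finitely presented when $\ring$ is not Noetherian; the residue field of $\varprojlim_n \mathbb{F}_p[x_1,\dots,x_n]/(x_1,\dots,x_n)^n$ is a counterexample. It is cleaner to present a profinite $N$ as $\ring[S']^{\solid}\to\ring[S]^{\solid}\to\underline{N}\to 0$ and use $\ring[T]^{\solid}\st\ring[S]^{\solid}\cong\ring[T\times S]^{\solid}$ together with exactness of condensation on profinite modules. That gives $\ring[T]^{\solid}\st\underline{N}\cong\underline{\ring\llbracket T\rrbracket\widehat{\otimes}_\ring N}$ directly, with no limit interchange. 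Second, projectivity of $\underline{P_i}$ alone does not make $\rhg(\underline{P_i},-)$ underived; for condensed projectives this fails, cf.\ Remark~\ref{rem:projectives}. What makes it work here is that $\underline{P_i}\st\ring[T]^{\solid}$ is a summand of $\rg[S\times T]^{\solid}$, which is projective for every profinite $T$ by Lemma~\ref{lem:coeffringanalytic}.

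\textbf{The genuine gap: the final descent in (3) for profinite $N$.} The cochain modules $\Hom_{\mathrm{cts}}(P_k,N)\cong C(G^k,N)$ with the compact-open topology are pro-discrete but in general not profinite; already $C(\Z_p,\Z_p)$ is not compact. The cocycle and coboundary submodules are not profinite either. So there is no profinite setting in which Lemma~\ref{lem:profinitembeddings} applies. Closedness of $B^k$ would not suffice anyway: condensation preserves the quotient $Z^k\to Z^k/B^k$ only if continuous maps from profinite sets into $Z^k/B^k$, and into $B^k$, lift locally. For discrete $N$ your argument is fine, since the cochain modules are then discrete.

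\textbf{How to repair it.} Condition (FF) must enter through limits instead of through closedness. Write $\underline{N}=\varprojlim_j\underline{N_j}$ with $N_j$ finite. By Proposition~\ref{prop:invlimexact} this is also the derived limit, so $\rhg(\cR,\underline{N})\cong\mathrm{R}\varprojlim_j\rhg(\cR,\underline{N_j})$. By the discrete case, $\mathbf{H}^q_{\cR}(G,\underline{N_j})\cong\underline{H^q_\ring(G,N_j)}$, which is finite by (FF). Hence its higher derived limits vanish by Proposition~\ref{prop:invlimexact}, the limit spectral sequence collapses, and
\[
\mathbf{H}^k_{\cR}(G,\underline{N})\cong\varprojlim_j\underline{H^k_\ring(G,N_j)}\cong\underline{\varprojlim_j H^k_\ring(G,N_j)}\cong\underline{H^k_\ring(G,N)},
\]
where the last step is as in Lemma~\ref{lem:profincohomgroups}.

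For comparison, the paper reads the cohomology isomorphism off the complex-level one in a single step. It uses (FF) only through Lemma~\ref{lem:profincohomgroups}, to know the groups are profinite or discrete and so have solid condensates. You are right that this passage needs an argument, but the one you propose does not work.
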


Before proving this, we remark on the novelty of this theorem in some detail. The second assertion of the above theorem can be deduced from \cite[Theorem B]{bri25}. The first two chapters of Brink's article are the very first source to rigorously define the category-theoretic formalism underlying condensed mathematics while the last chapter investigates forms of condensed group cohomology over $\mathbb{Z}$. As stated, Brink's Theorem B only pertains to the coefficient ring $\mathbb{Z}$, but it can also be formulated over any profinite coefficient ring $\ring$. The third assertion of the above theorem is a lifting to derived functors of Tang's main result \cite{tang24}. The statements we need for derived functors are mostly implicit in his proofs, and the above theorem serves to extract them. Tang also indicates an approach to the first assertion.

\begin{proof}[Proof of Theorem~\ref{thm:profintosolid}]
One deduces the second assertion from Lemma~\ref{lem:profinitembeddings} and Lemma~\ref{lem:analyticgrouprings} where one uses additionally \cite[Proposition~3.20]{tang24} to prove the first assertion. The statement on derived Hom-functors in the third assertion follows from~\cite[Proposition~4.2]{Scholze2019} and~\cite[Proposition~3.12]{tang24}. Therefore, there is an isomorphism of condensed $\cR$-modules $\underline{H_{\ring}^{\bullet}(G, N)} \cong \mathbf{H}_{\cR}^{\bullet}(G, \underline{N})$. Here, $H_{\ring}^k(G, N)$ is profinite for profinite $N$ by Lemma~\ref{lem:profincohomgroups} and is discrete for discrete $N$. Therefore, the isomorphism of cohomology groups is one of solid $\cR$-modules by Lemma~\ref{lem:analyticstructure} and Lemma~\ref{lem:profinitembeddings}.
\end{proof}

\section{Finiteness conditions in condensed mathematics}\label{sec:finitenessconds}
The subject of this section is one of the first investigations of homological finiteness properties in condensed mathematics. It serves the purpose of proving two milestones towards our main result and constitutes a generalisation of the account~\cite[Section~3]{Anschuetz2020}. The fist milestone is that all profinite groups are of type $\mathrm{CP}_{\infty}$ over every profinite ring $\cR$, which is necessary for the proof of Theorem~\ref{thmA}. As an intermediate result, we prove that every solid $\rg$-module can be obtained as an direct limit of condensates of profinite $\ring \llbracket G \rrbracket$-modules. The second milestone is demonstrating that the cohomological dimension of $G$ for profinite modules is equal to that for solid modules, which is necessary in the proof of Corollary~\ref{corC}.

\subsection{Profinite groups are of type $\mathrm{CP}_\infty$}
We first prove that type $\mathrm{CP}_{\infty}$ is of a different nature than type $\mathrm{FP}_{\infty}$:

\begin{prop}\label{prop:solidandcompact}
For every infinitely generated profinite group $G$, the profinite $\widehat{\Z}\llbracket G \rrbracket$-module $\widehat{\Z}\llbracket G \rrbracket$ is finitely presented, but not a compact object. In other words, $\widehat{\mathbb{Z}} \llbracket G \rrbracket$ is of type $\mathrm{FP}_{\infty}$, but not of type $\mathrm{CP}_{\infty}$ in the category of profinite $\widehat{\mathbb{Z}} \llbracket G \rrbracket$-modules.
\end{prop}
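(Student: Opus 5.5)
The plan is to treat the two claims separately. The first is formal, and the second is proved by exhibiting an explicit direct system of profinite modules. For finite presentation (and type $\mathrm{FP}_\infty$): $\widehat{\Z}\llbracket G \rrbracket$ is free of rank one over itself. The resolution $0 \to \widehat{\Z}\llbracket G \rrbracket \xrightarrow{\id} \widehat{\Z}\llbracket G \rrbracket \to 0$ is therefore a finite resolution by finitely generated projectives, so there is nothing more to show.

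For non-compactness, I first note that $\Hom_{\widehat{\Z}\llbracket G \rrbracket}(\widehat{\Z}\llbracket G \rrbracket, M) = M$ as a set (evaluation at $1$). It therefore suffices to find a direct system $(M_n)$ of profinite $\widehat{\Z}\llbracket G \rrbracket$-modules for which the canonical map $\colim_n M_n \to \varinjlim^{\mathrm{pro}}_n M_n$ is not bijective. Here $\colim_n M_n$ is the colimit of the underlying sets, and $\varinjlim^{\mathrm{pro}}_n M_n$ is the direct limit in the category of profinite modules.

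I will recall that this category has direct limits, since via Pontryagin duality it is anti-equivalent to discrete torsion modules, which have limits. For a chain of surjections $M \to M/K_1 \to M/K_2 \to \cdots$ with increasing closed submodules $K_n$, the direct limit in the profinite category is $M/\overline{\bigcup_n K_n}$. Indeed, a map out of the system is a map out of $M$ killing every $K_n$, and by continuity it then kills the closure. The set-theoretic colimit, by contrast, is $M/\bigcup_n K_n$. So I need a strictly increasing chain of closed left ideals $K_n$ of $\widehat{\Z}\llbracket G \rrbracket$ whose union is not closed.

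For the construction, I will use that $G$ is not (topologically) finitely generated. I choose $g_1, g_2, \ldots \in G$ inductively so that the closed subgroups $H_n = \overline{\langle g_1, \dots, g_n \rangle}$ are strictly increasing; this is possible since no $H_n$ equals $G$. I then let $K_n$ be the kernel of $\widehat{\Z}\llbracket G \rrbracket \to \widehat{\Z}\llbracket G/H_n \rrbracket$, where $G/H_n$ is the profinite space of cosets $gH_n$ and $G$ acts on the left. The $K_n$ are closed left ideals, increasing, and strictly increasing because the coset spaces $G/H_n$ are pairwise distinct quotients.

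The main step is to show that $\bigcup_n K_n$ is not closed, and I expect this to be the only nontrivial point. I will argue by the Baire category theorem. Suppose $K := \bigcup_n K_n$ were closed. Then $K$ is a compact (hence Baire) space that is a countable union of closed subgroups, so some $K_m$ has nonempty interior in $K$. Hence $K_m$ is an open subgroup of the compact group $K$, so it has finite index. Then the strictly increasing chain $K_m \subsetneq K_{m+1} \subsetneq \cdots$ inside $K$ would have to stabilise, which is a contradiction. Hence $M/\bigcup_n K_n \to M/\overline{\bigcup_n K_n}$ is not injective, and $\widehat{\Z}\llbracket G \rrbracket$ is not compact. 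By the discussion above, this also means $\widehat{\Z}$-modules of this kind fail type $\mathrm{CP}_\infty$ in the profinite category, since the first term of any resolution would have to be compact.
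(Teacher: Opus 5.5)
Your proof is correct and shares the paper's skeleton. Both arguments use the coset modules $\widehat{\Z}\llbracket G/H_n\rrbracket$ for an increasing chain of finitely generated closed subgroups. Both compare the abstract colimit $\widehat{\Z}\llbracket G\rrbracket/\bigcup_n K_n$ with the profinite one, using that $\Hom_{\widehat{\Z}\llbracket G\rrbracket}(\widehat{\Z}\llbracket G\rrbracket,-)$ is the forgetful functor.

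The difference is in the key step. The paper chooses the chain to have dense union. Then $\overline{\bigcup_n K_n}$ is the augmentation ideal and the profinite colimit is the trivial module $\widehat{\Z}$; the paper then imports, from the proof of \cite[Lemma~4.4]{coo16b}, the fact that the abstract colimit is not $\widehat{\Z}$. You only need the chain to be strictly increasing, and you prove non-closedness of $\bigcup_n K_n$ yourself via Baire.

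This makes your argument self-contained and more general. A countable chain of finitely generated subgroups with dense union exists only if $G$ is separable, which fails e.g.\ for $\prod_I\Z/2\Z$ with $|I|>2^{\aleph_0}$; your chain exists in every non-finitely generated $G$. In fact your observation shows the group theory is inessential. The closed ideals $\prod_{p\le k}\Z_p\llbracket G\rrbracket$ of $\widehat{\Z}\llbracket G\rrbracket\cong\prod_p\Z_p\llbracket G\rrbracket$ have the dense, non-closed union $\bigoplus_p\Z_p\llbracket G\rrbracket$, so non-compactness holds for every $G$. What the paper's choice buys is that the offending profinite colimit is the trivial module itself.

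Two small repairs:
\begin{enumerate}
\item Witness strictness of the chain by $g_{n+1}-1\in K_{n+1}\setminus K_n$.
\item For the $\mathrm{CP}$ conclusion, note that $\widehat{\Z}\llbracket G\rrbracket$, being projective, is a retract of the first term $P_0$ of any resolution, and retracts of compact objects are compact.
\end{enumerate}
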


\begin{proof}
Because $\widehat{\Z}\llbracket G \rrbracket$ is finitely presented, it remains to show that it is not compact. Let $G_0 \leq G_1 \leq \cdots$ be finitely generated subgroups whose union is dense in $G$. We obtain a direct system $\{\hat{\Z}\llbracket G/G_i \rrbracket\}_{i \geq 0}$ of profinite $\hat{\Z}\llbracket G \rrbracket$-modules whose colimit in the profinite category is $\hat{\Z}$ by~\cite[Lemma~4.4]{coo16b}. Therefore,
\[ \mathrm{Hom}_{\widehat{\Z}\llbracket G \rrbracket} \Big(\hat{\Z}\llbracket G \rrbracket, \varinjlim_{i \geq 0} \widehat{\Z}\llbracket G/G_i \rrbracket\Big) = \widehat{\Z} \, . \]
By the proof of \cite[Lemma~4.4]{coo16b}, the colimit of $\{\mathrm{Hom}_{\widehat{\Z}\llbracket G \rrbracket}(\hat{\Z}\llbracket G \rrbracket,\hat{\Z}\llbracket G/G_i \rrbracket)\}_{i \geq 0}$, that is, the colimit of $\{\hat{\Z}\llbracket G/G_i \rrbracket\}_{i \geq 0}$ as abstract modules, is not $\hat{\Z}$. In particular, the profinite $\hat{\Z} \llbracket G \rrbracket$-module $\hat{\Z} \llbracket G \rrbracket$ is not compact.
\end{proof}

The first milestone generalises~\cite[Lemma~3.1]{Anschuetz2020} and reads:

\begin{theorem}\label{thm:cpoo}
If $G$ is a profinite group and $\ring$ a commutative profinite ring, then $G$ is of type $\mathrm{CP}_{\infty}$ over $\cR$ for solid modules.
\end{theorem}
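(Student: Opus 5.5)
We prove this by exhibiting an explicit resolution, namely the solid analogue of the inhomogeneous bar resolution. For $k \geq 0$ set
\[ P_k := \ring[G^{k+1}]^{\solid} = \underline{\ring\llbracket G^{k+1} \rrbracket}, \]
where $G$ acts diagonally on $G^{k+1}$. The differentials are the usual alternating sums of face maps $d_i\colon G^{k+1}\to G^{k}$ that forget the $i$-th coordinate, and the augmentation $P_0=\rg\to\cR$ is induced by $G\to\ast$. These maps are continuous maps of profinite spaces, so they induce maps of condensed free modules and hence of their solidifications. We then verify two things: that each $P_k$ is a compact projective solid $\rg$-module, and that $P_\bullet \to \cR$ is exact in $\SolidLG$.

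\textbf{Exactness.} By Lemma~\ref{lem:profinitembeddings}, condensation is exact on profinite $\ring\llbracket G\rrbracket$-modules and sends $\ring\llbracket G^{k+1}\rrbracket$ to $P_k$. The complex $\ring\llbracket G^{\bullet+1}\rrbracket \to \ring$ is the classical profinite bar resolution. It is exact because it is the inverse limit of the finite bar complexes $\ring_j[(G/U)^{\bullet+1}] \to \ring_j$, each of which is contractible, and inverse limits of profinite modules are exact. Therefore its condensate $P_\bullet\to\cR$ is exact.

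\textbf{Projectivity and compactness.} Write $G^{k+1}\cong G\times G^k$ via $(g_0,\dots,g_k)\mapsto (g_0, (g_0^{-1}g_1,\dots,g_0^{-1}g_k))$. This is a $G$-equivariant homeomorphism, with $G$ acting by left translation on the first factor and trivially on the second. Hence $\ring\llbracket G^{k+1}\rrbracket\cong \ring\llbracket G\rrbracket\widehat{\otimes}_\ring \ring\llbracket G^k\rrbracket$, and by Theorem~\ref{thm:profintosolid}(1) this gives $P_k\cong \rg\st \ring[G^k]^{\solid}$. We now use two facts.

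First, by Lemma~\ref{lem:coeffringanalytic}(3), $Q:=\ring[G^k]^{\solid}$ is a compact projective solid $\cR$-module.

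Second, the induction functor $\rg\st(-)\colon \LSolid\to\SolidLG$ is left adjoint to the forgetful functor. The adjunction is the case of \eqref{eq:adjunction3} with $L=\rg$, identifying $\mathrm{Hom}_{\rg}(\rg\st Q, N)\cong \mathrm{Hom}_{\cR}(Q,N)$. The forgetful functor is exact and commutes with filtered colimits, since colimits in both categories are computed in condensed modules (Lemma~\ref{lem:analyticrings}, Lemma~\ref{lem:analyticgrouprings}).

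A left adjoint whose right adjoint is exact preserves projectives. A left adjoint whose right adjoint preserves filtered colimits preserves compact objects. Hence each $P_k$ is compact projective, and $G$ is of type $\mathrm{CP}_\infty$.

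\textbf{Main obstacle.} The delicate step is identifying $P_k$ with $\rg\st Q$ as $\rg$-modules, so that the adjunction above applies. An alternative route avoids the tensor identification: work directly with the condensed free module $\cR[\underline{G}][\underline{G^k}]$ and its solidification, and compute $\mathrm{Hom}_{\rg}(P_k,N)\cong N(G^k)=\mathrm{Hom}_{\cR}(\ring[G^k]^{\solid},N)$ for solid $N$, using the universal property of solidification. If needed, this identification can then be double-checked on the level of underlying profinite modules via Lemma~\ref{lem:profinitembeddings}(2).
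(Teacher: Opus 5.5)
Your proposal is correct and follows the paper's proof. You condense the profinite bar resolution (homogeneous in your version, inhomogeneous in the paper's, but the two are isomorphic), use exactness of condensation to obtain a resolution of $\cR$, and check that each term is compact projective. The one difference is the last step: the paper applies Lemma~\ref{lem:coeffringanalytic}(3) directly to the profinite ring $\ring\llbracket G\rrbracket$ to see that $\rg[G^j]^{\solid}$ is compact projective, whereas you apply that lemma only over $\cR$ and transfer compact projectivity along the induction functor $\rg\st(-)$, whose right adjoint (restriction) is exact and preserves filtered colimits. This is equally valid, and it avoids invoking the lemma for a noncommutative ring.
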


\begin{proof}
This theorem could be deduced from Lemma~3.5.11 and Lemma~3.5.16 in~\cite{bri25}, but we provide a more direct proof for the reader's convenience. According to~\cite[p.~206]{rib10} there is a projective resolution of the profinite $\ring\llbracket G \rrbracket$-module $\ring$
\[ \dots {} \rar \big(\ring\llbracket G \rrbracket\big)\llbracket G ^3\rrbracket \rar \big(\ring\llbracket G \rrbracket\big)\llbracket G ^2\rrbracket \rar \big(\ring\llbracket G \rrbracket\big)\llbracket G \rrbracket \rar \ring\llbracket G \rrbracket \rar \ring \]
called the inhomogeneous bar resolution. We see that $\underline{\big(\ring\llbracket G \rrbracket\big)\llbracket G ^j\rrbracket} = \rg[G^j]^{\solid}$ by the second assertion of Lemma~\ref{lem:profinitembeddings} and the second assertion of Lemma~\ref{lem:analyticgrouprings}. Because the condensation functor on profinite $\ring\llbracket G \rrbracket$-modules is exact by the first assertion of Lemma~\ref{lem:profinitembeddings}, it turns the above into a resolution of the solid $\rg$-module $\cR$
\[ \dots {} \rar \big(\rg \big)[G^3]^{\solid} \rar \big(\rg \big)[G^2]^{\solid} \rar \big(\rg \big)[G]^{\solid} \rar \rg \rar \cR. \]
By Lemma~\ref{lem:coeffringanalytic}, the terms in this resolution are compact projective and, thus, $G$ is of solid type $\mathrm{CP}_{\infty}$ over $\cR$.
\end{proof}

\subsection{Profinite equals solid cohomological dimension}
In this section we show that the profinite cohomological dimension of a profinite group agrees with the solid cohomological dimension of its condensate, $\cd_{\ring}(G) = \cd_{\cR}(\cG)$,

\subsubsection{Inverse limits and condensation}
Proving our result on cohomological dimensions requires some preliminary results of independent interest.
\begin{proposition}[{\cite[Proposition 3.2]{Anschuetz2020}}]\label{prop:invlimexact}
Inverse limits of condensates of compact abelian groups are exact where the same holds for compact modules over a $T1$ topological ring $L$. Equivalently, if $\lbrace A_i \rbrace_{i \in I}$ is any inverse system of compact abelian groups or of compact $L$-modules, then $R^j \varprojlim_{i \in I} \underline{A_i} = 0$ for $j > 0$.
\end{proposition}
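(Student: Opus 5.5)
The key input is the Mittag-Leffler-type vanishing for compact Hausdorff systems, transported into the condensed world by evaluating on extremally disconnected sets. For an inverse system $\{\underline{A_i}\}_{i\in I}$, the derived limit $\mathrm{R}\varprojlim$ in $\mathrm{Cond}(\mathbf{Ab})$ is computed sectionwise on extremally disconnected $S$. The reason is that evaluation at such $S$ is exact and commutes with all limits (it is $\mathrm{Hom}(\Z[\underline{S}],-)$ with $\Z[\underline{S}]$ projective). Hence
\[ \big(R^j\varprojlim_i \underline{A_i}\big)(S) \cong R^j\varprojlim_i \big(\underline{A_i}(S)\big) = R^j\varprojlim_i \mathrm{Hom}_{\tn{cts}}(S, A_i). \]
It therefore suffices to show that for each extremally disconnected (in particular compact Hausdorff) $S$, the system of abstract abelian groups $C(S,A_i)$ has vanishing higher derived limits. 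The same argument handles compact $L$-modules, since the underlying condensed abelian group computes the limit and a condensed $L$-module is determined by its underlying condensed abelian group together with the action.

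Next, observe that $C(S,A_i)$ is itself a compact abelian group in a natural way when $S$ is profinite and $A_i$ is compact. For profinite $A_i$ this is immediate, but for general compact groups such as $\mathbb{R}/\Z$ it fails with the compact-open topology. So I would instead use the standard argument for vanishing of $\varprojlim^j$ on compact groups, which goes through Pontryagin duality.

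Concretely, the plan is to replace the system by a Roos-type cosimplicial (or product) complex
\[ \prod_{i_0} A_{i_0} \to \prod_{i_0<i_1} A_{i_1}\to\cdots \]
whose cohomology computes $R^j\varprojlim$ in any category where products are exact. Products are exact in $\mathrm{Cond}(\mathbf{Ab})$, so this complex computes $R^j\varprojlim \underline{A_i}$ as well. Condensation commutes with products of compact groups, by Tychonoff and the universal property. The complex is thus the condensation of a complex of compact Hausdorff groups with continuous differentials.

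Applying Pontryagin duality turns it into a complex of discrete groups computing the derived colimits of the dual direct system $\{A_i^\ast\}$. These derived colimits vanish in positive degrees because filtered colimits are exact. (If $I$ is merely a directed poset this is what we need; for general $I$ one restricts to the directed case, which is the case relevant here.) Hence the compact complex is exact in positive degrees as a complex of topological groups. Its differentials are continuous maps between compact groups, hence strict, so this is genuine topological exactness.

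Finally I would invoke the fact that condensation is exact on compact Hausdorff abelian groups. A short exact sequence of compact groups $0\to A\to B\to C\to 0$ gives a surjection $\underline B\to\underline C$ of condensed sets, because $B\to C$ is a surjection of compact Hausdorff spaces and such maps are split after pulling back to extremally disconnected $S$ (Gleason projectivity). Kernels are clearly preserved. Therefore the condensed complex is exact in positive degrees, which gives $R^j\varprojlim \underline{A_i}=0$ for $j>0$.

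The main obstacle is the exactness step. We must make sure that the Roos complex legitimately computes $\mathrm{R}\varprojlim$ in $\mathrm{Cond}(\mathbf{Ab})$ (this uses exactness of products, which is available), and that exactness of a complex of compact groups transfers to condensed exactness. For the latter I would rely on strictness of continuous homomorphisms between compact groups together with the projectivity of extremally disconnected spaces.
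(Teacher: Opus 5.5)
Your argument is correct, but it takes a different route from the paper.

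\textbf{What the paper does.} It does not prove the abelian-group case; it cites Ansch\"utz--Le Bras for compact abelian groups. Its own proof covers only compact $L$-modules. There it observes that limits of condensed $\underline{L}$-modules are computed sectionwise, and that the underlying abelian group of such a limit is the limit in $\mathbf{Ab}$.

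\textbf{What you do instead.} You give a self-contained proof of the cited input:
\begin{enumerate}
\item You compute $R^j\varprojlim$ by the Roos product complex, which is legitimate because products are exact.
\item You note that this complex is the condensation of a complex of compact Hausdorff groups.
\item Pontryagin duality turns it into the simplicial complex $\bigoplus A_{i_0}^{\ast}$, which computes the derived colimits of the directed system $\{A_i^{\ast}\}$. These vanish in positive degrees because filtered colimits in $\mathbf{Ab}$ are exact.
\item You transport exactness back to the condensed side with Gleason's lifting property for extremally disconnected $S$, applied to the surjections $C^{n-1}\to \mathrm{im}\, d^{n-1}$.
\end{enumerate}

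\textbf{What your route buys.} It needs only Pontryagin duality, exactness of filtered colimits, and projectivity of extremally disconnected sets. It also makes the passage to $L$-modules rigorous. The Roos complex is built from products; these are exact in condensed $\underline{L}$-modules and preserved by the exact, conservative forgetful functor. The paper's one-line reduction leaves implicit why derived limits of $\underline{L}$-modules can be computed on underlying abelian groups.

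\textbf{The obstacle you flag is standard.} The Roos complex computes derived limits in any category with exact products and enough injectives, for example $\mathrm{Cond}_\kappa(\mathbf{Ab})$. Since you also reduce to sections over extremally disconnected $S$, you can run the whole argument in $\mathbf{Ab}$. Evaluating the compact Roos complex at $S$ gives exactly the Roos complex of $\{C(S,A_i)\}$, and Gleason lifting shows it is exact in positive degrees.

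\textbf{Two small slips, neither affecting the argument.}
\begin{itemize}
\item Your aside that $C(S,A_i)$ is naturally compact for profinite $A_i$ is false. For $S$ the Cantor set, $C(S,\Z/2)$ is countably infinite, so it admits no compact Hausdorff group topology. You abandon this line immediately, so nothing depends on it.
\item With transition maps $A_j\to A_i$ for $i\le j$, the Roos complex is $\prod_{i_0\le\cdots\le i_n}A_{i_0}$. It must be indexed by the smallest element of each chain, the one the other terms map to, not by $A_{i_1}$ as you wrote.
\end{itemize}
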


\begin{proof}
The result has only been proved for compact abelian groups in~\cite{Anschuetz2020}. Assume that $\lbrace A_i \rbrace_{i \in I}$ is an inverse system of compact $L$-modules. According to~\cite[p.~15]{Scholze2019}, inverse limits are calculated pointwise in $\mathrm{Cond}(\mathcal{C})$ for any category $\mathcal{C}$ admitting all direct limits. Thus,
\[ \big(\varprojlim_{i \in I} \underline{A_i}\big)(S) = \varprojlim_{\mathbf{Mod}(\underline{L}(S))} \underline{A_i}(S) \]
for any profinite space $S$. The latter can be written as the abelian group $\varprojlim_{\mathbf{Ab}} \underline{A_i}(S)$ with an $\underline{L}(S)$-action. Since $R^j \varprojlim_{\mathbf{Ab}} \underline{A_i} = 0$ for $j > 0$, we conclude that $R^j \varprojlim_{\mathbf{Mod}(\underline{L})} \underline{A_i} = 0$.
\end{proof}

We prove that every solid module can be written as a direct limit of condensates of profinite modules, which generalises~\cite[Lemma~3.7]{Anschuetz2020}. We do so by using the notion of Ind-categories as defined in~\cite[Definition~6.1.1]{kas06}. In a nutshell, if $\mathcal{C}$ is a small category, then $\mathrm{Ind} \, \mathcal{C}$ consists of all ``formal'' direct limits of objects in $\mathcal{C}$. However, this cannot be readily extended to categories $\mathcal{C}$ that are not small, as can be inferred from the warning in the Introduction on page~1 of~\cite{kas06}. Because the category of profinite modules is not small, we only consider categories of profinite modules each less than a certain size and their associated Ind-categories.

\begin{lemma}\label{lem:indprofinites}
Let $\mathcal{F}_{\ring\llbracket G \rrbracket}$ denote the category of finite discrete $\ring\llbracket G \rrbracket$-modules, and let $\kappa$ be an uncountable strong limit cardinal. Let $\Pro(\finitemodules)$ be the full subcategory of $\SolidLG^{\heartsuit}$ consisting of condensates of profinite $\ring\llbracket G \rrbracket$-modules, and let $\Pro^\kappa (\finitemodules)$ be the full subcategory spanned by those profinite modules that can be written as a $\kappa$-small inverse limit of objects in $\finitemodules$. Then the condensation functor $\underline{-}\colon \mathcal{F}_{\ring\llbracket G \rrbracket} \rar \SolidLG^{\heartsuit}$ induces a fully faithful functor
\[ \mathrm{Ind} \, \mathrm{Pro}^\kappa(\underline{-})\colon \mathrm{Ind} \, \mathrm{Pro}^\kappa(\mathcal{F}_{\ring\llbracket G \rrbracket}) \rar \SolidLG^{\heartsuit} \]
Taking the colimit over all $\kappa$, this gives an equivalence $\colim_\kappa \Ind \Pro^\kappa (\finitemodules) \cong \SolidLG^{\heartsuit}$.
\end{lemma}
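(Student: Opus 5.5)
We prove full faithfulness and essential surjectivity separately, using the standard criterion for functors out of Ind-categories: if $\mathcal{C}$ is small and $F\colon \mathcal{C}\to\mathcal{D}$ lands in compact objects of a category $\mathcal{D}$ with filtered colimits, then the induced functor $\Ind\,\mathcal{C}\to\mathcal{D}$ is fully faithful whenever $F$ is (cf.~\cite[Proposition~6.3.4]{kas06}). Here $\mathcal{C}=\Pro^\kappa(\finitemodules)$, which is essentially small, and $\mathcal{D}=\SolidLG^{\heartsuit}$.

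\emph{Step 1: $\underline{-}$ is fully faithful on $\Pro^\kappa(\finitemodules)$.} This is the first assertion of Lemma~\ref{lem:profinitembeddings}. Alternatively, for profinite $M=\varprojlim M_i$ and $N=\varprojlim N_j$ one computes
\[
\Hom(\underline{M},\underline{N})=\varprojlim_j\Hom(\underline{M},\underline{N_j}).
\]
Since condensation commutes with limits, it then suffices to treat finite $N_j$. There one uses that continuous maps from a profinite set into a finite discrete set factor through a finite quotient.

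\emph{Step 2: condensates of profinite modules are compact in $\SolidLG^{\heartsuit}$.} This is where I expect the real work. First, $\rg[S]^{\solid}$ is compact projective for profinite $S$ by Lemma~\ref{lem:coeffringanalytic}, together with the identification $\Solid_{\rg}$ with solid $\cR$-modules carrying a $\cG$-action from Lemma~\ref{lem:analyticgrouprings}. Any profinite $\ring\llbracket G\rrbracket$-module $M$ admits a presentation
\[
\ring\llbracket G\rrbracket\llbracket T\rrbracket\to\ring\llbracket G\rrbracket\llbracket S\rrbracket\to M\to 0
\]
with $S,T$ profinite, by the usual free profinite module on a profinite space. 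Condensation is exact on profinite modules, so $\underline{M}$ is a cokernel of a map between compact projectives. Hence $\underline{M}$ is compact, because a finite colimit of compact objects is compact. Lemma~\ref{lem:profinitembeddings}(2) identifies the condensed terms as $\rg[S]^{\solid}$. Step 1, Step 2 and the criterion then give full faithfulness of $\Ind\,\Pro^\kappa(\underline{-})$.

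\emph{Step 3: essential surjectivity after taking $\colim_\kappa$.} Every solid $\rg$-module $N$ is a cokernel $\operatorname{coker}(P_1\to P_0)$ with $P_i$ direct sums of compact projectives $\prod_I\rg$, by Lemma~\ref{lem:coeffringanalytic}(2) applied over $\rg$. Each $\prod_I\rg$ is the condensate of a profinite module. A direct sum is the filtered colimit of its finite subsums, which are again condensates of profinite modules. The map $P_1\to P_0$ is determined on each finite subsum of $P_1$, and by compactness (Step 2) it factors through a finite subsum of $P_0$.

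Thus $N$ is the filtered colimit, over pairs of finite subsums $(F_1\subseteq P_1,\ F_0\subseteq P_0)$ with $F_1$ mapping into $F_0$, of $\operatorname{coker}(F_1\to F_0)$. This uses that cokernels commute with filtered colimits. Each $F_1\to F_0$ is a map of condensates of profinite modules, hence by Step 1 the condensate of a continuous map. Its cokernel is computed in profinite modules, since condensation is exact there. So $N$ is a filtered colimit of objects of $\Pro^\kappa(\finitemodules)$ for $\kappa$ large enough to bound the index sets involved.

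Finally, one must check compatibility of the inclusions $\Ind\,\Pro^\kappa\to\Ind\,\Pro^{\kappa'}$ as $\kappa$ grows. These are fully faithful by the same criterion, so the colimit over $\kappa$ is an equivalence onto $\SolidLG^{\heartsuit}$.
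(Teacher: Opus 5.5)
Your proposal is correct and follows essentially the same route as the paper: condensation is fully faithful on profinite modules, and condensates are compact because they are presented as cokernels of maps between free modules $(\ring\llbracket G \rrbracket)\llbracket S \rrbracket$. These feed into the standard Ind-extension criterion (the paper cites HTT 5.3.5.11), and essential surjectivity comes from presenting an arbitrary solid module by direct sums of compact projective generators that are condensates of profinite modules. The one difference is the last step: you write the cokernel explicitly as a filtered colimit of cokernels over finite subsums, using compactness to factor the maps, whereas the paper just appeals to exactness of the Ind-extended functor; your version makes that step more transparent.
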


\begin{proof}
The the condensation functor extends to a functor
\[ \mathrm{Pro}(\underline{-})\colon \mathrm{Pro}(\mathcal{F}_{\ring\llbracket G \rrbracket}) \rar \SolidLG^{\heartsuit} \, . \]
Let us show that this functor renders $\mathrm{Pro}(\mathcal{F}_{\ring\llbracket G \rrbracket})$ a full subcategory of $\SolidLG^{\heartsuit}$. As $\mathrm{Pro}(\underline{-})$ is exact by Proposition~\ref{prop:invlimexact}, it remains to prove that it is fully faithful. Since inverse limits are constructed pointwise in $\SolidLG^{\heartsuit}$~\cite[p.~15]{Scholze2019}, every object in the essential image of $\mathrm{Pro}(\underline{-})$ is isomorphic to the condensate of a profinite $\ring\llbracket G \rrbracket$-module. By the first assertion of Lemma~\ref{lem:profinitembeddings}, the Hom-functors of profinite $\ring\llbracket G \rrbracket$-modules are isomorphic to those of their condensates. Therefore, for any inverse systems $\lbrace A_i \rbrace_{i \in I}$, $\lbrace B_j \rbrace_{j \in J}$ in $\mathcal{F}_{\ring\llbracket G \rrbracket}$, one concludes that
\[ \mathrm{Hom}_{\rg}(\varprojlim_{i \in I} \underline{A_i}, \varprojlim_{j \in J} \underline{B_j}) \cong \mathrm{Hom}_{\ring\llbracket G \rrbracket}(\varprojlim_{i \in I} A_i, \varprojlim_{j \in J} B_j) = \varprojlim_{j \in J} \mathrm{Hom}_{\ring\llbracket G \rrbracket}(\varprojlim_{i \in I} A_i, B_j) \, . \]
It follows from the universal property of direct limits that there is a homomorphism
\[  \varinjlim_{i \in I} \mathrm{Hom}_{\ring\llbracket G \rrbracket}(A_i, B_j) \rar \mathrm{Hom}_{\ring\llbracket G \rrbracket}(\varprojlim_{i \in I} A_i, B_j) \, . \]
Because any homomorphism $\varprojlim_{i \in I} A_i \rightarrow B_j$ factors through an $A_i$ by~\cite[Lemma~1.1.16]{rib10}, this is an isomorphism. Hence
\[ \mathrm{Hom}_{\rg}(\varprojlim_{i \in I} \underline{A_i}, \varprojlim_{j \in J} \underline{B_j}) \cong \varprojlim_{j \in J} \, \varinjlim_{i \in I} \mathrm{Hom}_{\ring\llbracket G \rrbracket}(A_i, B_j) \, , \]
$\mathrm{Pro}(\underline{-})$ is fully faithful and $\mathrm{Pro}(\mathcal{F}_{\ring\llbracket G \rrbracket})$ a full subcategory of $\SolidLG^{\heartsuit}$. 

Note that the condensation functor $\underline{-}$ on profinite $\ring\llbracket G \rrbracket$-modules is exact and fully faithful by the first assertion of Lemma~\ref{lem:profinitembeddings}. Thus, profinite $\ring\llbracket G \rrbracket$-modules form also a full subcategory of $\SolidLG^{\heartsuit}$. One can prove analogously to above that $\mathrm{Pro}(\mathcal{F}_{\ring\llbracket G \rrbracket})$ is equivalent to the category of profinite $\ring\llbracket G \rrbracket$-modules where one uses~\cite[pp.~5--6]{rib10} instead of Proposition~\ref{prop:invlimexact}. The condensation functor takes over this equivalence into the category $\SolidLG^{\heartsuit}$. 

We show that $\mathrm{Ind} \, \mathrm{Pro}^\kappa (\mathcal{F}_{\ring\llbracket G \rrbracket})$ is a full subcategory of $\SolidLG^{\heartsuit}$. By (\cite{HTT}, Proposition 5.3.5.11), we just need to show that $\Pro^\kappa (\finitemodules)$ consists of compact objects. By the above, we may consider any element of $\mathrm{Pro}(\mathcal{F}_{\ring\llbracket G \rrbracket})$ as condensate $\underline{M}$ of a profinite $\ring\llbracket G \rrbracket$-module. As a profinite module, it admits a surjective $\ring\llbracket G \rrbracket$-module homomorphism $f\colon \big(\ring\llbracket G \rrbracket\big)\llbracket M \rrbracket \rar M$ where the the condensate $\underline{\big(\ring\llbracket G \rrbracket\big)\llbracket M \rrbracket} = \big(\rg \big)[M]^{\solid}$ is compact by the second assertion of Lemma~\ref{lem:profinitembeddings}. Since there is an analogous surjective homomorphism for $\mathrm{Ker}(f)$ and the functor $\mathrm{Pro}(\underline{-})$ is exact, $\underline{M}$ can be written as a cokernel of an $\rg$-homomorphism $\big(\rg \big)[\mathrm{Ker}]^{\solid} \rar \big(\rg \big)[M]^{\solid}$. Hence, $M$ is a finite colimit of compact objects so it is compact.

We now show that these functors are jointly essentially surjective. Let $M$ be any object in $\SolidLG^{\heartsuit}$. Then there is a surjective $\rg$-homomorphism $\bigoplus_{k \in K} \big(\rg \big)[S_k]^{\solid} \rar M$ with $S_k$ extremally disconnected as $\big(\rg \big)[S_k]^{\solid}$ are generators of $\SolidLG^{\heartsuit}$ by definition. Given that $\big(\rg \big)[S_k]^{\solid} = \underline{\big(\ring\llbracket G \rrbracket\big)\llbracket S _k\rrbracket}$, we conclude that $\bigoplus_{k \in K} \big(\rg \big)[S_k]^{\solid}$ is contained in $\mathrm{Ind} \, \mathrm{Pro}(\mathcal{F}_{\ring\llbracket G \rrbracket})$. Since there is an analogous surjective homomorphism onto $\mathrm{Ker}(g)$, we see that $M$ is a cokernel of a morphism in $\mathrm{Ind} \, \mathrm{Pro}(\mathcal{F}_{\ring\llbracket G \rrbracket})$. Because $\mathrm{Ind} \, \mathrm{Pro}(\underline{-})$ is exact, we conclude that $M$ is an object in $\mathrm{Ind} \, \mathrm{Pro}(\mathcal{F}_{\ring\llbracket G \rrbracket})$ and that $\mathrm{Ind} \, \mathrm{Pro}(\underline{-})$ is an equivalence.
\end{proof}

\subsubsection{Cohomological dimension}
The following characterisation of projective dimension and thus of cohomological dimension was stated only for modules over a ring, but its proof can be easily extended to any abelian category.

\begin{lemma}[{\cite[Lemma~VIII.2.1]{bro82}}]\label{lem:cd}
For an object $M$ in an abelian category $\mathcal{C}$ the following conditions are equivalent.
\begin{enumerate}
    \item $\mathrm{proj} \, \mathrm{dim}_{\mathcal{C}} \, M \leq n$.
    \item $\mathrm{Ext}_{\mathcal{C}}^k(M, -) = 0$ for $k > n$.
    \item $\mathrm{Ext}_{\mathcal{C}}^{n+1}(M, -) = 0$.
    \item If $0 \rar K \rar P_{n-1} \rar {} \dots {} \rar P_0 \rar M \rar 0$ is any exact sequence in $\mathcal{C}$ with each $P_k$ projective, then $K$ is projective.
\end{enumerate}
\end{lemma}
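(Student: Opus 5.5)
The plan is to prove the cycle $(1)\Rightarrow(2)\Rightarrow(3)\Rightarrow(4)\Rightarrow(1)$. Throughout, $\mathcal{C}$ is an abelian category with enough projectives and $\mathrm{Ext}$ is computed from projective resolutions of the first variable. In an arbitrary abelian category we may also use Yoneda Ext, which agrees with the derived-functor version when there are enough projectives. The argument of \cite[Lemma~VIII.2.1]{bro82} uses nothing specific to modules, so I will transcribe it, replacing element-wise arguments by diagrammatic ones.

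For $(1)\Rightarrow(2)$, take a projective resolution $0\to P_n\to\dots\to P_0\to M\to 0$ of length at most $n$. Computing $\mathrm{Ext}^k_{\mathcal{C}}(M,-)$ as the cohomology of $\mathrm{Hom}_{\mathcal{C}}(P_\bullet,-)$ gives zero for $k>n$, since the complex itself vanishes there. The implication $(2)\Rightarrow(3)$ is trivial.

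For $(3)\Rightarrow(4)$, let $0\to K\to P_{n-1}\to\dots\to P_0\to M\to 0$ be exact with each $P_k$ projective. Splitting it into short exact sequences and using the long exact Ext sequence (the projective terms have vanishing higher Ext), dimension shifting gives $\mathrm{Ext}^1_{\mathcal{C}}(K,-)\cong\mathrm{Ext}^{n+1}_{\mathcal{C}}(M,-)=0$. The case $n=0$, where $K=M$, is covered directly. Next choose an epimorphism $P\to K$ with $P$ projective and kernel $L$. Since $\mathrm{Ext}^1(K,L)=0$, the map $\mathrm{Hom}(P,L)\to\mathrm{Hom}(L,L)$ is surjective, so the sequence $0\to L\to P\to K\to 0$ splits. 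Hence $K$ is a direct summand of a projective, and so is projective.

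For $(4)\Rightarrow(1)$, use enough projectives to build the first $n$ terms $P_{n-1}\to\dots\to P_0\to M$ of a projective resolution and let $K$ be the kernel of $P_{n-1}\to P_{n-2}$. By (4) $K$ is projective, which gives a projective resolution of length at most $n$.

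The only step needing care is the dimension shift in $(3)\Rightarrow(4)$ together with the splitting criterion. Both rely solely on the long exact sequence of $\mathrm{Ext}(-,X)$ and on the fact that projectives are Ext-acyclic in the first variable, which hold in any abelian category with enough projectives. In applications $\mathcal{C}$ will be $\SolidLG^{\heartsuit}$, which has enough projectives by Lemma~\ref{lem:analyticrings}.
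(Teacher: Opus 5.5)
Your proof is correct and is essentially the paper's. The paper gives no argument beyond citing Brown's Lemma VIII.2.1 and asserting that its proof extends to abelian categories, and your cycle $(1)\Rightarrow(2)\Rightarrow(3)\Rightarrow(4)\Rightarrow(1)$, using dimension shifting and the $\mathrm{Ext}^1$-splitting criterion, is exactly that transcription. Your explicit assumption that $\mathcal{C}$ has enough projectives is a genuine tacit requirement of the statement (without it $M$ may have no projective resolution at all and $(4)\Rightarrow(1)$ fails), but it holds in every category where the paper applies the lemma.
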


The second milestone is a vast generalisation of~\cite[Proposition 3.9]{Anschuetz2020}.

\begin{theorem}\label{thm:cd}
Let $G$ be a profinite group. Denote by $\mathrm{cd}_{\ring}G$ the cohomological dimension of $G$ in the category of profinite $\ring\llbracket G \rrbracket$-modules and by $\mathrm{cd}_{\cR} G$ the cohomological dimension in $\SolidLG^{\heartsuit}$. Then $\mathrm{cd}_{\ring} G =\mathrm{cd}_{\cR} G$.
\end{theorem}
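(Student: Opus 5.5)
We prove the two inequalities separately, using Lemma~\ref{lem:cd} on both sides together with the comparison results of Section~\ref{sec:condsetting}.

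\emph{Step 1: $\cd_{\cR} G \le \cd_{\ring} G$.} Suppose $\cd_{\ring} G = n < \infty$. Take the inhomogeneous bar resolution of $\ring$ by profinite projectives and truncate it at stage $n$. By Lemma~\ref{lem:cd}, applied in the abelian category of profinite $\ring\llbracket G \rrbracket$-modules, the kernel $K$ at stage $n$ is a projective profinite module. Projective profinite modules are direct summands of free profinite modules $\ring\llbracket G\rrbracket\llbracket S\rrbracket$ with $S$ profinite.

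Condensation is exact on profinite modules and preserves projectives (Lemma~\ref{lem:profinitembeddings}). Hence it yields a resolution
\[ 0 \rar \underline{K} \rar \underline{P_{n-1}} \rar \cdots \rar \underline{P_0} \rar \cR \rar 0 \]
in $\SolidLG^{\heartsuit}$ in which every term is projective. By Lemma~\ref{lem:cd}, applied now in $\SolidLG^{\heartsuit}$, we get $\cd_{\cR} G \le n$.

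\emph{Step 2: $\cd_{\ring} G \le \cd_{\cR} G$.} Suppose $\cd_{\cR} G = n$. By Lemma~\ref{lem:cd}, in the profinite category it suffices to show $\mathrm{Ext}^{n+1}_{\ring\llbracket G\rrbracket}(\ring, M) = 0$ for every profinite $M$. Theorem~\ref{thm:profintosolid}(2) identifies this group with $\mathrm{Ext}^{n+1}_{\rg}(\cR, \underline{M})$, which vanishes by the solid form of Lemma~\ref{lem:cd}.

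One caveat: profinite modules do not have enough injectives. So we must make sure that Ext in the profinite category is computed by projective resolutions in the first variable. This holds because profinite modules have enough projectives, and Lemma~\ref{lem:cd} needs only projectives, so the argument goes through.

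\emph{Main obstacle.} The delicate point is the precise meaning of $\cd_{\ring}$. Classically it is defined through cohomology with discrete (or finite) coefficients, i.e.\ the vanishing of $H^{n+1}(G,A)$ for finite $A$. The statement instead phrases it as projective dimension in the category of profinite modules. If the two are to be matched, I would invoke the standard equivalence for profinite groups: vanishing on finite modules implies vanishing on profinite modules by passing to inverse limits, which is exact as in Proposition~\ref{prop:invlimexact}, and on discrete modules by passing to direct limits.

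For Step 2 it suffices to test against profinite coefficients, and Theorem~\ref{thm:profintosolid}(2) covers exactly these. So with the paper's definition the argument closes. The remaining check I expect to require care is that truncating the bar resolution and condensing really gives $\underline{K}$ as the solid kernel. This follows from exactness of condensation restricted to profinite modules, since all the terms involved are profinite.
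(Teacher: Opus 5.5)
Your proof is correct. Your Step 2 is exactly the paper's argument for $\cd_{\ring} G \le \cd_{\cR} G$, namely Theorem~\ref{thm:profintosolid} plus Lemma~\ref{lem:cd}. For the other inequality, $\cd_{\cR} G \le \cd_{\ring} G$, you take a genuinely different route.

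\emph{The paper's route.} It argues on the cohomology side:
\begin{enumerate}
\item It picks a solid module $N$ with $H^n_{\cR}(G,N)\neq 0$, where $n=\cd_{\cR}G$.
\item It writes $N=\varinjlim \underline{N_i}$ with each $N_i$ profinite, using the Ind--Pro description of Lemma~\ref{lem:indprofinites}.
\item It uses type $\mathrm{CP}_\infty$ (Theorem~\ref{thm:cpoo}) to commute $H^n_{\cR}(G,-)$ with this direct limit, so some $H^n_{\ring}(G,N_i)$ must be nonzero.
\end{enumerate}

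\emph{Your route.} You condense a length-$n$ profinite projective resolution of $\ring$ directly into $\SolidLG^{\heartsuit}$. This needs only three facts:
\begin{enumerate}
\item condensation is exact on profinite modules (Lemma~\ref{lem:profinitembeddings});
\item condensation preserves projectives (Lemma~\ref{lem:profinitembeddings});
\item exactness in $\SolidLG^{\heartsuit}$ agrees with exactness in condensed modules, since solid modules are closed under kernels and cokernels.
\end{enumerate}

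\emph{Comparison.} Your argument is shorter and rests on fewer inputs. In particular it bypasses Lemma~\ref{lem:indprofinites}, the most delicate step in that section, and it also bypasses $\mathrm{CP}_\infty$. As a by-product it gives an explicit bounded projective resolution of $\cR$ by condensates of profinite projectives. The paper's argument instead shows that nonvanishing solid cohomology in the top degree is already detected by profinite coefficients. The same mechanism (approximate solid modules by profinite ones, then commute with direct limits via $\mathrm{CP}_\infty$) is reused later, e.g.\ in the second proof of Theorem~\ref{thmB}.

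Your closing remark about matching the discrete-coefficient definition of $\cd_{\ring}$ is unnecessary under the paper's definition. Also, exactness of inverse limits of profinite modules comes from \cite[Proposition~2.2.4]{rib10}, not from Proposition~\ref{prop:invlimexact}, which concerns condensates.
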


\begin{proof}
The proof strategy is based on using Lemma~\ref{lem:cd} to conclude that one cohomological dimension is bounded by the other. Namely, if there is a non-vanishing cohomology group in degree $k$, then the cohomological dimension is bounded below by $k$. Continuous cohomology $H_{\ring}^{\bullet}(G, M)$ is isomorphic to solid group cohomology $H_{\cR}^{\bullet}(G, \underline{M})$ for any profinite $\ring\llbracket G \rrbracket$-module $M$ by Theorem~\ref{thm:profintosolid}. Thus, by Lemma~\ref{lem:cd}, we conclude that $\cd_{\ring}(G) \leq \cd_{\cR}(G)$.

For the converse inequality, we know from Lemma~\ref{lem:cd} that there is a solid $\rg$-module $N$ such that $H_{\cR}^n(G, N) \neq 0$. According to Lemma~\ref{lem:indprofinites}, one can write $N = \varinjlim_{i \in I} \underline{N_i}$ with each $N_i$ a profinite $\ring\llbracket G \rrbracket$-module. Theorem~\ref{thm:profintosolid} and Theorem~\ref{thm:cpoo} imply that
\[ \varinjlim_{i \in I} \, H_{\ring}^n(G, N_i) \cong \varinjlim_{i \in I} \, H_{\cR}^n(G, \underline{N}_i) = H_{\cR}^n(G, N) \neq 0 \, . \]
For the direct limit to be non-zero one of the terms $H_{\ring}^n(G, N_i)$ needs to be nonzero. By Lemma~\ref{lem:cd}, $\cd_{\cR}(G) \leq \cd_{\ring}(G)$ and thus $\cd_{\ring}(G) = \cd_{\cR}(G)$. 
\end{proof}

\section{Derived functors, flatness and induced modules}\label{sec:varia}
This section proves the preliminary results that are necessary to demonstrate Theorem~\ref{thmA} and Theorem~\ref{thmB} in the next section. First, this involves a result on bounded chain complexes. Then we establish the identities on tensor products and internal Hom-functors of solid modules required for the Tate spectral sequence in Theorem~\ref{thm:tate}. We investigate when profinite and solid modules are flat, which is essential to Cohen--Macaulay groups in Definition~\ref{def:cm}, to Theorem~\ref{thmA} and to Theorem~\ref{thmB}. Lastly, we prove that the homology of induced solid modules vanishes, which is necessary in the proofs of Theorem~\ref{thmA} and Theorem~\ref{thmB}.

\subsection{Bounded chain complexes}
The following result on bounded chain complexes is used in the proof of the Tate spectral sequence to exploit condition (FF) together with $\cd_{\ring}(G) < \infty$.

\begin{lemma}\label{lem:finitecomplex}
If $R$ is a profinite ring, then any bounded chain complex of discrete $R$-modules with finite homology is quasi-isomorphic to a complex of finite $R$-modules.
\end{lemma}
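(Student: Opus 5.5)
We argue by induction on the length of the complex, using the fact that discrete $R$-modules are exactly the direct limits (unions) of their finite submodules: every finitely generated submodule of a discrete module over a profinite ring is finite, since $R$ acts through a finite quotient on each element and hence on any finitely generated submodule. Let $C$ be a bounded complex of discrete $R$-modules, concentrated in degrees $a \leq k \leq b$, with each $H_k(C)$ finite. We build a finite subcomplex $F \subseteq C$, degree by degree from the bottom up, such that the inclusion $F \hookrightarrow C$ is a quasi-isomorphism.

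In the lowest degree $a$, the homology $H_a(C) = C_a/\mathrm{Im}(d_{a+1})$ is finite. Choose finitely many elements of $C_a$ whose classes generate $H_a(C)$, and let $F_a$ be the finite submodule they generate; then $F_a \to H_a(C)$ is surjective. The kernel $F_a \cap \mathrm{Im}(d_{a+1})$ is finite. Lift its finitely many elements to finitely many elements of $C_{a+1}$. Together with finitely many elements of $Z_{a+1}(C)$ whose classes generate the finite group $H_{a+1}(C)$, these generate a finite submodule $F_{a+1}\subseteq C_{a+1}$.

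Inductively, suppose $F_a,\dots,F_k$ have been chosen with $d(F_j)\subseteq F_{j-1}$, such that $H_j(F)\to H_j(C)$ is an isomorphism for $j<k$ and $H_k(F)\to H_k(C)$ is surjective. Then the kernel of $H_k(F)\to H_k(C)$ is the set of cycles $Z_k(F)\cap \mathrm{Im}(d_{k+1}^C)$ modulo $d(F_{k+1})$. Since $Z_k(F)$ is finite, we can enlarge $F_{k+1}$ by finitely many preimages to kill this kernel, and also add generators of $H_{k+1}(C)$. Adding elements of $C_{k+1}$ does not change $Z_k(F)$ or the lower homology. It only possibly enlarges the boundaries in degree $k$, and this enlargement is exactly the intended effect: the map becomes injective while surjectivity is kept.

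At the top degree $b$, the cycles of $C_b$ are just $H_b(C)$, which is finite. Choosing $F_b$ to contain the prescribed lifts together with representatives generating $Z_b(C)=H_b(C)$ gives an isomorphism in degree $b$, because there are no boundaries in degree $b$. So $F \hookrightarrow C$ is a quasi-isomorphism of bounded complexes with each $F_k$ finite.

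The only delicate point is the bookkeeping that enlarging $F_{k+1}$ preserves the already-established isomorphisms in lower degrees. This holds because degree-$(k+1)$ additions affect only $H_k$ (making it smaller, by the intended amount) and $H_{k+1}$. The finiteness claims rest on finitely generated submodules of discrete $R$-modules being finite. This is the step I would verify carefully: a finitely generated discrete module is a quotient of $R^n$ through which the action factors via an open ideal, since stabilisers of elements are open, and hence it is finite.
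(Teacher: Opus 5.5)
Your proof is correct. It uses the same two ingredients as the paper. The first is that finitely generated submodules of discrete modules over a compact ring are finite. The second is that one builds a finite subcomplex from cycle representatives of homology generators, plus chains whose boundaries kill the kernel. However, you organise the construction degree by degree, and that organisation is genuinely needed.

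The paper does it in one shot over all degrees. It takes cycles $\vec{y}$ representing generators of $H^\bullet(C^\bullet)$, then chains $\vec{c}$ whose boundaries generate $K^\bullet=\ker\bigl(H^\bullet(F^\bullet_{\vec{y}})\to H^\bullet(C^\bullet)\bigr)$. It then asserts that $F^\bullet_{\vec{y}\cup\vec{c}}$ has injective homology map ``by construction''. This overlooks that adjoining $\vec{c}$ can create new cycles in the adjacent degree which are boundaries in $C^\bullet$ but not in the subcomplex. For example, over $R=\mathbb{Z}/4$ take $\mathbb{Z}/2\cdot w\to\mathbb{Z}/4\cdot c\to\mathbb{Z}/4\cdot y$ in degrees $-1,0,1$, with $w\mapsto 2c$ and $c\mapsto 2y$. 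The cohomology is $\mathbb{Z}/2$ in degree $1$. With the natural choices $\vec{y}=\{y\}$ and $\vec{c}=\{c\}$, the resulting subcomplex has $H^0=\{0,2c\}\neq 0=H^0(C^\bullet)$; one more round, adjoining $w$, is needed.

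Your induction handles precisely this. Enlarging $F_{k+1}$ does not change $Z_k(F)$ or the lower homology, and any new cycles in degree $k+1$ are dealt with at the next step. Boundedness makes the process terminate at the top degree, where there are no boundaries left to account for. So your argument is a complete version of the paper's sketch.

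Two minor presentational points:
\begin{itemize}
\item You announce an induction on the length of the complex, but you actually induct on the degree.
\item The open subgroups in your finiteness argument are the annihilators $\{r\in R : rm=0\}$, not ``stabilisers''.
\end{itemize}
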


\begin{proof}
Let $C^\bullet$ be a complex of discrete $R$-modules and $x \in C^i$. The cyclic $R$-submodule generated by $x$ is both discrete by assumption and compact (as $R$ is a compact ring), hence finite. We can therefore write $C^\bullet$ as the filtered colimit of subcomplexes of finite discrete modules $F_{\Vec{x}}^\bullet$ generated by finitely many elements $\Vec{x}$. Since taking homology commutes with filtered colimits, we have $H^i(C^\bullet) = \varinjlim_{\Vec{x}} H^i(F_{\Vec{x}}^\bullet)$. 
    
As $H^i(C^\bullet)$ is finite for all $i$ and $C^\bullet$ is bounded, the total homology $H^\bullet(C^\bullet)$ is generated by finitely many classes. Picking a finite set of cycles $\Vec{y}$ representing a set of generators for $H^\bullet(C^\bullet)$ we obtain a finite subcomplex $F_{\Vec{y}}^\bullet \subset C^\bullet$ such that the map induced on cohomology by the inclusion $F_{\Vec{y}}^\bullet \subset C^\bullet$ is surjective.

Let $K^\bullet = \ker(H^\bullet(F_{\Vec{y}}^\bullet) \to H^\bullet(C^\bullet))$. Because $F_{\Vec{y}}^\bullet$ is a finite complex, $K^\bullet$ is a finite graded module. An element in $K^i$ is represented by a cycle $z \in F_{\Vec{y}}^i$ that becomes a boundary in $C^i$, meaning $z=d(c)$ for some chain $c\in C^{i-1}$. Pick a finite set of chains $\Vec{c}$ in $C^\bullet$ whose boundaries represent a generating set for $K^\bullet$, and consider the subcomplex $F^\bullet_{\Vec{y}\cup \Vec{c}}$. This is still a finite complex, and the surjection $H^\bullet(F_{\Vec{y}\cup \Vec{c}}^\bullet) \to H^\bullet(C^\bullet)$ has a trivial kernel by construction. This means that $F_{\Vec{y}\cup \Vec{c}}^\bullet$ is a bounded complex of finite discrete $R$-modules that is quasi-isomorphic to $C^\bullet$.
\end{proof}

\subsection{The solid tensor product}
In this section we gather various results on the solid tensor product over $\ring$ and $\rg$ which are necessary for the proof of our core techinical result.

\subsubsection{Interaction between tensor products}
We will need to know how the tensor products $\st$ and $\stg$ interact.

\begin{lemma}\label{lem:tensorinteract}
	Let $G$ be a profinite group and $\ring$ be a commutative profinite ring. Then for objects $L,M,N$ in $\SolidLG$ we have
	\begin{align}
	    L \stg \left(M \st N\right) &\cong (L \st M) \stg N\label{eq:tensorinteract1}\\
                                    & \cong (L \stg M) \st N\label{eq:tensorinteract2}
	\end{align}
	in $\LSolid$. If $L$, $M$ and $N$ chain complexes of profinite $\ring \llbracket G \rrbracket$-modules, then
    \begin{align*}
	    L \widehat{\otimes}_{\ring \llbracket G \rrbracket} \left(M \widehat{\otimes}_{\ring} N\right) &\cong (L \widehat{\otimes}_{\ring} M) \widehat{\otimes}_{\ring \llbracket G \rrbracket} N \\
                                    & \cong (L \widehat{\otimes}_{\ring \llbracket G \rrbracket} M) \widehat{\otimes}_{\ring} N
	\end{align*}
    as chain complexes of profinite $\ring$-modules.
\end{lemma}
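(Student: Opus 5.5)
The plan is to reduce everything to the underlying condensed statements and then solidify. Since $\st$ and $\stg$ are defined as solidifications of $\otimes_{\cR}$ and $\otimes_{\cR[\cG]}$ (Lemma~\ref{lem:analyticgrouprings}), and solidification is a symmetric monoidal left adjoint, it suffices to produce natural isomorphisms
\[ L \otimes_{\cR[\cG]} (M \otimes_{\cR} N) \cong (L\otimes_{\cR} M)\otimes_{\cR[\cG]} N \cong (L\otimes_{\cR[\cG]} M)\otimes_{\cR} N \]
in $\mathrm{Cond}(\mathbf{Mod}_{\cR})$ and then check that solidification carries them to the claimed ones. On the right side I would use the Yoneda lemma in $\LSolid$. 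For any solid $\cR$-module $A$, apply \eqref{eq:adjunction1}, \eqref{eq:adjunction1a}, \eqref{eq:adjunction3} and \eqref{eq:adjunction4} repeatedly:
\[ \rh(L\stg(M\st N),A)\cong \rhg(M\st N,\rh(L,A))\cong \rhg(M,\rh(N,\rh(L,A))), \]
and $\rh(N,\rh(L,A))\cong \rh(L\st N,A)$ with the diagonal $\rg$-action. One computes the other two expressions similarly, e.g. $\rh((L\st M)\stg N,A)\cong\rhg(L\st M,\rh(N,A))\cong \rhg(M,\rh(L,\rh(N,A)))$, and symmetry of $\st$ identifies $\rh(L,\rh(N,A))\cong \rh(N,\rh(L,A))$ as $\rg$-modules. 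For \eqref{eq:tensorinteract2} I would instead use $\rh((L\stg M)\st N,A)\cong\rh(L\stg M,\rh(N,A))\cong \rhg(M,\rh(L,\rh(N,A)))$, where $\rh(N,A)$ is regarded with trivial action.

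The main obstacle is exactly this last point: in \eqref{eq:tensorinteract2} the object $N$ sits outside the $\rg$-tensor, so $\rh(N,A)$ carries no $G$-action, whereas in the other expressions $G$ acts diagonally. To reconcile the two, I would use the ``untwisting'' automorphism of $L\st N$ (or of $\rh(L,\rh(N,A))$) given by $l\otimes n\mapsto l\otimes g^{-1}n$. This converts the diagonal action into an action on the $L$-factor alone. In the condensed setting I would construct it via the equivalence of Lemma~\ref{lem:analyticgrouprings}(1) with $\cR$-modules carrying a commuting $\cG$-action. Concretely, I would write it as the composite of the coaction $N\to \underline{C(G,N)}$-style maps, or more cleanly verify the identity on generators. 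Both sides commute with colimits in each variable separately, and the category is generated under sifted colimits by compact projectives $\prod_I\rg$ (Proposition~\ref{lem:siftedcolimits}). So one may reduce to $L=\rg[S]^{\solid}$, and the isomorphism $\rg\stg(M\st N)\cong M\st N$ is then checked directly against the other forms. If the untwisting proves awkward, this reduction to free $L$ is what I would try first.

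For the profinite statement I would not redo the argument. I would invoke Theorem~\ref{thm:profintosolid}(1): condensation converts $\widehat{\otimes}_{\ring}$ and $\widehat{\otimes}_{\ring\llbracket G\rrbracket}$ of complexes of profinite modules into $\st$ and $\stg$. Each iterated completed tensor product of profinite complexes is again a complex of profinite modules, so condensation of both sides gives the solid isomorphisms just proved. Full faithfulness of condensation on profinite modules (Lemma~\ref{lem:profinitembeddings}, and its derived version via Lemma~\ref{lem:indprofinites}) then lifts the isomorphisms back to complexes of profinite $\ring$-modules.
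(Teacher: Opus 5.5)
Your argument for \eqref{eq:tensorinteract1} is essentially the paper's. It is a Yoneda argument through \eqref{eq:adjunction1}, \eqref{eq:adjunction1a}, \eqref{eq:adjunction3} and \eqref{eq:adjunction4}, in which every $G$-action is diagonal, so all identifications are compatible. Your transfer of the profinite statements along Theorem~\ref{thm:profintosolid} is also what the paper does.

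The gap is in \eqref{eq:tensorinteract2}, exactly where you place the obstacle, and neither proposed repair works. The untwisting map $l\otimes n\mapsto l\otimes g^{-1}n$ is only defined when $L$ is induced, so that $l$ can be taken to be a group element. That special case is Lemma~\ref{lem:tensorinducedmod}. For general $L$ the modules $L\st N$ and $L\st N_0$ (with $N_0$ carrying the trivial action) need not be isomorphic; take $L=\cR$. The reduction to free $L$ also fails. Extending along sifted colimits needs a transformation natural in $L$, and for $L=\rg$ the identifications of both sides with $M\st N$ are not natural. The $\rg$-linear endomorphism of $\rg$ given by multiplication by $g\in G$ acts on the left-hand side through the diagonal action on $M\st N$, but on the right-hand side through the action on $M$ alone.

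In fact no repair exists: \eqref{eq:tensorinteract2} is false whenever $G$ acts nontrivially on $N$, since its right-hand side depends only on the underlying solid $\cR$-module of $N$. Take $G\neq 1$ finite, $\ring=\Z_p$, $L=M=\cR$ and $N=\rg$.
\begin{itemize}
\item $M\st N\cong\rg$ with the regular action, so $L\stg(M\st N)\cong\cR\stg\rg\cong\cR$, concentrated in degree $0$.
\item $H_0(\cR\stg\cR)\cong\cR$ and $\rg\cong\cR^{|G|}$ as $\cR$-modules, so $H_0\big((L\stg M)\st N\big)\cong\cR^{|G|}$.
\end{itemize}
These are not isomorphic, and the same computation with $\ring\llbracket G\rrbracket$ refutes the profinite version.

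The paper's own second chain of isomorphisms has the same error, hidden in the step $\rhg(L,\rh(M,\rh(N,A)))\cong\rh(L\stg M,\rh(N,A))$. There \eqref{eq:adjunction1a} is applied with $\rh(N,A)$ playing the role of a module without $G$-action. One line earlier, however, $\rh(M,\rh(N,A))$ carried, via \eqref{eq:adjunction4}, the $G$-structure coming from the diagonal action on $M\st N$.

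What your chain (and the paper's) actually proves is $L\stg(M\st N_0)\cong(L\stg M)\st N$, i.e.\ \eqref{eq:tensorinteract2} for $N$ with trivial action. Any use with a nontrivially acting $N$ needs a different justification. An example is $N=D^\bullet_{\cR}$ in the proof of Theorems~\ref{thmA} and~\ref{thmB}.
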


\begin{proof}
	Because the isomorphisms in the profinite case follow from those in the solid case by Theorem~\ref{thm:profintosolid}, we only prove Equation~\eqref{eq:tensorinteract1} and Equation~\eqref{eq:tensorinteract2}.
    For $A\in\Solid$, we have
    \begin{align*}
        \rh(L\stg (M\st N), A) &\cong \rhg(M\st N, \rh(L, A)) &\mbox{(Eq.~\eqref{eq:adjunction1})}\\
        &\cong \rhg(N,\rh(M,\rh(L,A)))&\mbox{(Eq.~\eqref{eq:adjunction3})}\\
        &\cong \rhg(N,\rh(L\st M, A))&\mbox{(Eq.~\eqref{eq:adjunction4})}\\
        &\cong \rh((L\st M)\stg N ,A)&\mbox{(Eq.~\eqref{eq:adjunction1})};
    \end{align*}
    and similarly
     \begin{align*}
     \rh(L\stg (M\st N), A) &\cong \rhg(L, \rh(M\st N, A)) &\mbox{(Eq.~\eqref{eq:adjunction1a})}\\
        &\cong \rhg(L,\rh(M,\rh(N,A)))&\mbox{(Eq.~\eqref{eq:adjunction4})}\\
        &\cong \rh(L\stg M,\rh(N, A))&\mbox{(Eq.~\eqref{eq:adjunction1a})}\\
        &\cong \rh((L\stg M)\st N ,A)&\mbox{(Eq.~\eqref{eq:adjunction4})}.
    \end{align*}
    By the Yoneda lemma, Equation~\eqref{eq:tensorinteract1} and Equation~\eqref{eq:tensorinteract2} follow.
\end{proof}

\subsubsection{Tensor products and inverse limits}
We also need the following lemma describing how the solid tensor product interacts with inverse limits.

\begin{lemma}\label{lem:tensorsandlimits}
Let $R$ be a profinite ring and let $M = \varprojlim_{i \in I} M_i$, $N = \varprojlim_{j \in J} N_j$ be inverse limits of profinite $R$-modules. Then
\[ \underline{M} \otimes_{\underline{R}}^{\solid} \underline{N} \cong \varprojlim_{(i, j) \in I \times J} \underline{M_i} \otimes_{\underline{R}}^{\solid} \underline{N_j}. \]
\end{lemma}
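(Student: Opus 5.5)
The plan is to reduce to the case of the solid tensor product of two profinite modules, where Theorem~\ref{thm:profintosolid} identifies $\underline{M}\otimes^{\solid}_{\underline{R}}\underline{N}$ with $\underline{M\widehat{\otimes}_R N}$. The remaining task is then a statement about completed tensor products commuting with inverse limits, followed by passage back through condensation.

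First, Theorem~\ref{thm:profintosolid}(1), applied with $G$ trivial, gives
\[ \underline{M}\otimes^{\solid}_{\underline{R}}\underline{N}\cong\underline{M\widehat{\otimes}_R N} \quad\text{and}\quad \underline{M_i}\otimes^{\solid}_{\underline{R}}\underline{N_j}\cong\underline{M_i\widehat{\otimes}_R N_j}. \]
These isomorphisms are natural in the profinite modules, so the transition maps of the inverse system $\{\underline{M_i}\otimes^{\solid}_{\underline{R}}\underline{N_j}\}$ correspond to those of $\{\underline{M_i\widehat{\otimes}_R N_j}\}$. It therefore suffices to show two things:
\[ M\widehat{\otimes}_R N\cong\varprojlim_{(i,j)} M_i\widehat{\otimes}_R N_j \quad\text{(as profinite modules, at the derived level),} \]
and that condensation carries this profinite inverse limit to the inverse limit (derived limit) in solid modules.

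For the second point: inverse limits in condensed modules are computed pointwise (as used in the proof of Proposition~\ref{prop:invlimexact}), so $\underline{\varprojlim X_k}(S)=\mathrm{Hom}_{\mathrm{cts}}(S,\varprojlim X_k)=\varprojlim \mathrm{Hom}_{\mathrm{cts}}(S,X_k)$. Hence condensation commutes with limits of profinite modules. Proposition~\ref{prop:invlimexact} gives $R^j\varprojlim=0$ for $j>0$, so the underived and derived limits agree. Solid modules are closed under limits by Lemma~\ref{lem:analyticrings}, so the limit can be taken in $\Solid_{\underline{R}}$.

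For the first point, I would work in the abelian level first and then pass to the derived level. On $H^0$, the completed tensor product commutes with cofiltered limits: writing $M_i=\varprojlim M_{i,\alpha}$ and $N_j=\varprojlim N_{j,\beta}$ over finite quotients, both sides equal $\varprojlim M_{i,\alpha}\otimes_R N_{j,\beta}$. A cofinality argument identifies the finite quotients of $M$ with a cofinal family among those of the $M_i$. This step requires care because the maps $M\to M_i$ need not be surjective. I would first replace each $M_i$ by the image of $M$, which does not change the limit by exactness of profinite inverse limits \cite[Proposition~2.2.4]{rib10}.

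The derived level is the main obstacle. The plan is to choose free profinite resolutions $P^{(i)}_\bullet\to M_i$ compatible in $i$; for example, functorial ones such as $R\llbracket M_i\rrbracket\leftarrow R\llbracket\ker\rrbracket\leftarrow\cdots$, as in the proof of Lemma~\ref{lem:indprofinites}. By exactness of profinite inverse limits, $\varprojlim P^{(i)}_\bullet$ resolves $M$ by modules of the form $\varprojlim R\llbracket X\rrbracket=R\llbracket\varprojlim X\rrbracket$, which are projective. Then $M\widehat{\otimes}^{\mathrm L}_RN$ is computed by $\varprojlim P^{(i)}_\bullet\widehat{\otimes}_R\varprojlim Q^{(j)}_\bullet$. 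The degreewise $H^0$ commutation above, together with the exactness of inverse limits (which makes $\varprojlim$ of complexes compute homology termwise), gives the desired isomorphism with $\varprojlim_{(i,j)}(M_i\widehat{\otimes}^{\mathrm L}_RN_j)$. The delicate points are the functoriality of the resolutions and the cofinality of $I\times J$ for the product system; I expect these to occupy most of the work.
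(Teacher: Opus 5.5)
Your argument is correct, but it takes a different route from the paper's. The paper stays on the solid side and is two lines long. It quotes the underived isomorphism from \cite[Proposition~3.20]{tang24}, then invokes Proposition~\ref{prop:invlimexact} (vanishing of $R^j\varprojlim$ on condensates of compact modules) to pass to the derived statement. You instead transport the problem to profinite modules via Theorem~\ref{thm:profintosolid}(1), whose proof rests on the same result of Tang, so the underlying input is the same. You then prove that the derived completed tensor product commutes with inverse limits there, and condense back using pointwise limits of condensed modules together with Proposition~\ref{prop:invlimexact}. What your route buys is that it spells out the step the paper leaves implicit. Since $\otimes^{\solid}_{\underline R}$ is derived, the underived isomorphism and exactness of $\varprojlim$ only yield the lemma once one has projective resolutions strictly compatible with the inverse systems. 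Your functorial resolutions $R\llbracket M_i\rrbracket\leftarrow R\llbracket K^{(i)}_0\rrbracket\leftarrow\cdots$ supply exactly that: their limit is the same functorial resolution of $M$, because $R\llbracket-\rrbracket$ and kernels commute with cofiltered limits. Two points should be tightened. First, objectwise natural isomorphisms in a derived category do not determine limits of diagrams. You should say explicitly that the termwise isomorphisms $\underline{P_k\widehat{\otimes}_RQ_l}\cong\underline{P_k}\otimes^{\solid}_{\underline R}\underline{Q_l}$ are isomorphisms between modules, hence strictly natural, and so assemble into an isomorphism of $I\times J$-diagrams of complexes. Second, your $H^0$ cofinality argument can simply be replaced by the standard fact that completed tensor products commute with inverse limits \cite[\S5.5]{rib10}. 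This is preferable, because replacing $M_i$ by the image of $M$ is harmless for $\varprojlim M_i$ but not obviously harmless for $\varprojlim_{(i,j)}M_i\widehat{\otimes}_RN_j$. It also genuinely changes each derived $M_i\widehat{\otimes}^{\mathrm L}_RN_j$, so it could at most be confined to the $H^0$ step.
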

\begin{proof}
    The underived statement is \cite[Proposition~3.20]{tang24}. By Proposition~\ref{prop:invlimexact}, the higher derived functors of the inverse limit functor vanish, and the statement follows.
\end{proof}

\subsubsection{Flatness in the solid setting}
The Cohen--Macaulay property involves flatness. In general, it is tricky to determine which modules are flat, but more straightforward to determine which are projective. For examples of duality groups such as in Section~\ref{sec:exls}, it is easier to determine whether a proposed dualising module is projective than flat. Thus, the next results are dedicated to proving that projective solid modules are flat.

\begin{lemma}\label{lem:profinprojflat}
(\cite[Proposition~3.1]{bru66}) Let $G$ be a profinite group and let $\ring$ be a commutative profinite ring. Write $R$ for either $\ring$ or $\ring \llbracket G \rrbracket$. Then a profinite $R$-module is projective if and only if it is flat.
\end{lemma}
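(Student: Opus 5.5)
Projective implies flat is the easy direction, and the plan is to handle it first. In the category of profinite $R$-modules, $\widehat{\otimes}_R$ commutes with (exact) inverse limits. The free profinite module $R\llbracket S\rrbracket$ on a profinite space $S$ satisfies
\[ R\llbracket S\rrbracket \widehat{\otimes}_R N \cong \varprojlim_i R[S_i]\otimes_R N, \]
which is an inverse limit of finite direct sums of copies of $N$. Exactness of inverse limits of profinite modules \cite[Proposition~2.2.4]{rib10} then shows that $R\llbracket S\rrbracket\widehat{\otimes}_R(-)$ is exact. Every projective profinite module is a direct summand of some $R\llbracket S\rrbracket$, because it admits a surjection from one that splits. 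Direct summands of flat modules are flat, so projectives are flat.

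For flat implies projective, the plan is to reduce to finite quotients and use Pontryagin duality. Write $R=\varprojlim_j R_j$ with $R_j$ finite, and let $P$ be a flat profinite $R$-module. By Pontryagin duality, $P$ is projective if and only if $P^\ast$ is an injective discrete $R$-module. Equivalently, $\mathrm{Ext}^1_R(P,A)=0$ for every finite $A$, since a limit argument with finite modules suffices to test projectivity of profinite modules \cite[Section~5.4]{rib10}. Flatness gives $\mathrm{Tor}_1^R(B,P)=0$ for every finite right module $B$. There is a natural identification
\[ \mathrm{Tor}_n^R(B,P)^\ast \cong \mathrm{Ext}^n_R(P,B^\ast), \]
obtained by dualising a projective resolution of $P$ and using $\mathrm{Hom}_R(P_\bullet,B^\ast)\cong (B\widehat{\otimes}_R P_\bullet)^\ast$. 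Since every finite left module $A$ arises as $B^\ast$ for the finite right module $B=A^\ast$, it follows that $\mathrm{Ext}^1_R(P,A)=0$ for all finite $A$. Projectivity then follows from Lemma~\ref{lem:cd} applied to profinite modules, after the reduction to finite coefficients by writing an arbitrary profinite module as an inverse limit of finite ones and using exactness of inverse limits.

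I expect the main obstacle to be the reduction to finite coefficients. Concretely, one must show that the vanishing of $\mathrm{Ext}^1_R(P,A)$ for all finite $A$ forces $\mathrm{Ext}^1_R(P,M)=0$ for every profinite $M$. To do this I would write $M=\varprojlim M_k$ and use that $\mathrm{Hom}_R(P_\bullet,-)$ commutes with inverse limits. The $\varprojlim^1$ term vanishes because each $\mathrm{Hom}_R(P_n,M_k)$ is profinite when $P_n$ is free on a profinite space, and inverse limits of profinite groups are exact. The identical argument works for $R=\ring\llbracket G\rrbracket$, since that ring is profinite too. The noncommutativity is handled by taking right and left modules on the two sides of the duality pairing, using the inversion anti-isomorphism $\mathcal{I}$.
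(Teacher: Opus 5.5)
Your overall strategy is sound. Projective $\Rightarrow$ flat follows from $R\llbracket S\rrbracket$ and exact inverse limits. Flat $\Rightarrow$ projective uses $(B\widehat{\otimes}_R P_\bullet)^\ast\cong\mathrm{Hom}_R(P_\bullet,B^\ast)$ for finite $B$ to get $\mathrm{Ext}^1_R(P,A)=0$ for all finite $A$. This is essentially the duality argument behind \cite[Proposition~3.1]{bru66}, which the paper only cites without proof.

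The problem is the step you flag as the main obstacle: your own argument for passing from finite to arbitrary profinite coefficients fails. For $P_n=R\llbracket S\rrbracket$ and $M_k$ finite, $\mathrm{Hom}_R(P_n,M_k)\cong C(S,M_k)$ is the group of locally constant maps $S\to M_k$. It is discrete, and infinite as soon as $S$ is, so it is not profinite, and exactness of profinite inverse limits does not apply. Moreover, the finite quotients $M_k$ of $M$ are in general indexed by an uncountable directed set, where surjective transition maps need not give $\varprojlim^1=0$. Concretely, given a compatible family of cocycles $z_k=dy_k$, the admissible choices of $y_k$ form an inverse system of torsors under the infinite discrete groups $\mathrm{Hom}_R(P,M_k)$. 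With no compactness available, nothing guarantees a compatible choice. So the reduction is not a formal limit argument. The fact you cite from \cite[Section~5.4]{rib10} is true, but not for the reason you give.

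There are two standard repairs.

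(i) A Zorn's lemma lifting argument. Fix an epimorphism $\alpha\colon A\to B$ of profinite modules and $\varphi\colon P\to B$. Consider pairs $(K,\psi)$ with $K\le\ker\alpha$ closed and $\psi\colon P\to A/K$ lifting $\varphi$. A chain $(K_i,\psi_i)$ has lower bound $(\bigcap K_i,\varprojlim\psi_i)$. Suppose a minimal pair has $K\neq 0$. Choose an open submodule $U$ with $K\not\le U$, and use $A/(K\cap U)\cong A/K\times_{A/(K+U)}A/U$. This reduces a further lift of $\psi$ to the finite problem $A/U\to A/(K+U)$, which your vanishing of $\mathrm{Ext}^1_R(P,(K+U)/U)$ solves, contradicting minimality.

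(ii) Closer to your own setup: you already noted that $P$ is projective iff $P^\ast$ is injective among discrete modules. That category is Grothendieck, with the finite cyclic modules $R/I$ ($I$ an open ideal) as a generating set. Baer's criterion for this generating set shows $P^\ast$ is injective once $\mathrm{Ext}^1(F,P^\ast)=0$ for all finite $F$. Under the anti-equivalence this group is $\mathrm{Ext}^1_R(P,F^\ast)$, which you have shown vanishes.

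With either fix the rest of your proof goes through unchanged.
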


\begin{lemma}\label{lem:solidprojflat}
Let $G$ be a profinite group and let $\ring$ be a commutative profinite ring. Write $R$ for either $\ring$ or $\ring \llbracket G \rrbracket$. Then every projective solid $\underline{R}$-module is flat.
\end{lemma}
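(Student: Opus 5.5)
The plan is to reduce to the compact projective generators and then to the rank-one free module. Flatness of a solid $\underline{R}$-module $P$ means that $P \otimes_{\underline{R}}^{\solid} -$ is exact on $\Solid^{\heartsuit}_{\underline{R}}$. Equivalently, the derived tensor product $P \otimes_{\underline{R}}^{\solid} N$ is concentrated in degree $0$ for every $N$ in the heart; for a left module $P$ over the possibly noncommutative $\underline{R} = \rg$, we use the right-module convention of \textsection\ref{sssec:tensorhoms}.

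Every projective solid module is a direct summand of a direct sum $\bigoplus_k \underline{R}[S_k]^{\solid}$ of compact projective generators. Derived tensor products commute with direct sums, which are exact in $\Solid^{\heartsuit}_{\underline{R}}$, and flatness passes to direct summands. So it suffices to show that each generator is flat. By Lemma~\ref{lem:coeffringanalytic} (and Lemma~\ref{lem:profinitembeddings}(2) for $R=\ring\llbracket G\rrbracket$, where $\rg[S]^{\solid} = \underline{R\llbracket S\rrbracket}$), these generators are of the form $\prod_I \underline{R}$, that is, condensates of the profinite modules $\prod_I R$.

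The key step is to show that $\prod_I \underline{R} \otimes^{\solid}_{\underline{R}} N \simeq \prod_I N$ for every solid $N$ in the heart. Products are exact in solid modules, so this gives concentration in degree $0$.

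First take $N$ to be the condensate of a profinite module. Write $\prod_I R = \varprojlim_{F\subseteq I \text{ finite}} R^F$ and $N = \varprojlim N_j$ with $N_j$ finite. Lemma~\ref{lem:tensorsandlimits}, which already encodes the vanishing of higher $\varprojlim$ from Proposition~\ref{prop:invlimexact}, then gives
\[ \underline{\textstyle\prod_I R}\otimes^{\solid}\underline{N}\cong \varprojlim_{F,j} \underline{R}^F\otimes^{\solid} \underline{N_j} \cong \varprojlim_{F,j} \underline{N_j}^F \cong \textstyle\prod_I \underline{N}. \]
Lemma~\ref{lem:tensorsandlimits} is stated for commutative $R$; for $R=\ring\llbracket G\rrbracket$ we instead use Theorem~\ref{thm:profintosolid}(1), which identifies the solid tensor product with the completed tensor product $\widehat{\otimes}_{\ring\llbracket G\rrbracket}$ of profinite modules. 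Classically, $(\prod_I R)\widehat{\otimes}_R N \cong \prod_I N$, and $\prod_I R$ is projective, hence flat by Lemma~\ref{lem:profinprojflat}. Therefore the derived completed tensor product has no higher terms.

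For general $N$ in the heart, Lemma~\ref{lem:indprofinites} writes $N = \varinjlim \underline{N_i}$ as a filtered colimit of condensates of profinite modules. The tensor product commutes with filtered colimits, and filtered colimits are exact. So $H_q(\prod_I \underline{R}\otimes^{\solid} N) = \varinjlim H_q(\prod_I \underline{R}\otimes^{\solid}\underline{N_i}) = 0$ for $q\neq 0$, which is all that flatness requires.

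The main obstacle is the tensor–product computation for infinite products in the noncommutative case $R=\ring\llbracket G\rrbracket$. Here one must check carefully that Theorem~\ref{thm:profintosolid}(1) applies at the derived level to $\prod_I R$, which requires it to be projective as a profinite module. If that is delicate, the fallback is the finite case: show $R^F\otimes N = N^F$ directly, and then pass to the inverse limit over finite $F$ using Proposition~\ref{prop:invlimexact} together with the fact that compact projectives $\prod_I\underline R$ make $-\otimes^{\solid}$ commute with such limits when $N$ is profinite.
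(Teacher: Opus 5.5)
Your proof is correct and follows essentially the same route as the paper. Both arguments first get flatness of the free (compact projective) generators on condensates of profinite modules, via Theorem~\ref{thm:profintosolid} and Lemma~\ref{lem:profinprojflat}. They then extend this to all solid modules through the Ind-profinite presentation of Lemma~\ref{lem:indprofinites} and exactness of filtered colimits, and finally pass to direct sums and retracts.

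The only real difference is that you phrase flatness as vanishing of the higher homology of the derived tensor product. This means you only need to write objects, rather than short exact sequences, as filtered colimits of profinite condensates. That is slightly cleaner than the paper's argument through morphisms in the Ind-category.
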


\begin{proof}
For every profinite space $S$ the free profinite $R$-module $R \llbracket S \rrbracket$ is flat in the profinite category by Lemma~\ref{lem:profinprojflat}. Thus, every free solid $\underline{R}$-module $R[S]^{\solid}$ is flat on condensates of profinite $R$-modules by Lemma~\ref{lem:profinitembeddings}. Note that direct limits in $\Solid_R^{\heartsuit}$ are exact due to~\cite[Theorem~1.10]{Scholze2019} and that morphisms in an Ind-category $\mathrm{Ind} \, \mathcal{C}$ can be written as direct limits of morphisms in $\mathcal{C}$ by~\cite[Proposition~6.1.13]{kas06}. It follows from this and the proof of Lemma~\ref{lem:indprofinites} that any short exact sequence in $\Solid_{\underline{R}}^{\heartsuit}$ can be written as a direct limit of short exact sequences of condensates of profinite $R$-modules. Since $\otimes_R^{\solid}$ is a left adjoint by Equation~\eqref{eq:adjunction1}, it preserves colimits and hence distributes over coproducts. Again, as direct limits are exact, we conclude that free solid $\underline{R}$-modules are flat in all of $\Solid_R^{\heartsuit}$. Given that the solid tensor product distributes over coproducts, and coproducts are exact by~\cite[Theorem~1.10]{Scholze2019}, coproducts of free solid $\underline{R}$-modules are flat. Because projective solid $\underline{R}$-modules are retracts of coproducts of free solid $\underline{R}$-modules by~\cite[Theorem~1.10]{Scholze2019}, they are flat.
\end{proof}

The following result is needed to prove Corollary~\ref{corC}. More specifically, we use it to demonstrate that a profinite group is profinite Cohen--Macaulay if and only if it is solid Cohen--Macaulay.

\begin{lemma}\label{lem:flatness}
Let $G$ be a profinite group, $\ring$ a commutative profinite ring. Then a profinite $\ring$-module $M$ is flat with respect to $\widehat{\otimes}_{\ring}$ if and only if the solid $\cR$-module $\underline{M}$ is flat with respect to $\st$. 
\end{lemma}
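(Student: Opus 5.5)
Both directions reduce to comparing tensor products via Theorem~\ref{thm:profintosolid}: for profinite $\ring$-modules $N$ we have $\underline{M \widehat{\otimes}_{\ring} N} \cong \underline{M} \st \underline{N}$ (derived). By Lemma~\ref{lem:profinprojflat}, flatness of $M$ for $\widehat{\otimes}_\ring$ is the same as projectivity in the profinite category.

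\emph{Solid flat $\Rightarrow$ profinite flat.} Suppose $\underline{M}$ is flat. Then $H_i(\underline{M}\st\underline{N})=0$ for $i>0$ and every profinite $N$. By the first assertion of Theorem~\ref{thm:profintosolid} this equals $\underline{H_i(M\widehat{\otimes}_\ring N)}$. Condensation is faithful, so the derived completed tensor product with $M$ has no higher homology. Equivalently, $M\widehat{\otimes}_\ring -$ is exact on profinite modules, so $M$ is flat.

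\emph{Profinite flat $\Rightarrow$ solid flat.} If $M$ is flat, it is projective by Lemma~\ref{lem:profinprojflat}, hence a direct summand of a free profinite module $\ring\llbracket S\rrbracket$ for some profinite space $S$. By Lemma~\ref{lem:profinitembeddings}(2), $\underline{M}$ is then a retract of $\ring[S]^{\solid}$. That module is projective by Lemma~\ref{lem:coeffringanalytic}(3), so $\underline{M}$ is a projective solid $\cR$-module. Lemma~\ref{lem:solidprojflat} then shows it is flat.

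The main obstacle is the justification that a projective profinite module is a retract of some $\ring\llbracket S\rrbracket$. The standard fact is that every profinite module $M$ admits a surjection $\ring\llbracket M\rrbracket\to M$ from the free profinite module on its underlying space. Projectivity of $M$ splits this surjection, and condensation preserves the splitting because it is a functor. One also has to check that flatness in the statement means exactness of the underived functor in the heart. This agrees with vanishing of the higher derived homology once we know $\underline{M}\st-$ is computed on projective resolutions, which holds because $\underline{M}$ is projective.
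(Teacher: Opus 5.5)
Your proof is correct. The direction from solid flatness to profinite flatness is the same as the paper's: you compare derived tensor products via Theorem~\ref{thm:profintosolid} and use exactness and faithfulness of condensation.

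For the converse you take a genuinely different route. You use Brumer's theorem (Lemma~\ref{lem:profinprojflat}) to upgrade $M$ from flat to projective. You then transport projectivity across condensation via the splitting of $\ring\llbracket M\rrbracket\to M$ and Lemma~\ref{lem:profinitembeddings}(2), which in fact already states that condensation preserves projectives. Finally you quote Lemma~\ref{lem:solidprojflat}.

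The paper never passes through projectivity. It first notes that $\underline{M}\st-$ is exact on condensates of profinite modules, by the tensor comparison. It then reruns the Ind-argument from the proof of Lemma~\ref{lem:solidprojflat}: every short exact sequence of solid modules is a filtered colimit of short exact sequences of condensates of profinite modules, filtered colimits are exact, and $\underline{M}\st-$ commutes with colimits by Equation~\eqref{eq:adjunction3}.

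What each route buys:
\begin{itemize}
\item Your version gives the stronger conclusion that $\underline{M}$ is projective in $\Solid_{\cR}^{\heartsuit}$. It also avoids repeating the colimit argument, since that is packaged inside Lemma~\ref{lem:solidprojflat}.
\item The price is reliance on the special fact that flat profinite modules are projective. The paper's argument only uses flatness against profinite test modules plus the density of profinite modules under filtered colimits, so it would survive in settings where flat does not imply projective.
\end{itemize}

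One small precision. In the first direction $\underline{M}$ is only assumed flat, not projective. So if flatness means exactness of the underived functor, the vanishing of $H_i(\underline{M}\st\underline{N})$ for $i>0$ must come from resolving $\underline{N}$ by projectives, which are flat by Lemma~\ref{lem:solidprojflat}. It cannot come from projectivity of $\underline{M}$, as your closing remark suggests, since that is only established in the other direction.
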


\begin{proof}
If $\underline{M}$ as a solid $\cR$-module is flat, then we conclude by Theorem~\ref{thm:profintosolid} and Lemma~\ref{lem:profinitembeddings} that $M$ as a profinite $\ring$-module is flat. It remains to prove the other implication where we assume that $M$ is a flat profinite $\ring$-module. The solid $\cR$-module $\underline{M}$ is flat on condensates of profinite $\ring$-modules by Lemma~\ref{lem:profinitembeddings}. We conclude that $\underline{M}$ is flat in all of $\SolidLG^{\heartsuit}$ as in the proof of Lemma~\ref{lem:solidprojflat} except that we utilise Equation~\eqref{eq:adjunction3} instead of Equation~\eqref{eq:adjunction1}.
\end{proof}

In order to prove Theorem~\ref{thmA} and Theorem~\ref{thmB}, we need that the dualising modules of duality groups are flat. This in turn requires a version of Shapiro's Lemma, which has the following preliminary:

\begin{lemma}\label{lem:tensorinducedmod}
    Let $M\in\SolidLG$. The solid $\rg$-module $\rg \st M$ is canonically isomorphic to the induced solid module $\rg \st M_0$, where $M_0$ denotes the underlying solid abelian group of $M$ equipped with the trivial $G$-action.
\end{lemma}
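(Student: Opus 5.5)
We construct the classical ``untwisting'' isomorphism $g\otimes m\mapsto g\otimes g^{-1}m$ in the condensed setting, and then pass to solidifications. Write $\rg\st M$ for the module with the diagonal action, and $\rg\st M_0$ for the one where $\cG$ acts only on the left factor.

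\textbf{Step 1: the map before solidification.} Consider the free condensed module $\cR[\cG]$ and the condensed tensor product $\cR[\cG]\otimes_{\cR} M$. Since $\cR[\cG]\otimes_{\cR}M$ is the free functor applied in the first variable, a map out of it is determined by a map of condensed sets $\cG\times M\to \cR[\cG]\otimes_{\cR}M$ that is $\cR$-linear in $M$. Using the action map $a\colon \cG\times M\to M$, $(g,m)\mapsto g^{-1}m$, together with the diagonal $\cG\to\cG\times\cG$, we take
\[
\varphi\colon \cG\times M\to \cG\times M\to \cR[\cG]\otimes_{\cR}M_0,\qquad (g,m)\mapsto g\otimes g^{-1}m .
\]
This extends to a map $\Phi\colon\cR[\cG]\otimes_{\cR}M\to\cR[\cG]\otimes_{\cR}M_0$. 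The inverse is built in the same way from $(g,m)\mapsto g\otimes gm$. Both composites are the identity on generators $\cG\times M$, so they are mutually inverse. $\cG$-equivariance is also checked on generators: $h\cdot(g\otimes m)=hg\otimes hm\mapsto hg\otimes g^{-1}m=h\cdot(g\otimes g^{-1}m)$. Everything here is a map of sheaves, so it suffices to verify these identities on $S$-points for profinite $S$, where they are the classical formulas with $\cG(S)$ acting on $M(S)$. Since $M$ is a complex, we apply this termwise, or equivalently first for $M$ in the heart and then extend using that both sides are colimit-preserving in $M$.

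\textbf{Step 2: solidify.} By Lemma~\ref{lem:analyticgrouprings} and Lemma~\ref{lem:analyticstructure}, solidification is symmetric monoidal and $\cR[\cG]^{\solid}=\rg$. Applying $(-)^{\solid}$ to $\Phi$, which is an isomorphism of condensed $\cR[\cG]$-modules, gives an isomorphism
\[
\rg\st M\cong(\cR[\cG]\otimes_{\cR}M)^{\solid}\xrightarrow{\ \cong\ }(\cR[\cG]\otimes_{\cR}M_0)^{\solid}\cong\rg\st M_0
\]
in $\SolidLG$. For the derived statement, we reduce via Proposition~\ref{lem:siftedcolimits}: both functors $M\mapsto\rg\st M$ and $M\mapsto \rg\st M_0$ preserve sifted colimits, and $\Phi$ is natural in $M$. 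It therefore suffices to treat complexes of compact projectives, where the tensor products are computed termwise.

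\textbf{Main obstacle.} The delicate point is the interaction of the diagonal action with solidification. We need to know that the diagonal action on $\rg\st M$ is exactly the solidification of the diagonal action on $\cR[\cG]\otimes_{\cR} M$, and that $\Phi$ is $\rg$-linear, not merely $\cR[\cG]$-linear. The first follows from how $\st$ on $\SolidLG$ was defined via Lemma~\ref{lem:analyticgrouprings}(1). The second holds because $\rg$-linearity of a map between solid modules is equivalent to $\cG$-equivariance, again by Lemma~\ref{lem:analyticgrouprings}(1). Canonicity follows because $\Phi$ is natural in $M$.
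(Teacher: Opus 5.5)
Your argument is correct, but it is not the route the paper takes. The paper never writes down the untwisting map; it argues by Yoneda. Applying \eqref{eq:adjunction3} with $L=\rg$ gives, for all $N,A\in\SolidLG$, $\rhg(\rg\st N,A)\cong\rhg(\rg,\rh(N,A))\cong\rh(N,A)$. As a solid $\cR$-module, $\rh(N,A)$ depends only on the underlying $\cR$-module of $N$. Hence $\rg\st M$ and $\rg\st M_0$ corepresent the same functor of $A$, and so they are isomorphic in $\SolidLG$. Put differently, $\rg\st(-)$ with the diagonal action is the free $\rg$-module on the underlying $\cR$-module.

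Your approach gives an explicit formula, $g\otimes m\mapsto g\otimes g^{-1}m$, and shows that the isomorphism is the classical one. That is useful if one later needs to compute with it. The cost is bookkeeping that the paper avoids entirely:
\begin{itemize}
\item checking identities on sheafified tensor products through presheaf-level generators;
\item computing $\cR[\cG]\otimes_{\cR}M$ termwise, which needs $\cR[\cG]$ to be flat over $\cR$ (true, since it is the sheafification of a presheaf of free modules, but you should say so);
\item checking that the diagonal action is compatible with solidification.
\end{itemize}
The Yoneda argument uses only adjunctions already established in Section~\ref{sec:condsetting}. It is automatically $\rg$-linear and natural, and it handles arbitrary objects of $\SolidLG$ without resolutions, termwise arguments or generator checks. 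The trade-off is that the isomorphism is left implicit.

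Incidentally, the sifted-colimit reduction in your Step~2 is unnecessary. Once $\Phi$ is a termwise isomorphism of complexes computing the derived tensor products, applying derived solidification and using its symmetric monoidality already yields the statement.
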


\begin{proof}
    We apply the Yoneda lemma. For any objects $N,A$ in $\SolidLG$ we have
    \[
    \rhg(\rg \st N, A) \cong\rhg(\rg, \rh(N,A))\cong \rh(N,A),
    \]
    and specialising to $N=M$ and $N=M_0$ gives 
    \begin{align*}
    \rhg(\rg\st M,A) &\cong \rh(M,A) \\
    &= \rh(M_0,A) \cong \rhg(\rg\st M_0,A),
    \end{align*}
    using that $\rh$ does not depend on the $G$-module structure.
\end{proof}

Our version of Shapiro's Lemma reads:

\begin{lemma}\label{lem:Shapiro}
    Let $M=\rg \st \bar{M}$ be a solid $\rg$-module induced from a solid abelian group $\bar{M}$. Then 
    \[
    \mathbf{H}_k^{\cR}(G, M)=\begin{cases}\bar{M} & \mbox{when $k$=0}\\0& \mbox{otherwise.}     
    \end{cases}
    \] 
\end{lemma}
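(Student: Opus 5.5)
We compute $\cR\stg M$ directly, using the associativity isomorphisms of Lemma~\ref{lem:tensorinteract} to move the group-ring tensor product past the $\cR$-linear one. Here $\bar{M}$ is regarded as an object of $\SolidLG$ with trivial $G$-action, so that $M=\rg\st\bar{M}$ carries the diagonal action. By Lemma~\ref{lem:tensorinducedmod}, $\rg\st\bar{M}$ with the diagonal action is canonically isomorphic to the induced module $\rg\st\bar{M}_0$, so nothing changes if we use the diagonal description. The goal is the chain of isomorphisms
\begin{equation*}
\cR\stg\bigl(\rg\st\bar{M}\bigr)\cong\bigl(\cR\stg\rg\bigr)\st\bar{M}\cong\cR\st\bar{M}\cong\bar{M}
\end{equation*}
in the derived category $\LSolid$. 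Taking homology then gives $\mathbf{H}^{\cR}_0(G,M)=\bar{M}$ and $\mathbf{H}^{\cR}_k(G,M)=0$ for $k\neq 0$, provided $\bar{M}$ is concentrated in degree $0$, as it is in the statement.

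The first isomorphism is Equation~\eqref{eq:tensorinteract2} of Lemma~\ref{lem:tensorinteract} with $L=\cR$, $M=\rg$ and $N=\bar{M}$. The second uses $\cR\stg\rg\cong\cR$. This holds because $\rg$ is free of rank one over itself: for any solid $\cR$-module $A$, Equation~\eqref{eq:adjunction1} gives
\[
\rh(\cR\stg\rg,A)\cong\rhg(\rg,\rh(\cR,A))\cong\rh(\cR,A)\cong A,
\]
so the Yoneda lemma identifies $\cR\stg\rg$ with $\cR$. Alternatively, $\rg$ is projective, so no higher derived terms occur and the underived statement reduces to $\cR\otimes_{\rg}\rg=\cR$ before solidification. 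The third isomorphism is the unit isomorphism for the symmetric monoidal structure $\st$; it holds because $\cR$ is solid, by Lemma~\ref{lem:coeffringanalytic}.

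The main obstacle is bookkeeping of module structures. Lemma~\ref{lem:tensorinteract} is stated for $L,M,N\in\SolidLG$, with $\st$ carrying the diagonal $G$-action, and we must make sure the $\rg$-module structure on $\rg\st\bar{M}$ used in Lemma~\ref{lem:tensorinteract} agrees with the one intended in the statement. The diagonal structure is covered by that lemma as stated. If the intended action is instead left multiplication on the $\rg$ factor only, Lemma~\ref{lem:tensorinducedmod} identifies the two versions, so we may pass to whichever one the associativity isomorphism applies to.

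We also have to confirm that after the associativity step the remaining factor $\cR\stg\rg$ uses the right $\rg$-action on $\rg$, so that it collapses to $\cR$ as an $\cR$-module. This follows from tracing the adjunctions in the proof of Lemma~\ref{lem:tensorinteract}, in which $\rh$ ignores the $G$-structure.
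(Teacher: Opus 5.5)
Your proposal is correct and follows the paper's proof exactly: both use the chain $\cR\stg(\rg\st\bar M)\cong(\cR\stg\rg)\st\bar M\cong\bar M$ via Equation~\eqref{eq:tensorinteract2}, and you add justifications for $\cR\stg\rg\cong\cR$ and for the unit isomorphism. On your bookkeeping worry: because $\bar M$ carries the trivial action, the diagonal action on $\rg\st\bar M$ already is left multiplication on the $\rg$ factor, so Lemma~\ref{lem:tensorinducedmod} is not needed; this triviality is also exactly what makes Equation~\eqref{eq:tensorinteract2} legitimate here, since its proof applies an adjunction to $\rh(N,A)$ as if it had trivial $G$-action, which is only harmless when $N$ itself is trivial.
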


\begin{proof}
    The homology of $G$ with coefficients in $M$ is computed from
    \[
    {\cR} \stg M ={\cR} \stg (\rg \st \bar{M}) \cong ({\cR} \stg \rg) \st \bar{M}= \bar{M},
    \]
    which is a chain complex concentrated in degree zero.
\end{proof}

\section{Main results}\label{sec:mains}

This section establishes necessary and sufficient conditions for profinite groups to have duality. In the first subsection we prove the core isomorphism \eqref{eq:key iso} of derived functors from which we can deduce our Tate spectral sequence. The latter is the main tool in proving the main theorems, namely Theorem~\ref{thmA} and Theorem~\ref{thmB}, which is the subject of the second subsection. Recall that the main theorems establish duality for profinite groups with coefficients in both solid and continuous modules. In the last subsection, we prove that this duality generalise Bieri--Eckmann duality and Serre--Verdier--Tate duality to all profinite coefficient rings.

\subsection{The core isomorphism}
Recall that $D_{\cR}^{\bullet}=\rhg(\cR,\rg)$ denotes the solid dualising complex, and $D_{\ring}^{\bullet} =\mathrm{RHom}_{\ring \llbracket G \rrbracket}(\ring, \ring\llbracket G\rrbracket)	$ the profinite dualising complex. Our core technical result is the following:

\begin{theorem}[Core isomorphism]\label{thm:tate}
	Let $G$ be a profinite group and $\ring$ a commutative profinite ring such that $\cd_{\cR}(G) < \infty$ and satisfies condition (FF). Then there is an isomorphism
	\begin{equation}\label{e:tatess1}
		{\cR} \stg \left(M \st D_{\cR}^{\bullet} \right) \xrightarrow{\cong} \rhg({\cR}, M)
	\end{equation}
	that is natural in $M\in \SolidLG$. If $\underline{M}$ is a condensate of a chain complex of profinite $\ring \llbracket G \rrbracket$-modules, then this yields a natural isomorphism in the derived category of profinite $\ring \llbracket G \rrbracket$-modules
	\begin{equation}\label{e:tatess2}
		\ring \widehat{\otimes}_{\ring \llbracket G \rrbracket} \left(M \widehat{\otimes}_{\ring} D_{\ring}^{\bullet} \right) \xrightarrow{\cong} \mathrm{RHom}_{\ring \llbracket G \rrbracket}(\ring, M)	
	\end{equation}
	that is natural in $M$.
\end{theorem}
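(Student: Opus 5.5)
We first establish the key isomorphism \eqref{eq:key iso}, namely $D_{\cR}^{\bullet}\stg(-)\xrightarrow{\cong}\rhg(\cR,-)$ on $\SolidLG$, and then combine it with Lemma~\ref{lem:tensorinteract}. The comparison map is the composite
\[ D^\bullet_\cR\stg M=\rhg(\cR,\rg)\stg M\to\rhg(\cR,\rg\stg M)\cong\rhg(\cR,M), \]
adjoint to $\tn{ev}_{\cR}\stg\id_M$. It is natural in $M$ and is the identity-like isomorphism for $M=\rg$.

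To show it is an isomorphism we follow the strategy of the introduction. By Proposition~\ref{lem:siftedcolimits}, every $M$ is a sifted colimit of complexes of compact projectives $\prod_I\rg$ (Lemma~\ref{lem:coeffringanalytic}), so it suffices to prove two things: both functors commute with sifted colimits and with arbitrary products.

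For sifted colimits, the left side is a left adjoint. For the right side we use Theorem~\ref{thm:cpoo}, so that $\cR$ has a resolution by compact projectives, together with $\cd_\cR(G)=n<\infty$ (Lemma~\ref{lem:cd}). Truncating the resolution at stage $n$ gives a bounded complex $P_\bullet$ of compact projectives quasi-isomorphic to $\cR$: the kernel $K$ at stage $n$ is projective, and it is compact, being a retract of the compact $P_n$ since $P_n\to P_{n-1}$ splits onto its image by projectivity. Thus $\cR$ is perfect, and $\rhg(\cR,-)$ commutes with all colimits, sifted ones in particular.

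For products, $\rhg(\cR,-)$ is a right adjoint. The main obstacle is the left side, where we use (FF). Computing $D^\bullet_\cR$ via Theorem~\ref{thm:profintosolid} as the condensate of $\mathrm{RHom}_{\ring\llbracket G\rrbracket}(\ring,\ring\llbracket G\rrbracket)$, we write $\ring\llbracket G\rrbracket=\varprojlim \ring_j[G/U]$ as a limit of finite modules. Cohomology commutes with these limits by Proposition~\ref{prop:invlimexact}, so $D^\bullet_\cR\simeq\varprojlim_\alpha D_\alpha$ with $D_\alpha=\rhg(\cR,F_\alpha)$ for finite $F_\alpha$. By (FF) and finite cohomological dimension, each $D_\alpha$ is a bounded complex with finite cohomology. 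Lemma~\ref{lem:finitecomplex} then replaces it by a bounded complex of finite modules, compatibly with the system after passing to a cofinal system or choosing a strict model.

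A finite module $F$ admits a resolution by finitely generated free modules $\rg^m$, and $\rg^m\stg(-)$ commutes with products. Since each $D_\alpha$ is bounded, with cohomological dimension controlling the tail, $D_\alpha\stg(-)$ commutes with products. It remains to commute $\varprojlim_\alpha$ past $\stg\prod_I\rg$. For this we use Lemma~\ref{lem:tensorsandlimits}, viewing $\prod_I\rg=\underline{\ring\llbracket G\rrbracket^I}$ as profinite, so that $D^\bullet_\cR\stg\prod_I\rg\cong\varprojlim_\alpha D_\alpha\stg\prod_I\rg\cong\prod_I D^\bullet_\cR$. The hardest point is the compatibility of the finite models with the inverse system together with the derived exactness of $\varprojlim$; for the latter we rely on Proposition~\ref{prop:invlimexact}, since all terms are condensates of compact modules.

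Finally, Lemma~\ref{lem:tensorinteract} gives $\cR\stg(M\st D^\bullet_\cR)\cong(\cR\st D^\bullet_\cR)\stg M\cong D^\bullet_\cR\stg M$. This yields \eqref{e:tatess1}, with naturality inherited from the construction. For \eqref{e:tatess2}, we restrict to condensates of profinite complexes and apply Theorem~\ref{thm:profintosolid}(1)--(3) together with full faithfulness of condensation (Lemma~\ref{lem:profinitembeddings}). This identifies both sides with condensates of $\ring\widehat\otimes_{\ring\llbracket G\rrbracket}(M\widehat\otimes_\ring D^\bullet_\ring)$ and $\mathrm{RHom}_{\ring\llbracket G\rrbracket}(\ring,M)$, and by (FF) the latter has profinite cohomology (Lemma~\ref{lem:profincohomgroups}).
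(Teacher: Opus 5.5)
Your architecture is the paper's. You reduce \eqref{e:tatess1} via Lemma~\ref{lem:tensorinteract} to the map $D^\bullet_\cR\stg M\to\rhg(\cR,M)$. You check it on the generators $\prod_I\rg$ by writing $D^\bullet_\cR$ as an inverse limit of bounded complexes of finite modules, using (FF), finite cohomological dimension, Lemma~\ref{lem:finitecomplex} and Lemma~\ref{lem:tensorsandlimits}. You then extend by colimits using Theorem~\ref{thm:cpoo}. One part is tidier than the paper: truncating the bar resolution at stage $n$ makes $\cR$ perfect, since the kernel is projective by Lemma~\ref{lem:cd} and hence a retract of the compact $P_n$. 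This shows directly that $\rhg(\cR,-)$ preserves all colimits, so both sides are exact colimit-preserving functors. The paper's separate induction over bounded complexes and its truncation argument for unbounded ones then become unnecessary.

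One step, however, fails as you justify it. You assert that a finite $\rg$-module has a resolution by finitely generated free modules $\rg^m$, and conclude that $D_\alpha\stg(-)$ commutes with products. This is false at the generality of the theorem. For the trivial module $\mathbb{F}_p$ over $\mathbb{Z}_p\llbracket G\rrbracket$, such a resolution would make the augmentation ideal finitely generated, i.e.\ force $G$ to be of type $\mathrm{FP}_1$ over $\mathbb{Z}_p$. That is not assumed: the group of Theorem~\ref{thm:novelexample} satisfies the hypotheses but is not of this type.

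You do not actually need the claim. The only instance you use is $D_\alpha\stg\prod_I\rg\cong\prod_I D_\alpha$. This follows from Lemma~\ref{lem:tensorsandlimits} applied to the bounded complex of finite modules $D_\alpha$ and to $\prod_I\rg=\varprojlim_J\rg^J$ over finite $J\subseteq I$, which is exactly how the paper argues.

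You rightly flag the compatibility of the finite models with the inverse system, which the paper passes over; it can be bypassed entirely. $D^\bullet_\cR$ is bounded with cohomology $\underline{H^k_\ring(G,\ring\llbracket G\rrbracket)}$, which is profinite by (FF), Lemma~\ref{lem:profincohomgroups} and Theorem~\ref{thm:profintosolid}. Both $(-)\stg\prod_I\rg$ and $\prod_I(-)$ are exact. So a finite d\'evissage along the truncations of $D^\bullet_\cR$ reduces $D^\bullet_\cR\stg\prod_I\rg\cong\prod_I D^\bullet_\cR$ to the case of a single profinite module, where Lemma~\ref{lem:tensorsandlimits} applies directly.
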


The profinite statement follows from the solid statement by Theorem~\ref{thm:profintosolid}. Although the third assertion of Theorem~\ref{thm:profintosolid} only implies that $D_{\cR}^{\bullet} \cong \underline{D_{\ring}^{\bullet}}$ in $\LSolid$, it follows from its proof that this is an isomorphism in $\SolidLG$. Lemma~\ref{lem:tensorinteract} tells us that the left hand side of Equation~\eqref{e:tatess1} is isomorphic to $D_{\cR}^{\bullet} \otimes_{\rg}^{\solid} M$, leaving us to prove:

\begin{lemma}\label{lem:corelemma}
	Let $G$ be a profinite group and $\ring$ a commutative profinite ring such that $\cd_{\cR}(G) < \infty$ and satisfies condition (FF). Then there is an isomorphism
	\[
	D_{\cR}^{\bullet} \stg M \rar \rhg({\cR}, M)
	\]
	that is natural in $M$.
\end{lemma}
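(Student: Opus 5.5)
First I construct the natural map, then I show it is an isomorphism by reducing, via sifted colimits and products, to the free module $\rg$.

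\textbf{The map.} Since $\rg$ is a bimodule, $D_{\cR}^{\bullet}=\rhg(\cR,\rg)$ is a right $\rg$-module. I take the map to be the composite
\[
D_{\cR}^{\bullet}\stg M \;=\; \rhg(\cR,\rg)\stg M \longrightarrow \rhg(\cR,\rg\stg M)\;\cong\;\rhg(\cR,M).
\]
The first arrow is adjoint, under the adjunctions of \textsection\ref{sssec:tensorhoms}, to the evaluation map $\cR\st \rhg(\cR,\rg)\stg M\to \rg\stg M$. It is natural in $M$. For $M=\rg$ both sides are $D_{\cR}^{\bullet}$ and the map is the identity.

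\textbf{Commuting with colimits and products.} The left side commutes with all colimits because it is a left adjoint. By Theorem~\ref{thm:cpoo} and $\cd_{\cR}(G)=n<\infty$, we may truncate the bar resolution (Lemma~\ref{lem:cd}) to get a bounded resolution $P_\bullet\to\cR$ with compact projective terms. Then $\rhg(\cR,-)$ is a finite totalisation of the functors $\rhg(P_i,-)$, each of which commutes with filtered colimits and reflexive coequalisers. So $\rhg(\cR,-)$ commutes with sifted colimits, i.e.\ $\cR$ is a perfect object. By Proposition~\ref{lem:siftedcolimits}, and because both functors are exact, it then suffices to treat compact projectives, namely $M=\prod_I\rg$ by Lemma~\ref{lem:coeffringanalytic}. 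The right side commutes with products since it is a right adjoint.

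\textbf{The main obstacle: products on the left.} I must show that $D_{\cR}^{\bullet}\stg-$ commutes with arbitrary products. My plan is to use (FF). By Theorem~\ref{thm:profintosolid}, $D_{\cR}^{\bullet}$ is the condensate of $\mathrm{RHom}_{\ring\llbracket G\rrbracket}(\ring,\ring\llbracket G\rrbracket)$. I write $\ring\llbracket G\rrbracket=\varprojlim_U \ring_j[G/U]$ over finite quotients, and use Proposition~\ref{prop:invlimexact} to obtain $D_{\cR}^{\bullet}\cong R\varprojlim_U \rhg(\cR,\underline{\ring_j[G/U]})$. Each term is a bounded complex, computed from the truncated bar resolution with discrete coefficients, whose cohomology is finite by (FF). By Lemma~\ref{lem:finitecomplex} it is quasi-isomorphic to a bounded complex $F_U$ of finite modules. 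The delicate point is making these replacements compatible along the inverse system; I would do this by choosing them inductively, or more simply by working with pro-objects in the derived category and using the exactness of $\varprojlim$.

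For a finite $\rg$-module $F$, write $F=\rg^{a}/K$ with $K$ also finitely generated. Resolving $F$ by finite free modules $\rg^{a}$ shows that $F\stg-$ commutes with products: it does so on $\rg^a$, and products are exact. This uses that finite modules are pseudo-coherent, which I expect to hold since $F$ is finite and $\rg$ is profinite; a bounded-above resolution suffices in each degree. Finally, I need $(\varprojlim_U F_U)\stg M\cong \varprojlim_U(F_U\stg M)$ for $M=\prod_I\rg$. I would deduce this from the solid analogue of Lemma~\ref{lem:tensorsandlimits}, since $\prod_I\rg$ is a condensed profinite module. Combining these, limits commute with products, so $D_{\cR}^{\bullet}\stg\prod_I\rg\cong\prod_I D_{\cR}^{\bullet}$.

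With both functors commuting with sifted colimits and products, the natural map is an isomorphism on $\rg$, hence on all compact projectives $\prod_I\rg$, hence on every $M\in\SolidLG$.
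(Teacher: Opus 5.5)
Your outline follows the paper's. Your map is the paper's composition map $\rhg(\cR,\rg)\stg\rhg(\rg,M)\to\rhg(\cR,M)$, up to the identification $\rhg(\rg,M)\cong M$. The reduction to $M=\prod_I\rg$ is the same, and so is the description of $D^\bullet_{\cR}$ via (FF) and Lemma~\ref{lem:finitecomplex}. Your perfectness argument is correct and tidier than the paper's five-lemma and truncation steps. Truncating the bar resolution at stage $n=\cd_{\cR}(G)$ gives a bounded resolution by compact projectives, because the last kernel is projective and a quotient of $P_n$, hence a retract of it. So $\rhg(\cR,-)$ preserves all colimits.

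The genuine problem is in the products step. A finite $\rg$-module $F$ need not be of the form $\rg^a/K$ with $K$ finitely generated, let alone have a resolution by finitely generated free modules. Already for $\ring=\mathbb{F}_p$ the trivial module $\cR$ is finite. Such a presentation of it exists exactly when the augmentation ideal is finitely generated, i.e.\ when $G$ is of type $\mathrm{FP}_1$, and a resolution by finitely generated free modules amounts to type $\mathrm{FP}_\infty$. The lemma has to hold without such assumptions; producing duality groups that are not of type $\mathrm{FP}_\infty$ is the point of Theorem~\ref{thm:novelexample}. Moreover, the finite complexes from Lemma~\ref{lem:finitecomplex} involve arbitrary finite modules. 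So you cannot get ``$F\stg -$ commutes with products'' this way.

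You only need the case of $\prod_I\rg$, and that follows from Lemma~\ref{lem:tensorsandlimits} applied in \emph{both} variables. Write $\prod_I\rg=\varprojlim_J\rg^J$ over finite $J\subseteq I$. Then $(\varprojlim_U F_U)\stg\prod_I\rg\cong\varprojlim_{U,J}F_U^J$, and commuting the limits gives $\prod_I D^\bullet_{\cR}$. This is exactly the paper's argument, and no finite presentation is needed.

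The compatibility issue you flag, which the paper passes over, can also be sidestepped. $D^\bullet_{\cR}$ is bounded, and its cohomology modules are condensates of profinite modules by (FF), Theorem~\ref{thm:profintosolid} and Lemma~\ref{lem:profincohomgroups}. Both sides of $X\stg\prod_I\rg\to\prod_I X$ are exact in $X$. So a d\'evissage along truncations reduces to a single profinite module, where Lemma~\ref{lem:tensorsandlimits} applies directly, without Lemma~\ref{lem:finitecomplex}.
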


\begin{proof}
    We define for $M\in \SolidLG$ the natural transformation
    \begin{equation}\label{eq:defofphi}
        \Phi_M \colon D_{\cR}^{\bullet} \stg M \cong \rhg({\cR},\rg)\stg \rhg(\rg,M) \xrightarrow{\underline{\circ}} \rhg({\cR}, M),
    \end{equation}
    where $\underline{\circ}$ is the composition map (see Equation~\ref{eq:internalcomposition}).

    To prove that $\Phi_M$ is an isomorphism for all $M \in \SolidLG$, we first consider the case where $M$ is a compact projective generator of $\SolidLG^{\heartsuit}$. According to Lemma~\ref{lem:coeffringanalytic}, there is a set $I$ such that $M = \prod_I \rg$. Because the internal Hom-functor commutes with arbitrary products in the second variable, the target of our map evaluates to:
    \[ \rhg\left({\cR}, \prod_I \rg\right) \cong \prod_I \rhg({\cR}, \rg) = \prod_I D_{\cR}^{\bullet}. \]
    For the source, we first show that $D_{\cR}^{\bullet}$ can be written as an inverse limit of finite modules. As $\rhg({\cR},-)$ commutes with inverse limits, the complex $D_{\cR}^{\bullet}$ can be written as the limit over $D_{\ring_{\alpha}, U}^{\bullet} := \rhg({\cR}, \underline{\ring_\alpha [G/U]}^{\solid})$, where the $\Lambda_\alpha$ form an inverse system for $\ring$ and $U$ ranges over the open normal subgroups of $G$. By virtue of the finite cohomological dimension of $G$ and Lemma~\ref{lem:cd}, ${\cR}$ and the finite discrete module $\underline{\ring_\alpha [G/U]}$ are resolved by bounded chain complexes in $\SolidLG$, whence $D_{\ring_{\alpha}, U}^{\bullet}$ is bounded. By the hypothesis on the cohomology groups of $G$ and the third assertion of Theorem~\ref{thm:profintosolid}, the homology groups of $D_{\ring_{\alpha}, U}^{\bullet}$ are the finite discrete cohomology groups $\underline{H_{\ring}^k(G, \ring_{\alpha}[G/U])}$. Therefore, each $D_{\ring_{\alpha}, U}^{\bullet}$ is quasi-isomorphic to a bounded complex of finite modules according to Lemma~\ref{lem:finitecomplex}.
    
    Hence, replacing each $D_{\ring_{\alpha}, U}^{\bullet}$ by a complex of finite modules, we have
    \[ D_{\cR}^{\bullet} \stg \prod_I \rg \xrightarrow{\cong} \left( \varprojlim_{\alpha,U} D_{\ring_{\alpha}, U}^{\bullet} \right)\stg \prod_I \rg \, . \]
    Observe that all terms of the complexes $D_{\ring_{\alpha}, U}^{\bullet}$ and $\rg$ are condensates of profinite $\ring \llbracket G \rrbracket$-modules where products are inverse limits of finite products. Lemma~\ref{lem:tensorsandlimits} then implies
    \[
    \left( \varprojlim_{\alpha,U} D_{\ring_{\alpha}, U}^{\bullet} \right)\stg \prod_I \rg \xrightarrow{\cong} \varprojlim_{\alpha,U} \left(\prod_I  \left( D_{\ring_{\alpha}, U}^{\bullet} \stg \rg \right)\right) \xrightarrow{\cong} \varprojlim_{\alpha,U} \left(\prod_I D_{\ring_{\alpha}, U}^{\bullet}\right) \, .
    \]
    As limits commute with each other, this means that we have an isomorphism
    \[
    D_{\cR}^{\bullet} \stg \prod_I \rg \xrightarrow{\cong} \prod_I D_{\cR}^{\bullet}.
    \]
    By naturality and the universal property of products, the map $\Phi_{\prod_I \rg}$ factors as the product of the individual maps $\Phi_{\rg}$. The map $\Phi_{\rg} \colon D_{\cR}^{\bullet} \stg \rg \rar D_{\cR}^{\bullet}$ is just the right unitor of the monoidal structure, so an isomorphism. This means that $\Phi_{\prod_I \rg}$ is an isomorphism for any set $I$.

    This argument extends to all compact projective objects. Any compact projective object $P$ is a retract of a direct sum of products of the form $P_j = \prod_{I_j} \rg$. To see that $\Phi$ is an isomorphism on such a coproduct $\bigoplus_j P_j$, we must check that the functors on both sides commute with direct sums. On the one hand, the tensor product $D_{\cR}^{\bullet} \stg (-)$ is a left adjoint by Equation~\eqref{eq:adjunction2} and hence commutes with colimits. On the other hand, as $G$ is of type $\mathrm{CP}_{\infty}$ over $\cR$ for solid modules according to Theorem~\ref{thm:cpoo}, the functor $\rhg({\cR}, -)$ commutes with arbitrary direct sums. Because $\Phi_{P_j}$ is an isomorphism on each individual product component, $\Phi_{\bigoplus_j P_j}$ is also an isomorphism on their direct sum. Finally, because both the tensor product and the internal hom preserve retracts, the natural transformation $\Phi_P$ restricts to an isomorphism for all compact projective objects $P$.

    We prove inductively that for any bounded chain complex $M$ of compact projectives the map $\Phi_M$ is an isomorphism. Since the base case is bounded complexes of length $1$, assume that the claim holds for complexes of length less than $k$ and that $M$ is of length $k$. Consider the short exact sequence
    \[
    0 \to M_k \to M \to M/M_k \to 0 \, ,
    \]
    where $M_k$ denotes the $k^{\text{th}}$ term of $M$. Applying the five-lemma to the map $\Phi$ between the long exact sequences for $D_{\cR}^{\bullet} \stg -$ and $\rhg({\cR}, -)$ yields that $\Phi_M$ is an isomorphism. This extends to an arbitrary, potentially unbounded complex $M$ of compact projectives. By hypothesis, the group $G$ has finite cohomological dimension. This ensures that both $D_{\cR}^{\bullet} \stg (-)$ and $\rhg({\cR}, -)$ have finitely many non-vanishing homology groups. Consequently, both $D_{\cR}^{\bullet} \stg M$ and $\rhg({\cR}, M)$ depend only on a bounded truncation $\tau_{\geq a} \tau_{\leq b} M$.
    
    This extends to any chain complex $M$. The functors $D_{\cR}^{\bullet} \stg (-)$ and $\rhg({\cR}, -)$ commute with direct limits because the former is a left adjoint by Equation~\eqref{eq:adjunction2} and for the latter the group $G$ is of type $\mathrm{CP}_{\infty}$ over $\cR$. As both functors also commute with finite products, they commute with sifted colimits. Given that every object $M$ in $\SolidLG$ is obtained as sifted colimits of chain complexes of compact projective objects according to Lemma~\ref{lem:siftedcolimits}, $\Phi_M$ is an isomorphism.
\end{proof}

We do not expect that Theorem~\ref{thm:tate} can be proved for profinite modules only using methods from the profinite setting. One can adapt the proof of Lemma~\ref{lem:corelemma} from the condensed to the profinite setting until the point where the lemma's isomorphism holds for chain complexes $M$ of projective profinite modules. Any chain complex of arbitrary profinite modules can be written as direct limit of the former chain complexes by taking term-wise projective resolutions consisting of free profinite modules. In order to extend the isomorphism from Lemma~\ref{lem:corelemma} to these arbitrary chain complexes, one would require that every profinite group is of type $\mathrm{CP}_{\infty}$ within the profinite setting. However, this is doubtful because not every discrete group is of type $\mathrm{CP}_{\infty}$ or equivalently, of type $\mathrm{FP}_{\infty}$. This is a reason why our form of duality has not been achieved by any existing method from the literature thus far and why condensed mathematics has potential for group cohomology.

\subsection{Proof of the main theorems}

This subsection contains a proof of Theorem~\ref{thmA}, and provides two proofs of Theorem~\ref{thmB} that are of independent interest. On the one hand, Theorem~\ref{thmB} can be proved as stand-alone consequence of the profinite isomorphism in Theorem~\ref{thm:tate} using the same arguments to prove the Theorem~\ref{thmA} from the solid isomorphism. Therefore, we first demonstrate these main theorems in parallel. However, we remind the reader that the Tate spectral sequence was established only via the solid setting. Theorem~\ref{thmB} can also be deduced as a corollary of Theorem~\ref{thmA}, which highlights again the power of the condensed machinery and Theorem~\ref{thm:profintosolid}.

\begin{proof}[Proof of Theorem~\ref{thmA} and Theorem~\ref{thmB}]
    We prove both theorems in parallel. Theorem~\ref{thmA} lives entirely in the solid setting, Theorem~\ref{thmB} entirely in the profinite setting, but as the required arguments are identical, only the notation differs in either case. We concentrate on the solid case and provide references for the profinite one whenever these cases differ. In order to retrieve the profinite case from the below arguments, one uses Theorem~\ref{thm:profintosolid} to replace `$\st$' by `$\widehat{\otimes}_{\ring}$', `$\stg$' by `$\widehat{\otimes}_{\ring \llbracket G \rrbracket}$', `$\mathbf{H}_{\cR}^{\bullet}(G, -)$' by `$H_{\ring}^{\bullet}(G, -)$' and `$D_{\cR}$' by `$D_{\ring}$'. Following Lemma~\ref{lem:profinitembeddings} and its subsequent paragraph, one replaces any instance of `$\rg$' by `$\ring \llbracket G \rrbracket$'. At the very end, we prove that the duality isomorphism of Equation~\eqref{eq:profinduality} in Theorem~\ref{thmB} holds for discrete $\ring \llbracket G \rrbracket$-modules under the specified conditions.
    
    So far, we have not distinguished between underived and derived tensor products. As this becomes necessary now, we introduce for $M, N \in \SolidLG^{\heartsuit}$ the notation
    \[
    M \stheart N := H_0(M\st N).
    \]
    for the underived tensor product.
    
    To see that any Cohen--Macaulay group is a duality group, we consider the Tate spectral sequence associated to the isomorphism from Theorem~\ref{thm:tate}. If $P_{\bullet}$ denotes a projective resolution of the solid $\rg$-module ${\cR}$, then the left-hand side of this isomorphism can be written as
    \[
    P_\bullet \stg (M \st D_{\cR}^{\bullet}) \cong (P_\bullet \stg M) \st D_{\cR}^{\bullet} \, .
    \]
    according to Lemma~\ref{lem:tensorinteract}. First taking the differential along $D_{\cR}^{\bullet}$ on the double complex gives
    \[
    E_{\bullet\star}^1 = \big(P_\bullet\stg M \big) \st \mathbf{H}_{\cR}^{-\star}(G, \rg) \cong P_\bullet\stg \big(M \st \mathbf{H}_{\cR}^{-\star}(G, \rg)\big) \, ,
    \]
    where we are applying Lemma~\ref{lem:tensorinteract} again. Because the solid $\rg$-module $\mathbf{H}_{\cR}^{-\star}(G, \rg)$ is flat with respect to $\st$, the tensor product in $M \st \mathbf{H}_{\cR}^{-\star}(G, \rg)$ is underived. As $\mathbf{H}_{\cR}^{-\star}(G, \rg)$ is assumed to be concentrated in degree $n$, the Tate spectral sequence collapses on the next page so that
    \[
    \mathbf{H}_{\bullet}^{\cR}\big(G, M \stheart \mathbf{H}_{\cR}^n(G, \rg)\big) \cong \mathbf{H}_{\cR}^{n-\bullet}(G, M) \, .
    \]
    To see that any duality group is Cohen--Macaulay, we evaluate the duality isomorphism at the group ring $M = \rg$ and obtain that
    \[
    \mathbf{H}_{\cR}^{n-k}(G, \rg) \cong \mathbf{H}_k^{\cR}(G, \rg \stheart D_{\cR}) \, . 
    \]
    Observe that $\rg \stheart D_{\cR}$ is isomorphic to an induced module by Lemma~\ref{lem:tensorinducedmod} in the solid case and~\cite[Remark~3.3.3]{sym00} in the profinite case. Hence,
    \begin{equation*}
        \mathbf{H}_{\cR}^{k}(G, \rg) \cong \begin{cases} D_{\cR} &\mbox{ when }$k=n$\\ 0 & \mbox{otherwise}\end{cases}
    \end{equation*}
    by Lemma~\ref{lem:Shapiro} in the solid case and by~\cite[Lemma~3.3.4]{sym00} in the profinite case. This shows that $\mathbf{H}_{\cR}^{\bullet}(G, \rg)$ is indeed concentrated in degree $n$. 
    
    It remains to show that the solid $\rg$-module $D_{\cR} \cong \mathbf{H}_{\cR}^n(G, \rg)$ is flat with respect to $\st$. Let $0 \to A \to B \to C \to 0$ be a short exact sequence of solid $\rg$-modules. We aim to show that
    \begin{equation}\label{eq:sesforsure}
    0 \to A \stheart D_{\cR} \to B \stheart D_{\cR} \to C \stheart D_{\cR} \to 0    
    \end{equation}
    is exact. First we use Lemma~\ref{lem:Shapiro} in the solid case, \cite[Lemma~3.3.4]{sym00} in the profinite case and then duality to conclude that
    \[
    A \stheart D_{\cR} \cong H_0^{\cR}\big(G, \rg \stheart A \stheart D_{\cR} \big) \cong \mathbf{H}_{\cR}^n(G, \rg \stheart A ),
    \]
    with similar isomorphisms for $B$ and $C$. The cohomology groups on the right hand side are part of the long exact sequence associated to the short exact sequence
    \[
    0 \to \rg \stheart A \to \rg \stheart B \to \rg \stheart C \to 0 \, .
    \]
    That is, the sequence in Equation~\eqref{eq:sesforsure} is isomorphic to the middle portion of
    \begin{align*}
        \mathbf{H}_{\cR}^{n-1}(G, \rg \stheart C) \to &\mathbf{H}_{\cR}^n(G, \rg \stheart A) \to \mathbf{H}_{\cR}^n(G, \rg \stheart B) \to \\
        &\mathbf{H}_{\cR}^n(G, \rg \stheart C) \to \mathbf{H}_{\cR}^{n+1}(G, \rg \stheart A) \, . 
    \end{align*}
    Note that $\mathbf{H}_{\cR}^{n+1}(G, \rg \stheart A)$ vanishes as $\cd_{\cR}(G) = n$ (respectively, $\cd_{\ring}(G) = n$). Moreover,
    \[ \mathbf{H}_{\cR}^{n-1}(G, \rg \stheart C)\cong H_{1}^{\cR}\big(G, \rg \st C \stheart D_{\cR} \big)=0 \]
    by duality, Lemma~\ref{lem:Shapiro} in the solid case and~\cite[Lemma~3.3.4]{sym00} in the profinite case. Hence, the sequence in Equation~\eqref{eq:sesforsure} is exact, $D_{\cR}$ is flat with respect to $\st$ and $G$ is Cohen--Macaulay.

    For the last assertion of Theorem~\ref{thmB}, assume that $M$ is a discrete $\ring \llbracket G \rrbracket$-module, $G$ is of type $\mathrm{FP}_{\infty}$ over $\ring$ and $D_{\ring}$ is a finitely generated profinite $\ring \llbracket G \rrbracket$-module. These conditions imply that the tensor product $M \widehat{\otimes}_{\ring} D_{\ring} = M \otimes_{\ring} D_{\ring}$ and the homology groups $H_{\bullet}^{\ring}(G, M \widehat{\otimes}_{\ring} D_{\ring})$ are well defined by~\cite[pp.~378--379]{sym00}. This source also implies that the functors $H_{\bullet}^{\ring}(G, - \widehat{\otimes}_{\ring} D_{\ring})$ and $H_{\ring}^{\bullet}(G, -)$ commute with direct limits. Because every discrete $\ring \llbracket G \rrbracket$-module is a direct limit of finite $\ring \llbracket G \rrbracket$-modules by~\cite[Lemma~5.1.1]{rib10}, the duality isomorphism in Equation~\eqref{eq:profinduality} extends to all discrete $\ring \llbracket G \rrbracket$-modules $M$.
\end{proof}

\begin{proof}[Second proof of Theorem~\ref{thmB}]
    We use Theorem~\ref{thmA} in order prove Theorem~\ref{thmB}. The paragraph preceding Corollary~\ref{corC} proves that a profinite group $G$ is profinite Cohen--Macaulay if and only it is solid Cohen--Macaulay. According to Theorem~\ref{thmA}, the latter is equivalent to $G$ being a solid duality group. Theorem~\ref{thm:profintosolid} implies that $G$ is a profinite duality group. The last paragraph of the previous proof demonstrates that duality holds for any discrete $\ring \llbracket G \rrbracket$-module $M$ under the specified conditions.

    It remains to prove that any profinite duality group $G$ is also a solid duality group. By Theorem~\ref{thm:profintosolid}, the duality isomorphism
    \begin{equation}\label{eq:dualityforcondensates}
    \mathbf{H}_k^{\cR}(G, \underline{M} \st D_{\cR}) \rightarrow \mathbf{H}_{\cR}^{n-k}(G, \underline{M})
    \end{equation}
    holds for any condensate of a profinite $\ring \llbracket G \rrbracket$-module $M$. Any solid $\rg$-module can be written as a direct limit of condensates of profinite $\ring \llbracket G \rrbracket$-modules $N = \varinjlim_{i \in I} \underline{M_i}$ by Lemma~\ref{lem:indprofinites}. Recall that $G$ is type $\mathrm{CP}_{\infty}$ in the solid setting by Theorem~\ref{thm:cpoo} and the tensor products `$\st$' and `$\stg$' as left-adjoints commute with direct limits. It follows from~\cite[Theorem~1.10]{Scholze2019} and Lemma~\ref{lem:analyticrings} that direct limits of solid modules are exact. In particular, Equation~\eqref{eq:dualityforcondensates} extends to every solid $\rg$-module $M$, whence $G$ is a solid duality group.
\end{proof}

\subsection{Duality via cap- and cup-product pairings}

In the literature, duality has not only been established via the Cohen--Macaulay condition, but also via cap- and cup-product pairings. More specifically, Bieri--Eckmann duality in~\cite{bie73} has been formulated over a cap-product pairing while Serre--Verdier--Tate duality in~\cite{Serre1997}, \cite{Verdier1965} has been formulated over a cup-product pairing. In the profinite setting, these pairings have only been constructed over the coefficient ring $\Z_p$ \cite{Pletch1980}, \cite{ser94}, \cite{sym00}, \cite{Verdier1965}, \cite{Wilkes2019}. In this subsection, we generalise a cap- and a cup-product pairing from~\cite{Pletch1980} to every profinite coefficient ring $\ring$. In doing so, we also prove that the duality from Theorem~\ref{thmB} generalises Bieri--Eckmann duality and Serre--Verdier--Tate duality to the profinite ring $\ring$.

\subsubsection{Solid cap product}
First, we tackle the cap-product pairing which necessitates the construction of a cap product in the derived category $\SolidLG$. This derived solid cap product can be seen as a reformulation of the following fact for discrete groups: the edge homomorphism of the Tate spectral sequence coming from Theorem~\ref{thm:tate} is the inverse of the cap product with a fundamental class~\cite[Section VIII.10]{bro82}.

\begin{definition}[Solid cap product]
Let $G$ be a profinite group, $\ring$ a commutative profinite ring and $M, N$ objects in $\SolidLG$. Denote the canonical evaluation morphism by
\[ \mathrm{eval} \colon {\cR} \st \rhg({\cR}, N) \to N \, . \]
The cap product is defined, using Lemma~\ref{lem:tensorinteract} and the fact that $M\stg N \cong N\stg M$ (see \textsection\ref{sssec:tensorhoms}), as the map
\begin{align*}
    -\frown - \colon ({\cR} \stg M) \st &\rhg({\cR}, N) \\
    &\xrightarrow{\cong} M \stg \left({\cR} \st \rhg({\cR}, N)\right) \\
    &\xrightarrow{\id \st \mathrm{eval}}  M\stg N \\
    &\cong {\cR} \stg (M \st N).
\end{align*}
\end{definition}

We also have a notion of fundamental class:

\begin{definition}
The representative of the fundamental class is a preimage $e$ of the identity map under the specialisation of the quasi-isomorphism from Theorem~\ref{thm:tate}
\[ {\cR} \stg D_{\cR}^{\bullet} \xrightarrow{\simeq} \rhg({\cR}, {\cR}) \] 
to the trivial module ${\cR}$.
\end{definition}

Capping with the fundamental class is inverse to the isomorphism from Theorem~\ref{thm:tate}:

\begin{proposition}\label{prop:cap_product_quasi_iso}
    Let $G$ be a profinite group and $\ring$ a commutative profinite ring such that $\cd_{\cR}(G) < \infty$ and $G$ satisfies condition (FF). For any object $M$ in $\SolidLG$, the cap product with a representative $e$ of the fundamental class cycle is an isomorphism
\[ e \frown - \colon \rhg({\cR}, M) \xrightarrow{\cong} {\cR} \stg (D_{\cR}^{\bullet} \st M) \]
inverse to the isomorphism from Theorem~\ref{thm:tate}. 
\end{proposition}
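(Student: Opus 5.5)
Since Theorem~\ref{thm:tate} already gives that $\Phi'_M\colon {\cR} \stg (D_{\cR}^{\bullet} \st M) \to \rhg({\cR}, M)$ is an isomorphism, natural in $M$, it suffices to prove one identity:
\[ \Phi'_M \circ (e \frown -) = \id_{\rhg({\cR},M)} . \]
Here $\Phi'_M$ is the composite of the identification of Lemma~\ref{lem:tensorinteract} with the map $\Phi_M$ of Equation~\eqref{eq:defofphi}. Once this holds, $e\frown -$ is a right inverse of an isomorphism, hence itself an isomorphism and its two-sided inverse.

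The key steps, in order, are as follows.

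\begin{enumerate}
\item Unwind $e\frown -$. By definition it is the composite
\[ \rhg({\cR},M) \cong {\cR}\st\rhg({\cR},M) \xrightarrow{e\st\id} ({\cR}\stg D^\bullet_{\cR})\st \rhg({\cR},M) \xrightarrow{\frown} {\cR}\stg(D^\bullet_{\cR}\st M). \]
Here $e$ is viewed as a map ${\cR}\to {\cR}\stg D^\bullet_{\cR}$ in $\LSolid$.
\item Move $\Phi'$ past the cap product. Naturality of $\Phi'$ in the coefficient, together with the compatibility of $\Phi$ with the evaluation map, should give a commutative square
\[ \Phi'_M\circ \frown \;=\; \underline{\circ}\circ(\Phi'_{\cR}\st \id)\colon ({\cR}\stg D^\bullet_{\cR})\st\rhg({\cR},M)\to \rhg({\cR},M). \]
On the right, $\Phi'_{\cR}$ lands in $\rhg({\cR},{\cR})$, and $\underline{\circ}$ is the composition
\[ \rhg({\cR},{\cR})\st\rhg({\cR},M)\to\rhg({\cR},M) \]
of Equation~\eqref{eq:internalcomposition}, written in the order that agrees with composing endomorphisms of ${\cR}$ with maps out of ${\cR}$.
\item Conclude. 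By the defining property of $e$, we have $\Phi'_{\cR}\circ e=\id_{\cR}$, which is the unit $\cR \to \rhg({\cR},{\cR})$. Composition with the identity is the unitor, so the composite in the displayed identity is the identity.
\end{enumerate}

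The main obstacle is step 2, the compatibility of $\Phi$, which is defined via internal composition, with the cap product, which is defined via evaluation. I would first reduce this to an associativity statement about internal composition. Both sides are maps adjoint to maps
\[ {\cR}\st({\cR}\stg D^\bullet_{\cR})\st\rhg({\cR},M)\to M, \]
and after unwinding $\Phi$ using $D^\bullet_{\cR}=\rhg({\cR},\rg)$ and $M\cong \rhg(\rg,M)$, each becomes an iterated evaluation
\[ {\cR}\to \rg \to M . \]
The only choice made in that unwinding is the ordering of two evaluations, so associativity of internal composition, i.e.\ the commutativity of evaluations, should identify the two sides. If this bookkeeping becomes unwieldy in the derived setting, a fallback is available. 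Both sides are natural transformations of functors of $M$ that commute with sifted colimits, by Theorem~\ref{thm:cpoo} and the arguments in Lemma~\ref{lem:corelemma}, so it suffices to check the identity for $M=\prod_I\rg$, and by product-compatibility for $M=\rg$. There the statement reduces to the unit axiom for $\rg$, since $\Phi_{\rg}$ is the unitor.
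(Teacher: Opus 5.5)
Your main line (Steps 1--3) is correct and is essentially the paper's proof. The paper observes that $\tilde\Phi\circ(e\frown-)$ is a natural endomorphism of $\rhg({\cR},-)$ and uses the Yoneda lemma to reduce to its value at $\id_{\cR}$; there the cap product collapses to $e$ and $\tilde\Phi_{\cR}(e)=\id_{\cR}$. Your Step~2, compatibility of $\Phi'$ with internal composition via associativity, is exactly the enriched naturality that this Yoneda step uses implicitly, so your version is, if anything, the more explicit one.

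I would drop the fallback. Agreement with the identity on the generators $\prod_I\rg$ does not propagate through sifted colimits the way being an isomorphism does. Moreover, even at $M=\rg$ one gets $\Phi'_{\rg}(e\frown\psi)=\psi\circ\Phi'_{\cR}(e)$, so the check needs Step~2 and the defining property of $e$, not merely the unit axiom for $\rg$.
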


\begin{proof}
Denote the natural isomorphism from Theorem~\ref{thm:tate} by
\[ \tilde{\Phi}_M \colon {\cR} \stg (D_{\cR}^{\bullet} \st M) \xrightarrow{\cong} \rhg({\cR}, M) \, . \]
The cap product $e \frown -$ determines a natural transformation
\[ C \colon \rhg({\cR}, -) \to {\cR} \stg (D_{\cR}^{\bullet} \st -) \, . \] 
Note that $\tilde{\Phi} \circ C$ is a natural endomorphism of $\rhg({\cR}, -)$. By the Yoneda lemma, any natural endomorphism of this functor is determined by its value on the identity morphism $\id_{{\cR}} \in \rhg({\cR}, {\cR})$. 

So let us evaluate $\tilde{\Phi}_{{\cR}} \circ C_{{\cR}}(\id_{{\cR}})$. When the element in the internal Hom-functor is the identity morphism $\id_{{\cR}}$, the evaluation
$$
{\cR} \st \rhg({\cR}, {\cR}) \xrightarrow{\mathrm{eval}_{\cR}} {\cR}
$$
reduces to the identity map on ${\cR}$. This collapses the evaluation step in the cap product $e\frown \id_{\cR}$, leaving us with $e \in {\cR} \stg D_{\cR}^{\bullet}$. By definition $\tilde{\Phi}_{{\cR}}(e) = \id_{{\cR}}$, so $\tilde{\Phi} \circ C$ is the identity on $\rhg({\cR}, -)$. Because $\tilde{\Phi}$ is an isomorphism by Theorem~\ref{thm:tate}, this implies that $C$ and $\tilde{\Phi}$ are mutually inverse.
\end{proof}

\subsubsection{Cap-product pairing}

For Bieri--Eckmann duality, the relevant profinite cap-product pairing has been defined only over $\Z_p$, but one can easily extend its definition to any profinite coefficient ring $\ring$:

\begin{definition}[Profinite cap product, $\text{\cite[pp.~62--63]{Pletch1980}}$]\label{defn:capprod}
Let $G$ be a profinite group and $\ring$ a commutative profinite ring such that $G$ satisfies condition (FF). Then there are natural continuous $\ring$-bilinear maps
\[ - \frown - \colon H_r^{\ring}(G, A) \times H_{\ring}^s(G, B) \rightarrow H_{r-s}^{\ring}(G, A \widehat{\otimes}_{\ring} B) \]
for any profinite $\ring \llbracket G \rrbracket$-module $A$, $B$ and every $r \geq s$ that are defined on the cochain level by
\begin{align*}
    \big( A \widehat{\otimes}_{\ring \llbracket G \rrbracket} \ring \llbracket G^{r+1} \rrbracket \big) \times C^s(G, B) &\rightarrow (A \widehat{\otimes}_{\ring} B) \widehat{\otimes}_{\ring \llbracket G \rrbracket} \ring \llbracket G^{r-s+1} \rrbracket \\
    \big(a \otimes (g_0, \dots, g_r), \sigma \big) &\mapsto \big(a \otimes \sigma(g_0, \dots, g_s)\big) \otimes (g_s, \dots, g_r) \, .
\end{align*}
\end{definition}

The duality from Theorem~\ref{thmB} can be expressed via the above cap-product pairing and thus generalises Bieri--Eckmann duality to every profinite coefficient ring $\ring$:

\begin{proposition}[Duality via a cap product]\label{prop:capduality}
Let $G$ be a profinite group and $\ring$ a commutative profinite ring such that $\cd_{\ring}(G) = n$ and $G$ satisfies condition (FF). Then $G$ is a profinite duality group if and only if there exists a profinite $\ring \llbracket G \rrbracket$-module $C$ and an element $e \in H_n^{\ring}(G, C)$ such that for every profinite $\ring \llbracket G \rrbracket$-module $M$ the pairing
\begin{equation}\label{eq:profcapwithe}
     e \frown - \colon H_{\ring}^k(G, M) \rightarrow H_{n-k}^{\ring}(G, C \widehat{\otimes}_{\ring} M)
\end{equation}
is an isomorphism for every profinite $\ring \llbracket G \rrbracket$-module $M$.    
\end{proposition}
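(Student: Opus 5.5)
Both directions run through Theorem~\ref{thmB} together with the profinite form of the core isomorphism (Theorem~\ref{thm:tate}) and its inverse given by capping (Proposition~\ref{prop:cap_product_quasi_iso}).

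\emph{($\Rightarrow$).} Suppose $G$ is a profinite duality group. By Theorem~\ref{thmB}, $G$ is Cohen--Macaulay with flat dualising module $D_\ring = H^n_\ring(G,\ring\llbracket G\rrbracket)$, so the complex $D^\bullet_\ring$ is quasi-isomorphic to $D_\ring[-n]$. Take $C := D_\ring$.
\begin{enumerate}
\item Specialise the isomorphism \eqref{e:tatess2} to $M=\ring$. Its degree-zero part gives $H_n^\ring(G, D_\ring) \cong H^0_\ring(G,\ring)$.
\item Let $e\in H_n^\ring(G,D_\ring)$ be the element corresponding to $\id_\ring$. This is the homology class of the fundamental class representative, after condensation via Theorem~\ref{thm:profintosolid}.
\item By Proposition~\ref{prop:cap_product_quasi_iso}, $e\frown -$ is inverse to the core isomorphism for every $M$ in $\SolidLG$.
\item Restrict to condensates $\underline{M}$ of profinite modules and pass to cohomology in degree $k$. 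Because $D^\bullet_\ring \simeq D_\ring[-n]$ with $D_\ring$ flat, the target becomes $H_{n-k}^\ring(G, D_\ring\widehat{\otimes}_\ring M)$. Thus $e\frown-\colon H^k_\ring(G,M)\to H_{n-k}^\ring(G,D_\ring\widehat\otimes_\ring M)$ is an isomorphism.
\end{enumerate}

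The main obstacle is step~4: we must check that the derived solid cap product on homology agrees with Pletch's cochain-level pairing of Definition~\ref{defn:capprod}. I would do this as follows.
\begin{itemize}
\item Compute ${\cR}\stg(-)$ using the condensed inhomogeneous bar resolution from Theorem~\ref{thm:cpoo}.
\item Compute $\rhg({\cR},-)$ using the homogeneous cochains $C^s(G,-)$, which by Theorem~\ref{thm:profintosolid} condense correctly.
\item Write the evaluation map on these explicit models and verify it is the Alexander--Whitney-type formula $(a\otimes(g_0,\dots,g_r),\sigma)\mapsto (a\otimes\sigma(g_0,\dots,g_s))\otimes(g_s,\dots,g_r)$. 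This should follow because the evaluation map is induced by the diagonal approximation on the bar resolution.
\end{itemize}
If this is awkward, a fallback is available: both pairings are natural, are $\delta$-functorial in $M$, and agree for $k=0$. A universal $\delta$-functor argument in the cohomological variable then identifies them, using effaceability of $H^k_\ring(G,-)$ on profinite modules via injective-like embeddings into coinduced modules.

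\emph{($\Leftarrow$).} Suppose $C$ and $e$ are given. I would show Cohen--Macaulayness directly, mimicking the second half of the proof of Theorem~\ref{thmA}.
\begin{enumerate}
\item Apply the hypothesis to $M=\ring\llbracket G\rrbracket$. This gives $H^k_\ring(G,\ring\llbracket G\rrbracket)\cong H^\ring_{n-k}(G, C\widehat\otimes_\ring\ring\llbracket G\rrbracket)$.
\item The module $C\widehat\otimes_\ring\ring\llbracket G\rrbracket$ is induced, by \cite[Remark~3.3.3]{sym00}. By \cite[Lemma~3.3.4]{sym00} its homology is $C$ in degree $0$ and vanishes otherwise.
\item Hence $H^\bullet_\ring(G,\ring\llbracket G\rrbracket)$ is concentrated in degree $n$ and equals $C$.
\end{enumerate}
Flatness of $C$ follows verbatim from the long exact sequence argument in the proof of Theorems~\ref{thmA} and~\ref{thmB}. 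That argument uses only the isomorphisms $H^{k}_\ring(G,\ring\llbracket G\rrbracket\widehat\otimes_\ring A)\cong H^\ring_{n-k}(G,\ring\llbracket G\rrbracket\widehat\otimes_\ring A\widehat\otimes_\ring C)$ together with $\cd_\ring G=n$.

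Therefore $G$ is Cohen--Macaulay, and Theorem~\ref{thmB} shows that it is a profinite duality group.
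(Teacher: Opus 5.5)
Your ($\Rightarrow$) direction is the paper's argument. You take $C=D_\ring$, let $e$ be the class of the fundamental-class representative, and read off $e\frown-$ from Proposition~\ref{prop:cap_product_quasi_iso}, transported to profinite modules by Theorem~\ref{thm:profintosolid}.

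You diverge on ($\Leftarrow$), which the paper dispatches in one line. The inverse of $e\frown-$ is already an isomorphism $H^\ring_{n-k}(G,M\widehat{\otimes}_\ring C)\to H^k_\ring(G,M)$ for every profinite $M$ and every $k$. That is verbatim the definition of a profinite duality group in Theorem~\ref{thmB}, with $D_\ring=C$. Your detour through the Cohen--Macaulay condition is valid, since naturality of the cap product supplies the naturality the flatness argument needs. However, it only re-proves the ``duality $\Rightarrow$ Cohen--Macaulay'' half of Theorem~\ref{thmB} in order to apply the other half. Its only by-product is that $C\cong H^n_\ring(G,\ring\llbracket G\rrbracket)$ and that $C$ is $\ring$-flat.

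Your concern about matching the homology of the derived cap product with the cochain formula of Definition~\ref{defn:capprod} is legitimate. The paper takes this for granted when it says that taking homology gives the desired pairing. So your bar-resolution check is extra rigour beyond the paper.

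Your fallback, however, fails as stated. $H^k_\ring(G,-)$ is not effaceable on profinite modules, because coinduced modules are not profinite. For example, take $G=\ring=\Z_p$ with generator $t$. Then $H^1(G,N)=N/(t-1)N$, and topological Nakayama over the local ring $\Z_p\llbracket G\rrbracket$ forces $N=0$ whenever this vanishes. So no nonzero profinite module embeds into one with vanishing $H^1$, and a universal $\delta$-functor argument anchored at $k=0$ is unavailable.

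A version that does work runs in the opposite direction. Since $D_\ring$ is flat, $M\mapsto H^\ring_{n-k}(G,D_\ring\widehat{\otimes}_\ring M)$ is a $\delta$-functor. It is coeffaceable through free profinite modules $\ring\llbracket G\rrbracket\llbracket S\rrbracket$, because tensoring these with $D_\ring$ gives induced modules. Hence any morphism into it is determined by its component at $k=n$. You would still need to check two things: that both pairings are compatible with the connecting homomorphisms, and that they agree at $k=n$.
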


\begin{proof}
Since the cap-product pairing already gives a duality isomorphism, it remains to prove that every profinite duality group possesses such a cap-product pairing. The duality isomorphism in Theorem~\ref{thmB} can be taken as the homology of the profinite isomorphism in Equation~\eqref{e:tatess2} from Theorem~\ref{thm:tate}. The subject of Proposition~\ref{prop:cap_product_quasi_iso} is that the inverse of the latter isomorphism is a derived cap-product pairing. Using Theorem~\ref{thm:profintosolid}, one can thus express the inverse of the profinite isomorphism in Theorem~\ref{thm:tate} as a derived cap-product pairing
\[ e' \frown - \colon \mathrm{RHom}_{\ring \llbracket G \rrbracket}(\ring, M) \xrightarrow{\cong} \ring \widehat{\otimes}_{\ring \llbracket G \rrbracket} (D_{\ring}^{\bullet} \widehat{\otimes}_{\ring} M) \]
in the profinite setting. Taking homology of this derived isomorphism gives the desired cap-product pairing where $C = D_{\ring}$ and the element $e$ is the class of $e'$.
\end{proof}

To characterise duality via a cup-product pairing shortly, we require that it can be already characterised by the cap-product pairing taking only finite coefficients:

\begin{corollary}\label{cor:finiteduality}
 Let $G$ be a profinite group and $\ring$ a commutative profinite ring such that $\cd_{\ring}(G) = n$ and $G$ satisfies condition (FF). Then  $G$ is a profinite duality group if and only if the cap-product pairing
\[ e \frown - \colon H_{\ring}^k(G, M) \rightarrow H_{n-k}^{\ring}(G, C \widehat{\otimes}_{\ring} M) \]
is an isomorphism for every {finite} $\ring \llbracket G \rrbracket$-module $M$ and every integer $k$.
\end{corollary}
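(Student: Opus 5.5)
The forward direction is immediate from Proposition~\ref{prop:capduality}: if $G$ is a profinite duality group, then the cap product with $e$ is an isomorphism for every profinite module $M$. Finite modules are profinite, so the isomorphism holds in particular for finite $M$. The work lies in the converse. There I assume that for some profinite $C$ and $e\in H_n^{\ring}(G,C)$ the map $e\frown-$ is an isomorphism for all finite $M$. The aim is to extend this to all profinite $M$ by passing to inverse limits, and then apply Proposition~\ref{prop:capduality} again.

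To extend, write an arbitrary profinite $\ring\llbracket G\rrbracket$-module as $M=\varprojlim_i M_i$ with $M_i$ finite, using the equivalence $\Pro(\finitemodules)\simeq$ profinite modules from the proof of Lemma~\ref{lem:indprofinites}. I then check that both sides of the pairing commute with this inverse limit.

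\textbf{Cohomology side.} By condition (FF) and Lemma~\ref{lem:profincohomgroups}, each $H^k_\ring(G,M_i)$ is finite. Inverse limits of profinite groups are exact (\cite[Proposition~2.2.4]{rib10}), and continuous cochains commute with inverse limits in the coefficient variable. Hence $H^k_\ring(G,M)\cong\varprojlim_i H^k_\ring(G,M_i)$. On the solid side this amounts to the vanishing of $R^j\varprojlim$ from Proposition~\ref{prop:invlimexact}.

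\textbf{Homology side.} By Lemma~\ref{lem:tensorsandlimits}, $C\widehat{\otimes}_\ring M\cong\varprojlim_i C\widehat{\otimes}_\ring M_i$ at the derived level. Homology $\ring\widehat{\otimes}_{\ring\llbracket G\rrbracket}-$ is computed by tensoring with the bar resolution, whose terms $\ring\llbracket G\rrbracket\llbracket G^j\rrbracket$ are profinite. Lemma~\ref{lem:tensorsandlimits} again lets me move the limit outside, and exactness of $\varprojlim$ on profinite (compact) modules then gives $H_{n-k}^\ring(G,C\widehat{\otimes}_\ring M)\cong\varprojlim_i H_{n-k}^\ring(G,C\widehat{\otimes}_\ring M_i)$. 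In the condensed picture the chain complexes are condensates of profinite modules, so the vanishing of $R^j\varprojlim$ again makes limits commute with homology.

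\textbf{Compatibility.} The cap product is defined at the cochain level by the explicit formula of Definition~\ref{defn:capprod}. That formula is natural in $M$, so it is compatible with the transition maps. The map $e\frown-$ for $M$ is therefore the inverse limit of isomorphisms, and hence an isomorphism.

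I expect the homology side to be the main obstacle: controlling the homology of $C\widehat{\otimes}_\ring M$ under limits. The difficulty is that $C$ need not be finitely generated, and the group homology of a limit must be shown to be the limit of the homologies without Mittag-Leffler issues. I would handle this in the condensed setting, identifying everything with solid objects via Theorem~\ref{thm:profintosolid} and using Proposition~\ref{prop:invlimexact}, since all terms involved are compact modules.
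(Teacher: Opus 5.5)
Your proposal is correct and follows the paper's proof. Both reduce via Proposition~\ref{prop:capduality} to showing that the two sides of the cap-product pairing commute with inverse limits $M=\varprojlim_i M_i$ of finite modules: exactness of inverse limits of compact modules handles the cohomology side, and commutation of completed tensor products with inverse limits (with $C$ profinite) handles the homology side. Your version is slightly more explicit than the paper's in two places. It invokes condition (FF) to make the groups $H^k_{\ring}(G,M_i)$ finite, which is where exactness of $\varprojlim$ really has to be applied, since the continuous cochain groups $C^k(G,M_i)$ are discrete rather than profinite. It also records the naturality of $e\frown-$ needed to identify the limit map.
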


\begin{proof}
By Proposition~\ref{prop:capduality}, it suffices to show that the cap product with the fundamental class $e$ from \eqref{eq:profcapwithe} is an isomorphism for all profinite modules if and only if it is an isomorphism for all finite modules. This is achieved by proving that both sides of the isomorphism commute with inverse limits. The cohomology groups $H_{\ring}^{n-k}(G, -)$ do so because inverse limits are exact by~\cite[Proposition~2.2.4]{rib10}. Moving on to the relevant homology groups, completed tensor products of profinite modules commute with inverse limits by~\cite[Lemma~5.5.1]{rib10}. As $G$ satisfies condition (FF), $D_\ring$ is profinite. Thus, $H_k^{\ring}(G, - \widehat{\otimes}_{\ring} D_{\ring})$ also commutes with inverse limits.
\end{proof}

\subsubsection{Cup-product pairing} 

We conclude this subsection by proving that duality can be expressed via a cup-product pairing and that it thus generalises Serre--Verdier--Tate duality to every profinite coefficient ring $\ring$. The cup product provides a way of combining cohomology classes from different modules:

\begin{definition}[Profinite cup product, $\text{\cite[pp.~282--284]{rib10}}$]
Let $G$ be a profinite group and $\ring$ a commutative profinite ring. For any discrete $\ring \llbracket G \rrbracket$-modules $M$, $N$ and every $r, s \geq 0$ there are $\ring$-bilinear maps of discrete $\ring$-modules
\[ H_{\ring}^r(G, M) \otimes_\ring H_{\ring}^s(G, N) \rightarrow H_{\ring}^{r+s}(G, M \otimes_{\ring} N) \]
called \emph{cup products}. On the cochain level, for $\phi \in C^r(G, M)$ and $\psi \in C^s(G, N)$, it is defined by by
\[ (\phi \smile \psi)(g_0, \dots, g_{r+s}) = \phi(g_0, \dots, g_r) \otimes\psi(g_r, \dots, g_{r+s}) \]
for $g_0, \dots, g_{r+s} \in G$.
\end{definition}

The cup product duality pairing is formulated using Pontryagin duality, which is a duality between profinite modules and discrete (torsion) modules. More specifically, endow $\mathbb{R}/\mathbb{Z}$ with the quotient topology, $\mathbb{Q}/\mathbb{Z}$ with the corresponding subspace topology and $M^{\ast} := \mathrm{Hom}_{\mathrm{cts}}(M, \mathbb{Q}/\mathbb{Z})$ with the compact-open topology. For every profinite ring $R$, the functor $(-)^{\ast} := \mathrm{Hom}_{\mathrm{cts}}(-, \mathbb{Q}/\mathbb{Z})$ forms a duality between profinite $R$-modules and and discrete $R$-modules, which comes with a continuous $R$-isomorphism $M \rightarrow M^{\ast \ast}$. Classically, this is often used to avoid considering derived functors of completed tensor products. It allows us to work entirely with cohomology of discrete modules, by thinking morally of $H^\bullet(G,M^\ast)^\ast$ as a replacement for the homology of $M$. Applying this logic to the module $H_n(G,D_\ring)$ containing the fundamental class, we see that we should replace the fundamental class by a map
$$
e'\colon H^{n}(G, D_\ring^*)\to \mathbb{Q}/\Z
$$
To use such a fundamental class for a duality pairing for an arbitrary module $A$, we therefore want to take (following Pletch who gave this definition over $\Z_p$) our cup product pairing with the fundamental class as follows:

\begin{definition}[Cup-product pairing, {\cite[p.~64]{Pletch1980}}]\label{defn:cupprod}
Let $G$ be a profinite group and $\ring$ a commutative profinite ring. For a finite $\ring \llbracket G \rrbracket$-module $A$ and a profinite $\ring \llbracket G \rrbracket$-module $C$, the \emph{cup-product pairing} with an element $e' \in H_{\ring}^{r+s}(G, C^{\ast})^{\ast}$ is the map$$\Psi_A(e') \colon H_{\ring}^s(G, \mathrm{Hom}_{\ring}(A, C^{\ast})) \rightarrow H_{\ring}^r(G, A)^{\ast}$$defined as the tensor-hom adjoint of the composition
\begin{align*}
   H_{\ring}^r(G, A) \otimes H_{\ring}^s(G, \mathrm{Hom}_{\ring}(A, C^{\ast})) &\xrightarrow{\smallsmile} H_{\ring}^{r+s}(G, A \otimes_{\ring} \mathrm{Hom}_{\ring}(A, C^{\ast}))\\ & \xrightarrow{\mathrm{eval}_{\ast}} H_{\ring}^{r+s}(G, C^{\ast}) \xrightarrow{e'} \mathbb{Q}/\mathbb{Z} \, , 
\end{align*}
where $\mathrm{eval}_{\ast}$ is induced by the evaluation homomorphism $A \otimes_{\ring} \mathrm{Hom}_{\ring}(A, C^{\ast}) \to C^{\ast}$.
\end{definition}

We now have everything we need to establish that duality can also be characterised through the cup product. We note that the proof of this proposition ultimately depends on Theorem~\ref{thm:tate}, for which we are only aware of a proof using condensed methods.

\begin{proposition}[Duality via a cup product]\label{prop:cupduality}
Let $G$ be a profinite group and $\ring$ a commutative profinite ring such that $\cd_{\ring}(G) = n$ and $G$ satisfies condition (FF). Then $G$ is a profinite duality group if and only if there exists a profinite $\ring \llbracket G \rrbracket$-module $C$ and an element $e \in H_{\ring}^n(G, C^{\ast})^{\ast}$ such that for every finite $\ring \llbracket G \rrbracket$-module $M$ the pairing
\[ \Psi(e) \colon H_{\ring}^k(G, \mathrm{Hom}_{\ring}(M, C^{\ast})) \rightarrow H_{\ring}^{n-k}(G, M)^{\ast} \]
is an isomorphism.
\end{proposition}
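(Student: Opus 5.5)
The idea is to reduce the statement to Corollary~\ref{cor:finiteduality} via Pontryagin duality. For finite $M$, the cap-product pairing $e\frown-$ and the cup-product pairing $\Psi(e)$ should be Pontryagin duals of each other, up to the substitution $M\leftrightarrow \mathrm{Hom}_\ring(M, C^\ast)$. Once this identification is in place, one is an isomorphism exactly when the other is.

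First I would set up the dictionary between homology with profinite coefficients and cohomology with discrete coefficients. For a profinite $\ring\llbracket G\rrbracket$-module $N$ there is a natural isomorphism
\[ H_k^\ring(G,N)^\ast\cong H^k_\ring(G,N^\ast). \]
This holds because $(P_\bullet\widehat{\otimes}_{\ring\llbracket G\rrbracket}N)^\ast\cong \mathrm{Hom}_{\ring\llbracket G\rrbracket}(P_\bullet,N^\ast)$ for a profinite projective resolution $P_\bullet\to\ring$, using the standard tensor--hom adjunction for Pontryagin duals (\cite{rib10}) and exactness of $(-)^\ast$. For finite $M$ and profinite $C$ one also has $(C\widehat{\otimes}_\ring M)^\ast\cong \mathrm{Hom}_\ring(M,C^\ast)$. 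Applying the dictionary with $k=n$ and $N=C$, an element $e\in H_n^\ring(G,C)$ corresponds to an element $e'\in H^n_\ring(G,C^\ast)^\ast$ via $H_n^\ring(G,C)\cong H_n^\ring(G,C)^{\ast\ast}\cong H^n_\ring(G,C^\ast)^\ast$. Here condition (FF) and Lemma~\ref{lem:profincohomgroups} ensure that $H_n^\ring(G,C)$ is profinite, so that double duality applies. The correspondence is bijective, so the existence statements in Proposition~\ref{prop:capduality} and in the present statement match.

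Second, and this is the main obstacle, I would check on the cochain level that for finite $M$ the square
\[ H_{n-k}^\ring(G,C\widehat{\otimes}_\ring M)^\ast\xrightarrow{(e\frown-)^\ast} H^k_\ring(G,M)^\ast \]
agrees with $\Psi(e')\colon H^{n-k}_\ring(G,\mathrm{Hom}_\ring(M,C^\ast))\to H^k_\ring(G,M)^\ast$, after composing with the dictionary isomorphism. Here the indices of the statement are relabelled, and I would first reduce to the case of finite $C$ by taking limits. To do this, I would write both maps through the standard resolution $\ring\llbracket G^{\bullet+1}\rrbracket$. The cap product of Definition~\ref{defn:capprod} sends $a\otimes(g_0,\dots,g_r)$ to $(a\otimes\sigma(g_0,\dots,g_s))\otimes(g_s,\dots,g_r)$. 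The cup product followed by evaluation and $e'$ uses the same front-face/back-face splitting. So the identity $\langle e\frown\phi,\psi\rangle=\langle e',\mathrm{eval}_\ast(\phi\smile\psi)\rangle$ should be a direct computation, up to a sign $(-1)^{rs}$ and the ordering of the tensor factors. The delicate points are the bookkeeping of left/right module conventions (the tacit use of $\mathcal I$) and continuity for infinite $C$. I would handle the latter by writing $C=\varprojlim C_j$ with finite $C_j$ and using that all terms commute with the relevant limits and colimits, as in the proof of Corollary~\ref{cor:finiteduality}.

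Finally, I would conclude. Since $(-)^\ast$ is an exact anti-equivalence between profinite and discrete modules, $\Psi(e')$ is an isomorphism for every finite $M$ and every $k$ if and only if $e\frown-$ is. By Corollary~\ref{cor:finiteduality}, the latter condition holds if and only if $G$ is a profinite duality group, with $C=D_\ring$ and $e$ the class from Proposition~\ref{prop:capduality}. For the converse direction, starting from any $C$ and $e'$, I would translate back to $e\in H_n^\ring(G,C)$ and apply the same corollary.
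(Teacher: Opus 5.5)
Your proposal is correct and follows the paper's route. Both arguments reduce to Corollary~\ref{cor:finiteduality}, use the Pontryagin tensor--hom adjunction to identify chains with dual cochains (the paper's $\alpha$ and $\beta$), check on the standard resolution that the cap and cup pairings correspond, and transport the fundamental class via $H_n^\ring(G,C)\cong H^n_\ring(G,C^\ast)^\ast$.

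Two small remarks. First, $H_n^\ring(G,C)$ is profinite automatically, as homology of a complex of profinite modules, and not because of Lemma~\ref{lem:profincohomgroups}, which concerns cohomology. Second, the reduction to finite $C$ is unnecessary, since the cochain-level identity holds directly for profinite $C$.
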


\begin{proof}
By Corollary~\ref{cor:finiteduality}, it suffices to prove the proposition using the classical equivalence arguments of~\cite[Section~2]{Pletch1980}. For profinite $\ring \llbracket G \rrbracket$-modules $M$ and $N$, there is a continuous tensor-hom adjunction
\[ (M \widehat{\otimes}_{\ring \llbracket G \rrbracket} N)^{\ast} = \mathrm{Hom}_{\mathrm{cts}}(M \widehat{\otimes}_{\ring \llbracket G \rrbracket} N, \mathbb{Q}/\mathbb{Z}) \xrightarrow{\cong} \mathrm{Hom}_{\ring \llbracket G \rrbracket}(N, M^{\ast}) \]
by~\cite[Lemma~2.4]{bru66}. This adjunction induces canonical isomorphisms on the (co)chain level:
\begin{align*}
\alpha \colon \ring \llbracket G^{n+1} \rrbracket \widehat{\otimes}_{\ring \llbracket G \rrbracket} C &\xrightarrow{\cong} C^n(G, C^{\ast})^{\ast} \, , \\
\beta \colon \big((C \widehat{\otimes}_{\ring} M) \widehat{\otimes}_{\ring \llbracket G \rrbracket} \ring \llbracket G^{n-k+1} \rrbracket\big)^{\ast} &\xrightarrow{\cong} C^{n-k}\big(G, \mathrm{Hom}_{\ring}(M, C^{\ast})\big) \, .
\end{align*}
Note that the domain of $\alpha$ is precisely the continuous $n$-th chain group $C_n(G, C)$. These isomorphisms intertwine the cap and cup product maps via the following diagram:
\begin{center}
\begin{tikzcd}[column sep=large]
    \ring \llbracket G^{n+1} \rrbracket \widehat{\otimes}_{\ring \llbracket G \rrbracket} C \arrow[r, "\Phi_{\frown}"] \arrow[d, "\alpha"', "\cong"] &[-2em] {\mathrm{Hom}_{\ring}\big(\big((C \widehat{\otimes}_{\ring} M) \widehat{\otimes}_{\ring \llbracket G \rrbracket} \ring \llbracket G^{n-k+1} \rrbracket\big)^{\ast}, \, C^k(G, M)^{\ast} \big)} \arrow[d, "{\mathrm{Hom}_{\ring}(\beta^{-1}, \, \id)}"', "\cong"] \\
    C^n(G, C^{\ast})^{\ast} \arrow[r, "\Phi_{\smallsmile}"] & {\mathrm{Hom}_{\ring}\big(C^{n-k}(G, \mathrm{Hom}_{\ring}(M, C^{\ast})), \, C^k(G, M)^{\ast} \big)}
\end{tikzcd}
\end{center}
where $\Phi_{\frown}$ maps a chain $z$ to its associated cap-product pairing $z \frown -$, and $\Phi_{\smallsmile}$ maps a dual cochain $\phi$ to its cup-product pairing $\Psi(\phi)$. A straightforward diagram chase shows that this diagram commutes. 

Because $\alpha$ is a chain complex isomorphism, it induces a canonical isomorphism on homology:
\[ \alpha_{\ast} \colon H_n(G, C) \xrightarrow{\cong} H_{\ring}^n(G, C^{\ast})^{\ast} \, . \]
Let $e_{\mathrm{hom}} \in H_n(G, C)$ be the unique fundamental homology class such that $\alpha_{\ast}(e_{\mathrm{hom}}) = e$. Passing to (co)homology, evaluating the commutative diagram at $e_{\mathrm{hom}}$ demonstrates that the cup-product pairing $\Psi(e)$ is equivalent to the cap-product pairing $e_{\mathrm{hom}} \frown -$ from Corollary~\ref{cor:finiteduality}. Because the latter isomorphism already characterises profinite duality, the result follows.
\end{proof}

\section{Examples and properties of duality groups}\label{sec:exls}

This final section is dedicated to examples and properties of duality groups. By virtue of Corollary~\ref{corC}, it suffices to prove that any given profinite group is a profinite duality group over a profinite ring $\ring$ as it is then a solid duality group over $\cR$. Henceforth, all results in this section are stated entirely within the profinite setting. The proof methods may stem largely from this setting, but the powerful tools from condensed mathematics continue playing a role. All proofs in this section hinge on the Cohen--Macaulay condition because it provides a verifiable condition for duality by Theorem~\ref{thmB}. The first part of the section constructs two novel examples of duality groups while the second part investigates basic group-theoretic properties.

\subsection{Examples}
To prove that our two novel examples are indeed duality groups, we require a technical result.
Recall (\cite[p.~208]{rib10}) that for any profinite group $G$ and any commutative profinite ring $\ring$ the canonical homomorphism $\varepsilon\colon \ring\llbracket G\rrbracket \rightarrow \ring$ sending any generator $g \in G$ to the multiplicative identity $1 \in \ring$ is called the \emph{augmentation map}. Its kernel $I_{\ring} G := \mathrm{Ker}(\varepsilon)$ is called the \emph{augmentation ideal}.

\begin{proposition}\label{prop:cd1groups}
Let $G$ be a profinite group and $\ring$ a commutative profinite ring such that $\cd_{\ring}(G) \leq 1$.
\begin{enumerate}
    \item (\cite[Lemma~6.3.2]{rib10}) If $T \subseteq G$ is a profinite subspace that constitutes a generating set and contains the identity element $1$ of $G$, then
    \[ T-1 = \lbrace t-1 \mid t \in T \rbrace \]
    is a profinite subspace of $I_{\ring}(G)$ that generates $I_{\ring}(G)$ as a profinite $\ring \llbracket G \rrbracket$-module.
    \item There is an inclusion
    \begin{equation}\label{eq:cd1group2}
    H_{\ring}^1(G, \ring \llbracket G \rrbracket) \leq \mathrm{Hom}_{\mathbf{Pro}}\big(T-1, \ring \llbracket G \rrbracket \big)/ \Theta \, ,
    \end{equation}
    where
    \[ \Theta := \lbrace f\colon T-1 \rar \ring \llbracket G \rrbracket \mid \exists x \in \ring \llbracket G \rrbracket \: \forall t \in T: f(t-1) = (t-1)x \rbrace \, . \]
    \item If $\ring \in \lbrace \widehat{\Z}, \Z_p \rbrace$ and $G$ satisfies property (FF), then $H_{\ring}^1(G, \ring \llbracket G \rrbracket)$ is flat as a profinite $\ring$-module.
\end{enumerate}
\end{proposition}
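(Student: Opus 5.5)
Part (1) is quoted from \cite[Lemma~6.3.2]{rib10}, so the plan concerns (2) and (3). For (2), I start from the short exact sequence of profinite $\ring\llbracket G\rrbracket$-modules
\[ 0 \to I_{\ring}(G) \to \ring\llbracket G\rrbracket \xrightarrow{\varepsilon} \ring \to 0 . \]
Since $\cd_{\ring}(G)\le 1$, Lemma~\ref{lem:cd} shows that $I_{\ring}(G)$ is projective, so this sequence is a projective resolution of $\ring$. Applying $\mathrm{Hom}_{\ring\llbracket G\rrbracket}(-,\ring\llbracket G\rrbracket)$ then identifies $H^1_{\ring}(G,\ring\llbracket G\rrbracket)$ with the cokernel of the restriction map
\[ \mathrm{Hom}_{\ring\llbracket G\rrbracket}(\ring\llbracket G\rrbracket,\ring\llbracket G\rrbracket)\to \mathrm{Hom}_{\ring\llbracket G\rrbracket}(I_{\ring}(G),\ring\llbracket G\rrbracket). \]
The source is $\ring\llbracket G\rrbracket$ acting by right multiplication $x\mapsto(y\mapsto yx)$. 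The next step restricts homomorphisms along the profinite subspace $T-1$. By (1), $T-1$ generates $I_{\ring}(G)$ topologically, so a continuous module map out of $I_{\ring}(G)$ is determined by its restriction to $T-1$. This gives an injection $\mathrm{Hom}_{\ring\llbracket G\rrbracket}(I_{\ring}(G),\ring\llbracket G\rrbracket)\hookrightarrow \mathrm{Hom}_{\mathbf{Pro}}(T-1,\ring\llbracket G\rrbracket)$ under which the image of right multiplication by $x$ is exactly the map $t-1\mapsto (t-1)x$, i.e.\ an element of $\Theta$. I also check that an element of the image lying in $\Theta$ comes from some $x$: if $f(t-1)=(t-1)x$ on generators, then $f$ agrees with right multiplication by $x$ on all of $I_{\ring}(G)$. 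Passing to quotients yields the inclusion \eqref{eq:cd1group2}.

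For (3), take $\ring\in\{\widehat{\Z},\Z_p\}$. Condition (FF) together with Lemma~\ref{lem:profincohomgroups} makes $H^1_{\ring}(G,\ring\llbracket G\rrbracket)$ a profinite $\ring$-module. By Lemma~\ref{lem:profinprojflat}, flat is the same as projective, and for these rings a profinite module is projective if and only if it is torsion-free. For $\Z_p$ this is the classical fact that torsion-free pro-$p$ abelian groups are free pro-$p$. For $\widehat{\Z}=\prod_p\Z_p$ I decompose the module into its $p$-primary components and apply the $\Z_p$ case to each.

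It therefore remains to show that the cokernel in (2) is torsion-free. Suppose $m f\in\Theta$ for some integer $m\ne 0$, say $m f(t-1)=(t-1)x$ for all $t$. I want to show $x\in m\,\ring\llbracket G\rrbracket$ modulo the kernel of right multiplication on $I_{\ring}(G)$. Reducing modulo $m$ gives $(t-1)\bar x=0$ in $(\ring/m)\llbracket G\rrbracket$ for every $t\in T$, so $\bar x$ is left-$G$-invariant. The invariants $H^0(G,(\ring/m)\llbracket G\rrbracket)$ vanish when $G$ is infinite; the finite case is trivial, since $\cd\le1$ then forces $G=1$ for these rings. Hence $\bar x=0$, so $x=m y$ and $m(f-\rho_y)=0$. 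Because $\ring\llbracket G\rrbracket$ is torsion-free for these $\ring$, we get $f=\rho_y\in\Theta$.

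The main obstacle is this torsion-freeness step, specifically the vanishing of invariants of $(\ring/m)\llbracket G\rrbracket$ for infinite $G$ and the identification of the profinite quotient with the cokernel, which needs the closedness of $\Theta$. If that becomes delicate, I would instead read torsion-freeness directly off the long exact sequence for $0\to\ring\llbracket G\rrbracket\xrightarrow{m}\ring\llbracket G\rrbracket\to(\ring/m)\llbracket G\rrbracket\to 0$ in the degree-$0$ to degree-$1$ portion.
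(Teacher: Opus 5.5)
Your argument for (2), and your reduction of (3) to torsion-freeness of a profinite module (profinite by (FF), then torsion-free $\Rightarrow$ projective $\Rightarrow$ flat), agree with the paper.

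The gap is the claim that \emph{the invariants $H^0(G,(\ring/m)\llbracket G\rrbracket)$ vanish when $G$ is infinite}, which is false. Take $G=\Z_\ell$, let $\ring=\Z_p$ or $\ring=\widehat{\Z}$ with $p\neq\ell$, and let $m=p$. Then $\cd_\ring(G)\le 1$. In $\mathbb{F}_p\llbracket\Z_\ell\rrbracket=\varprojlim_k\mathbb{F}_p[\Z/\ell^k]$, let $N_k$ be the sum of all elements of $\Z/\ell^k$. The transition maps send $N_{k+1}\mapsto \ell N_k$, so the family $(\ell^{-k}N_k)_k$ is compatible, and it is a nonzero $G$-invariant. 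Correspondingly, for $\ring=\Z_p$ the choice $f=0$ and $x=(\ell^{-k}N_k)_k\in\Z_p\llbracket\Z_\ell\rrbracket$ satisfies $mf(t-1)=(t-1)x$ with $x\notin m\,\ring\llbracket G\rrbracket$. So $\bar x=0$ does not follow.

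Your remark on the finite case is also wrong for $\Z_p$: finite groups of order prime to $p$ have $\cd_{\Z_p}=0$. This is harmless, since there $H^1=0$.

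What your argument actually needs is that every $G$-invariant of $(\ring/m)\llbracket G\rrbracket$ lifts to a $G$-invariant of $\ring\llbracket G\rrbracket$. Then $x\in m\,\ring\llbracket G\rrbracket+\ring\llbracket G\rrbracket^G$, so $(t-1)x=m(t-1)y$, and torsion-freeness of $\ring\llbracket G\rrbracket$ gives $f\in\Theta$. Your fallback does not bypass this. In the exact segment $H^0(G,\ring\llbracket G\rrbracket)\to H^0(G,(\ring/m)\llbracket G\rrbracket)\to H^1(G,\ring\llbracket G\rrbracket)\xrightarrow{m}H^1(G,\ring\llbracket G\rrbracket)$, the $m$-torsion of $H^1$ is exactly the cokernel of reduction on invariants. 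So the fallback requires the same lifting statement.

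The lifting statement is true and easy to prove, so the gap is repairable.
\begin{itemize}
\item Since $\widehat{\Z}\llbracket G\rrbracket=\prod_p\Z_p\llbracket G\rrbracket$, it suffices to treat $\ring=\Z_p$ and $m=p^k$.
\item One has $\Z_p\llbracket G\rrbracket^G=\varprojlim_U\Z_p\cdot N_{G/U}$, with transition maps given by multiplication by $[U:V]$; the same holds with $\Z/p^k$ in place of $\Z_p$.
\item If $p^\infty$ divides the order of $G$, then $(\Z/p^k)\llbracket G\rrbracket^G=0$. Indeed, for every $U$ there is an open normal $V\le U$ with $p^k\mid[U:V]$, as in the argument of Proposition~\ref{prop:infiniteporder}. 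So there is nothing to lift.
\item Otherwise, choose $U_0$ with $v_p([G:U_0])$ maximal. Below $U_0$ all transition maps are multiplication by units, so the reduction map on invariants is $\Z_p\to\Z/p^k$, which is surjective.
\end{itemize}
With this lemma inserted, your argument is correct. It replaces the paper's finite-quotient argument, which picks $t_1,t_2$ and projects to $\ring\llbracket G/U\rrbracket\oplus\ring\llbracket G/V\rrbracket$, with a cleaner one. Your concern about closedness of $\Theta$ is moot, since torsion-freeness is a property of the abstract cokernel.
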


\begin{proof}
We prove the second assertion. Since $\cd_{\ring}(G) \leq 1$, the short exact sequence
\[ 0 \rightarrow I_{\ring}(G) \xrightarrow{\partial} \ring \llbracket G \rrbracket \xrightarrow{\varepsilon} \ring \rightarrow 0 \]
is a projective resolution of $\ring$ by Lemma~\ref{lem:cd}. There is a canonical surjective $\ring \llbracket G \rrbracket$-homomorphism
\[ \pi\colon \big(\ring \llbracket G \rrbracket \big) \llbracket T-1 \rrbracket \rightarrow I_{\ring}(G) \]
such that $\partial \circ \pi\colon \big(\ring \llbracket G \rrbracket \big) \llbracket T-1 \rrbracket \rar \ring \llbracket G \rrbracket$ restricts to the identity on $T-1 \subset \ring \llbracket G \rrbracket\llbracket T-1 \rrbracket $, resulting in an embedding
\[ \mathrm{Hom}_{\ring \llbracket G \rrbracket}(\pi, \ring \llbracket G \rrbracket)\colon \mathrm{Hom}_{\ring \llbracket G \rrbracket}(I_{\ring}(G), \ring \llbracket G \rrbracket) \rar \mathrm{Hom}_{\mathbf{Pro}}(T-1, \ring \llbracket G \rrbracket) \, . \]
Thus, the image of $\mathrm{Hom}_{\ring \llbracket G \rrbracket}(\partial \circ \pi, \ring \llbracket G \rrbracket)$ is given by
\[ \Theta := \{ f\colon T-1 \rar \ring \llbracket G \rrbracket \mid \exists x \in \ring \llbracket G \rrbracket \: \forall t \in T: f(t-1) = (t-1)x \} \, . \]
To prove the third assertion, we show first that $H_{\ring}^1(G, \ring \llbracket G \rrbracket)$ is torsion-free. Assume for now that $G$ cannot be generated by one element. Let $f\colon T-1 \rightarrow \ring \llbracket G \rrbracket$ be a continuous map that does not lie in $\Theta$. Then there exist elements $t_1, t_2 \in T$ for which there is no $x \in \ring \llbracket G \rrbracket$ such that $f(t_i-1) = (t_i-1)x$. For brevity, write $f_i := f(t_i-1)$. Restricting the domain from $T-1$ to $\lbrace t_1-1, t_2-1 \rbrace$, we may identify $f$ with the element $(f_1, f_2)$ in $\ring \llbracket G \rrbracket \oplus \ring \llbracket G \rrbracket$ and $\Theta$ with the closed subgroup
\[ \Theta' := \big\lbrace \big((t-1)x, (t_2-1)x \big) \mid x \in \ring \llbracket G \rrbracket \big\rbrace \leq \ring \llbracket G \rrbracket \oplus \ring \llbracket G \rrbracket \, . \]
Note that a neighbourhood basis of $\ring \llbracket G \rrbracket \oplus \ring \llbracket G \rrbracket$ is given by $\ring_i \llbracket U \rrbracket \oplus \ring_j \llbracket V \rrbracket$ where $U$, $V$ range over all open normal subgroups of $G$ and $\ring_i$, $\ring_j$ range over all open ideals of $\ring$ (cf.~\cite[Section~5.7]{rib10}). It follows from~\cite[Proposition~0.3.3]{wil98} that there are $U$, $V$, $\ring_i$ and $\ring_j$ such that $(f_1, f_2) \notin \Theta' + \ring_i \llbracket U \rrbracket \oplus \ring_j \llbracket V \rrbracket$. Denote by
\[ \pi\colon \ring \llbracket G \rrbracket \oplus \ring \llbracket G \rrbracket \rightarrow \ring \llbracket G/U \rrbracket \oplus \ring \llbracket G/V \rrbracket \]
the canonical projection of profinite $\ring$-modules where the codomain is finitely generated. Because $\ring$ is torsion-free, neither
\[ \pi(f_1, f_2) = \bigg(\sum_{g \in G/U} b_g \cdot g, \sum_{h \in G/V} c_h \cdot h \bigg) \]
nor any of its multiples lie in $\pi(\Theta')$. This demonstrates that no multiple of $f$ lies in $\Theta$ and that $\mathrm{Hom}_{\mathbf{Pro}}(T-1, \ring \llbracket G \rrbracket)/\Theta$ is torsion-free. If $G$ is generated by one element instead of at least two, then $G = \overline{\langle t \rangle}$ is an inverse limit of cyclic groups by~\cite[p.~42]{rib10}. By taking finite quotients as before, one can prove that
\[ \mathrm{Hom}_{\mathbf{Pro}}(T-1, \ring \llbracket G \rrbracket)/\Theta \cong \ring \llbracket G \rrbracket / (t-1)\ring \llbracket G \rrbracket \cong \ring\]
is torsion-free. In either case, $H_{\ring}^1(G, \ring \llbracket G \rrbracket)$ as a submodule of $\mathrm{Hom}_{\mathbf{Pro}}(T-1, \ring \llbracket G \rrbracket)/\Theta$ is torsion-free.

In the case where $\ring = \Z_p$, we infer from Lemma~\ref{lem:profinprojflat} and~\cite[Corollary~2.1.2]{sym00} that $H_{\mathbb{Z}_p}^1(G, \mathbb{Z}_p \llbracket G \rrbracket)$ is flat as a profinite $\mathbb{Z}_p$-module. In the case where $\ring = \widehat{\Z}$, we first use Pontryagin duality. Observe that the discrete $\widehat{\Z}$-modules are exactly the torsion abelian groups and that the profinite $\widehat{\Z}$-modules are exactly the profinite abelian groups. Due to~\cite{osb00}, a discrete $\widehat{\Z}$-module $M$ is injective if and only if it is divisible. By Pontryagin duality, this is equivalent to the Pontryagin dual $M^{\ast}$ being projective, while it is equivalent to $M^{\ast}$ being torsion-free by~\cite[Theorem~2.9.12]{rib10}. Therefore, $H_{\widehat{\Z}}^1(G, \widehat{\Z} \llbracket G \rrbracket)$ is a projective profinite $\widehat{\Z}$-module and thus $\ring$-flat by Lemma~\ref{lem:profinprojflat}.
\end{proof}

\subsubsection{Finitely generated free pronilpotent groups have duality}
Thus far, duality groups have been only constructed over $\mathbb{Z}_p$. We adapt~\cite[Example~4.4.4]{sym00} to construct the very first example of a duality group over $\widehat{\mathbb{Z}}$:

\begin{proposition}\label{prop:freegroups}
Every finitely generated free pronilpotent group is a duality group over $\widehat{\mathbb{Z}}$.
\end{proposition}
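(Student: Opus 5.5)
We verify the Cohen--Macaulay condition of Definition~\ref{def:cm} over $\ring=\widehat{\Z}$ and then invoke Theorem~\ref{thmB}. Let $F$ be the free pronilpotent group on $d$ generators. The plan has three steps. First, we show $\cd_{\widehat{\Z}}(F)\le 1$. Second, we check condition (FF). Third, we show that $H^1_{\widehat{\Z}}(F,\widehat{\Z}\llbracket F\rrbracket)$ is the only nonvanishing cohomology and that it is flat, the latter via Proposition~\ref{prop:cd1groups}(3).

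For the first step, use the decomposition $F\cong\prod_p F_p$, where $F_p$ is the free pro-$p$ group of rank $d$. This holds because a pronilpotent group is the product of its Sylow subgroups, and the universal property of $F$ splits prime by prime. Each $F_p$ has $\cd_p(F_p)\le 1$. For $\ell\neq p$ the group $F_p$ is a pro-$p$ group, so it has $\ell$-cohomological dimension $0$. The Sylow $p$-subgroup of $F$ is $F_p$, and $\cd_p(F)=\cd_p(F_p)\le1$. Hence $\cd_{\widehat{\Z}}(F)=\sup_p \cd_p(F)\le 1$, as in \cite[Example~4.4.4]{sym00}.

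If $d=0$ there is nothing to prove. Otherwise $F$ is infinite and torsion-free, so $H^0(F,\widehat{\Z}\llbracket F\rrbracket)=0$. This is because the $F$-invariants of $\widehat{\Z}\llbracket F\rrbracket=\varprojlim \widehat{\Z}[F/U]$ are the limit of the norm elements, and the transition maps multiply these by indices that are unbounded at every prime. It follows that $\cd_{\widehat{\Z}}(F)=1$ and that cohomology is concentrated in degree $1$.

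For condition (FF), I expect the cleanest route is type $\mathrm{FP}_\infty$ over $\widehat{\Z}$, which implies (FF) by \cite[Proposition~4.2.2]{sym00}. Since $\cd\le1$, the augmentation sequence
\[0\to I_{\widehat{\Z}}(F)\to\widehat{\Z}\llbracket F\rrbracket\to\widehat{\Z}\to 0\]
is a projective resolution. So it suffices to show that $I_{\widehat{\Z}}(F)$ is finitely generated, which follows from Proposition~\ref{prop:cd1groups}(1) with $T=\{1,x_1,\dots,x_d\}$. The module $I_{\widehat{\Z}}(F)$ is then a finitely generated projective module, and in fact free on the $x_i-1$ by the pronilpotent analogue of the Lyndon/Gaschütz argument, prime by prime. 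Alternatively, (FF) can be checked directly: a finite module $M$ decomposes into $p$-primary parts, and $H^k(F,M_p)$ is computed through $F_p$-cohomology, which is finite because $F_p$ is finitely generated.

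For flatness, once (FF) and $\cd_{\widehat{\Z}}(F)\le1$ are established, Proposition~\ref{prop:cd1groups}(3) with $\ring=\widehat{\Z}$ gives that $D_{\widehat{\Z}}=H^1_{\widehat{\Z}}(F,\widehat{\Z}\llbracket F\rrbracket)$ is flat as a profinite $\widehat{\Z}$-module. Combining this with the vanishing of $H^0$, $F$ is profinite Cohen--Macaulay of dimension $1$, and Theorem~\ref{thmB} yields duality. I expect the main obstacle to be the rigorous justification of $\cd_{\widehat{\Z}}(F)=1$ over the non-domain ring $\widehat{\Z}$. This requires identifying $\cd_{\widehat{\Z}}$ with the supremum of the $\cd_p$, for instance via $\widehat{\Z}\llbracket F\rrbracket\cong\prod_p\Z_p\llbracket F\rrbracket$, and using the Sylow reduction $\cd_p(F)=\cd_p(F_p)$. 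The remaining steps are direct applications of earlier results.
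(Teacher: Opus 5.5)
Your proposal is correct and follows essentially the same route as the paper. You bound $\cd_{\widehat{\Z}}(F)\le 1$ prime by prime, get type $\mathrm{FP}_\infty$ (hence (FF)) from the augmentation sequence with $I_{\widehat{\Z}}(F)$ generated by the $x_i-1$, take flatness of $H^1_{\widehat{\Z}}(F,\widehat{\Z}\llbracket F\rrbracket)$ from Proposition~\ref{prop:cd1groups}(3), show $H^0$ vanishes, and conclude by Theorem~\ref{thmB}; the only differences are minor, namely that you obtain $H^0=0$ from the norm-element argument (as in Proposition~\ref{prop:infiniteporder}) rather than from injectivity of $\mathrm{Hom}(\partial,\widehat{\Z}\llbracket F\rrbracket)$, and you reduce $\cd$ via $F\cong\prod_p F_p$ and Sylow subgroups rather than via primary parts of $H^2$.
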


\begin{proof}
If $F$ is a free pronilpotent group on a finite set $X = \lbrace x_1, \dots, x_n, 1 \rbrace$, then we prove first that $F$ has finite cohomological dimension over $\widehat{\mathbb{Z}}$. It follows from~\cite[p.~251, Lemma~7.6.3 and Proposition~7.6.7]{rib10} that
\[ \sup_{p \text{ prime}} \cd_{\mathbb{Z}_p}(F) \leq 1 \, . \]
If $A$ is an abelian group and $p$ is a prime number, denote by $A_p$ the subgroup of $A$ in which every element has order $p^n$ for some $n$. Then \cite[Proposition~7.1.1]{rib10} implies that $H_{\widehat{\mathbb{Z}}}^2(F, A)_p = 0$ for any discrete $\widehat{\mathbb{Z}} \llbracket G \rrbracket$-module $A$ and any prime $p$. In particular, $H_{\widehat{\mathbb{Z}}}^2(F, A) = 0$ for any discrete $A$. Because this pertains to any finite $A$ and $H_{\widehat{\mathbb{Z}}}^{\bullet}(F, -)$ commutes with inverse limits by~\cite[Proposition~2.2.4]{rib10}, $H_{\widehat{\mathbb{Z}}}^2(F, M) = 0$ for every profinite $\widehat{\mathbb{Z}} \llbracket F \rrbracket$-module $M$. Lemma~\ref{lem:cd} implies that $\cd_{\widehat{\mathbb{Z}}}(F) \leq 1$.

We are in position to prove that $F$ is a duality group of dimension $1$ over $\widehat{\mathbb{Z}}$. It follows from $\cd_{\widehat{\mathbb{Z}}}(F) \leq 1$, \cite{lim73} and~\cite[Example~4.4.4]{sym00} that there is a projective resolution
\begin{equation}\label{eq:augmentationses}
0 \rightarrow \bigoplus_{i = 1}^n \widehat{\mathbb{Z}} \llbracket F \rrbracket (x_i-1) \xrightarrow{\partial} \widehat{\mathbb{Z}} \llbracket F \rrbracket \xrightarrow{\varepsilon} \widehat{\mathbb{Z}} \rightarrow 0 \, ,
\end{equation}
which arises as follows. We can take $\widehat{\Z} \llbracket F \rrbracket$ to be the (associative) free $\widehat{\Z}$-algebra on the non-commutative variables $\lbrace x_1-1, \dots, x_n -1 \rbrace$. The profinite group $F$ is embedded into $\widehat{\Z} \llbracket F \rrbracket$ via the continuous homomorphism $x_i \mapsto x_i$. Thus, the augmentation map is given by $\varepsilon(x_i) = 1$ and $\varepsilon(1) = 1$. The boundary map $\partial$ is the embedding given by mapping the generator of the $i^{\text{th}}$ component $(0, \dots, 1, \dots, 0)$ to $x_i-1 \in \widehat{\Z} \llbracket F \rrbracket$. As $F$ is thus of type $\mathrm{FP}_{\infty}$ over $\widehat{\Z}$, it satisfies condition (FF) by~\cite[Proposition~4.2.2]{sym00}. According to Proposition~\ref{prop:cd1groups}, $H_{\widehat{\Z}}^1(F, \widehat{\Z} \llbracket F \rrbracket)$ is a flat profinite $\widehat{\Z}$-module. Due to the projective resolution in Equation~\eqref{eq:augmentationses}, this cohomology group is non-vanishing, whence $\cd_{\widehat{\Z}}(F) = 1$. The real benefit of the projective resolution in Equation~\eqref{eq:augmentationses} is that $\mathrm{Hom}_{\widehat{\Z} \llbracket F \rrbracket}(\partial, \widehat{\Z} \llbracket F \rrbracket)$ can be verified to be injective and that thus $H_{\widehat{\Z}}^0(F, \widehat{\Z} \llbracket F \rrbracket) = 0$.
\end{proof}

\subsubsection{A duality group not of type $\mathrm{FP}_{\infty}$}
The following result is useful to when establishing that a given profinite group $G$ is a duality group over the coefficient ring $\Z_p$.

\begin{prop}\label{prop:infiniteporder}
Let $G$ be a profinite group whose order contains a factor of $p^{\infty}$, meaning that $G$ has finite quotients whose orders contain arbitrarily large factors $p^i$. Then $H_{\Z_p}^0(G, \Z_p\llbracket G \rrbracket) = 0$.
\end{prop}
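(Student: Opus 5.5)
We identify $H_{\Z_p}^0(G, \Z_p\llbracket G \rrbracket)$ with the module of $G$-invariants $\Z_p\llbracket G \rrbracket^G$ and show that this is zero. The plan is to write $\Z_p\llbracket G \rrbracket = \varprojlim_{k, U} (\Z/p^k)[G/U]$, with $U$ ranging over the open normal subgroups of $G$. Since taking invariants is a limit, it commutes with inverse limits, so
\[ \Z_p\llbracket G \rrbracket^G = \varprojlim_{k,U} \big((\Z/p^k)[G/U]\big)^G , \]
with the transition maps induced by the projections. It therefore suffices to show that every compatible family of invariant elements is zero.

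First we compute the finite layers. For a finite group $Q = G/U$, an element of $(\Z/p^k)[Q]$ is $G$-invariant exactly when all its coefficients are equal. Hence the invariants are $(\Z/p^k)\cdot N_Q$, where $N_Q = \sum_{q \in Q} q$ is the norm element.

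Next we analyse the transition maps. For $V \le U$ open normal, the projection $(\Z/p^k)[G/V] \to (\Z/p^k)[G/U]$ sends $N_{G/V}$ to $[U:V]\, N_{G/U}$. Likewise, reducing coefficients from $\Z/p^{k'}$ to $\Z/p^k$ sends $a N$ to $(a \bmod p^k) N$.

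Now take a compatible family $x_{k,U} = a_{k,U} N_{G/U}$, and fix $k$ and $U$. By the hypothesis on the order of $G$, the exponent of $p$ in $|G/V|$ is unbounded as $V$ shrinks. Replacing $V$ by $V \cap U$, we can choose $V \le U$ with $p^k \mid [U:V]$. This is possible because $|G/V| = |G/U|\,[U:V]$ and $|G/U|$ is fixed, so the $p$-part of $[U:V]$ is unbounded too. Then
\[ x_{k,U} = [U:V]\, a_{k,V}\, N_{G/U} = 0 \quad \text{in } (\Z/p^k)[G/U]. \]
Hence every component of the family vanishes, and $\Z_p\llbracket G \rrbracket^G = 0$.

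The only delicate point is justifying that $H^0_{\Z_p}(G,-)$ is given by invariants, that is, $\mathrm{Hom}_{\Z_p\llbracket G \rrbracket}(\Z_p, -)$, and that these invariants commute with the inverse limit. Both are formal, since $\mathrm{Hom}$ commutes with limits in the second variable. The index bookkeeping, namely choosing $V$ inside $U$ with $p^k$ dividing $[U:V]$, is the one step that actually uses the hypothesis.
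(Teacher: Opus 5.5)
Your proof is correct and takes essentially the same route as the paper. Both identify $H^0$ with invariants and pass to an inverse limit over finite layers, where the invariants are multiples of the norm element. Both then use that the transition maps multiply the norm element by the index $[U:V]$, whose $p$-part is unbounded. The only difference is that you work with the finite layers $(\Z/p^k)[G/U]$ and kill each component directly, whereas the paper works with $\Z_p[G/U]$ and argues by contradiction that a nonzero coefficient in $\Z_p$ would be divisible by arbitrarily high powers of $p$.
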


\begin{proof}
One can proceed similarly as in the proof of~\cite[Lemma~3.9]{ghe24}. Assume by contradiction that $H_{\Z_p}^0(G, \Z_p\llbracket G \rrbracket) \neq 0$. By~\cite[p.~171]{rib10}, the completed group ring is given by $\mathbb{Z}_p\llbracket G \rrbracket = \varprojlim_{U \trianglelefteq G \text{ open}} \mathbb{Z}_p[G/U]$. As group cohomology commutes with inverse limits by~\cite[Proposition~2.2.4]{rib10}, there exists an open normal subgroup $U_0 \triangleleft G$ such that $H_{\mathbb{Z}_p}^0(G, \mathbb{Z}_p[G/U_0]) \neq 0$. Because
\[ H_{\Z_p}^0(G, M) = M^G = \{ m \in M \mid \forall g \in G: g \cdot m = m \} \]
for any profinite $\Z_p\llbracket G \rrbracket$-module $M$ and
\[ \Z_p[G/U]^G = \Big\lbrace (b_y)_{y \in G/U} \in \prod_{y \in G/U} \Z_p \mid \exists b \in \Z_p \, \forall y \in G/U: b_y = b \Big\rbrace \]
for any open normal subgroup $U \trianglelefteq G$, there is a nonzero element $b \in \Z_p$ such that
\begin{equation}\label{eq:diagonalelement}
b_{U_0} = (b)_{x \in G/U_0} \in \prod_{x \in G/U_0} \mathbb{Z}_p = \Z_p\llbracket G /U_0\rrbracket^G \, .
\end{equation}
Since $G$ is the inverse limit of its finite quotients, we may construct a countable sequence of open normal subgroups $U_{n+1} \triangleleft U_n \trianglelefteq G$ such that the $p$-factors of the orders $|G/U_n|$ are strictly increasing. Thus, there are elements $b_{U_n} \in \Z_p[G/U_n]^G$ such that for any $m \leq n$ the projection homomorphism $\mathbb{Z}_p[G/U_n]^G \rightarrow \mathbb{Z}_p[G/U_m]^G$ maps $b_{U_n}$ to $b_{U_m}$. It follows from the assumption on the $p$-factors of $|G/U_n|$ that the element $b$ in Equation~\eqref{eq:diagonalelement} is divisible by arbitrarily large powers of $p$. However, this is impossible by~\cite[p.~26--27]{wil98}, and thus  $H_{\Z_p}^0(G, \Z_p\llbracket G \rrbracket) = 0$.
\end{proof}

Below we provide the very first example of a profinite duality group that is not of type $\mathrm{FP}_{\infty}$. This example necessitates the following definition.

\begin{definition}
(\cite[p.~513]{fri08}) If $K$ is a profinite group, then the Frattini subgroup $\Phi(K) \leq K$ is defined as the intersection of all maximal open subgroups of $K$. A homomorphism of profinite groups $\psi\colon J \rar K$ is a Frattini cover if $\psi$ is surjective and $\mathrm{Ker}(\psi) \leq \Phi(J)$. A Frattini cover is universal if $J$ is a projective profinite group, and all profinite groups have a universal Frattini cover.
\end{definition}

\begin{theorem}\label{thm:novelexample}
For any odd prime $p$ define the following profinite group $H$. Let $\mathrm{SL}_p(\mathbb{F}_p)$ act canonically on the vector space $\mathbb{F}_p^p$. For any $n \geq 1$ define the semidirect product $G_n := (\mathbb{F}_p^p)^n \ltimes \mathrm{SL}_p(\mathbb{F}_p)$ with a diagonal action of $\mathrm{SL}_p(\mathbb{F}_p)$. Define the profinite group $G := \prod_{n \geq 1} G_n$ and let $\varphi\colon H \rar G$ denote a universal Frattini cover. Then the following assertions hold.
\begin{enumerate}
\item $H$ is a profinite duality group over $\Z_p$.
\item $H$ is not of type $\mathrm{FP}_{\infty}$ over $\Z_p$.
\end{enumerate}
\end{theorem}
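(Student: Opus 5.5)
We verify for $H$ the Cohen--Macaulay condition over $\Z_p$ in dimension $1$ and then apply Theorem~\ref{thmB}. Since $H$ is a projective profinite group (universal Frattini cover), $\cd(H)\le 1$, and in particular $\cd_{\Z_p}(H)\le 1$. Next we show $H$ satisfies condition (FF) over $\Z_p$. A finite $\Z_p\llbracket H\rrbracket$-module $M$ is an iterated extension of simple $\mathbb{F}_p\llbracket H\rrbracket$-modules. By the long exact sequence it suffices to prove finiteness of $H^0$ and $H^1$ for a simple module $M$, and $H^0$ is trivially finite. For $H^1(H,M)$ we use that the kernel of the Frattini cover $\varphi$ lies in $\Phi(H)$. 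By the standard theory of Frattini covers (Gaschütz; cf.\ \cite{fri08}), $H^1(H,M)$ is controlled by $H^1(G,M)$ together with $H^1(\ker\varphi, M)^H$. Since $\ker\varphi\le\Phi(H)$, this last term is finite, being the multiplicity of $M$ in the Frattini module, which is finite when $M$ is a simple module over the relevant finite quotient. For $H^1(G,M)$, note that $M$ is inflated from some finite quotient of $G$, and $G=\prod G_n$, so $M$ factors through $\prod_{n\le N}G_n$. The remaining factor $\prod_{n>N} G_n$ then contributes $\mathrm{Hom}(\prod_{n>N}G_n,M)$ to $H^1$. This is finite because $G_n$ is perfect (as $p$ is odd, $(\mathbb{F}_p^p)^n$ is a nontrivial irreducible module with no coinvariants and $\mathrm{SL}_p(\mathbb{F}_p)$ is perfect), so the abelianisation of $\prod_{n>N}G_n$ is trivial. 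Combined with an inflation--restriction argument, this gives (FF).

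For the Cohen--Macaulay condition, Proposition~\ref{prop:infiniteporder} applies: the $p$-part of $|G_n|$ grows with $n$, so $H^0_{\Z_p}(H,\Z_p\llbracket H\rrbracket)=0$. Also $\cd_{\Z_p}(H)=1$ exactly, since $H$ is not $p$-projective-trivial: $H^1(H,\mathbb{F}_p[H/U])\neq0$ for suitable $U$ because $H$ has infinite $p$-part. Part (3) of Proposition~\ref{prop:cd1groups} with $\ring=\Z_p$ gives flatness of $H^1_{\Z_p}(H,\Z_p\llbracket H\rrbracket)$, using (FF). Theorem~\ref{thmB} then yields (1).

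For (2), we argue that type $\mathrm{FP}_1$ over $\Z_p$ already fails. The standard criterion is that $\mathrm{FP}_1$ over $\mathbb{F}_p$ implies that $\dim H^1(H,M)$ grows at most linearly in $\dim M$ over simple modules $M$, uniformly in the number of generators. Take $M=\mathbb{F}_p^p$ viewed through the projection $G\to G_n$. Then $H^1(G,M)\supseteq \mathrm{Hom}_{\mathrm{SL}_p}((\mathbb{F}_p^p)^n,M)$, which has dimension at least $n$ and so is unbounded in $n$ while $\dim M=p$ stays fixed. Since $H^1(H,M)\supseteq H^1(G,M)$ by inflation, we get $\sup_M \dim H^1(H,M)/\dim M=\infty$, so $\mathrm{FP}_1$ fails. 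Equivalently, $H$ is not finitely generated (its quotient $G$ is not, as the $G_n$ require growing numbers of generators), and over $\Z_p$ this obstructs $\mathrm{FP}_1$ via the relation-module/minimal-generator bound.

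The main obstacle is establishing (FF) rigorously: we must control $H^1(H,M)$ for all finite $M$ despite $H$ being infinitely generated. We would first try to show that the Frattini module multiplicities are finite and that only finitely many $G_n$ act nontrivially on any given $M$. The danger is that the same $\mathrm{Hom}_{\mathrm{SL}_p}$ computation used in (2) threatens finiteness when $M$ is nontrivial. This has to be reconciled by noting that $M$ factors through a single finite quotient $\prod_{n\le N}G_n$, and on the remaining perfect factors $M$ is trivial, so they contribute no $H^1$.
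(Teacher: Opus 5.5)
\textbf{Verdict: genuine gap.} Your proposal does not establish condition (FF) for $H$ itself, so part (1) is not yet proved. The rest holds up.

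\textbf{What works.} Your overall route matches the paper: verify the Cohen--Macaulay condition in dimension $1$, using Propositions~\ref{prop:infiniteporder} and~\ref{prop:cd1groups}, then apply Theorem~\ref{thmB}. Two of your steps are simpler than the paper's.
\begin{itemize}
\item Finiteness of $H^1(G,M)$: $C_G(M)$ contains the perfect group $\prod_{n>N}G_n$, so $H^1(G,M)=H^1(\prod_{n\le N}G_n,M)$. This replaces the paper's count of $\delta_G(M)$.
\item Part (2): $\dim H^1(H,M_n)\ge n$ follows from the split extension $G_n$ and injectivity of inflation. This contradicts the elementary bound $\dim H^1(H,M)\le d\dim M$, which comes from $\mathrm{Hom}_{\Z_p\llbracket H\rrbracket}(I_{\Z_p}H,M)\twoheadrightarrow H^1(H,M)$. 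This avoids Damian's theorem entirely.
\end{itemize}
Your aside that non-finite generation ``equivalently'' obstructs $\mathrm{FP}_1$ over $\Z_p$ is false in general, but you do not use it.

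\textbf{The gap: passing from $G$ to $H$ in (FF).} Write $K=\ker\varphi$ and let $M$ be simple.
\begin{itemize}
\item If $K$ acts nontrivially on $M$, then $M^K=0$ and inflation--restriction gives $H^1(H,M)\cong H^1(K,M)^H$ exactly. So asserting that this term is finite is the entire claim. Your ``multiplicity in the Frattini module'' reading only makes sense when $K$ acts trivially.
\item If $K$ acts trivially on $M$, the term is $\mathrm{Hom}_H(K,M)$. Here $K$ is the kernel of the universal Frattini cover of the infinitely generated group $G$, not of a finite quotient. Since $H^2(H,M)=0$, this group surjects onto $H^2(G,M)$, and you never show that $H^2(G,M)$ is finite.
\end{itemize}

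\textbf{The missing idea.} The Frattini property makes the restriction map contribute nothing; this is what the paper extracts from the Gasch\"utz/Damian formula $H^1\cong E^{\delta+h}$. Suppose a derivation $d$ on $H$ restricts to a nonzero $H$-map on a normal subgroup $C$ that centralises $M$. Then $\psi\colon h\mapsto (d(h),hC)$ is an epimorphism $H\to X:=M\rtimes H/C$, and $M$ is complemented in $X$. Hence $M\cap\Phi(X)=0$, so $\Phi(X)$ centralises $M$, and $\psi(\Phi(H))\le\Phi(X)$. Apply this twice:
\begin{itemize}
\item With $C=K$: this would force $M=\psi(K)\subseteq M\cap\Phi(X)=0$, a contradiction. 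So $H^1(H,M)=H^1(G,M)$ when $K$ acts trivially.
\item With $C=C_H(M)$: this would force $K\le\Phi(H)$ to centralise $M$, a contradiction. So $H^1(H,M)=H^1(H/C_H(M),M)$, which is finite, when $K$ acts nontrivially.
\end{itemize}
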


In order to prove that the properties of the example in the above theorem, we require a reformulation of the homological finiteness condition $\mathrm{FP}_1$ over $\Z_p$. By Schanuel's Lemma, $G$ is of type $\mathrm{FP}_1$ over $\ring$ if and only if $I_{\ring} G$ is finitely generated. By Theorem~1 from Damian's paper~\cite{dam11}, $G$ is of type $\mathrm{FP}_1$ over $\Z_p$ if and only if the expression whose terms we define below
\begin{equation}\label{eq:fp1}
\mathrm{sup}_{M \in \mathrm{Irr}(\mathbb{F}_p\llbracket G\rrbracket)} \bigg\lbrace \biggl\lceil \frac{\delta_G(M)+h_G(M)}{r_G(M)} \biggr\rceil \bigg\rbrace
\end{equation}
is finite. Denote by $\mathrm{Irr}(\mathbb{F}_p\llbracket G\rrbracket)$ the set of irreducible finite $\mathbb{F}_p\llbracket G\rrbracket$-modules. If $M \in \mathrm{Irr}(\mathbb{F}_p \llbracket G \rrbracket)$ and $E := \mathrm{End}_{\mathbb{F}_p \llbracket G \rrbracket}(M)$ denotes its field of endomorphisms, then $h_G(M)$ is defined as $\mathrm{dim}_E H_{\mathbb{F}_p}^1(G/C_G(M), M)$ and $r_G(M)$ as
\[ M \cong \mathrm{Hom}_{\mathbb{F}_p \llbracket G \rrbracket}(\mathbb{F}_p\llbracket G\rrbracket, M) \cong E^{r_G(M)} \, . \]
A chief series of $G$ is defined to be a series of closed normal subgroups
\[ {} \dots {} \leq G_2 \leq G_1 \leq G_0 = G \]
such that there is no closed normal subgroup properly contained between any $G_i$ and $G_{i+1}$. A chief factor $H/N$ of $G$ is non-Frattini if $H/N \nleq \Phi(G/N)$. Then $\delta_G(M)$ is defined as the number of non-Frattini chief factors $G$-isomorphic to $M$ in a chief series of $G$. 

In the following proof we change, where appropriate, between the coefficient rings $\mathbb{F}_p$ and $\mathbb{Z}_p$, which is justified by~\cite[Remark~6.2.5]{rib10}.

\begin{proof}[Proof of Theorem~\ref{thm:novelexample}]
To prove the first assertion, we first explain why $H$ has cohomological dimension $1$, then demonstrate that the cohomology of $H$ with finite coefficients is finite and lastly show that $H_{\mathbb{Z}_p}^{\bullet}(H, \mathbb{Z}_p \llbracket H \rrbracket)$ is of the desired form. Thereafter we prove the second assertion.

\textbf{(1)} Because $H$ as a universal Frattini cover is a projective profinite group, $\mathrm{cd}_{\mathbb{Z}_p} (H) \leq 1$ by~\cite[Theorem~7.5.1]{rib10}. By~\cite[Corollaire~2]{ser94}, $\cd_{\mathbb{Z}_p}(H) \neq 0$, so $\cd_{\mathbb{Z}_p}(H) = 1$.

As an intermediate step, we show that $H_{\mathbb{Z}_p}^1(G, M)$ is finite for every finite $\Z_p\llbracket G\rrbracket$-module $M$. It suffices to consider any irreducible finite $M$. If we take $M = \mathbb{F}_p^p$ with the action of the $n^{\text{th}}$ copy of $\mathrm{SL}_p(\mathbb{F}_p)$ in $G$, then $M$ is absolutely irreducible. Taking the appropriate chief series implies that $\delta_G(M)$ is finite. Because $\mathrm{SL}_p(\mathbb{F}_p)$ has trivial centre, $\delta_G(M) = 0$ for any other finite irreducible $\mathbb{F}_p\llbracket G\rrbracket$-module $M$. We restrict our attention to any normal open subgroup $N$ of $G$ that acts trivially on $M$ such that it does not contain any chief factor $G$-isomorphic to $M$. By~\cite[p.~454]{dam11}, $H_{\mathbb{Z}_p}^1(G/N, M) \cong E^{\delta_{G/N}(M)+h_{G_N}(M)}$. As the action on $M$ factors through $G \rar G/N$, $\delta_G(M) = \delta_{G/N}(M)$ and $h_G(M) = h_{G/N}(M)$. By~\cite[p.~455]{dam11}, $E = \mathrm{End}_{\mathbb{F}_p\llbracket G \rrbracket}(M) \cong \mathrm{End}_{\mathbb{F}_p\llbracket G/N \rrbracket}(M)$. Because this holds true for any normal open subgroup $N'$ contained in $N$, we deduce that
\[ H_{\mathbb{Z}_p}^1(G, M) = \varinjlim_{N' \trianglelefteq_o N} H_{\mathbb{Z}_p}^1(G/N', M) \cong E^{\delta_G(M)+h_G(M)} \, . \]
According to~\cite[p.~455]{dam11}, $h_G(M) \leq r_G(M)$ where the latter is finite for a finite module $M$. Hence, $H_{\mathbb{Z}_p}^1(G, M)$ is finite for every finite irreducible $\mathbb{F}_p\llbracket G\rrbracket$-module $M$.

Now we prove that $H_{\mathbb{Z}_p}^k(H, M)$ is finite for every finite $\Z_p\llbracket H\rrbracket$-module $M$ and every $k \geq 0$. Since $\mathrm{cd}_{\Z_p}(H) = 1$, any cohomology group of $H$ of degree greater than $1$ vanishes. Since $H_{\mathbb{Z}_p}^0(H, M)$ is isomorphic to the group of invariants $M^H$, it suffices to prove that $H_{\mathbb{Z}_p}^1(H, M)$ is finite for every finite module $M$. As before, it suffices to consider any finite irreducible $\mathbb{F}_p\llbracket H\rrbracket$-module $M$. If the action on $M$ factors through the surjective homomorphism $H \rar G$, then we infer as above that
\[ H_{\mathbb{Z}_p}^1(H, M) \cong E^{\delta_H(M)+h_H(M)} = E^{\delta_G(M)+h_G(M)} \cong H_{\mathbb{Z}_p}^1(G, M) \, . \]
As the latter cohomology group is finite, the former is so too. Now assume that the action on $M$ does not factor through $H \rar G$. This implies that the Frattini subgroup $\Phi(H)$ acts non-trivially on $M$. In contrast, every non-Frattini chief factor of $H$ is of the form $J/K$ for open normal subgroups $J, K \leq H$ where $\Phi(H) \leq K$ by definition. Thus, $\Phi(H)$ acts trivially on every non-Frattini chief factor of $H$. In particular, $\delta_H(M) = 0$ and $H_{\mathbb{Z}_p}^1(H, M)$ is finite also in this case because $h_H(M) \leq r_H(M)$ as above.

It follows from Proposition~\ref{prop:cd1groups} that $H_{\mathbb{Z}_p}^1(H, \mathbb{Z}_p \llbracket H \rrbracket)$ is a flat profinite $\mathbb{Z}_p$-module. Finally, by Proposition~\ref{prop:infiniteporder}, $H_{\mathbb{Z}_p}^0(H, \mathbb{Z}_p \llbracket H \rrbracket) = 0$ because $p^{\infty}$ divides the order of $G$ and thus the order of $H$.

\textbf{(2)} It suffices to show that $H$ is not of type $\mathrm{FP}_1$ over $\Z_p$ to conclude that it is not of type $\mathrm{FP}_{\infty}$. Since there is a surjective homomorphism $\varphi\colon H \rar G$, there is a surjective homomorphism of $\Z_p\llbracket G\rrbracket$-modules $\Z_p\llbracket H\rrbracket \rar \Z_p\llbracket G\rrbracket$. A diagram chase reveals that the latter homomorphism restricts to a surjective homomorphism $I_{\Z_p}H \rar I_{\Z_p}G$. If $I_{\Z_p}G$ is not finitely generated, then the same holds true for $I_{\Z_p}H$. Thus, it suffices to show that $G$ is not of type $\mathrm{FP}_1$ over $\Z_p$. If we take the absolutely irreducible $\mathbb{F}_p\llbracket G\rrbracket$-module $M = \mathbb{F}_p^p$ with the action of the $n^{\text{th}}$ copy of $\mathrm{SL}_p(\mathbb{F}_p)$ in $G$ as above and take an appropriate chief series, then $\delta_G(M) = n$ and $r_G(M) = p$. Therefore, the supremum in Equation~\eqref{eq:fp1} does not exist and $G$ is not of type $\mathrm{FP}_1$ over $\Z_p$.
\end{proof}

\subsection{Closure properties}

We end this paper by establishing group-theoretic closure properties of duality groups. 

\subsubsection{Subgroup closure}
We start by proving that duality groups are closed under taking open subgroups, thus generalising \cite[Proposition~4.4.1]{sym00} to any profinite coefficient ring.
\begin{proposition}\label{prop:subgroupclosed}
Let $G$ be a profinite group, $H$ an open subgroup and $\ring$ a profinite commutative ring such that $\cd_{\ring}(G) < \infty$ and $G$ satisfy condition (FF). Then $G$ is a duality group over $\ring$ if and only if $H$ is a duality group over $\ring$.
\end{proposition}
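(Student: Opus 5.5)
Since $G$ and $H$ have the same hypotheses (to be checked), the plan is to work through Theorem~\ref{thmB}: $G$, resp.\ $H$, is a duality group if and only if it is Cohen--Macaulay. So it suffices to show that $\cd_\ring(H)=\cd_\ring(G)$, that $H$ satisfies (FF), and that $H^k_\ring(H,\ring\llbracket H\rrbracket)\cong H^k_\ring(G,\ring\llbracket G\rrbracket)$ as profinite $\ring$-modules for every $k$. Concentration in a single degree and $\ring$-flatness then transfer in both directions at once.

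\textbf{Step 1: Shapiro's lemma.} Since $H$ is open, $\ring\llbracket G\rrbracket$ is a finitely generated free $\ring\llbracket H\rrbracket$-module of rank $[G:H]$. Hence induction and coinduction agree, and for profinite $\ring\llbracket H\rrbracket$-modules $N$ we have $\ring\llbracket G\rrbracket\widehat{\otimes}_{\ring\llbracket H\rrbracket}N\cong \mathrm{Hom}_{\ring\llbracket H\rrbracket}(\ring\llbracket G\rrbracket,N)$. Any projective resolution of $\ring$ over $\ring\llbracket G\rrbracket$ restricts to a projective resolution over $\ring\llbracket H\rrbracket$. This gives the profinite Shapiro isomorphism
\[
H^k_\ring(H,N)\cong H^k_\ring\bigl(G,\ring\llbracket G\rrbracket\widehat{\otimes}_{\ring\llbracket H\rrbracket}N\bigr),
\]
and, conversely, $H^k_\ring(H,M)=H^k_\ring(G,\ring\llbracket G/H\rrbracket\widehat{\otimes}_\ring M)$ for $\ring\llbracket G\rrbracket$-modules $M$.

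\textbf{Step 2: cohomological dimension and (FF).} Restricting the resolution gives $\cd_\ring(H)\le\cd_\ring(G)$. For the reverse inequality, I would use the Shapiro isomorphism applied to the module $M$ such that $H^n_\ring(G,M)\neq 0$ and try to find a nonzero degree-$n$ class over $H$. The approach I would try first is the corestriction--restriction argument. If $G$ has $\ring$-torsion elements, equality can genuinely fail, so this step needs care. My plan is to observe that for duality purposes only $\cd_\ring(H)<\infty$ is needed on the $H$ side, together with the Cohen--Macaulay degree. The degree $n$ is then recovered from the nonvanishing of $H^n(H,\ring\llbracket H\rrbracket)\cong H^n(G,\ring\llbracket G\rrbracket)$, since for a Cohen--Macaulay group this module is the nonzero $D$.

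For (FF), if $N$ is finite then $\ring\llbracket G\rrbracket\widehat{\otimes}_{\ring\llbracket H\rrbracket}N$ is finite because the index is finite. Shapiro's lemma then shows that $H$ inherits (FF) from $G$. The converse direction uses $\ring\llbracket G/H\rrbracket\widehat{\otimes}_\ring M$.

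\textbf{Step 3: comparing the dualising modules.} Apply Step~1 with $N=\ring\llbracket H\rrbracket$. Since $\ring\llbracket G\rrbracket\widehat{\otimes}_{\ring\llbracket H\rrbracket}\ring\llbracket H\rrbracket=\ring\llbracket G\rrbracket$, we get $H^k_\ring(H,\ring\llbracket H\rrbracket)\cong H^k_\ring(G,\ring\llbracket G\rrbracket)$ as $\ring$-modules, and this isomorphism is compatible with the right $H$-action.

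Thus the $G$-module is concentrated in degree $n$ and $\ring$-flat if and only if the $H$-module is. Combining this with Theorem~\ref{thmB} for both groups (using Step~2 for the hypotheses) proves the proposition.

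I expect the main obstacle to be the equality of cohomological dimensions in Step~2. I would handle it as described there: derive $\cd_\ring(H)=n$ from the Cohen--Macaulay condition itself, rather than proving it for general open subgroups.
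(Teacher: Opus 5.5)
Your overall route is the paper's: transfer (FF) to $H$, use Shapiro's lemma to get $H^k_\ring(H,\ring\llbracket H\rrbracket)\cong H^k_\ring(G,\ring\llbracket G\rrbracket)$, and apply Theorem~\ref{thmB} to both groups. The direction ``$G$ duality $\Rightarrow$ $H$ duality'' works as you describe. You have $\cd_\ring(H)\le\cd_\ring(G)=n$, and $H^n_\ring(H,\ring\llbracket H\rrbracket)\cong D_\ring\neq 0$ gives the reverse inequality. You should justify $D_\ring\neq0$: by Lemma~\ref{lem:cd} there is some $M$ with $H^n_\ring(G,M)\neq 0$, and duality gives $H^n_\ring(G,M)\cong H_0^\ring(G,M\widehat{\otimes}_\ring D_\ring)$.

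The gap is in the converse direction. Suppose $H$ is a duality group of dimension $m=\cd_\ring(H)$. Shapiro tells you that $H^\bullet_\ring(G,\ring\llbracket G\rrbracket)$ is $\ring$-flat and concentrated in degree $m$, and its nonvanishing gives $\cd_\ring(G)\ge m$. But to call $G$ Cohen--Macaulay, and to apply Theorem~\ref{thmB} to $G$, the concentration degree must be $\cd_\ring(G)$. So you also need $\cd_\ring(G)\le m$, which is exactly the equality $\cd_\ring(G)=\cd_\ring(H)$ you tried to sidestep. ``Recovering the degree from nonvanishing'' only ever yields lower bounds on $\cd$, so Step~2 does not supply it.

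Your worry that the equality can fail is also misplaced here: under the standing hypothesis $N=\cd_\ring(G)<\infty$ it holds, and the paper proves it by showing $H^N_\ring(G,\ring\llbracket G\rrbracket)\neq 0$. Take a profinite $M$ with $H^N_\ring(G,M)\neq0$ and a surjection $(\ring\llbracket G\rrbracket)\llbracket M\rrbracket\to M$. Since $H^{N+1}_\ring(G,-)=0$, the long exact sequence gives $H^N_\ring(G,(\ring\llbracket G\rrbracket)\llbracket M\rrbracket)\neq 0$. Write this free module as an inverse limit of free modules of finite rank and pass cohomology through the limit; this gives $H^N_\ring(G,\ring\llbracket G\rrbracket)\neq0$. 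Shapiro then yields $\cd_\ring(H)\ge N$.

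Alternatively, you can get the missing upper bound from Theorem~\ref{thm:tate}. Since $G$ has finite cohomological dimension and satisfies (FF), flatness and concentration of $H^\bullet_\ring(G,\ring\llbracket G\rrbracket)$ in degree $m$ collapse the Tate spectral sequence to $H^k_\ring(G,M)\cong H^\ring_{m-k}(G,M\widehat{\otimes}_\ring D)$. This vanishes for $k>m$, so $\cd_\ring(G)\le m$. One of these two arguments must be added for the ``$H\Rightarrow G$'' direction to be complete.
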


\begin{proof}
First we prove that $H_{\ring}^k(H, M)$ is finite for any finite $M$ and $k$ if and only if $H_{\ring}^k(G, M)$ is so too. It follows from~\cite[pp.~370--371]{sym00} that coinduction $\mathrm{Hom}_{\ring \llbracket H \rrbracket}(\ring \llbracket G \rrbracket, -)$ is a functor mapping profinite $\ring \llbracket H \rrbracket$-modules to profinite $\ring \llbracket G \rrbracket$-modules that is left adjoint to the restriction functor. One can consider the long exact sequence in $H_{\ring}^{\bullet}(G, -)$ associated to the unit and any finite $\ring \llbracket G \rrbracket$-module $M$
\[ 0 \rightarrow M \rightarrow \mathrm{Hom}_{\ring \llbracket H \rrbracket}(\ring \llbracket G \rrbracket, M) \rightarrow \mathrm{Coker} \rightarrow 0 \, . \]
The equivalence of finite cohomology groups can be deduced as in the proof of~\cite[Proposition~1.8]{Pletch1980} using Shapiro's Lemma from~\cite[Lemma~4.2.8]{sym00}.

It can be deduced as in~\cite[Proposition~VIII.2.4]{bro82} that $\cd_{\ring}(H) = \cd_{\ring}(G)$. Because $\ring \llbracket G \rrbracket$ is a projective profinite $\ring \llbracket H \rrbracket$-module by~\cite[Proposition~5.7.1]{rib10}, the coinduction functor $\mathrm{Hom}_{\ring \llbracket H \rrbracket}(\ring \llbracket G \rrbracket, -)$ is exact and its left-adjoint restriction functor preserves projectives. In particular, any projective resolution $P_{\bullet} \rightarrow \ring$ of profinite $\ring \llbracket G \rrbracket$-modules is also one of $\ring \llbracket H \rrbracket$-modules. This proves that $\cd_{\ring}(H) \leq \cd_{\ring}(G)$. Let us prove the converse inequality. As $\cd_{\ring}(G) = n < \infty$, there is by Lemma~\ref{lem:cd} a profinite $\ring \llbracket G \rrbracket$-module $M$ such that $H_{\ring}^n(G, M) \neq 0$. The long exact sequence resulting from the surjection $\big(\ring \llbracket G \rrbracket \big) \llbracket M \rrbracket \rightarrow M$ shows that $H_{\ring}^n \big(G, (\ring \llbracket G \rrbracket)\llbracket M \rrbracket \big) \neq 0$. It follows from \cite[Proposition~2.2.4 and Proposition~5.2.2]{rib10} that $H_{\ring}^n \big(G, \ring \llbracket G \rrbracket \big) \neq 0$. We conclude from~\cite[Lemma~4.2.8]{sym00} that
\[ H_{\ring}^n \big(G, \ring \llbracket G \rrbracket \big) \cong H_{\ring}^n \big(H, \ring \llbracket H \rrbracket \big) \]
as profinite $\ring \llbracket H \rrbracket$-modules. This isomorphism does not only prove the converse inequality. It also proves that the $H_{\ring}^n \big(G, \ring \llbracket G \rrbracket \big)$ is $\ring$-flat if and only if $H_{\ring}^n \big(H, \ring \llbracket H \rrbracket \big)$ is so. It also proves that $H_{\ring}^k \big(G, \ring \llbracket G \rrbracket \big)$ vanishes for $k \neq n = \cd_{\ring}(G)$ if and only if $H_{\ring}^k \big(H, \ring \llbracket H \rrbracket \big)$ does so.
\end{proof}

\subsubsection{Extension closure}
We prove that duality groups are closed under taking extensions, thus generalising~\cite[Theorem~3.9]{Pletch1980} to arbitary profinite rings. The proof of this is analogous to classical proofs of extension closure, which rely on a Lyndon--Hochschild--Serre spectral sequence. This tool is of independent interest:

\begin{lemma}[Solid Lyndon--Hochschild--Serre spectral sequence]
    Let $0 \rightarrow N \rightarrow G \rightarrow Q \rightarrow 0$ be a short exact sequence of profinite groups. Then there is a spectral sequence
    \begin{equation}\label{eq:lyndonsequence1}
E_2^{pq}=\mathbf{H}_{\cR}^p \big(Q, \mathbf{H}_{\cR}^q(N, M)\big) \Rightarrow \mathbf{H}_{\cR}^{p+q}(G, M),
\end{equation}
which converges for any solid $G$-module $M$. 
\end{lemma}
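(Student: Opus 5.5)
The plan is to obtain \eqref{eq:lyndonsequence1} as a Grothendieck spectral sequence for the factorisation of $\rhg(\cR,-)$ through $N$-invariants, carried out in $\SolidLG$. First, the solid group rings must be compatible. Since $N$ is closed normal in $G$ and $Q=G/N$, we have $\cR(Q)=\underline{\ring\llbracket Q\rrbracket}\cong \cR\stheart_{\cR(N)}\rg$. This follows from the profinite identity $\ring\llbracket G/N\rrbracket\cong \ring\widehat{\otimes}_{\ring\llbracket N\rrbracket}\ring\llbracket G\rrbracket$ together with Theorem~\ref{thm:profintosolid}. The same theorem, with the bar resolution used in Theorem~\ref{thm:cpoo} and \cite[Proposition~5.7.1]{rib10}, shows that $\rg$ is projective as a solid $\cR(N)$-module. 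Indeed, $\ring\llbracket G\rrbracket$ is a projective profinite $\ring\llbracket N\rrbracket$-module, and condensation preserves projectives by Lemma~\ref{lem:profinitembeddings}. Consequently, restriction $\SolidLG\to\Solid_{\cR(N)}$ is exact and preserves projectives. In particular, any projective resolution $P_\bullet\to\cR$ over $\rg$ is also a projective resolution over $\cR(N)$, so the derived restriction is computed termwise.

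Second, I would prove the derived identity
\[
\rhg(\cR,M)\;\cong\;\mathrm{R}\underline{\mathrm{Hom}}_{\cR(Q)}\big(\cR,\ \mathrm{R}\underline{\mathrm{Hom}}_{\cR(N)}(\cR,M)\big)
\]
for $M\in\SolidLG$. Two facts about $\mathrm{R}\underline{\mathrm{Hom}}_{\cR(N)}(\cR,M)$ are needed here. It carries a residual $\cR(Q)$-action, and it equals $\rhg(\cR(Q),M)$. The latter follows from the adjunction \eqref{eq:adjunction2} applied to $\cR(Q)\cong\rg\stg_{}\!\!$-bimodule $\cR\stg\!\!{}_{\cR(N)}\rg$; more precisely, one uses $\rhg(\rg\otimes^{\solid}_{\cR(N)}\cR,M)\cong\mathrm{R}\underline{\mathrm{Hom}}_{\cR(N)}(\cR,M)$, which is extension/restriction of scalars along $\cR(N)\to\rg$, combined with the flatness from the first step. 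Next I would apply \eqref{eq:adjunction2} with $\cR\cong\cR\otimes^{\solid}_{\cR(Q)}\cR(Q)$ regarded as a $\rg$-module via $G\to Q$. This yields $\rhg(\cR,M)\cong \mathrm{R}\underline{\mathrm{Hom}}_{\cR(Q)}(\cR,\rhg(\cR(Q),M))$, which is exactly the composite of derived functors displayed above.

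Third, with this composite in hand, the spectral sequence comes from filtering the inner complex by its canonical truncations, equivalently from the hypercohomology spectral sequence of $\mathrm{R}\underline{\mathrm{Hom}}_{\cR(Q)}(\cR,-)$ applied to the complex $C=\mathrm{R}\underline{\mathrm{Hom}}_{\cR(N)}(\cR,M)$. Because $H^q(C)=\mathbf{H}^q_{\cR}(N,M)$, this gives $E_2^{pq}=\mathbf{H}^p_{\cR}(Q,\mathbf{H}^q_{\cR}(N,M))$. Convergence holds because $M$ is a single module, so $C$ is coconnective with $H^q(C)=0$ for $q<0$, and the $p$-direction is also bounded below. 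The spectral sequence therefore lives in the first quadrant and converges for any solid $G$-module $M$, with no finiteness hypothesis required.

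The main obstacle is the second step, specifically the bimodule bookkeeping needed to identify $\mathrm{R}\underline{\mathrm{Hom}}_{\cR(N)}(\cR,M)$, with its residual $Q$-action, as an object of $\Solid_{\cR(Q)}$, and the base change $\cR(Q)\cong\cR\otimes^{\solid}_{\cR(N)}\rg$ being underived. I would attack both via the profinite side. I would first verify them for $M$ a condensate of a profinite module, using Theorem~\ref{thm:profintosolid} and the classical statements in \cite{rib10}. I would then extend to general $M$ using Lemma~\ref{lem:indprofinites}, exactness of direct limits, and the fact that all functors involved commute with direct limits. That last fact rests on Theorem~\ref{thm:cpoo} applied to $N$ and $Q$, together with the observation that an $N$-module resolution by compact projectives remains compact after restriction.
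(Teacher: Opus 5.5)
Your proposal is correct and takes the paper's route. Both arguments rest on $\rg$ being projective over $\ring(N)$, so that a projective $\rg$-resolution $P_\bullet$ of $\cR$ restricts to one over $\ring(N)$. Your hypercohomology spectral sequence of $\mathrm{R}\underline{\mathrm{Hom}}_{\ring(Q)}(\cR,-)$ applied to $\mathrm{R}\underline{\mathrm{Hom}}_{\ring(N)}(\cR,M)$ is exactly the spectral sequence of the paper's double complex $\underline{\mathrm{Hom}}_{\ring(Q)}\big(Q_\bullet,\underline{\mathrm{Hom}}_{\ring(N)}(P_\bullet,M)\big)$.

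Your Step~2 combines coinduction along $\rg\to\ring(Q)$ with the derived Shapiro isomorphism $\rhg(\ring(Q),M)\cong\mathrm{R}\underline{\mathrm{Hom}}_{\ring(N)}(\cR,M)$ coming from $\ring(Q)\cong\rg\otimes^{\solid}_{\ring(N)}\cR$. This just makes explicit the identification of the abutment that the paper leaves to ``the standard way''. It is a formal consequence of adjunctions once the single module $\ring(Q)$ is understood, so the detour through profinite $M$ and Lemma~\ref{lem:indprofinites} in your last paragraph is unnecessary.
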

\begin{proof}
    Let $Q_{\bullet} \rightarrow {\cR}$ be a projective resolution of solid $\ring(Q)$-modules and $P_{\bullet} \rightarrow {\cR}$ a projective resolution of solid $\rg$-modules. It follows from~\cite[Proposition~5.7.1]{rib10} and Lemma~\ref{lem:profinitembeddings} that $\rg$ is a projective solid $\ring(N)$-module. Coinduction $\mathrm{Hom}_{\ring(N)}(\rg, -)$ is exact and restriction as its left-adjoint preserves projectives. Thus, $P_{\bullet} \rightarrow {\cR}$ is a projective resolution of solid $\ring(N)$-modules from which we obtain as in~\cite[p.~384]{sym00} a first quadrant double complex
\begin{equation}\label{eq:lyndoncomplex}
\underline{\mathrm{Hom}}_{\ring(Q)}\big(Q_{\bullet}, \underline{\mathrm{Hom}}_{\ring(N)}(P_{\bullet}, M) \big)
\end{equation}
for any solid $\rg$-module $M$. This gives rise to the Lyndon--Hochschild--Serre spectral sequence in the standard way.
\end{proof}

By virtue of Theorem~\ref{thm:profintosolid} and Lemma~\ref{lem:profincohomgroups}, this lemma carries over to the profinite setting:

\begin{corollary}[Profinite Lyndon--Hochschild--Serre spectral sequence]\label{cor:LHSprof}
     Let $0 \rightarrow N \rightarrow G \rightarrow Q \rightarrow 0$ be a short exact sequence of profinite groups satisfying condition (FF). Then there is a spectral sequence
     \begin{equation}\label{eq:lyndonsequence2}
H_{\ring}^k \big(Q, H_{\ring}^m(N, M)\big) \Rightarrow H_{\ring}^{k+m}(G, M)
\end{equation}
converging for any discrete or profinite $G$-module $M$.
\end{corollary}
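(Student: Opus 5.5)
The plan is to deduce Corollary~\ref{cor:LHSprof} from the solid Lyndon--Hochschild--Serre spectral sequence \eqref{eq:lyndonsequence1}, applied to the condensate $\underline{M}$. Each term is then identified with its continuous counterpart using Theorem~\ref{thm:profintosolid} and Lemma~\ref{lem:profincohomgroups}. The abutment is handled by Theorem~\ref{thm:profintosolid}(3): since $G$ satisfies condition (FF), $\mathbf{H}_{\cR}^{k+m}(G,\underline{M}) \cong \underline{H_{\ring}^{k+m}(G,M)}$ for $M$ discrete or profinite.

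For the $E_2$-page I work in two stages. First, $N$ satisfies (FF), so Theorem~\ref{thm:profintosolid}(3) gives $\mathbf{H}_{\cR}^m(N,\underline{M}) \cong \underline{H_{\ring}^m(N,M)}$. This is initially an isomorphism of solid $\cR$-modules. I would check that it is compatible with the residual $Q$-action, which is induced by conjugation on the double complex \eqref{eq:lyndoncomplex}. The compatibility comes from functoriality of the comparison in Theorem~\ref{thm:profintosolid}, applied to a $G$-projective resolution $P_\bullet$ that is the condensate of a profinite resolution; this is possible by Lemma~\ref{lem:profinitembeddings}(2).

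Next, Lemma~\ref{lem:profincohomgroups} ensures that $H_{\ring}^m(N,M)$ is profinite when $M$ is profinite. When $M$ is discrete, $H_{\ring}^m(N,M)$ is discrete, being a direct limit over open normal subgroups of cohomology of finite quotients. In either case it is a discrete or profinite $\ring\llbracket Q\rrbracket$-module. I then apply Theorem~\ref{thm:profintosolid}(3) again, now to $Q$, which satisfies (FF), and obtain
\[
\mathbf{H}_{\cR}^k\big(Q,\mathbf{H}_{\cR}^m(N,\underline{M})\big)\cong \underline{H_{\ring}^k(Q,H_{\ring}^m(N,M))}.
\]
The solid spectral sequence is first-quadrant, so it converges. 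Since condensation is fully faithful and exact on discrete and on profinite modules (Lemma~\ref{lem:profinitembeddings}(1)), the differentials and the filtration on the abutment descend to the profinite or discrete setting. The subquotients of the filtration stay inside the profinite (respectively discrete) category, being closed subobjects and quotients. This yields \eqref{eq:lyndonsequence2}.

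I expect the main obstacle to be the $Q$-equivariance of the comparison isomorphism for $N$-cohomology. Theorem~\ref{thm:profintosolid} is stated only as an isomorphism of $\cR$-modules. What I would try first is to build the double complex \eqref{eq:lyndoncomplex} entirely from condensates of profinite resolutions, namely bar resolutions for $G$ and for $Q$. Then $\underline{\mathrm{Hom}}_{\ring(N)}(P_\bullet,\underline{M})$ is literally the condensate of the continuous cochain complex with its $Q$-action, termwise by Theorem~\ref{thm:profintosolid}(3). This reduces equivariance to naturality of condensation.

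A minor point is that the hypothesis ``(FF)'' must be understood for each of $N$, $G$ and $Q$. In the discrete case, the identification of cohomology as a discrete module likewise uses Theorem~\ref{thm:profintosolid}(3) rather than Lemma~\ref{lem:profincohomgroups}.
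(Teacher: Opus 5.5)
Your proposal is correct and follows the paper's own route. The paper simply says the solid Lyndon--Hochschild--Serre sequence carries over via Theorem~\ref{thm:profintosolid} and Lemma~\ref{lem:profincohomgroups}, and your check of $Q$-equivariance using condensates of bar resolutions supplies a detail the paper leaves implicit.

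One caution about reading (FF) as a hypothesis on $G$: Proposition~\ref{lem:extensionclosed} applies this corollary knowing (FF) only for $N$ and $Q$, in order to \emph{deduce} (FF) for $G$, so applying Theorem~\ref{thm:profintosolid}(3) to $G$ at the abutment would make that use circular. You can avoid this by passing to underlying modules: evaluation at a point is exact and turns $\rhg(\cR,-)$ into $\mathrm{RHom}_{\rg}(\cR,-)$, and then Theorem~\ref{thm:profintosolid}(2) identifies the abutment with $H_{\ring}^{k+m}(G,M)$ without any finiteness assumption on $G$.
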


To obtain this spectral sequence for profinite $M$ entirely via methods from the profinite setting as in~\cite{Pletch1980}, $\mathrm{Hom}_{\ring \llbracket N \rrbracket}(P_{\bullet}, M)$ from Equation~\eqref{eq:lyndoncomplex} needs to be a profinite module. This is achieved in~\cite[Theorem~4.2.6]{sym00} by imposing that $N$ is open and thus of finite index in $G$. This does not suffice for the next proposition which proves closure of profinite duality groups under extensions. If one assumes the quotient $Q$ to be a duality group, it has $\cd_{\ring}(Q) < \infty$ and is hence torsion-free by~\cite[Proof of Corollary~VIII.2.5]{bro82} and~\cite[Remark~6.2.5]{rib10}. This implies that $N$ is not of finite index. Condensed mathematics allows us to bypass these issues and show:

\begin{proposition}\label{lem:extensionclosed}
Let $0 \rightarrow N \rightarrow G \rightarrow Q \rightarrow 0$ be a short exact sequence of profinite groups such that $N$ and $Q$ satisfy condition (FF). If $\ring$ is a profinite commutative ring over which $N$ is a duality group of dimension $n$ with dualising module $D_{\ring, N}$ and $Q$ is a duality group of dimension $q$ with dualising module $D_{\ring, Q}$, then $G$ is a duality group of dimension $n+q$ over $\ring$ with dualising module $D_{\ring, N}\widehat{\otimes}_\ring D_{\ring, Q}$. 
\end{proposition}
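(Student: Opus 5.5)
The plan is to verify that $G$ is Cohen--Macaulay over $\ring$ of dimension $n+q$ and then invoke Theorem~\ref{thmB}. We work on the solid side whenever we need the Lyndon--Hochschild--Serre spectral sequence for non-open $N$, and transport the conclusions via Theorem~\ref{thm:profintosolid} and Corollary~\ref{corC}. By Theorem~\ref{thmB}, $N$ and $Q$ are Cohen--Macaulay with $D_{\ring,N}=H^n_\ring(N,\ring\llbracket N\rrbracket)$ and $D_{\ring,Q}=H^q_\ring(Q,\ring\llbracket Q\rrbracket)$, both $\ring$-flat. The duality isomorphisms themselves are the working tool throughout: $\mathbf{H}^{k}_{\cR}(N,-)\cong\mathbf{H}^{\cR}_{n-k}(N,D_{\cR,N}\st -)$ on all solid modules, and similarly for $Q$.

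First we establish that $G$ has condition (FF) and that $\cd_\ring(G)\le n+q$. Apply Corollary~\ref{cor:LHSprof} (or the solid version) with $M$ finite. Condition (FF) for $N$ gives that each $H^m_\ring(N,M)$ is finite, since $N$ is normal and the $Q$-action is well defined. Condition (FF) for $Q$ then shows that every $E_2$ term is finite. Since $E_2^{km}=0$ for $k>q$ or $m>n$, the abutment is finite and vanishes above degree $n+q$. For arbitrary profinite $M$, passing to inverse limits (Lemma~\ref{lem:profincohomgroups} and exactness of $\varprojlim$) gives $\cd_\ring(G)\le n+q$ via Lemma~\ref{lem:cd}.

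Next we compute $\mathbf{H}^\bullet_{\cR}(G,\rg)$ using \eqref{eq:lyndonsequence1} with $M=\rg$. Since $\rg$ is induced from $\ring(N)$ along the projection to $Q$, Shapiro-type reasoning together with the Cohen--Macaulay property of $N$ should give
\[
\mathbf{H}^m_{\cR}(N,\rg)\cong
\begin{cases}
D_{\cR,N}\st \ring(Q), & m=n,\\
0, & m\neq n.
\end{cases}
\]
Here $\ring(Q)$ carries the regular action, and we use the identification $\rg\cong \ring(N)\st\ring(Q)$ as $\ring(N)$-modules. This is the step I expect to be the main obstacle. To handle it, I would write $\rg=\varprojlim_U \ring(N)\st \cR[Q/\bar U]$ and use that $\rhg$-cohomology of $N$ commutes with limits and with finite sums. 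Alternatively, I would use the core isomorphism of Lemma~\ref{lem:corelemma} for $N$: it gives $\mathbf{H}^\bullet_{\cR}(N,\rg)=H^\bullet(D^\bullet_{\cR,N}\otimes^{\solid}_{\ring(N)}\rg)$, and since $\rg$ is flat (indeed projective, by Lemma~\ref{lem:solidprojflat}) over $\ring(N)$, only degree $n$ survives. In either approach, some care is needed to identify the $Q$-action.

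With the degree-$n$ computation in hand, the spectral sequence collapses onto the row $m=n$. This gives $\mathbf{H}^{p+n}_{\cR}(G,\rg)\cong \mathbf{H}^p_{\cR}(Q, D_{\cR,N}\st\ring(Q))$. By Lemma~\ref{lem:tensorinducedmod}, $D_{\cR,N}\st\ring(Q)$ is induced, with diagonal action, from the underlying module $D_{\cR,N}$. Duality for $Q$ applied to $\ring(Q)\st D_{\cR,N}$ together with Lemma~\ref{lem:Shapiro} then shows that this cohomology is concentrated in $p=q$ and equals $D_{\cR,Q}\st D_{\cR,N}$. This is the same flatness manoeuvre as in the proof of Theorem~\ref{thmA}: flatness of $D_{\cR,Q}$ is what makes the tensor product underived.

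Hence $\mathbf{H}^\bullet_{\cR}(G,\rg)$ is concentrated in degree $n+q$. Its nonvanishing forces $\cd_\ring(G)=n+q$. The module $D_{\cR,N}\st D_{\cR,Q}$ is $\cR$-flat as a tensor product of flat modules, and Lemma~\ref{lem:flatness} and Theorem~\ref{thm:profintosolid} translate this to the profinite module $D_{\ring,N}\widehat{\otimes}_\ring D_{\ring,Q}$. Therefore $G$ is Cohen--Macaulay, and Theorem~\ref{thmB} gives duality with the stated dualising module.
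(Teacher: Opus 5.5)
Your overall route is the paper's. The Lyndon--Hochschild--Serre spectral sequence gives (FF) and $\cd_\ring(G)\le n+q$. The terms $\mathbf{H}^m_{\cR}(N,\rg)$ vanish for $m\neq n$ because $\rg$ is induced over $\ring(N)$. The spectral sequence then collapses onto the row $m=n$, and duality for $Q$ finishes the proof.

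\textbf{The gap.} It is the step you defer with ``some care is needed to identify the $Q$-action'', and this is exactly the step the paper treats as the crux. Everything you invoke there lives in $\Solid_{\ring(N)}$: Lemma~\ref{lem:corelemma} for $N$, flatness of $\rg$ over $\ring(N)$, Shapiro for $N$, and the limit description of $\rg$. So it only produces $\mathbf{H}^n_{\cR}(N,\rg)\cong D_{\cR,N}\otimes^{\solid}_{\ring(N)}\rg$ as $\cR$-modules. It says nothing about the $G/N$-action on the $E_2$-term. Hence your claim that $\ring(Q)$ ``carries the regular action'' is unsupported, and the computation of $\mathbf{H}^p_{\cR}(Q,-)$ has nothing to act on.

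\textbf{The missing idea, as in the paper.} Since $N$ is normal, conjugation makes $\ring(N)$ an algebra object in $\SolidLG$. Therefore $\otimes^{\solid}_{\ring(N)}$ and $\mathrm{R}\underline{\mathrm{Hom}}_{\ring(N)}$ of solid $\rg$-modules are again solid $\rg$-modules, with $\rg$-linear unit and counit. Hence $D^\bullet_{\cR,N}$ is a $\rg$-module, and the composition map $\Phi$ of \eqref{eq:defofphi}, formed for $N$, is $\rg$-linear. Applying it to $M=\rg$ and taking $H^n$ gives a $G$-equivariant isomorphism on which $N$ acts trivially on both sides. That is, it is an isomorphism of $\ring(Q)$-modules.

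\textbf{A second, smaller point.} Even with this, Lemma~\ref{lem:tensorinducedmod} does not apply verbatim. The $Q$-module you obtain is $D_{\cR,N}\otimes^{\solid}_{\ring(N)}\rg$ with $g(d\otimes x)=(g\cdot d)\otimes gx$, where $g\cdot d$ is the conjugation action. This action on $D_{\cR,N}$ does not factor through $Q$: for $g\in N$ it is given by the (in general nontrivial) $\ring(N)$-module structure of $D_{\cR,N}$, not the identity. So the module is not $D_{\cR,N}\st\ring(Q)$ with a diagonal $Q$-action.

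It is nonetheless induced. For $x\in G$, the shearing map $d\otimes x\mapsto \bar x\otimes x^{-1}\cdot d$ is a well-defined $Q$-equivariant isomorphism onto $\ring(Q)\st D_0$, where $D_0$ is the underlying module with trivial action. You can check this on the profinite side using a continuous section $Q\to G$, then condense via Theorem~\ref{thm:profintosolid}.

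With these two points supplied, your final step is correct: duality for $Q$ plus Lemma~\ref{lem:Shapiro}, with flatness of $D_{\cR,Q}$ making the tensor product underived. It also makes explicit the identification $H^q_\ring(Q,H^n_\ring(N,\ring\llbracket G\rrbracket))\cong D_{\ring,Q}\widehat{\otimes}_\ring D_{\ring,N}$ and the vanishing in the other degrees, which the paper's proof only asserts.
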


\begin{proof}
The proof follows and rectifies that of~\cite[Theorem~3.9]{Pletch1980}. We write $D_{\ring, N}^{\bullet}$ and $D_{\ring, Q}^{\bullet}$ for the dualising complexes of $N$ and $Q$, respectively. By the profinite Lyndon--Hochschild--Serre spectral sequence from Corollary~\ref{cor:LHSprof}, we see that $N$ and $Q$ satisfying condition (FF) implies that $G$ also satisfies this condition. Writing $\cd_{\ring}(N) = n$ and $\cd_{\ring}(Q) = q$, we conclude by Lemma~\ref{lem:cd} that $\cd_{\ring}(G) \leq n+q$.

We show now that $H_{\ring}^{\bullet}(G, \ring \llbracket G \rrbracket)$  is concentrated in degree $n+q$ and that
\[ H_{\ring}^{n+q}(G, \ring \llbracket G \rrbracket)\cong D_{\ring, N}\widehat{\otimes}_\ring D_{\ring, Q} \, , \]
from which the proposition will follow. Using that $\ring \llbracket G \rrbracket$ is an induced profinite $\ring \llbracket N \rrbracket$-module \cite[Proposition~5.7.1]{rib10} and that $N$ is a duality group, we conclude by~\cite[Remark~3.3.3]{sym00} that
\[ H_{\ring}^k(N, \ring \llbracket G \rrbracket) \cong H_{n-k}^{\ring}(N, D_{\ring, N} \widehat{\otimes}_{\ring} \ring \llbracket G \rrbracket) = 0 \quad \text{for } k \neq n \, . \]
Therefore, the spectral sequence from Equation~\eqref{eq:lyndonsequence2} collapses to a single line
\begin{equation}\label{eq:lyndonsequence3}
H_{\ring}^k \big(Q, H_{\ring}^n(N, \ring \llbracket G \rrbracket)\big) \cong H_{\ring}^{k+n}(G, \ring \llbracket G \rrbracket) \, .
\end{equation}
To proceed, we need to show that the duality isomorphism $H_{\ring}^n(N, \ring \llbracket G \rrbracket) \cong D_{\ring, N} \widehat{\otimes}_{\ring\llbracket N\rrbracket} \ring \llbracket G \rrbracket$ is one of profinite $\ring \llbracket Q \rrbracket$-modules, which does not follow from profinite arguments and Theorem~\ref{thmB} directly.

Instead, we establish this by examining the solid origins of the duality isomorphism (Lemma~\ref{lem:corelemma}). Because $N$ is a normal subgroup of $G$, the conjugation action of $G$ on $N$ endows the solid group ring $\ring(N)$ with the canonical structure of a $\rg$-bimodule. This makes $\ring(N)$ into an algebra object in $\SolidLG$. Consequently, for any solid $\rg$-modules $L$ and $M$, both $L \otimes^\solid_{\ring(N)} M$ and $\mathrm{R\underline{Hom}}_{\ring(N)}(L, M)$ inherit the structure of solid $\rg$-modules from the tensor product and internal hom in $\SolidLG$,\footnote{Classically, the action of $G$ on $L\otimes_N M$ is given by $g (l\otimes m)= (gl)\otimes(gm)$. By virtue of normality of $N$, one checks that this action leaves the submodule of $L\otimes M$ spanned by elements of the form $n^{-1}l \otimes m - l\otimes nm$ for $n\in N$ invariant.} and the unit and counit for the hom-tensor adjunction are $\rg$-linear maps. This in particular implies that the dualising complex $D_{\cR, N}^\bullet = \mathrm{R\underline{Hom}}_{\ring(N)}(\cR, \ring(N))$ is a $\rg$-module, and that the composition map
\[ D_{\cR, N}^\bullet \otimes_{\ring(N)}^{\solid} \mathrm{R\underline{Hom}}_{\ring(N)}(\ring(N), M)\to \mathrm{R\underline{Hom}}_{\ring(N)}({\cR}, M) \]
is $\rg$-linear. This then implies (see Equation~\eqref{eq:defofphi}) that $N$'s duality isomorphism is $\rg$-linear when applied to $\rg$-modules. Using this when $M = \rg$ and taking the $n^{\text{th}}$ solid homology yields, by Theorem~\ref{thm:profintosolid}, an isomorphism of profinite $\ring \llbracket G \rrbracket$-modules:
\[ D_{\ring, N} \widehat{\otimes}_{\ring\llbracket N \rrbracket} \ring \llbracket G \rrbracket \cong H_{\ring}^n(N, \ring \llbracket G \rrbracket) . \]
This in fact descends to an isomorphism of profinite $\cgr{Q}$-modules as the (left) action of $\cgr{N}$ on both sides is trivial. 

Using this $\ring \llbracket Q \rrbracket$-isomorphism and Equation~\eqref{eq:lyndonsequence3}, we conclude that
\[ H_{\ring}^{n+q}(G, \ring \llbracket G \rrbracket) \cong H_{\ring}^q \big(Q, H_{\ring}^n(N, \ring \llbracket G \rrbracket) \big) \cong D_{\ring, Q} \widehat{\otimes}_{\ring} D_{\ring, N} \, . \]
This proves that $\cd_{\ring}(G) = n+q$ and that $D_{\ring, G} = H_{\ring}^{n+q}(G, \ring \llbracket G \rrbracket)$ is a flat profinite $\ring$-module.
\end{proof}

\subsubsection{Amalgamation closure}
In this final subsection, we prove that duality groups are closed under taking profinite amalgamated products, thus generalising~\cite[Theorem~3.5]{Pletch1980a} to arbitrary profinite rings.

\begin{proposition}\label{lem:amalgamclosed}
Let $G = G_1 \bigsqcup_L G_2$ be a profinite amalgamated product and let $\ring$ be a commutative profinite ring over which $G_1$, $G_2$ are duality groups of dimension $n$ and $L$ is a duality group of dimension $n-1$, all satisfying condition (FF). Then $G$ is a duality group over $\ring$.
\end{proposition}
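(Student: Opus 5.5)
We verify the Cohen--Macaulay condition of Definition~\ref{def:cm} for $G$ and then apply Theorem~\ref{thmB}. The basic tool is the Mayer--Vietoris sequence of a profinite amalgamated product. We will assume, as in \cite{Pletch1980a}, that the amalgam is proper, i.e.\ $G_1,G_2,L$ embed as closed subgroups of $G$. Then there is a short exact sequence of profinite $\ring\llbracket G\rrbracket$-modules
\[ 0\to \ring\llbracket G/L\rrbracket \to \ring\llbracket G/G_1\rrbracket\oplus\ring\llbracket G/G_2\rrbracket \to \ring \to 0 . \]
For any profinite $\ring\llbracket G\rrbracket$-module $M$, applying $\mathrm{RHom}_{\ring\llbracket G\rrbracket}(-,M)$ together with Shapiro's lemma \cite[Lemma~4.2.8]{sym00} gives a long exact sequence
\[ \cdots\to H^{k-1}_\ring(L,M)\to H^k_\ring(G,M)\to H^k_\ring(G_1,M)\oplus H^k_\ring(G_2,M)\to H^k_\ring(L,M)\to\cdots . \]
Alternatively, we could set this up with solid modules via Theorem~\ref{thm:profintosolid}, where $\rg$ being projective over $\ring(G_i)$ and $\ring(L)$ is supplied by \cite[Proposition~5.7.1]{rib10} and Lemma~\ref{lem:profinitembeddings}.

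\textbf{Step 1: finiteness.} Taking $M$ finite in the Mayer--Vietoris sequence and using the five-lemma style sandwich, condition (FF) for $G_1,G_2,L$ gives (FF) for $G$. The same sequence shows $H^k_\ring(G,M)=0$ for $k>n$, since $\cd_\ring(L)=n-1$ and $\cd_\ring(G_i)=n$. By Lemma~\ref{lem:cd} this yields $\cd_\ring(G)\le n$.

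\textbf{Step 2: cohomology with group ring coefficients.} Now take $M=\ring\llbracket G\rrbracket$. As a module over $\ring\llbracket G_i\rrbracket$ (respectively $\ring\llbracket L\rrbracket$) it is induced, so the duality isomorphisms of $G_i$ and $L$, combined with \cite[Remark~3.3.3, Lemma~3.3.4]{sym00} as in the proof of Proposition~\ref{lem:extensionclosed}, give:
\begin{itemize}
\item $H^k_\ring(G_i,\ring\llbracket G\rrbracket)=0$ for $k\neq n$ and $H^n_\ring(G_i,\ring\llbracket G\rrbracket)\cong D_{G_i}\widehat{\otimes}_{\ring\llbracket G_i\rrbracket}\ring\llbracket G\rrbracket$;
\item $H^k_\ring(L,\ring\llbracket G\rrbracket)=0$ for $k\ne n-1$ and $H^{n-1}_\ring(L,\ring\llbracket G\rrbracket)\cong D_L\widehat{\otimes}_{\ring\llbracket L\rrbracket}\ring\llbracket G\rrbracket$.
\end{itemize}
Substituting into the Mayer--Vietoris sequence, $H^k_\ring(G,\ring\llbracket G\rrbracket)=0$ for $k\le n-2$. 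The remaining terms form an exact sequence
\[ 0\to H^{n-1}_\ring(G,\ring\llbracket G\rrbracket)\to 0\to D_L\widehat{\otimes}_{\ring\llbracket L\rrbracket}\ring\llbracket G\rrbracket\to H^n_\ring(G,\ring\llbracket G\rrbracket)\to \bigoplus_{i=1,2} D_{G_i}\widehat{\otimes}_{\ring\llbracket G_i\rrbracket}\ring\llbracket G\rrbracket\to 0 . \]
Hence $H^\bullet_\ring(G,\ring\llbracket G\rrbracket)$ is concentrated in degree $n$, which also gives $\cd_\ring(G)=n$.

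\textbf{Step 3: flatness (main obstacle).} It remains to show that $D_G:=H^n_\ring(G,\ring\llbracket G\rrbracket)$ is $\ring$-flat. The short exact sequence above has flat outer terms, provided each $D\widehat{\otimes}_{\ring\llbracket H\rrbracket}\ring\llbracket G\rrbracket$ is $\ring$-flat. To see this, note that $\ring\llbracket G\rrbracket$ is a projective, hence direct summand of free, profinite $\ring\llbracket H\rrbracket$-module by \cite[Proposition~5.7.1]{rib10}. As a $\ring$-module, $D\widehat{\otimes}_{\ring\llbracket H\rrbracket}\ring\llbracket G\rrbracket$ is then a summand of a completed tensor of the flat $\ring$-module $D$ with a free profinite $\ring$-module, which is flat by Lemma~\ref{lem:profinprojflat}.

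Extensions of flat profinite $\ring$-modules are flat: the Tor long exact sequence for $\widehat{\otimes}_\ring$, computed in the solid setting via Lemma~\ref{lem:flatness} and Theorem~\ref{thm:profintosolid}, shows $\mathrm{Tor}_1(D_G,-)=0$. We expect the main difficulty to lie in justifying these identifications rigorously, in particular the $\ring\llbracket G\rrbracket$-linearity needed to read off the Mayer--Vietoris terms. We would handle this exactly as in the proof of Proposition~\ref{lem:extensionclosed}, using the solid origin of the duality isomorphism in Lemma~\ref{lem:corelemma}. With Steps 1--3 complete, $G$ is profinite Cohen--Macaulay of dimension $n$, and Theorem~\ref{thmB} shows that it is a duality group.
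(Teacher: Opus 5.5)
Your proposal is correct and follows the paper's proof. The Mayer--Vietoris sequence gives (FF), $\cd_\ring(G)\le n$, vanishing of $H^k_\ring(G,\ring\llbracket G\rrbracket)$ for $k\neq n$, and the short exact sequence in degree $n$ with $\ring$-flat outer terms, so $D_{\ring,G}$ is flat and Theorem~\ref{thmB} applies. The differences are cosmetic: the paper cites Gildenhuys--Ribes for Mayer--Vietoris, writes the outer terms as inverse limits of finite products of $D_{\ring,H}$ rather than as $D_H\widehat{\otimes}_{\ring\llbracket H\rrbracket}\ring\llbracket G\rrbracket$, and concludes by projectivity and splitting rather than a Tor sequence.

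Your worry about $\ring\llbracket G\rrbracket$-linearity is unnecessary here. Unlike the extension case, only the $\ring$-module structure of the outer terms enters the flatness argument, so you do not need the solid origin of the duality isomorphism.
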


\begin{proof}
It follows from~\cite[Theorem~1.13]{gil74} that there is a Mayer--Vietoris sequence
\[ {} \dots {} \rightarrow H_{\ring}^{k-1}(L, M) \rightarrow H_{\ring}^k(G, M) \rightarrow H_{\ring}^k(G_1, M) \oplus H_{\ring}^k(G_2, M) \rightarrow H_{\ring}^k(L, M) \rightarrow {} \dots {} \]
for any discrete $\ring \llbracket G \rrbracket$-module $M$. Thus, $H_{\ring}^k(G, M)$ is finite for any finite $M$ and any $k$. By~\cite[Proposition~2.2.4]{rib10}, the Mayer--Vietoris sequence also pertains to any profinite $\ring \llbracket G \rrbracket$-module $M$. Therefore, $H_{\ring}^k(G, \ring \llbracket G \rrbracket) = 0$ for $n \neq k$ and
\begin{equation}\label{eq:dualmodses}
0 \rightarrow H_{\ring}^{n-1}(L, \ring \llbracket G \rrbracket) \rightarrow H_{\ring}^n(G, \ring \llbracket G \rrbracket) \rightarrow H_{\ring}^n(G_1, \ring \llbracket G \rrbracket) \oplus H_{\ring}^n(G_2, \ring \llbracket G \rrbracket) \rightarrow 0
\end{equation}
is a short exact sequence. Denote by $H$ any of the subgroups $L$, $G_1$ or $G_2$. By~\cite[Proposition~5.2.2 and Proposition~5.7.1]{rib10}, $H_{\ring}^n(H, \ring \llbracket G \rrbracket)$ can be written as an inverse limit of finite products of $D_{\ring, H}$. Therefore, $H_{\ring}^n(G, \ring \llbracket G \rrbracket)$ is nonzero and $\cd_{\ring}(G) = n$. We know that $D_{\ring, H}$ is a flat profinite $\ring$-module. Because completed tensor products commute with inverse limits and finite products by~\cite[Lemma~5.5.2 and Proposition~5.5.3]{rib10}, $H_{\ring}^n(H, \ring \llbracket G \rrbracket)$ is a flat and thus projective profinite $\ring$-module by Lemma~\ref{lem:profinprojflat}. We deduce from Equation~\eqref{eq:dualmodses} and Lemma~\ref{lem:profinprojflat} that $D_{\ring, G} = H_{\ring}^n(G, \ring \llbracket G \rrbracket)$ is a flat profinite $\ring$-module.
\end{proof}

\bibliographystyle{alpha}
\bibliography{library.bib}
	
\end{document}